\documentclass[a4paper,12pt]{amsart}

\oddsidemargin=0.125in
\evensidemargin=0.125in
\textwidth=6in              % A4 paper is 8.25in wide

\topmargin=0in
\textheight=9in              % A4 paper is 11.7in high

\usepackage{url}
\usepackage{enumerate}
\usepackage{color}
\usepackage{multirow}
\usepackage{hhline}
\usepackage{amsmath}
\usepackage{amsthm}
\usepackage{amsfonts}% to get the \mathbb alphabet
\usepackage{amssymb}
\usepackage{times}
\usepackage{mathptm}
\usepackage{array}
\usepackage{graphics}
\usepackage{epsfig}
\usepackage{mathrsfs}

\newcommand{\subg}[2]{\langle #1\rangle_{#2}}
\newcommand{\subgg}[2]{\langle\langle #1\rangle\rangle_{#2}}

\newcommand{\Gcenter}[2]{
	\dimen0=\ht\strutbox%
	\advance\dimen0\dp\strutbox%
	\multiply\dimen0 by#1%
	\divide\dimen0 by2%
	\advance\dimen0 by-.5\normalbaselineskip
	\raisebox{-\dimen0}[0pt][0pt]{#2}}%

%definition
\def\h{\mathscr{H}}

\def\x{\mathscr{X}}

\def\f{\mathscr{F}}

\def\X{\tilde{X}}
\def\Y{\tilde{Y}}

\newtheorem{thm}{Theorem}[section]
\newtheorem{cl}{Claim}
\newtheorem{lm}[thm]{Lemma}
\newtheorem{prop}[thm]{Proposition}

\theoremstyle{definition}
\newtheorem{df}[thm]{Definition}

%%%%%%% claim

\newenvironment{proofclaim*}{\paragraph{\it Proof of Claim.\ }}
		{\hfill$\Box$\\}

\title[On Graphs with the Smallest Eigenvalue at Least $-1-\sqrt{2}$,
	part II]
	{On Graphs with the Smallest Eigenvalue at Least $-1-\sqrt{2}$,
	part II}

\author{Tetsuji Taniguchi}

%\address[T.~Taniguchi]{Graduate~School~of~Mathematics, Kyushu~University,
%	Fukuoka~812-81, Japan}
%\email{tetsuzit@math.kyushu-u.ac.jp}
\address[T.~Taniguchi]{Matsue~College~of~Technology,
   Nishiikuma-cho~ 14-4,
   Matsue,
   Shimane~690-8518,
   Japan}
\email{tetsuzit@matsue-ct.ac.jp}
\keywords{Generalized line graph, Spectrum}
\subjclass[2000]{05C50}

\date{\today}

\begin{document}

\begin{abstract}
This is a continuation of the article with the same title.
In this paper,
	the family $\h$ is the same as in the previous paper \cite{paperI}.
The main result is that
	a minimal graph which is not an $\h$-line graph,
	is just isomorphic to one of the $38$ graphs found by computer.
\end{abstract}

\maketitle

%%%%%%%%%%%%%%%%%%%%%%%%%%%%%%%%%%%%%%%%%%%
%%  Section 1:
%%
%%%%%%%%%%%%%%%%%%%%%%%%%%%%%%%%%%%%%%%%%%%
\section{INTRODUCTION}
In the previous paper \cite{paperI},
	we proved the uniqueness of strict $\{[H_2],[H_3],[H_5]\}$-cover graphs.
This result plays a crucial role in obtaining an upper bound
	on the number of vertices in a minimal forbidden subgraph.

In this paper,
	we completely determine minimal forbidden subgraphs
	for the class of slim $\{[H_2],[H_3],[H_5]\}$-line graphs.
By computer,
	we obtain such graphs (cf. Figure~\ref{MFS}).
The smallest eigenvalue of the minimal forbidden subgraph $G_{5,2}$
	is less than $-1-\sqrt{2}$,
	and others are greater than or equal to $-1-\sqrt{2}$.
We know
	that the smallest eigenvalues of $\{[H_2],[H_3],[H_5]\}$-line graphs
	are greater than or equal to $-1-\sqrt{2}$
	(cf. Theorem 3.7 of \cite{hlg}).
These mean that,
	if a graph does not contain subgraphs in Figure~\ref{MFS},
	then it is a slim  $\{[H_2],[H_3],[H_5]\}$-line graph,
	and has the smallest eigenvalue at least $-1-\sqrt{2}$.

We use the same notation as in \cite{paperI}.

%%%%%%%%%%%%%%%%%%%%%%%%%%%%%%%%%%%%%%%%%  df:Hg
\begin{df}\label{df:Hg}
A \emph{Hoffman graph} is a graph $H$ with vertex labeling
$V(H)\to\{s,f\}$, satisfying the following conditions:
\begin{enumerate}
\item every vertex with label $f$ is adjacent to at least
	one vertex with label $s$;
\item vertices with label $f$ are pairwise non-adjacent.
\end{enumerate}
We call a vertex with label $s$ a \emph{slim vertex},
	and a vertex with label $f$ a \emph{fat vertex}.
We denote by $V_s(H)$ ($V_f(H)$) the set of slim (fat) vertices of $H$.
An ordinary graph without labeling can be regarded as a Hoffman graph
	without fat vertex.
Such a graph is called a \emph{slim graph}.
The subgraph of a Hoffman graph $H$ induced on $V_s(H)$
	is called the \emph{slim subgraph} of $H$.
%If every slim vertex of a Hoffman graph $H$ has a fat neighbour,
%	then we call $H$ \emph{fat}.
We draw Hoffman graphs by depicting vertices as large (small) black dots
	if they are fat (slim).
\end{df}

%%%%%%%%%%%%%%%%%%%%%%%%%%%%%%%%%%%%%%%%%%%%
\begin{figure}[h]
\caption{}
\begin{tabular}{ccccc}
\includegraphics[scale=0.11]
{./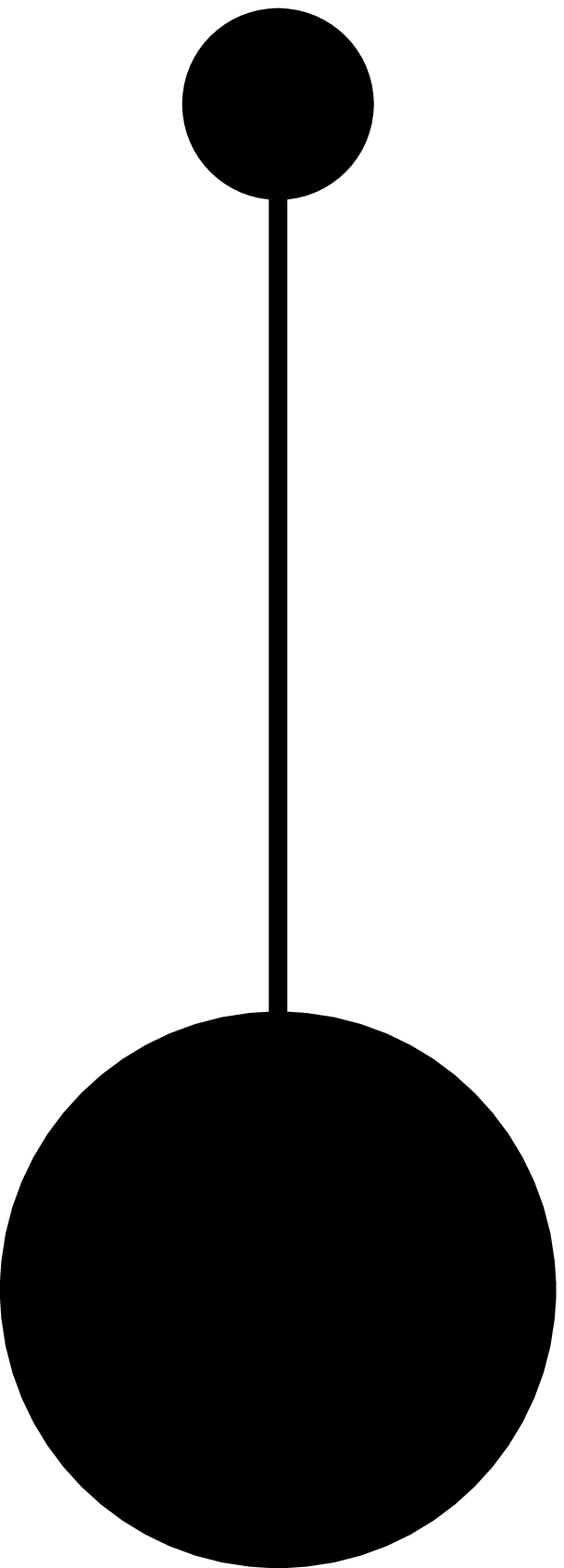} &
\includegraphics[scale=0.11]
{./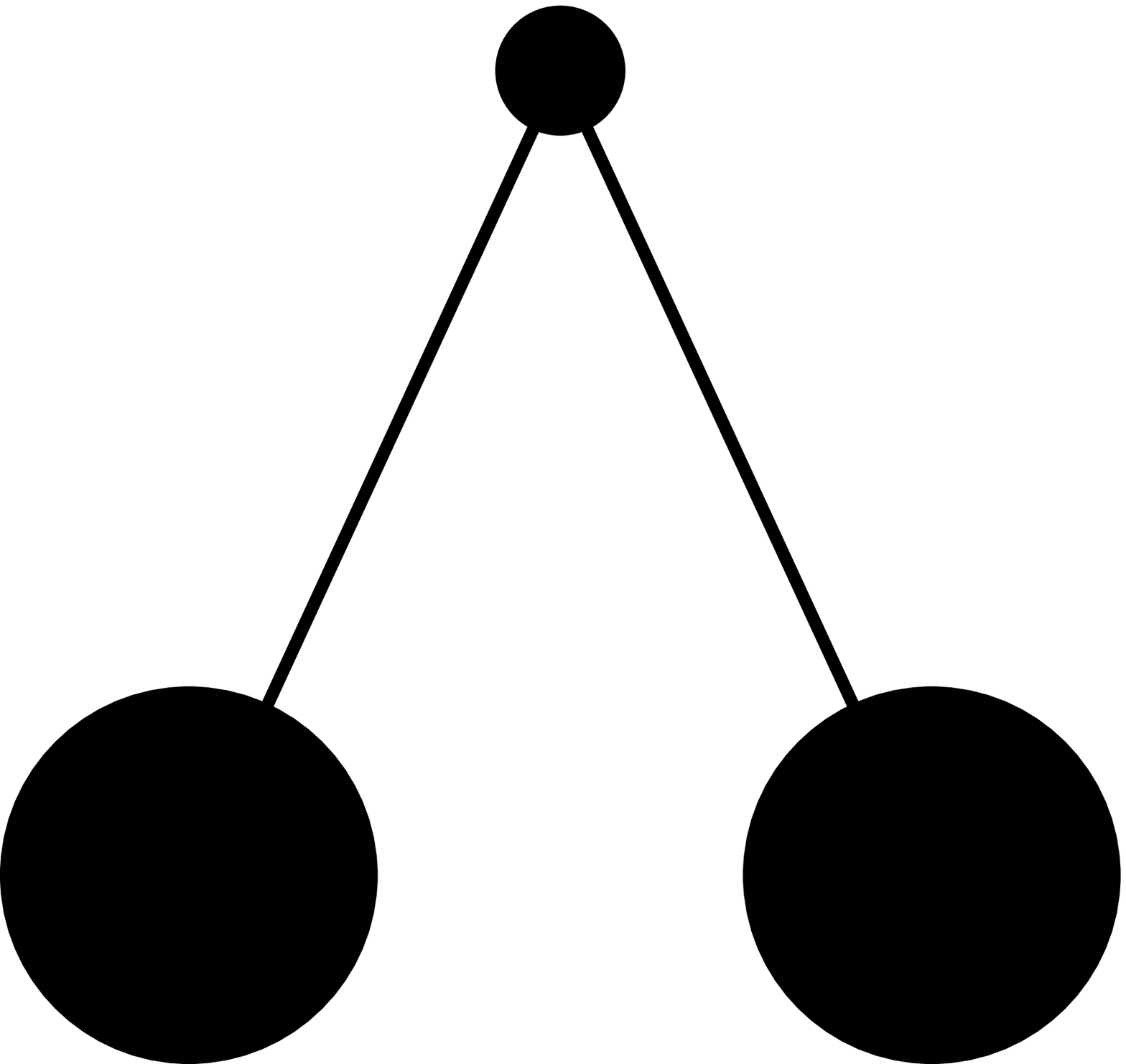} &
\includegraphics[scale=0.11]
{./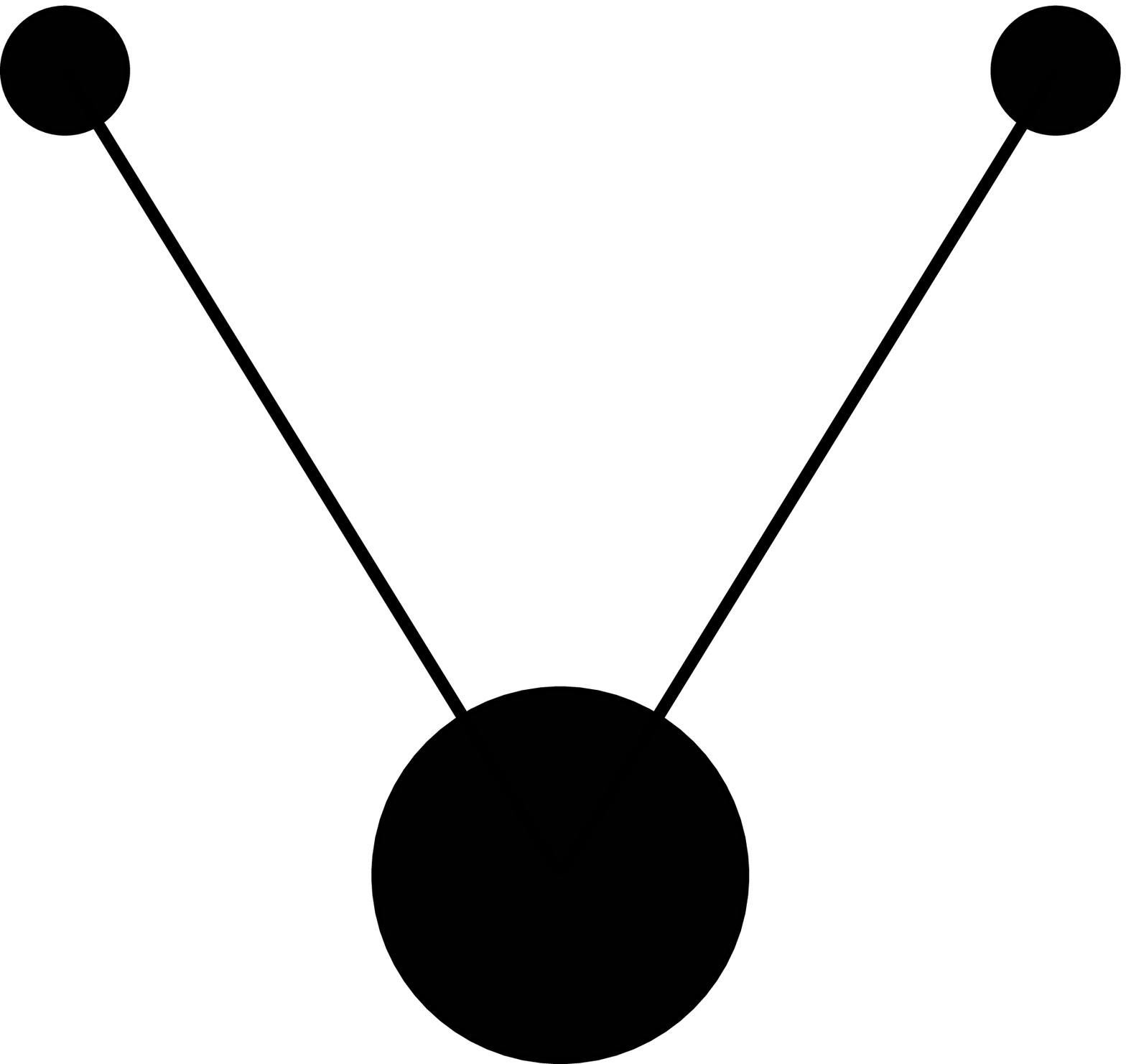} &
\includegraphics[scale=0.11]
{./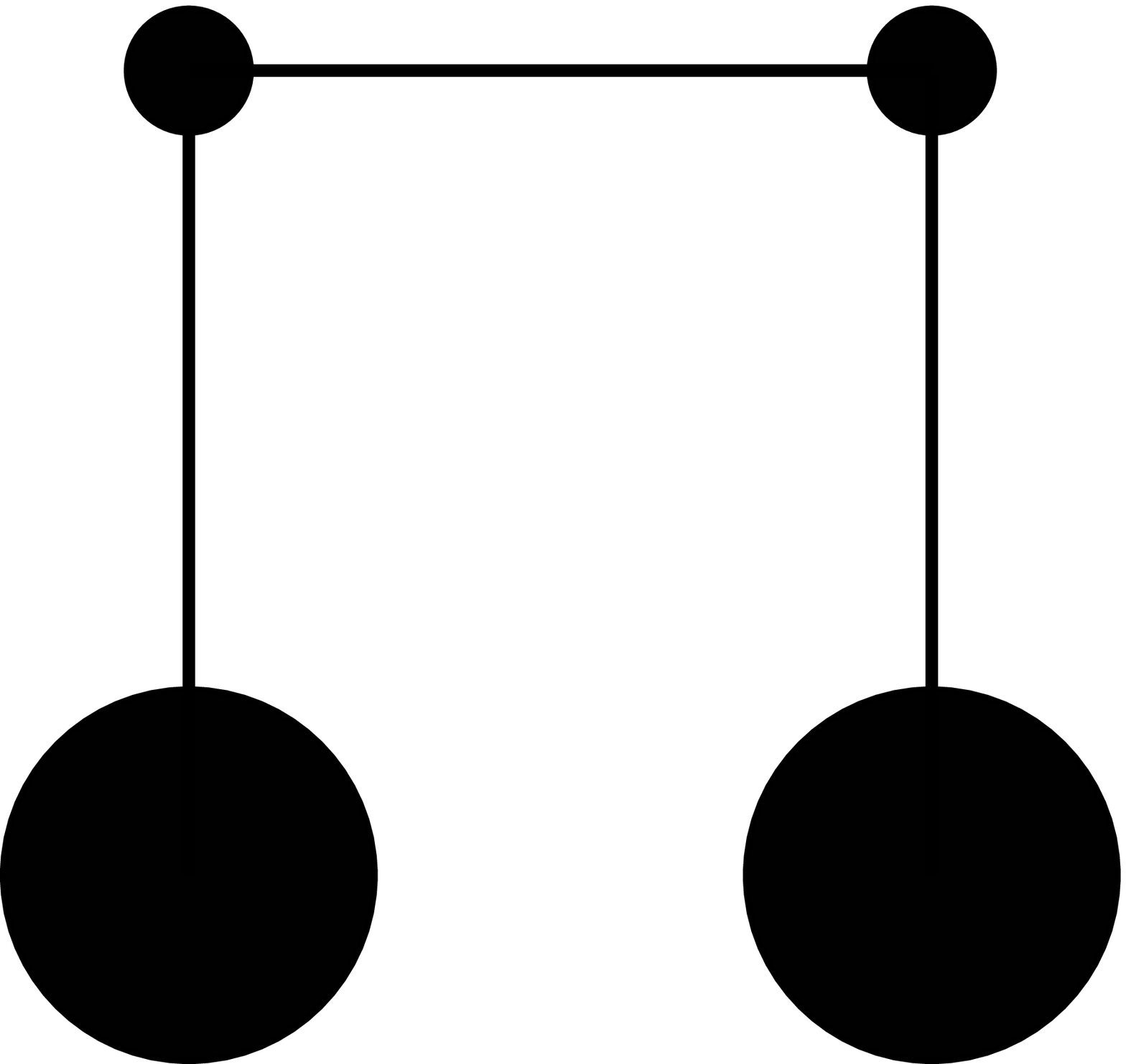} &
\includegraphics[scale=0.11]
{./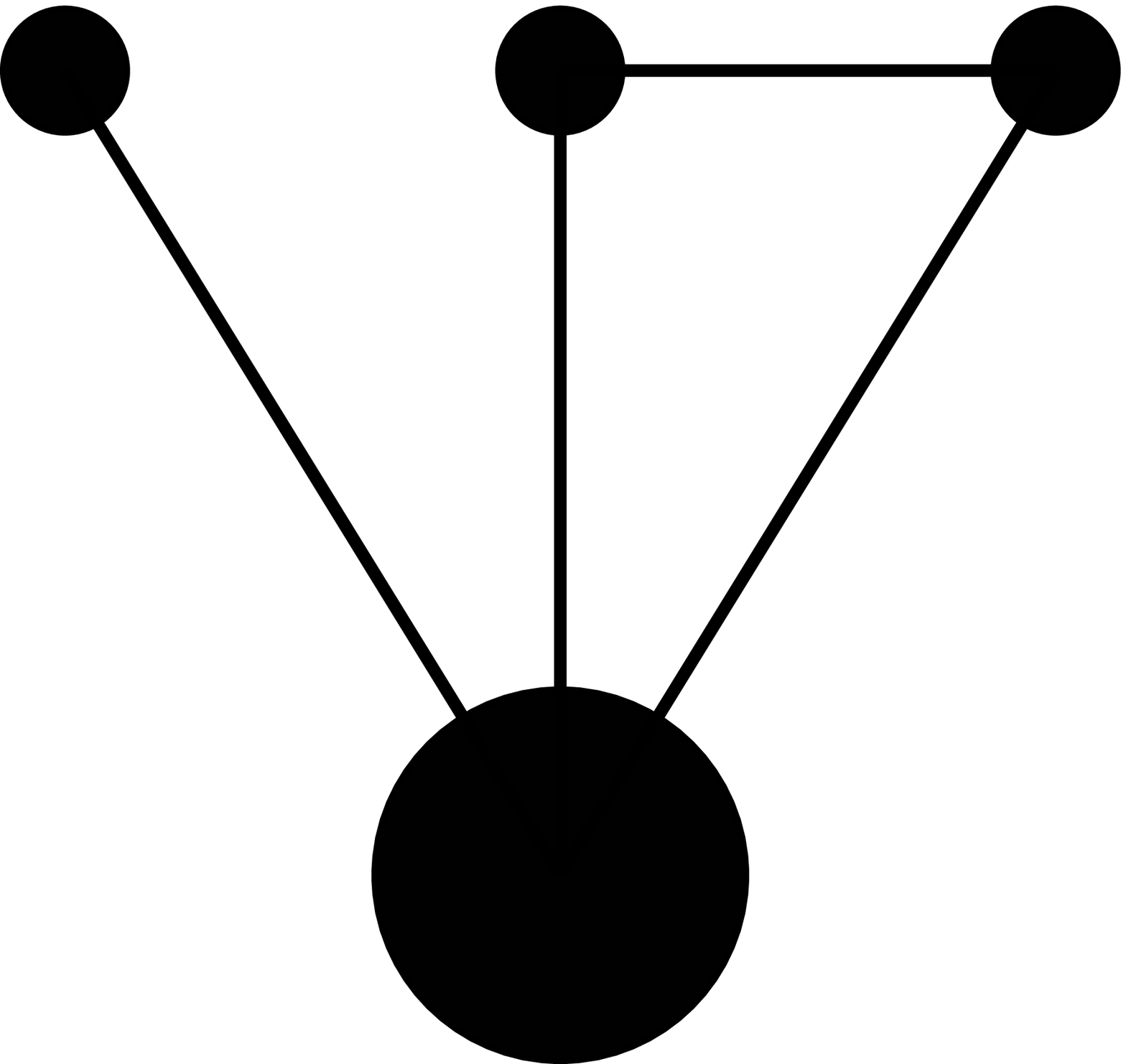}
 \\
$H_1,\lambda_{\min}=\alpha_1$ & $H_2,\lambda_{\min}=\alpha_2$
 & $H_3,\lambda_{\min}=\alpha_2$ & $H_4,\lambda_{\min}=\alpha_2$
 & $H_5,\lambda_{\min}=\alpha_3$ \\
&&&&\\

\includegraphics[scale=0.11]
{./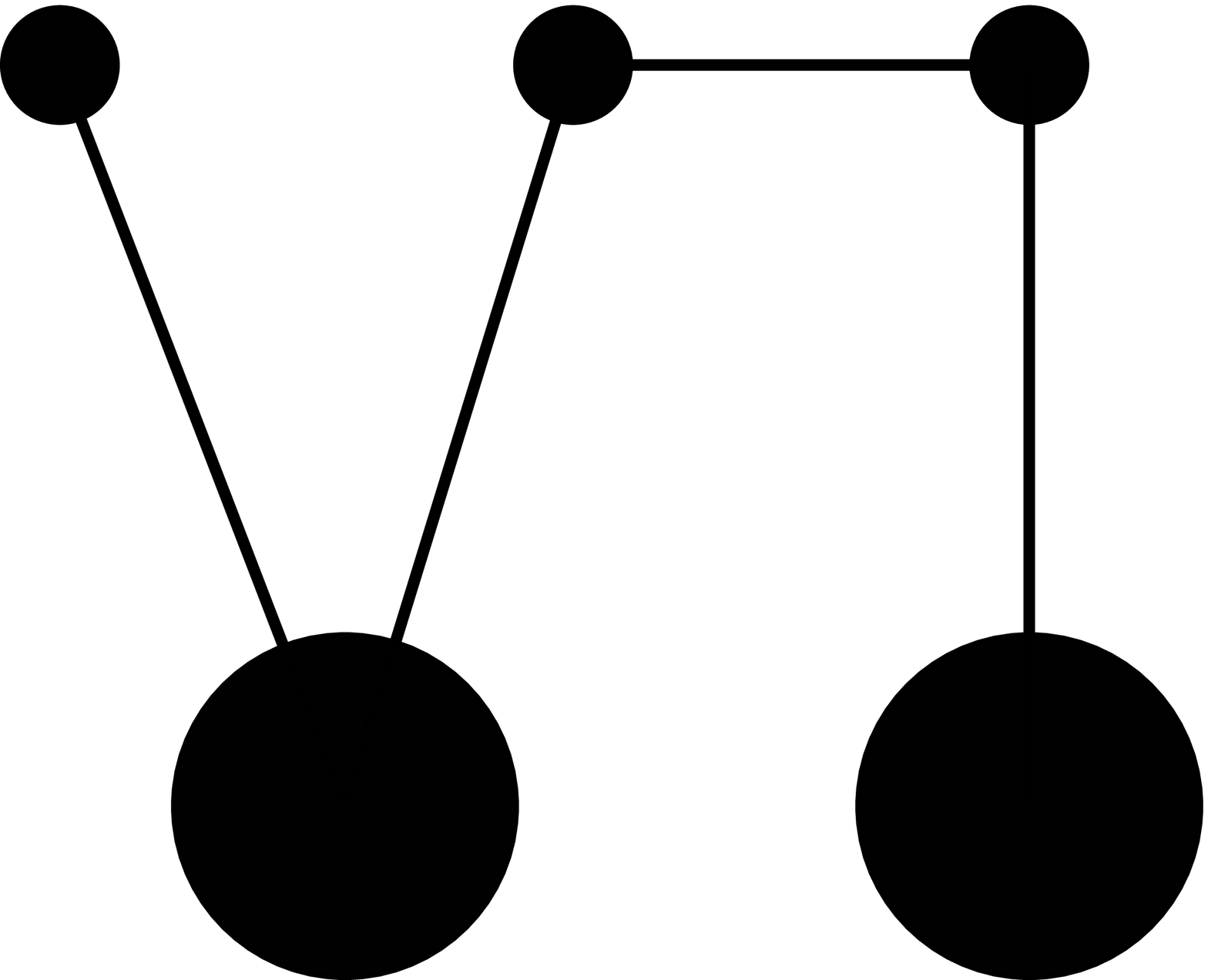} &
\includegraphics[scale=0.11]
{./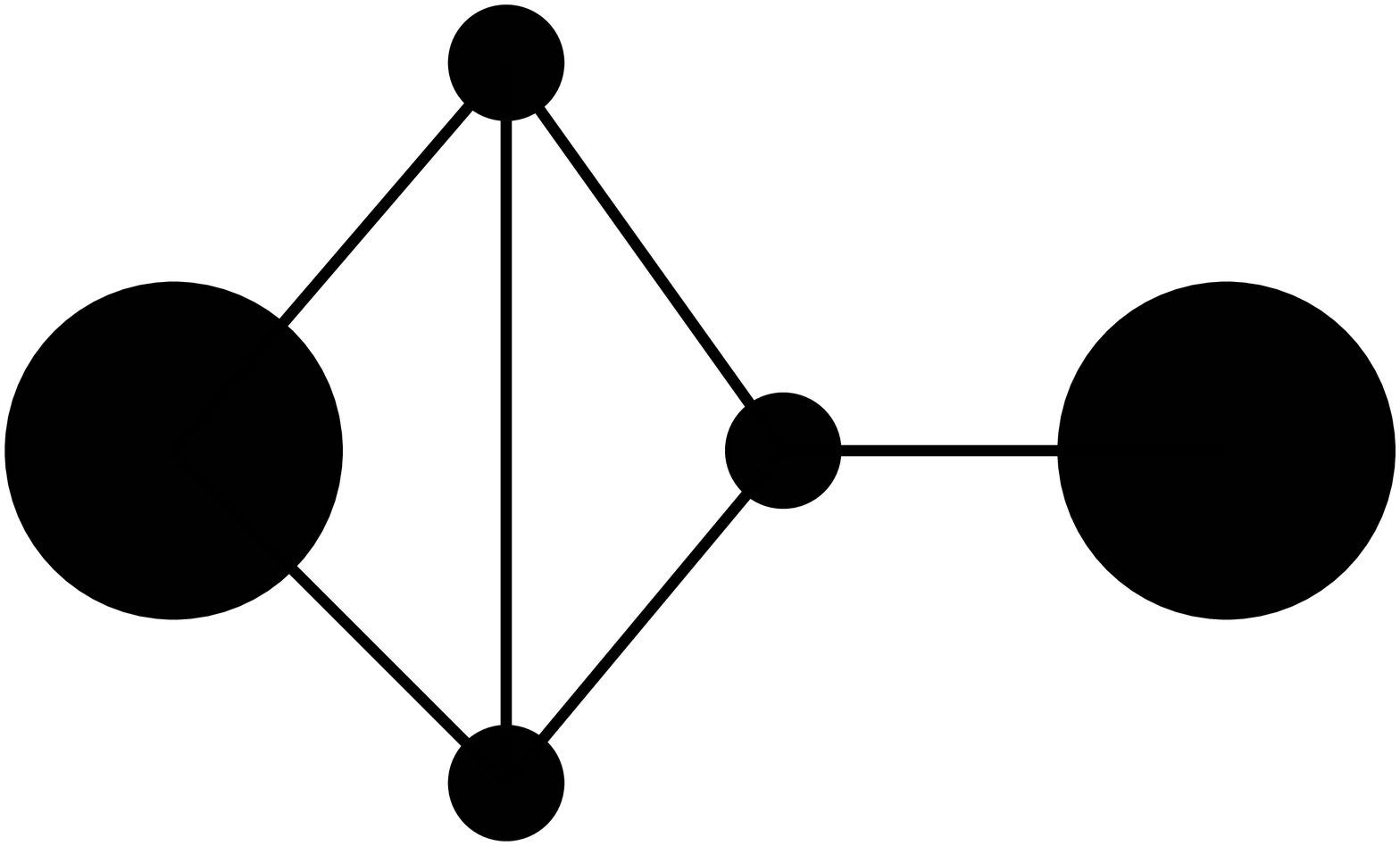} &
\includegraphics[scale=0.11]
{./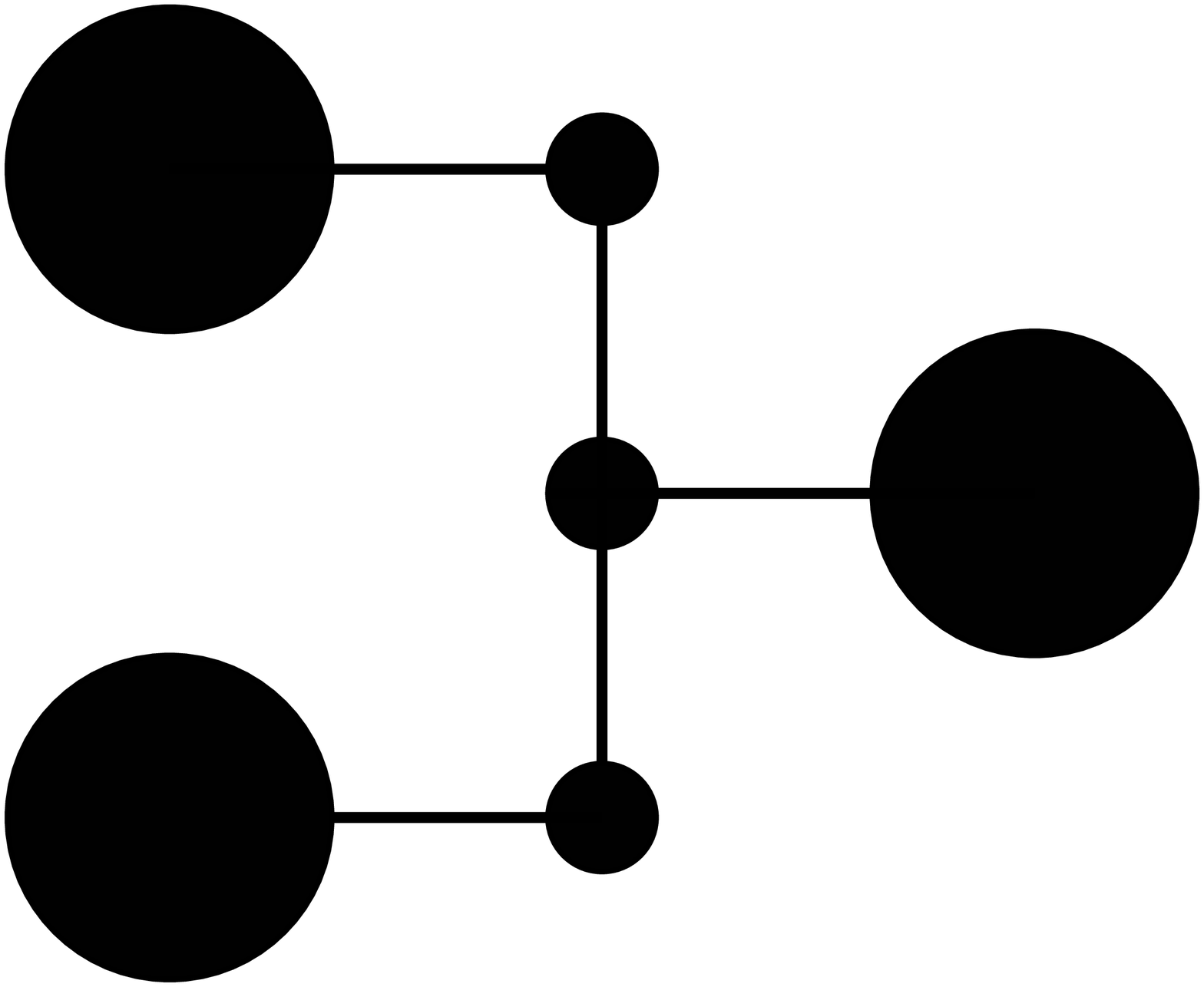} &
\includegraphics[scale=0.11]
{./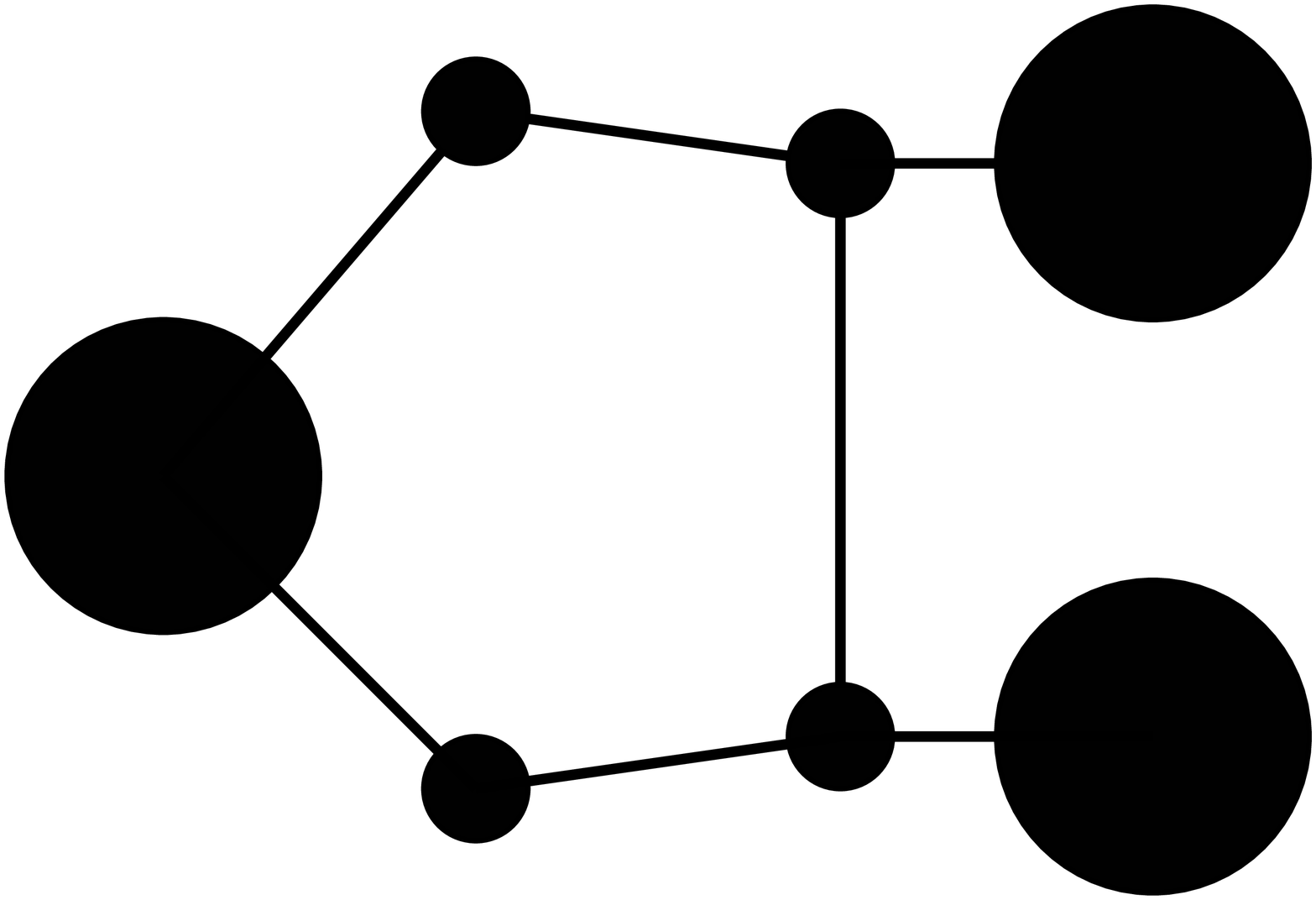}
 \\
$H_6,\lambda_{\min}=\alpha_3$ & $H_7,\lambda_{\min}=\alpha_3$
 & $H_8,\lambda_{\min}=\alpha_3$ & $H_9,\lambda_{\min}=\alpha_3$ & \\

\end{tabular}
\label{Hoffmans}
\end{figure}
%%%%%%%%%%%%%%%%%%%%%%%%%%%%%%%%%%%%%%%%%%%%

We denote by $[H]$ the isomorphism class of Hoffman graphs containing $H$.
In the following,
	all graphs considered are Hoffman graphs
	and all subgraphs considered are induced subgraphs.
For a vertex $v$ of a Hoffman graph $H$,
	we denote by
	$N^s_H(v)$ (resp.\ $N^f_H(v)$)
	the set of all slim (resp.\ fat) neighbours of $v$,
	and by $N_H(v)$ the set of all neighbours of $v$,
	i.e., $N_H(v)=N^s_H(v)\cup N^f_H(v)$.
We write $G\subset H$ if $G$ is an induced subgraph of $H$.
We denote by $\langle S\rangle_H$ the subgraph of $H$ induced
	on a set of vertices $S$.
For a Hoffman graph $H$ and a subset $S\subset V_s(H)$,
	let $\subg{\subg{S}{}}{H}$ denote the subgraph
\[
	\subg{\subg{S}{}}{H}=\subg{S\cup(\bigcup_{z\in S}N_H^f(z))}{H}.
\]
Also,
	define $H-S$,
	$H-x$ by $H-S=\subg{\subg{V_s(H)\setminus S}{}}{H}$,
	$H-x=H-\{x\}$,
	respectively,
	where $x\in V(H)$.
Let $\emptyset$ be an empty set,
	and let $\phi$ be an empty graph.

%%%%%%%%%%%%%%%%%%%%%%%%%%%%%%%%%%%%%%%%%  df:1
\begin{df}\label{df:1}
Let $H$ be a Hoffman graph, and let $H^i$ $(i=1,2,\ldots,n)$ be
a family of subgraphs of $H$. The graph $H$ is said to
be the {\em sum}\/ of $H^i$ $(i=1,2,\ldots,n)$, denoted
\begin{equation}\label{eq:lem0a}
H=\biguplus_{i=1}^n H^i,
\end{equation}
if the following conditions are satisfied:
\begin{enumerate}[(i)]
\item $V(H)=\bigcup_{i=1}^n V(H^i)$;
\item $V_s(H^i)\cap V_s(H^j)=\emptyset$ if $i\neq j$;
\item if $x\in V_s(H^i)$ and $y\in V_f(H)$ are adjacent,
	then $y\in V(H^i)$;
\item if $x\in V_s(H^i)$, $y\in V_s(H^j)$ and $i\neq j$,
then $x$ and $y$ have at most one common fat neighbour,
and they have one if and only if they are adjacent.
\end{enumerate}
\end{df}

%One can show easily that the sum defined above is associative,
%in the sense that one of $H=H^1\uplus H^2\uplus H^3$,
%$H=(H^1\uplus H^2)\uplus H^3$ implies the other.
%%%%%%%%%%%%%%%%%%%%%%%%%%%%%%%%%%%%%%%%%%%%%%%%%%%%%%%%%%%%%%%%%%%%%%%%%%%%%%%
\begin{df}\label{df:linegraph}
Let $\h$ be a family of isomorphism classes of Hoffman graphs.
An $\h$-line graph $\Gamma$ is a subgraph of a graph $H=\biguplus_{i=1}^n H^i$
	such that $[H^i]\in\h$ for all $i\in\{1,2,\ldots,n\}$.
In this case,
	we call $H$ an $\h$-cover graph of $\Gamma$.
If $V_s(\Gamma)=V_s(H)$,
	then we call $H$ a strict $\h$-cover graph of $\Gamma$.
Two strict $\h$-covers $K$ and $L$ of $\Gamma$ are called equivalent,
	if there exists an isomorphism $\varphi:K\to L$
	such that $\varphi |_\Gamma$
	is the identity automorphism of $\Gamma$.
\end{df}
%%%%%%%%%%%%%%%%%%%%%%%%%%%%%%%%%%%%%%%%%%%%%%%%%%%%%%%%%%%%%%%%%%%%%%%%%%%%%%%

For the remainder of this section,
	we assume $\h=\{[H_2],\ [H_3],\ [H_5]\}$ (cf. Figure~\ref{Hoffmans}).
In our previous paper \cite{paperI},
	we proved the following theorem:
%%%%%%%%%%%%%%%%%%%%%%%%%%%%%%%%%%%%%%%%%%%%
%%%%%%%%%%%%%%						%%%%%%%%%%%%%%%%%
%%%%%%%%%%%%%%      thm of covering		%%%%%%%%%%%%%%%%%
\begin{thm}
Let $\Gamma$ be a connected slim $\h$-line graph with
	at least 8 vertices.
Then a strict $\h$-cover graph
	of $\Gamma$
	is unique up to equivalence.
\label{cover}
\end{thm}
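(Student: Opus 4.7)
The plan is to take two strict $\h$-cover graphs $K$ and $L$ of $\Gamma$ and build an isomorphism $\varphi:K\to L$ that restricts to the identity on $\Gamma$. Because strictness forces $V_s(K)=V_s(L)=V_s(\Gamma)$, only the fat vertices and their adjacencies need to be matched. Write the canonical decompositions $K=\biguplus_{i=1}^n K^i$ and $L=\biguplus_{j=1}^m L^j$ with $[K^i],[L^j]\in\h$. The heart of the argument is that the two resulting partitions $\{V_s(K^i)\}$ and $\{V_s(L^j)\}$ of $V_s(\Gamma)$ coincide, and that this partition is in fact intrinsically determined by $\Gamma$ alone.

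To isolate this intrinsic partition, I first catalogue the slim shapes produced by the three allowed Hoffman graphs $H_2,H_3,H_5$: each component $K^i$ contributes one of a very small list of patterns on $V_s(\Gamma)$. By condition (iv) of Definition~\ref{df:1}, two slim vertices lying in different components are adjacent in $\Gamma$ if and only if they share exactly one common fat neighbour; in particular the slim subgraph of $K^i$ is the induced subgraph $\subg{V_s(K^i)}{\Gamma}$. Thus any strict $\h$-cover of $\Gamma$ determines a covering of $\Gamma$ by these ``elementary slim pieces'' glued along the prescribed overlap rule, and the problem reduces to showing that every such covering of $\Gamma$ coincides.

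The main obstacle is this last uniqueness claim. For small $\Gamma$ there are genuine counterexamples---e.g. a near-complete $\Gamma$ on few vertices can often be covered either by one $H_5$-piece or by several $H_2$/$H_3$ pieces glued together---and the hypothesis $|V(\Gamma)|\ge 8$ is exactly the threshold that eliminates them. I would attack this via a local-to-global case analysis: fix a slim vertex $v$ of maximum degree, enumerate the finitely many candidate pieces of a strict $\h$-cover containing $v$, and show that whenever two distinct candidates are both compatible with $\Gamma$, they force $|V(\Gamma)|<8$. Connectedness then propagates this local rigidity: once the piece containing $v$ is fixed, condition (iv) determines which slim neighbours of $v$ lie in adjacent pieces and which remain in the same piece, and one iterates outward through $\Gamma$.

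Once the partition is pinned down, the rest is routine. For each of the three possible slim configurations of a component, the strict $\h$-cover is unique up to an isomorphism fixing the slim vertices, which is a direct inspection of the automorphism groups of $H_2,H_3,H_5$. Moreover, condition (iv) of Definition~\ref{df:1} forces the fat vertices shared between two distinct components to correspond exactly to those edges of $\Gamma$ that straddle the two components, so these local isomorphisms glue into a well-defined global $\varphi:K\to L$ which is the identity on $\Gamma$. The hard step is the combinatorial one in the previous paragraph; everything else is case-checking on a finite list.
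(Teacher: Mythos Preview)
This theorem is not proved in the present paper at all: it is quoted verbatim from the predecessor paper \cite{paperI} (it is Theorem~31 there, as the later proof of Theorem~\ref{mainthm} makes explicit). So there is no ``paper's own proof'' to compare against here; the current paper merely \emph{uses} the result.

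As for your plan itself, the reduction to showing that the two partitions $\{V_s(K^i)\}$ and $\{V_s(L^j)\}$ of $V_s(\Gamma)$ coincide is the right framing, and your remarks about gluing the local isomorphisms afterwards are essentially correct (indeed Lemma~\ref{newlemma} of this paper carries out exactly that step under the stronger hypothesis $K=L$). The gap is that your proposed attack on the hard step is not yet an argument. ``Fix a slim vertex $v$ of maximum degree, enumerate the finitely many candidate pieces containing $v$, and show that two distinct compatible candidates force $|V(\Gamma)|<8$'' is a restatement of what has to be proved, not a mechanism for proving it: nothing you have written explains why the ambiguity at $v$ is governed by the \emph{total} number of vertices of $\Gamma$ rather than by the local structure near $v$, nor why the propagation step (``condition~(iv) determines which slim neighbours of $v$ lie in adjacent pieces'') cannot branch. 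In fact condition~(iv) only tells you, \emph{after} you know the partition, which cross-edges come from shared fat vertices; it does not by itself let you recover the partition from $\Gamma$. The actual proof in \cite{paperI} requires a substantial amount of structural work specific to $\{H_2,H_3,H_5\}$ to close this, and the threshold $8$ emerges from that analysis rather than from a one-vertex local check.
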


Every subgraph of an $\h$-line graph
   is an $\h$-line graph.
%If a graph $\Gamma$ contains a subgraph which
%	is a non $\h$-line graph,
%	then obviously $\Gamma$ is a non $\h$-line graph.
Thus,
   it is desirable to
%   know by the determination of all minimal slim non $\h$-line graph.
   determine all minimal slim non $\h$-line graphs.
If $\Gamma$ is a minimal slim non $\h$-line graph with at least $9$ vertices,
	then we can use Theorem~\ref{cover}
	to derive a contradiction
	(refer to Section \ref{proofofmain} for the details of the proof).
Enumerating all the slim non $\h$-line graphs with
   at most $8$ vertices by comupter,
   we obtain the following theorem
	which is the main result in this paper: 
%%%%%%%%%%%%%%%%%%%%%%%%%%%%%%%%%%%%%%%%%%%%
%%%%%%%%%%%%%%						%%%%%%%%%%%%%%%%%
%%%%%%%%%%%%%%      main theorem 2		%%%%%%%%%%%%%%%%%
\begin{thm}
If $\Gamma$ is a minimal slim non $\h$-line graph,
%	then $[\Gamma]\in\f_5\cup\f_6\cup\f_7\cup\f_8$.
	then $\Gamma$ is isomorphic to one of the graphs
	in Figure~\ref{MFS}.
\label{mainthm}
\end{thm}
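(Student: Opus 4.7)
The plan is to prove Theorem~\ref{mainthm} by a dichotomy on the number of vertices of $\Gamma$. If $|V(\Gamma)| \leq 8$, I would simply invoke a finite computer search: enumerate all (isomorphism classes of) connected slim graphs on at most $8$ vertices, test each for the $\h$-line-graph property by attempting to build a strict $\h$-cover via Definitions~\ref{df:1} and~\ref{df:linegraph}, and then keep only those $\Gamma$ such that $\Gamma$ itself fails the test while every $\Gamma-v$ passes it. This is exactly what produces the $38$ graphs of Figure~\ref{MFS}.

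The substantive case is $|V(\Gamma)| \geq 9$, and here the goal is to derive a contradiction; equivalently, to show \emph{no} minimal forbidden subgraph of this size can exist. Fix a vertex $v$. By minimality, $\Gamma - v$ is a slim $\h$-line graph on at least $8$ vertices, so by Theorem~\ref{cover} it has a unique strict $\h$-cover graph $K_v$ up to equivalence; the same applies to $\Gamma - u$ for any other vertex $u$, yielding a cover $K_u$. Since $\Gamma - \{u,v\}$ still has at least $7$ vertices and is an induced subgraph of both covered graphs, the covers $K_v - u$ and $K_u - v$ are both strict $\h$-covers of $\Gamma - \{u,v\}$, hence equivalent (provided we arrange $|V(\Gamma)|\geq 10$, and treat the one edge case $|V(\Gamma)|=9$ by deleting a carefully chosen pair so that what remains still has $\geq 8$ vertices under Theorem~\ref{cover}; a mild separate argument or direct computer check handles the threshold).

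The core construction is then to amalgamate the family $\{K_v : v\in V(\Gamma)\}$ into a single strict $\h$-cover $K$ of $\Gamma$. Concretely, I would use the equivalences $K_v - u \simeq K_u - v$ to identify the fat-vertex sets of all $K_v$'s along their common slim part, and then verify that the induced subgraph structure, together with the sum decompositions $K_v = \biguplus_i H^{i}_v$ inherited from the definition of $\h$-line graph, glue to a global sum decomposition of some graph $K$ on $V_s(\Gamma)$ plus the merged fat vertices. Once such a $K$ is produced, $\Gamma$ is an $\h$-line graph, contradicting its minimality.

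The main obstacle will be this amalgamation step: the uniqueness given by Theorem~\ref{cover} is only \emph{up to equivalence}, so one must carefully choose representatives of $K_v$ and $K_u$ realizing compatible identifications on the overlap $\Gamma-\{u,v\}$, and then check that conditions (i)--(iv) of Definition~\ref{df:1} survive the gluing --- in particular condition (iv), that a slim pair has a common fat neighbour if and only if they are adjacent, which is a genuinely global constraint. I expect this to reduce to verifying that the slim-adjacency pattern of $\Gamma$ forces the fat-vertex identifications to be consistent across all $K_v$, so that no slim vertex of $\Gamma$ accidentally picks up two distinct fat neighbours of the same summand when the covers are merged. With this compatibility in hand, the remaining conditions follow formally from the fact that each $K_v$ already satisfies them locally.
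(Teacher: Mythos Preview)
Your overall architecture---computer search for small $\Gamma$, then a contradiction argument for large $\Gamma$ built on Theorem~\ref{cover}---matches the paper. The gap is in the amalgamation step, and it is not a technicality: the gluing you describe \emph{cannot} produce a strict $\h$-cover of $\Gamma$, because $\Gamma$ is by hypothesis not an $\h$-line graph. So the plan ``glue the $K_v$'s, obtain a cover, contradiction'' is asking to construct an object that does not exist. Your final paragraph identifies exactly where the obstruction lives (condition~(iv) of Definition~\ref{df:1}), but ``I expect this to reduce to\ldots'' is precisely the statement that fails.

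What actually happens, and what the paper does: one works only with two carefully chosen vertices $x,y$ (a non-adjacent pair with $\Gamma-\{x,y\}$ connected, via Lemma~\ref{connected}), takes covers $X$ of $\Gamma-y$ and $Y$ of $\Gamma-x$, and uses uniqueness on $\Gamma-\{x,y\}$ to align them. The glued object is a Hoffman graph $G\supset\Gamma$ admitting a sum decomposition $G=F\uplus H$, where $H$ is an honest $\h$-line graph but $F=\langle\langle V_s(X^0)\cup V_s(Y^0)\rangle\rangle_G$ is a small fat Hoffman graph that is \emph{not} an $\h$-line graph (this is forced: if $F$ were an $\h$-line graph, $G$ and hence $\Gamma$ would be too). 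The substantive work is then to classify the possible $F$: Lemmas~\ref{lm:100}--\ref{lm:102} show $F$ contains one of nine specific Hoffman graphs $F_1,\dots,F_9$, and Lemma~\ref{lm:F} (supported by the computer-generated Table~\ref{table:1}) shows that $F_i\uplus H$ with $H$ connected and $|V_s(H)|$ large enough must contain one of the $38$ graphs of Figure~\ref{MFS} as a slim induced subgraph. Since that subgraph has at most $8<|V(\Gamma)|$ vertices, this contradicts the minimality of $\Gamma$. In short: the paper does not resolve the obstruction to gluing, it \emph{exploits} it.
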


%%%%%%%%%%%%%%%%%%%%%%%%%%%%%%%%%%%%%%%%%%%%
%%
%%  Section 2: result by computer
%%
%%%%%%%%%%%%%%%%%%%%%%%%%%%%%%%%%%%%%%%%%%%%
\section{FORBIDDEN GRAPHS FOUND BY COMPUTER SEARCH}

In this section,
	we assume $\h=\{[H_2],\ [H_3],\ [H_5]\}$ (cf. Figure~\ref{Hoffmans}).
Proposition \ref{prop1} is the main result in this section.
It is very hard to obtain the propositions without computer search.
In this paper,
        we have computed
                by the software MAGMA \cite{MAGMA}.
%%\footnote{URL: http://magma.maths.usyd.edu.au/magma/}.
In order to prove the propositions,
        we show some lemmas.

Let $\x_n$ be the family of isomorphism classes of
        connected slim graphs with $n$ vertices.
Brendan McKay gives collections of simple graphs
	on his web site (cf. \cite{bdm}).
From the data on this web site,
	we can generate $\x_n$.
Let $S_n$ be the family of isomorphism classes of
        connected slim $\h$-line graphs with $n$ vertices.
By computer, we obtain
\begin{equation}\label{eq:nonHLGs}
	\x_n=S_n~(n=1,2,3,4)
	\text{ and }
	\x_5\setminus S_5=\{[G_{5,1}],[G_{5,2}]\}
	\text{ (cf. Figure~\ref{MFS})}.
\end{equation}
We define $\f_n$ to be the family of isomorphism classes of
	minimal slim non $\h$-line graphs with $n$ vertices.
From (\ref{eq:nonHLGs}),
	$\f_i=\emptyset~(i=1,2,3,4)
	\mbox{ and }
	\f_5=\{[G_{5,1}],[G_{5,2}]\}$.
Removing those graphs which contain $G_{5,1}$ or $G_{5,2}$
	from $\x_6\setminus S_6$,
	we obtain $\f_6=\{[G_{6,i}]|\ i=1,2,\ldots,28\}$.
Similarly we obtain
		$\f_7=\{[G_{7,i}]|\ i=1,2,\ldots,7\}$,
		$\f_8=\{[G_{8,1}]\}$,
		and $\f_9=\emptyset$
	(cf. Figure~\ref{MFS}).
Hence the following proposition holds:
\begin{prop}
Let $\Gamma$ be a minimal slim non $\h$-line graph.
If $|V(\Gamma)|\leq 9$,
	then $[\Gamma]\in\f_5\cup\f_6\cup\f_7\cup\f_8$.
\label{prop1}
\end{prop}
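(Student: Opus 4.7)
The plan is to carry out the inductive computer enumeration sketched in the paragraph preceding the statement. Starting from Brendan McKay's catalogue of connected simple graphs \cite{bdm}, one generates $\x_n$ for $n = 1, 2, \ldots, 9$ in MAGMA. Then, for each $n$ in turn, one decides which members of $\x_n$ are $\h$-line graphs to obtain $S_n$, and sets
\[
\f_n = \bigl\{\, [\Gamma] \in \x_n \setminus S_n \;\big|\; \Gamma \text{ contains no graph in } \bigcup_{j<n}\f_j \text{ as an induced subgraph} \,\bigr\}.
\]
Running this from $n=1$ to $n=9$ yields $\f_1 = \f_2 = \f_3 = \f_4 = \emptyset$, $\f_5 = \{[G_{5,1}], [G_{5,2}]\}$ (every graph in $\x_5 \setminus S_5$ is automatically minimal, since $\f_j = \emptyset$ for $j < 5$), the $28$-element list $\f_6$, the $7$-element list $\f_7$, the singleton $\f_8 = \{[G_{8,1}]\}$, and $\f_9 = \emptyset$. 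The union of these four non-empty $\f_n$'s contains exactly the $38$ graphs depicted in Figure~\ref{MFS}, which gives the proposition.

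The real work lies in the subroutine deciding whether a given $\Gamma \in \x_n$ belongs to $S_n$, i.e., whether $\Gamma$ is (isomorphic to) an induced subgraph of some $H = \biguplus_{i=1}^m H^i$ with $[H^i] \in \h = \{[H_2], [H_3], [H_5]\}$ for every $i$. By clause~(ii) of Definition~\ref{df:1}, each slim vertex of $H$ (in particular of $\Gamma$) lies in exactly one piece $H^i$, and by clauses~(iii) and~(iv) the fat neighbours of slim vertices are forced to live in the same piece and may be shared across pieces only in a tightly constrained way. One can therefore search for a valid cover by a backtracking procedure that walks through the slim vertices of $\Gamma$ and, at each step, assigns the current vertex to a piece of one of the three allowed isomorphism types in a manner compatible with all earlier assignments, including the global identification of shared fat vertices.

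The main obstacle is the combinatorial blow-up of this backtracking: the gluing of fat vertices across pieces must be tracked globally throughout the search, and for dense local structure there are many legal local choices to consider. For $n \leq 9$ and the three-element family $\h$, however, it remains tractable enough for MAGMA to complete the enumeration in reasonable time. Correctness of the overall argument reduces to correctness of this subroutine: each positive answer $[\Gamma] \in S_n$ must come with an explicit decomposition $\biguplus H^i$ that can be verified directly against the conditions of Definition~\ref{df:1}, and each negative answer must be certified by exhaustion of the backtracking tree. Everything else is routine bookkeeping that the inductive sieve handles automatically.
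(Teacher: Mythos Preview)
Your proposal is correct and matches the paper's own argument essentially line for line: the paper's proof is precisely the computer enumeration described in the paragraph immediately preceding Proposition~\ref{prop1}, using McKay's catalogue to build $\x_n$, computing $S_n$ in MAGMA, and sieving $\x_n\setminus S_n$ against the smaller $\f_j$'s to obtain $\f_n$ for $n\le 9$. The only difference is that you spell out a plausible backtracking implementation of the $\h$-line-graph test, whereas the paper leaves that entirely to the phrase ``by computer''.
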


Actually,
	the conclusion of the proposition holds without the assumption
	$|V(\Gamma)|\leq 9$.

\begin{figure}
\caption{}
{\small
\begin{tabular}{ccccc}
&&&&\\
\includegraphics[scale=0.11]{./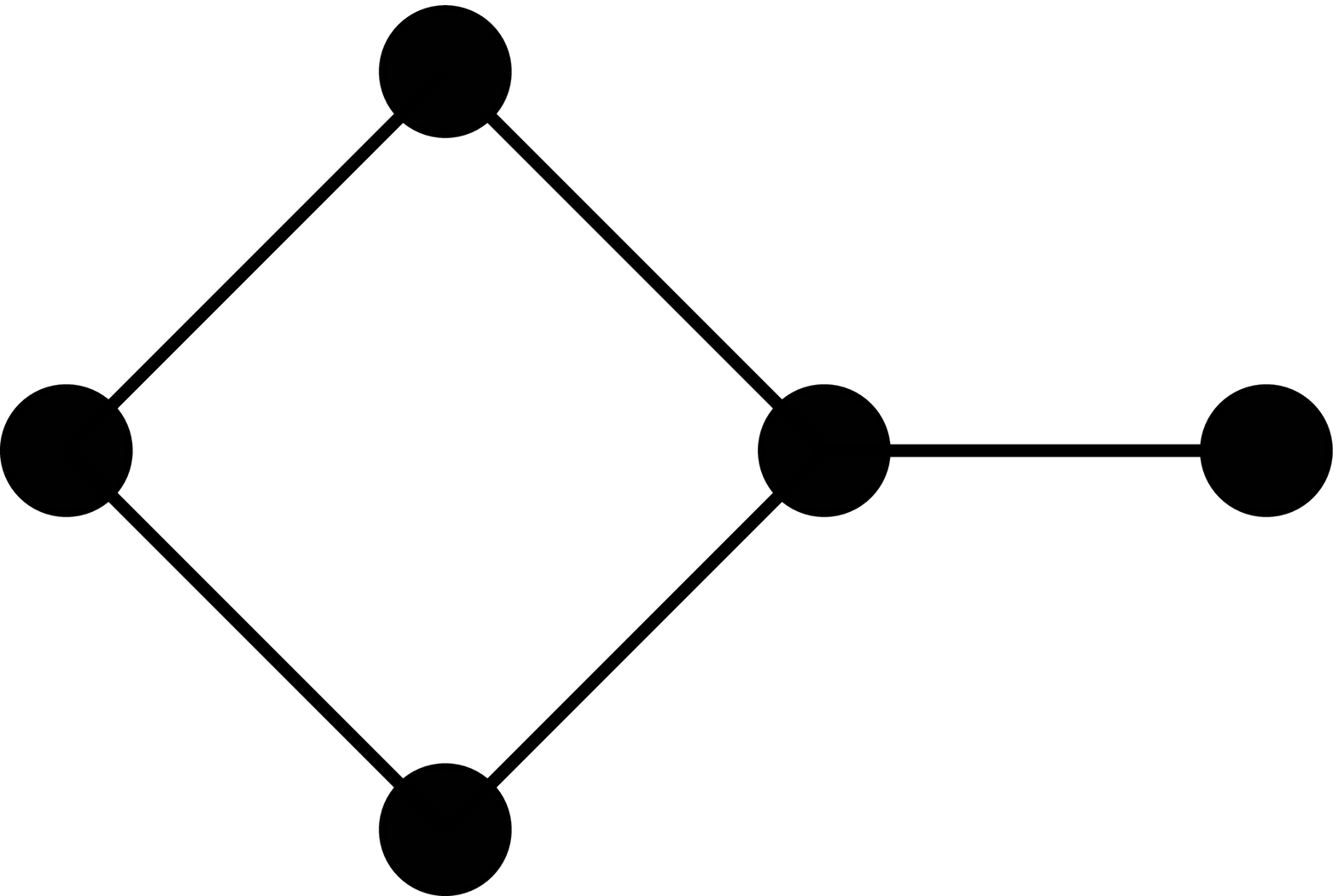} &
\includegraphics[scale=0.11]{./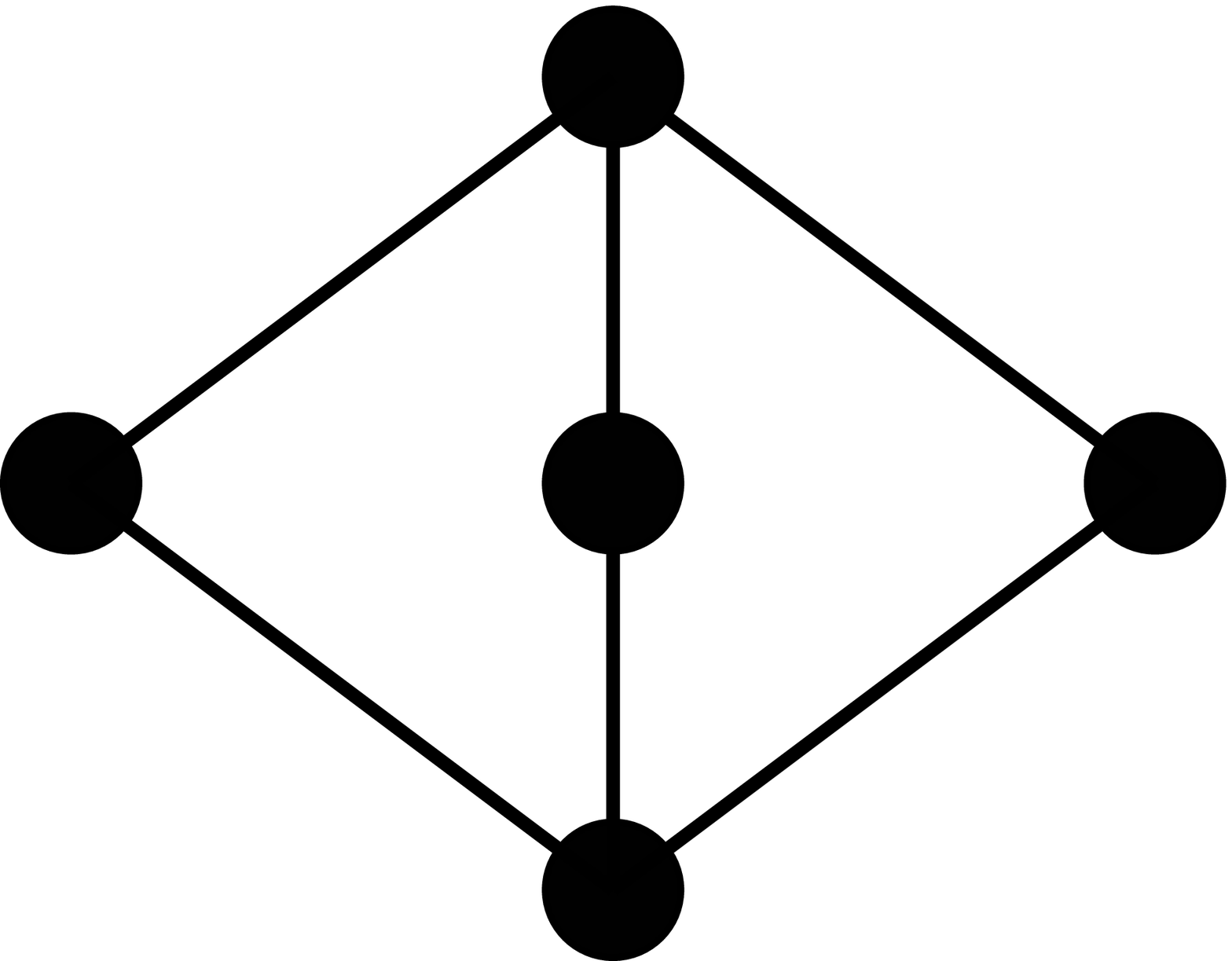} &
\includegraphics[scale=0.11]{./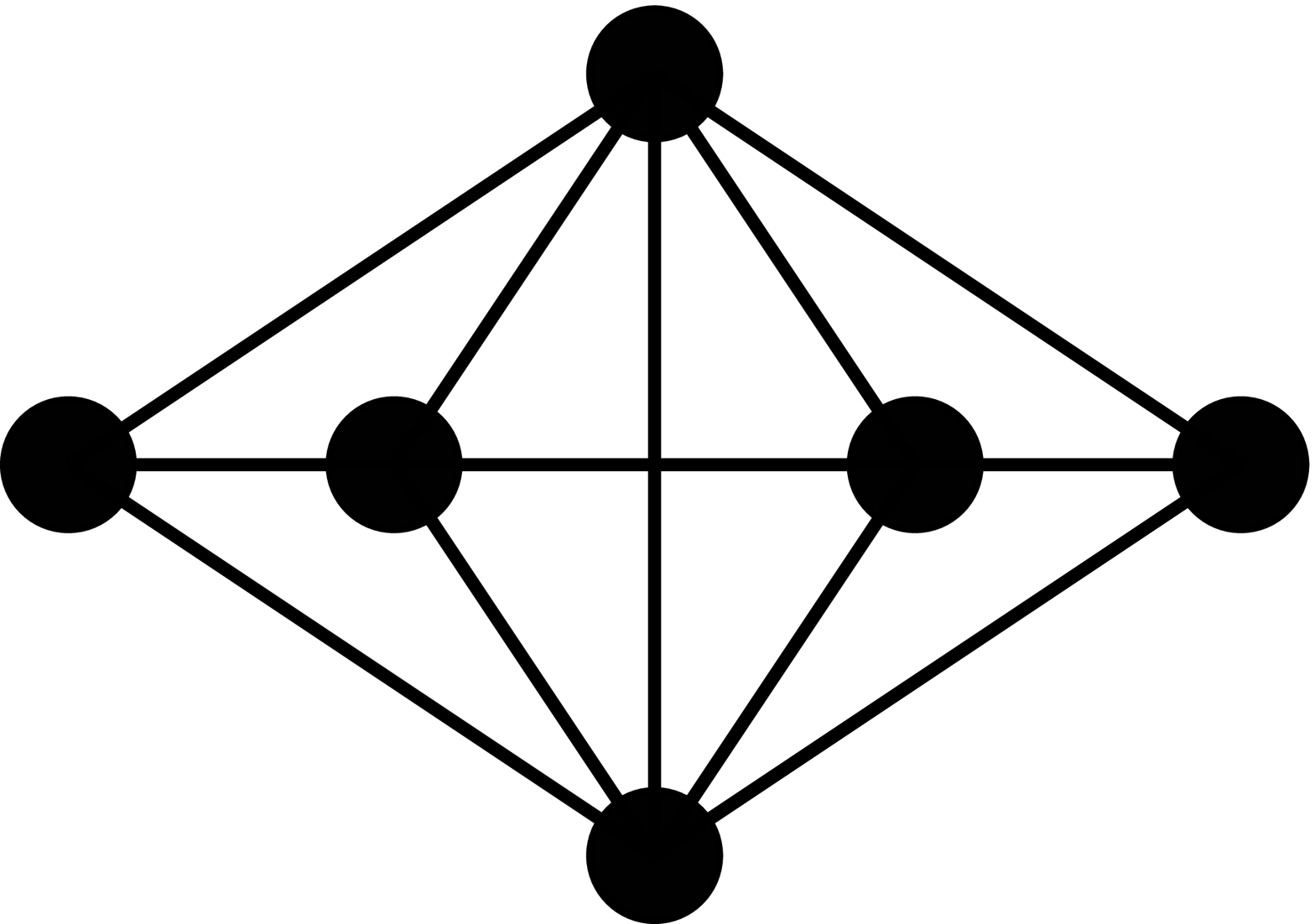} &
\includegraphics[scale=0.11]{./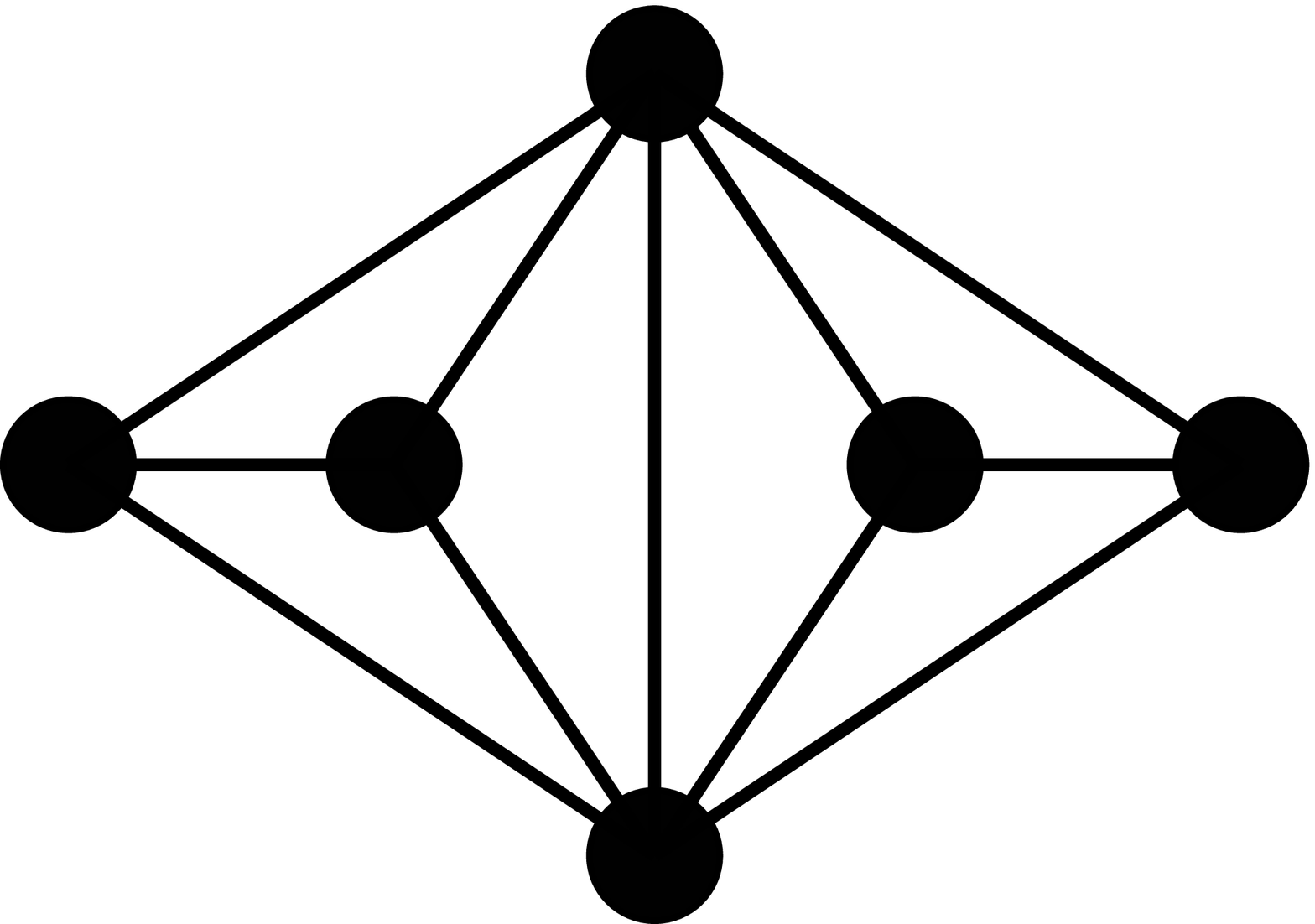} &
\includegraphics[scale=0.11]{./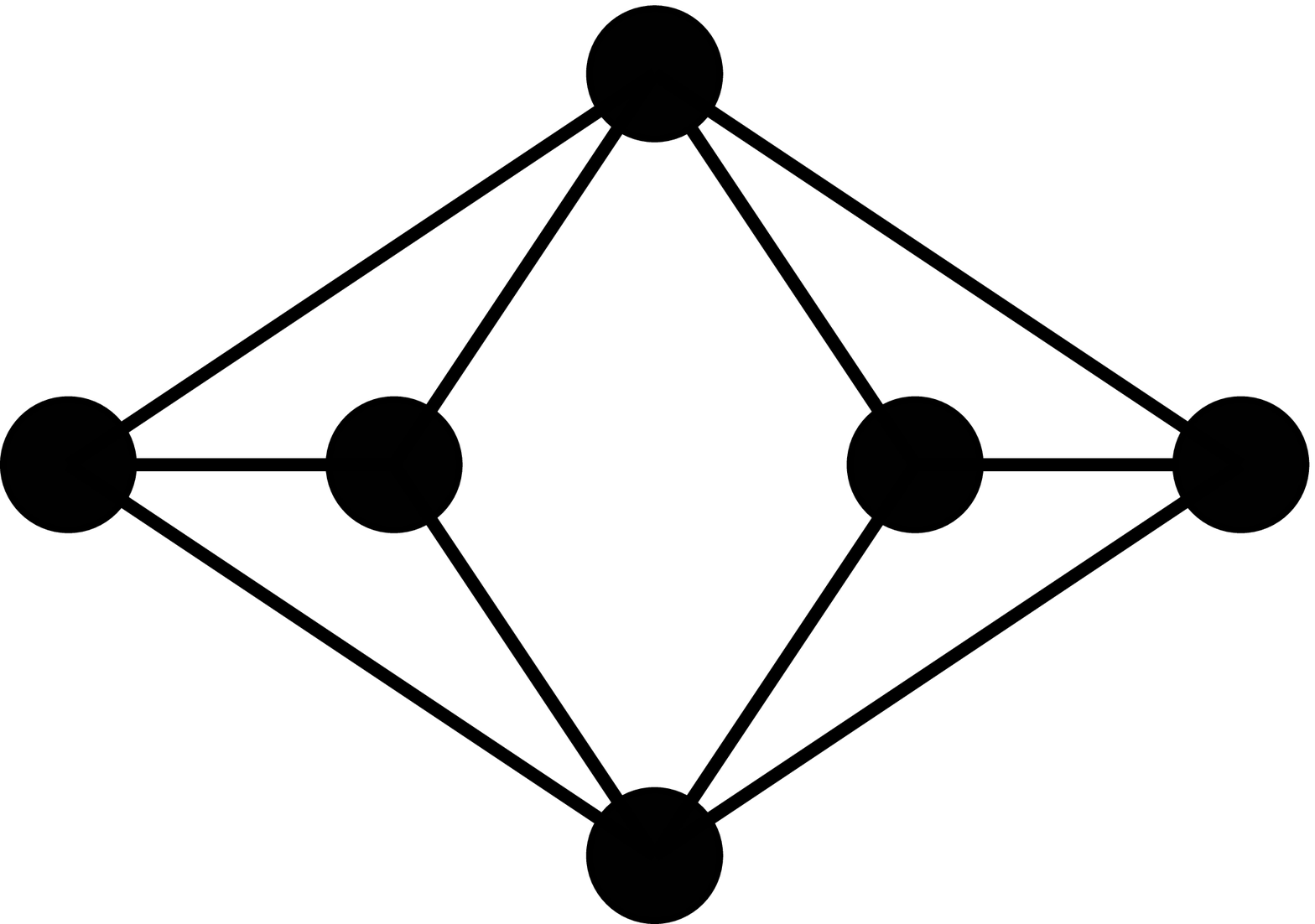}
 \\
$G_{5,1}$ & $G_{5,2}$ & $G_{6,1}$ &
$G_{6,2}$ & $G_{6,3}$ \\
&&&&\\

\includegraphics[scale=0.11]{./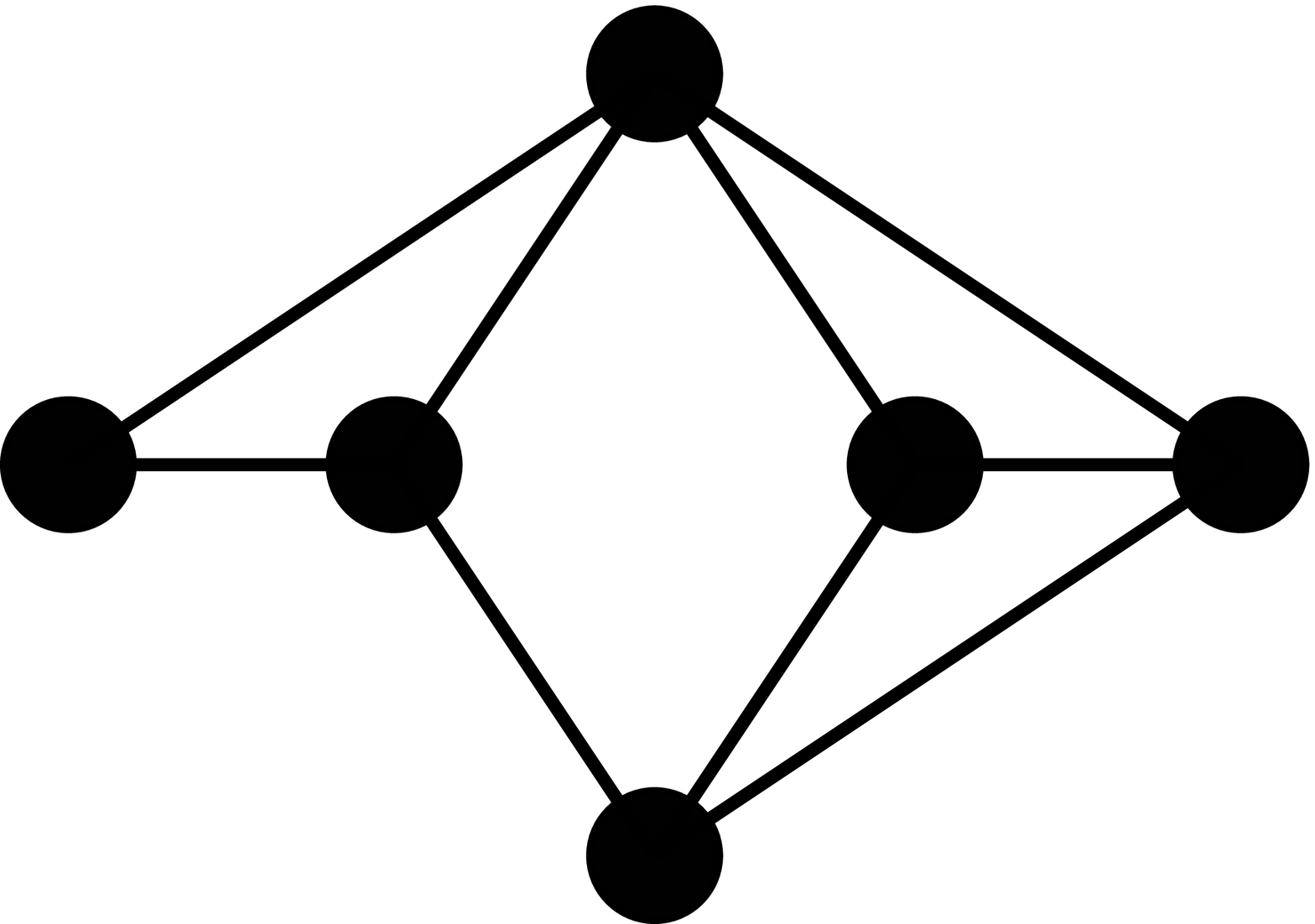} &
\includegraphics[scale=0.11]{./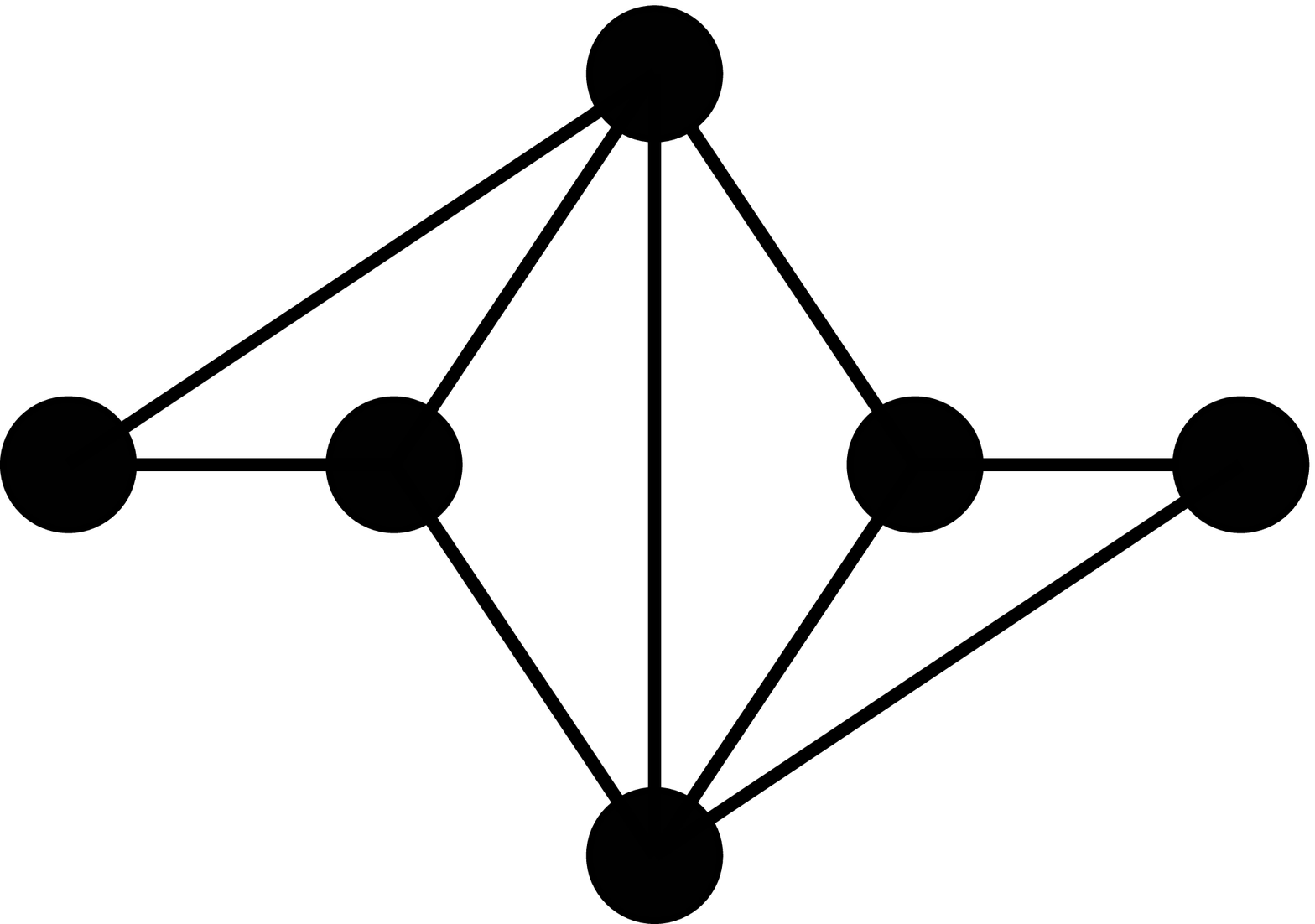} &
\includegraphics[scale=0.11]{./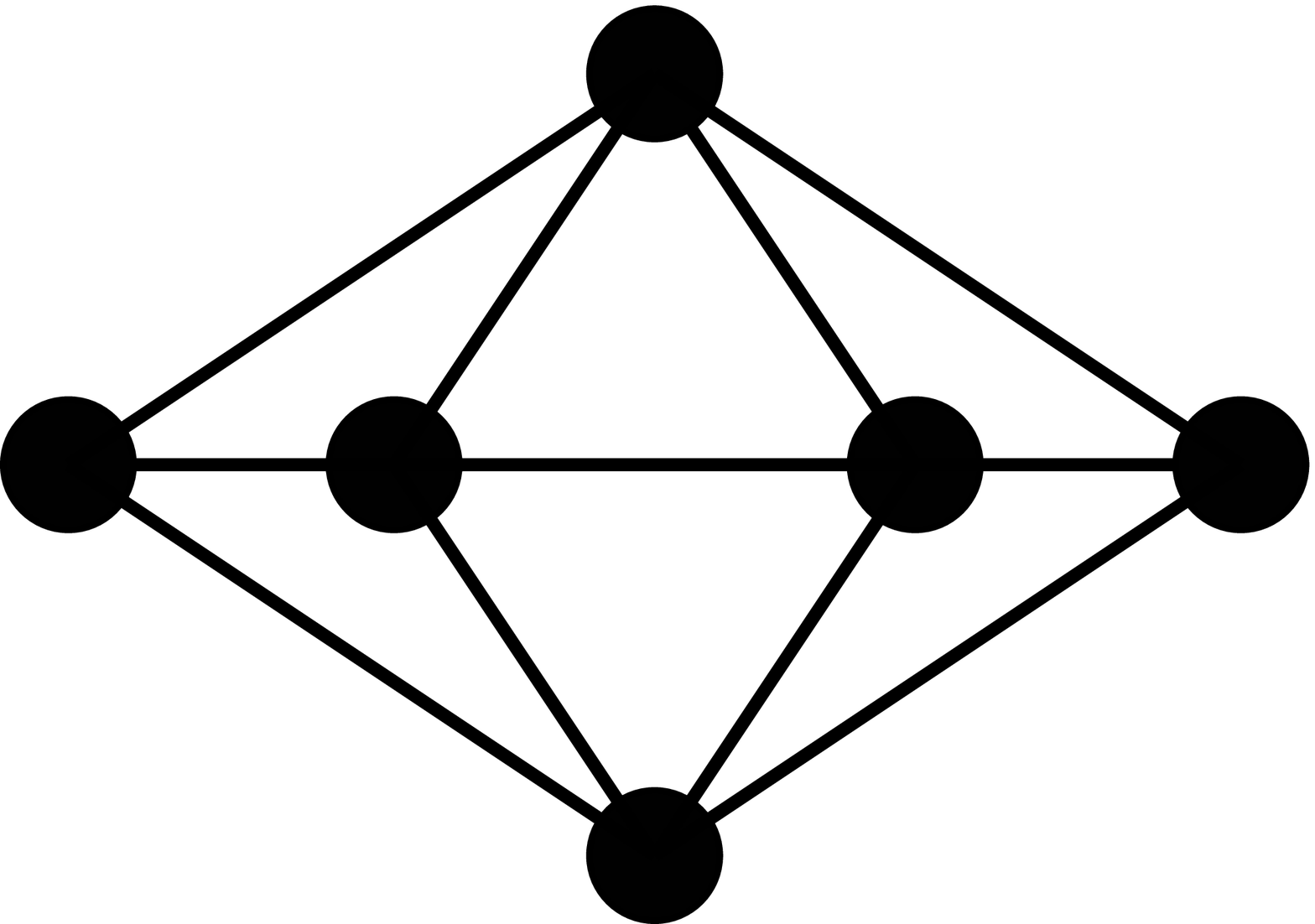} &
\includegraphics[scale=0.11]{./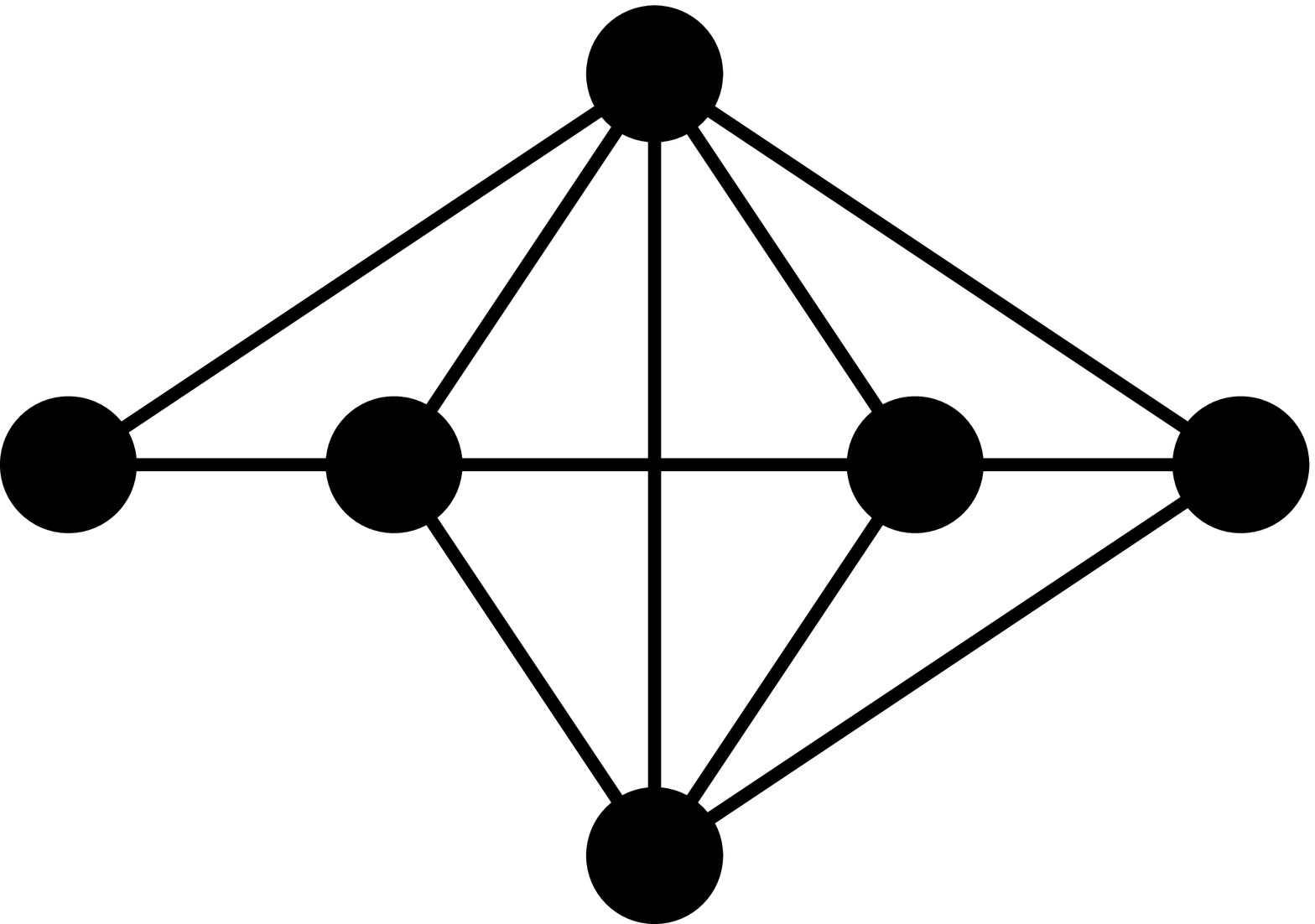} &
\includegraphics[scale=0.11]{./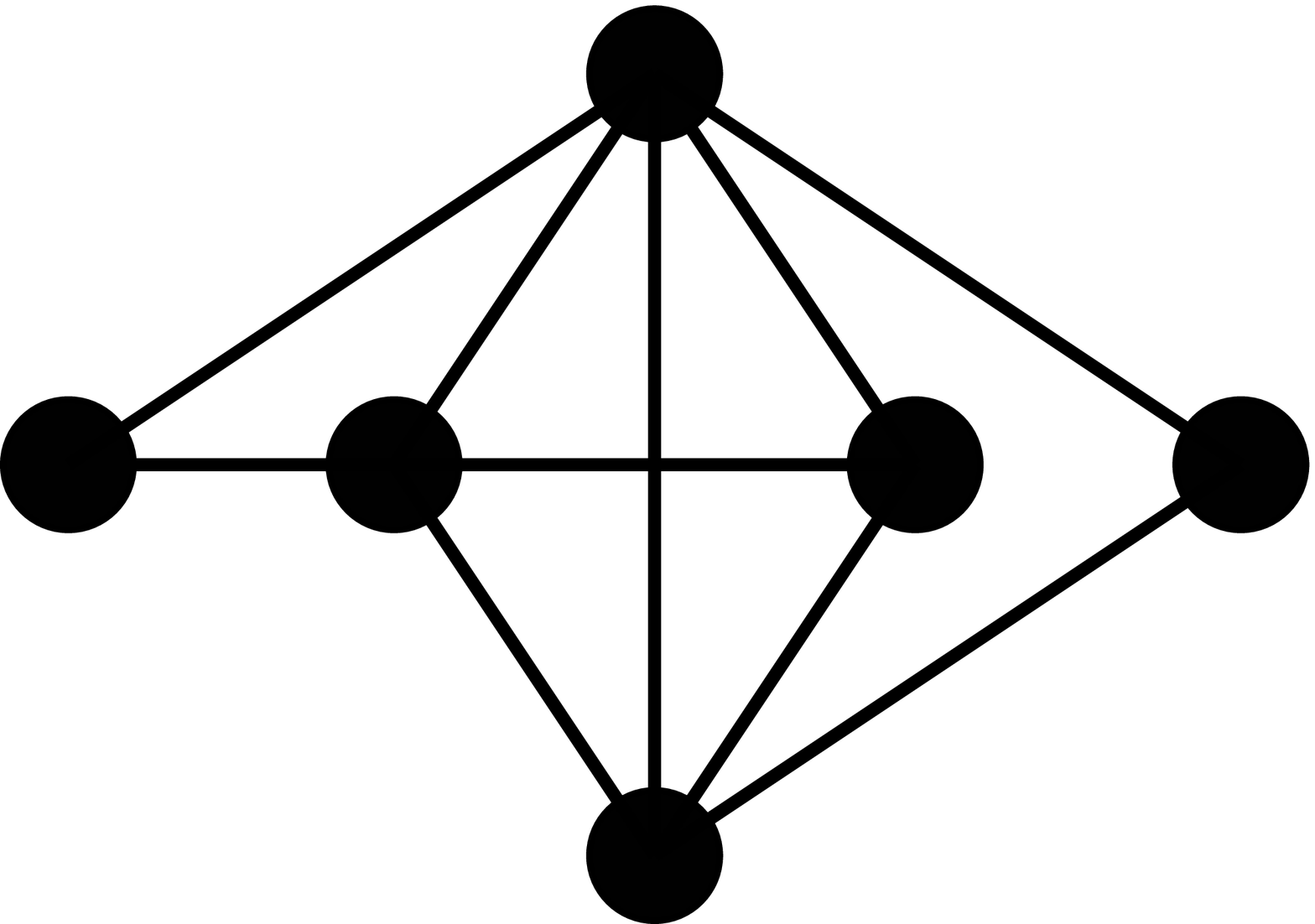} 
 \\
$G_{6,4}$ & $G_{6,5}$ & $G_{6,6}$ & $G_{6,7}$ & $G_{6,8}$ \\
&&&&\\

\includegraphics[scale=0.11]{./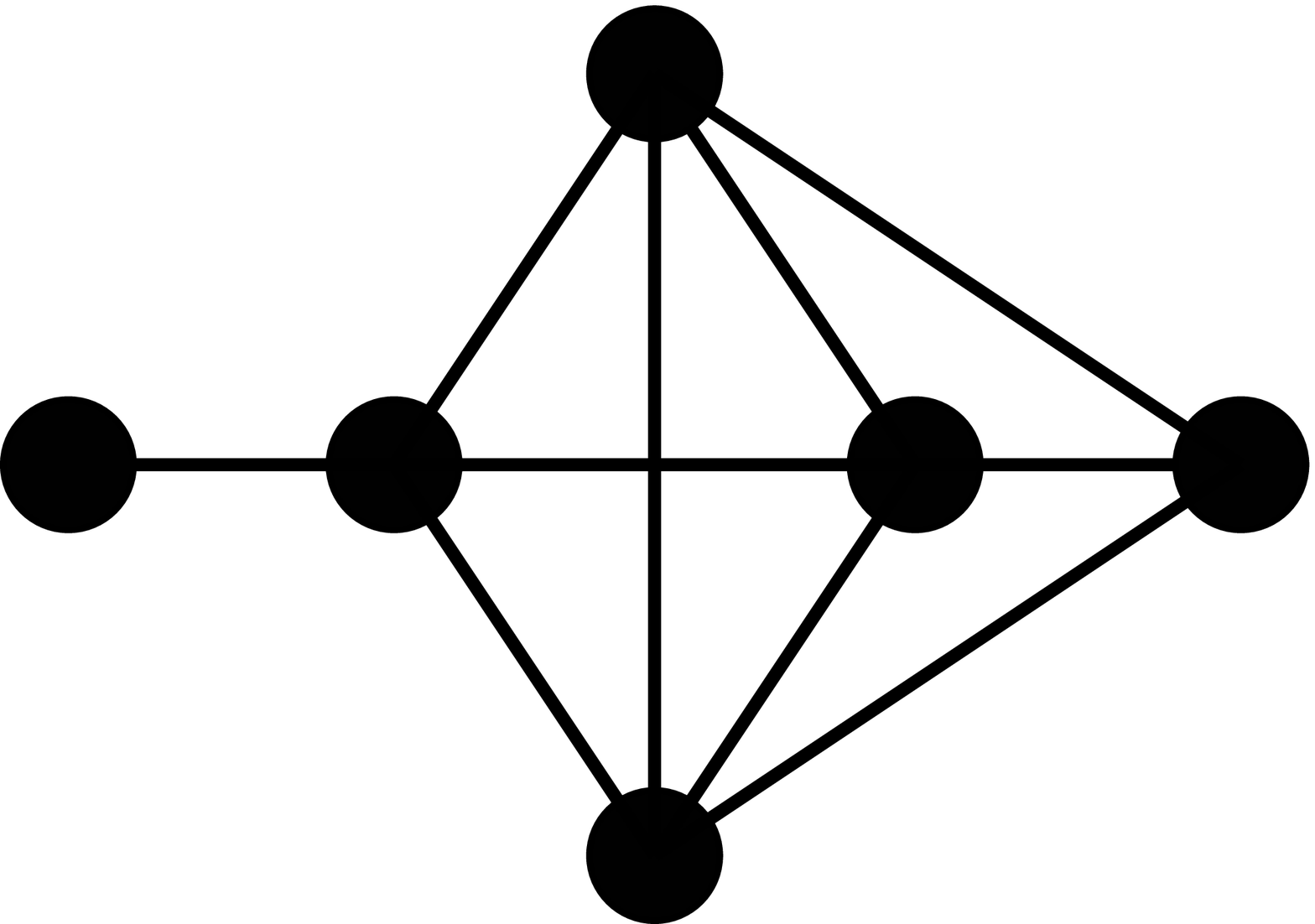} &
\includegraphics[scale=0.11]{./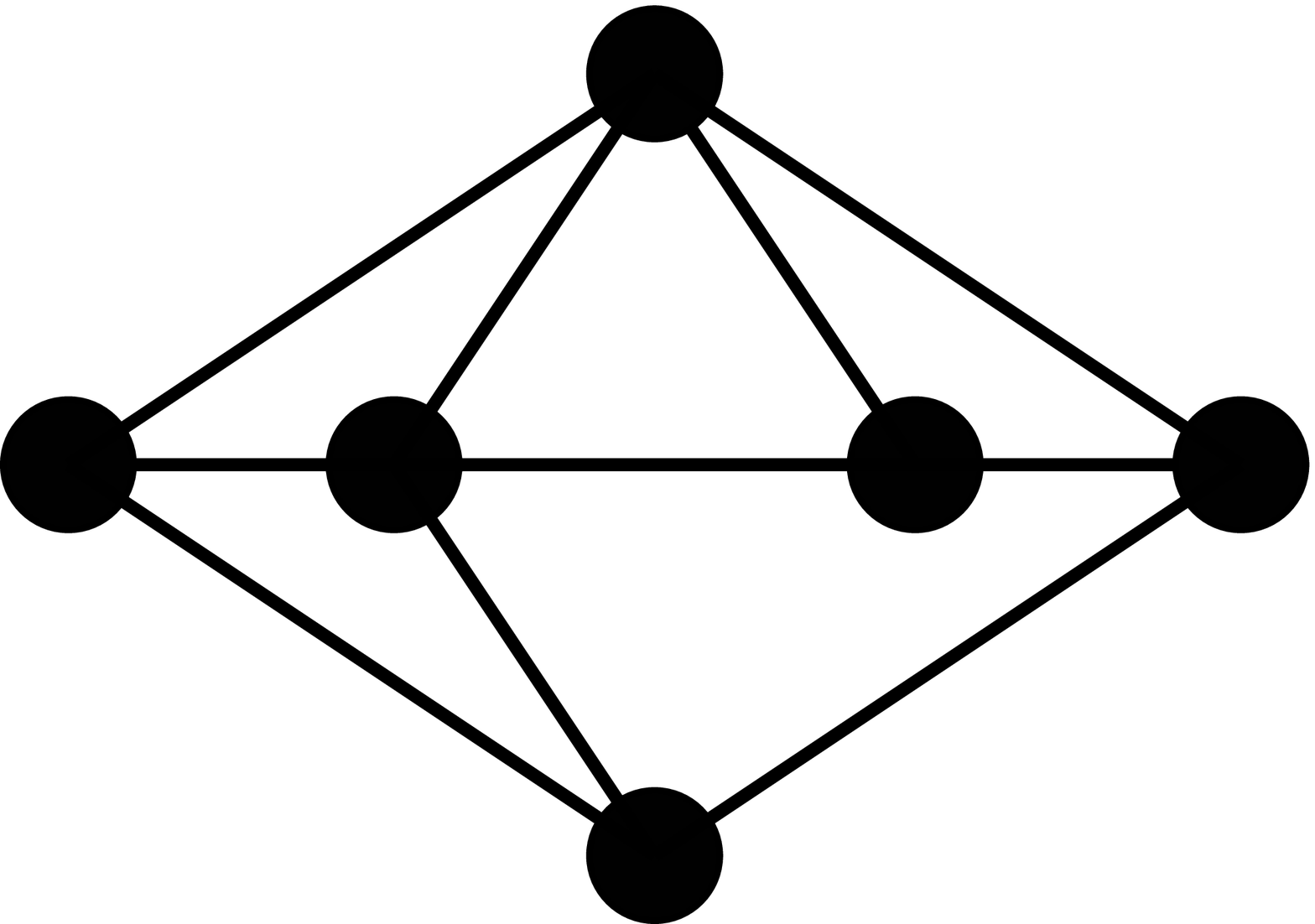} &
\includegraphics[scale=0.11]{./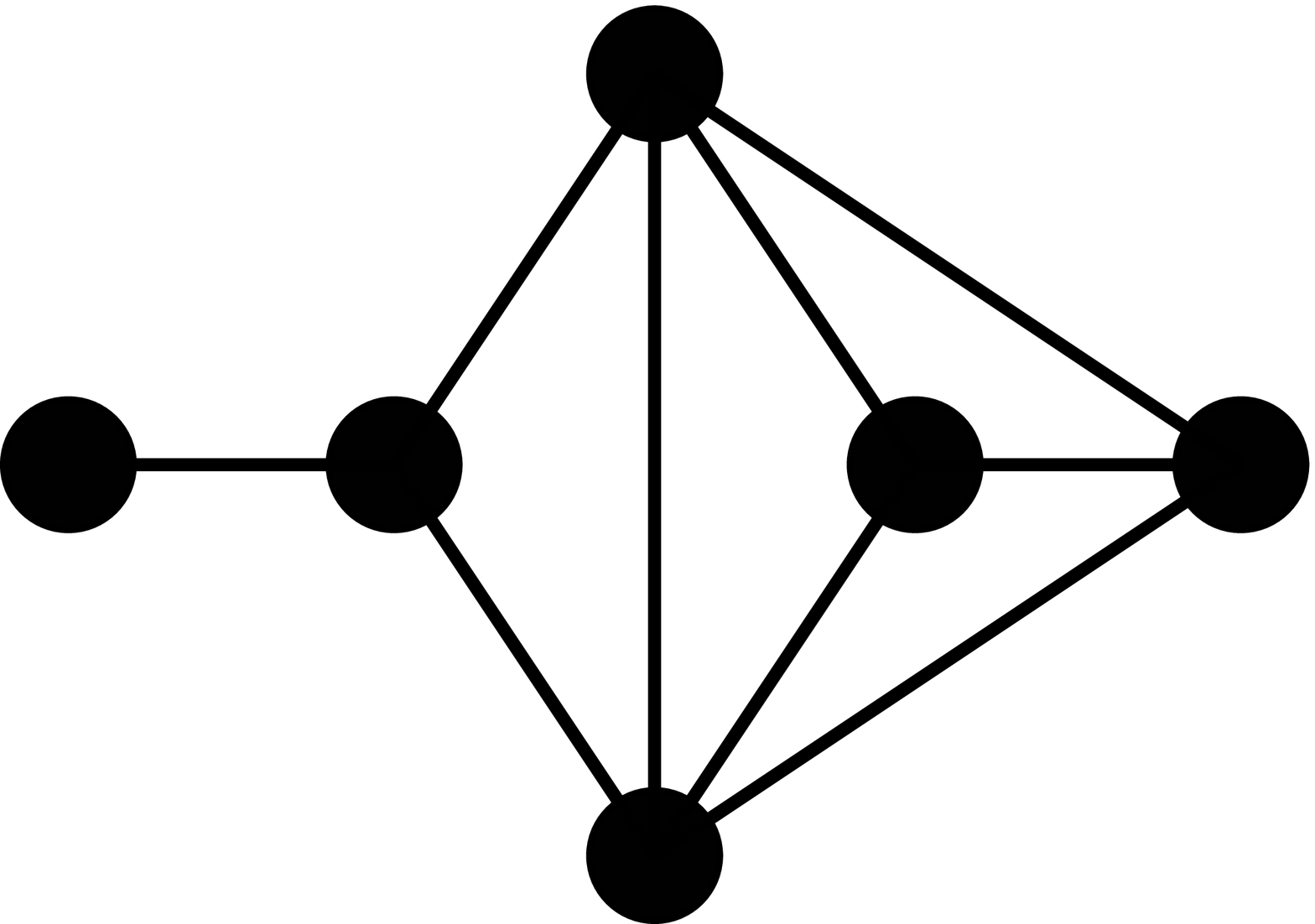} &
\includegraphics[scale=0.11]{./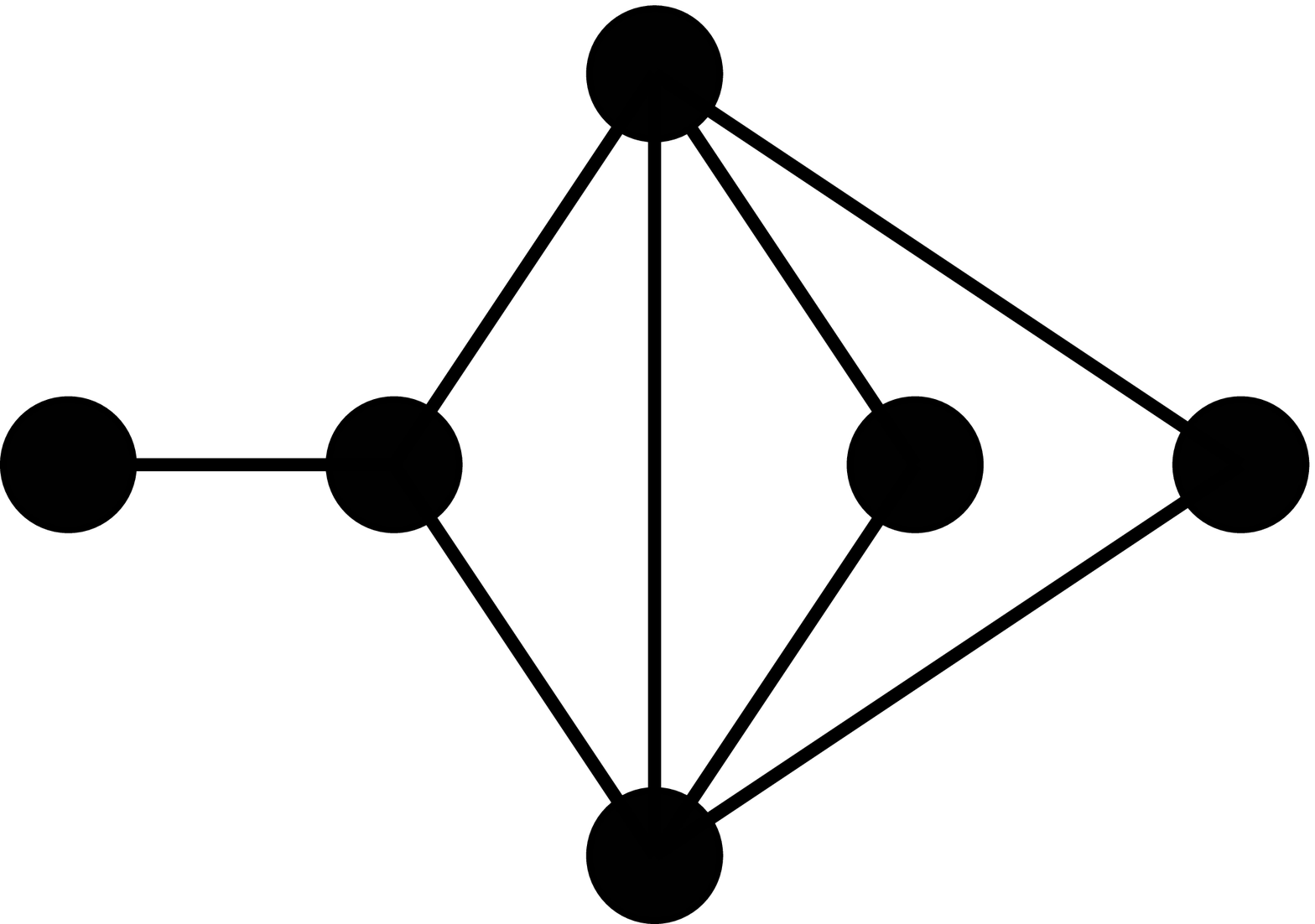} &
\includegraphics[scale=0.11]{./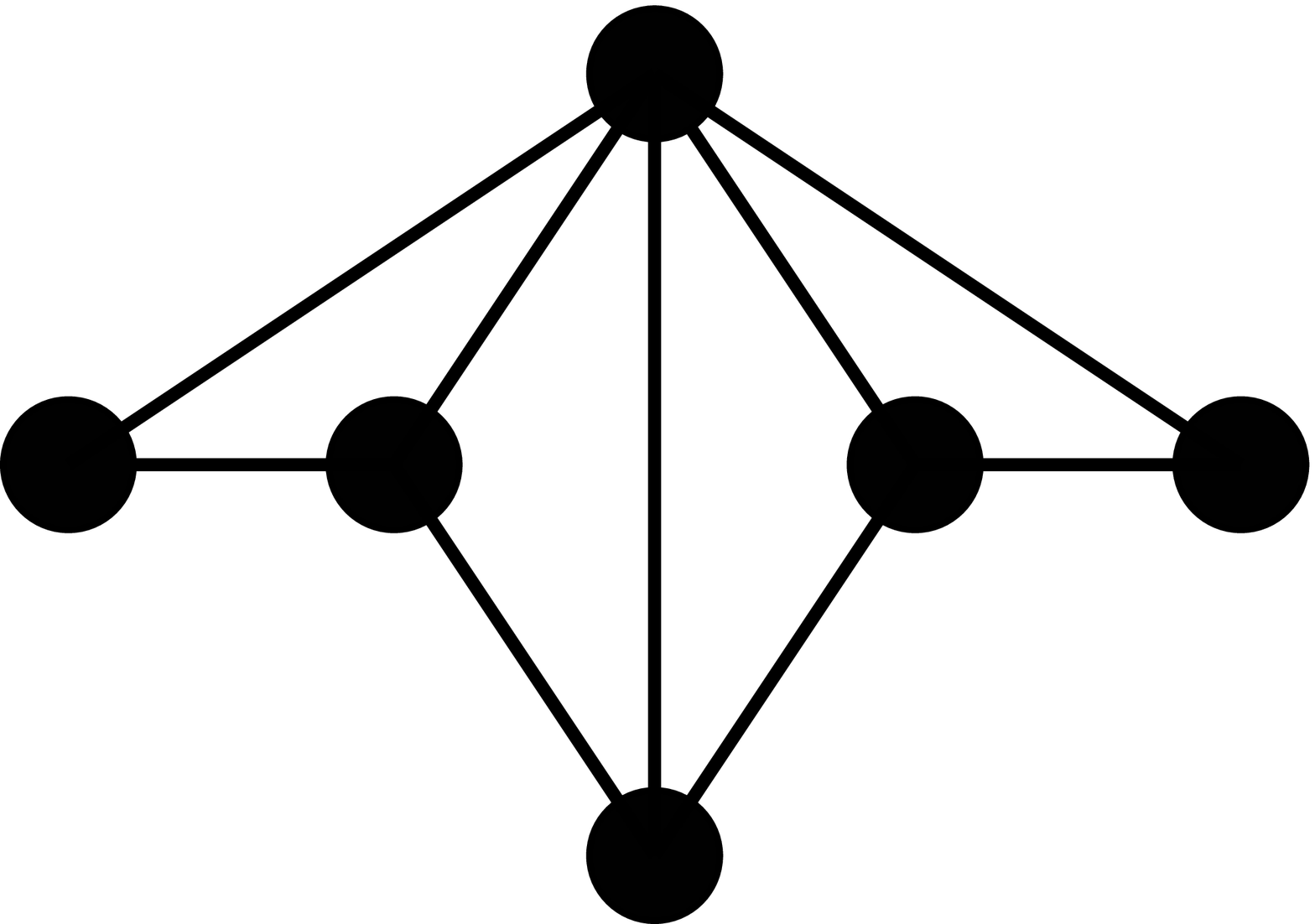}
 \\
$G_{6,9}$ & $G_{6,10}$ & $G_{6,11}$ & $G_{6,12}$ & $G_{6,13}$ \\
&&&&\\

\includegraphics[scale=0.11]{./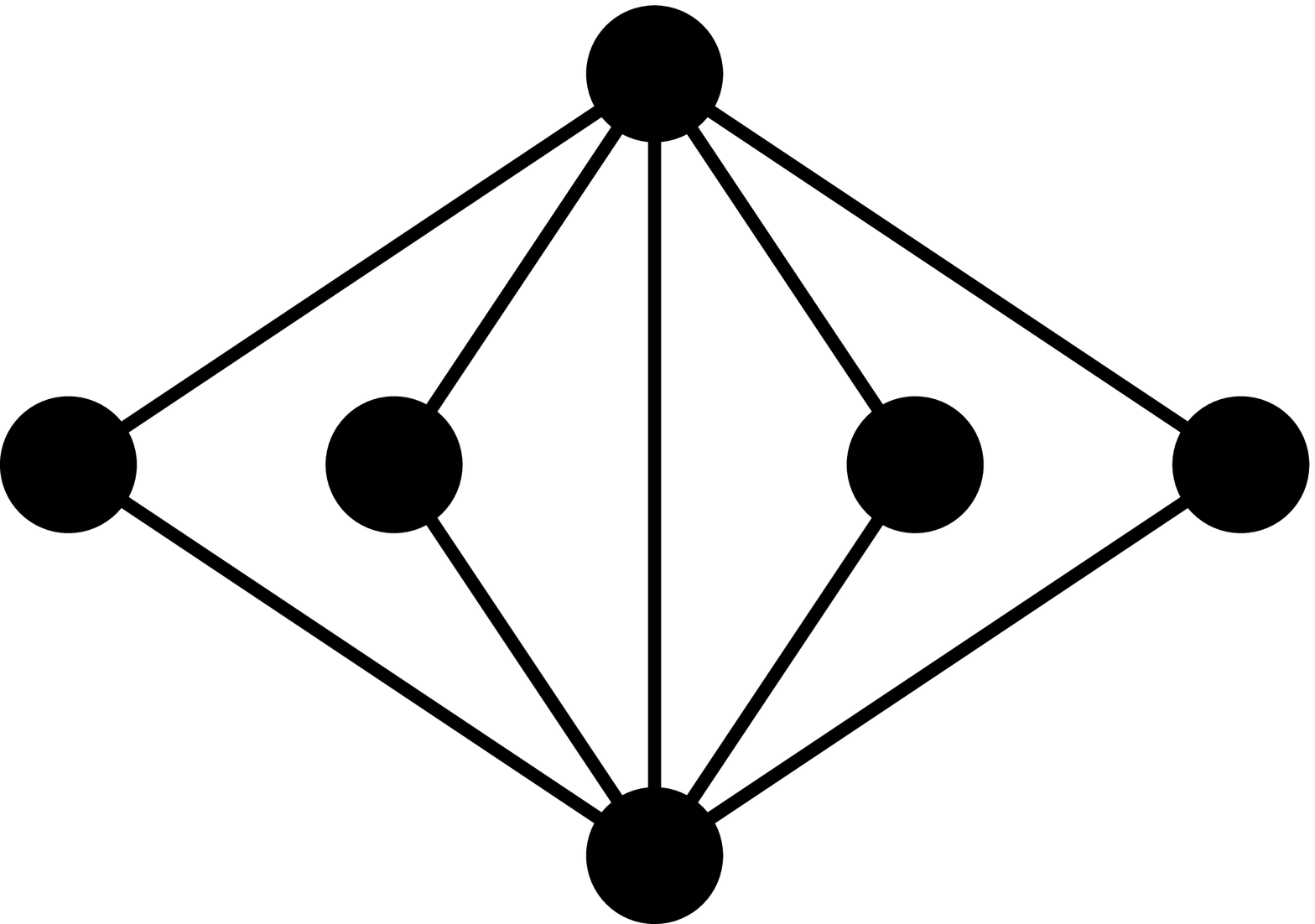} &
\includegraphics[scale=0.11]{./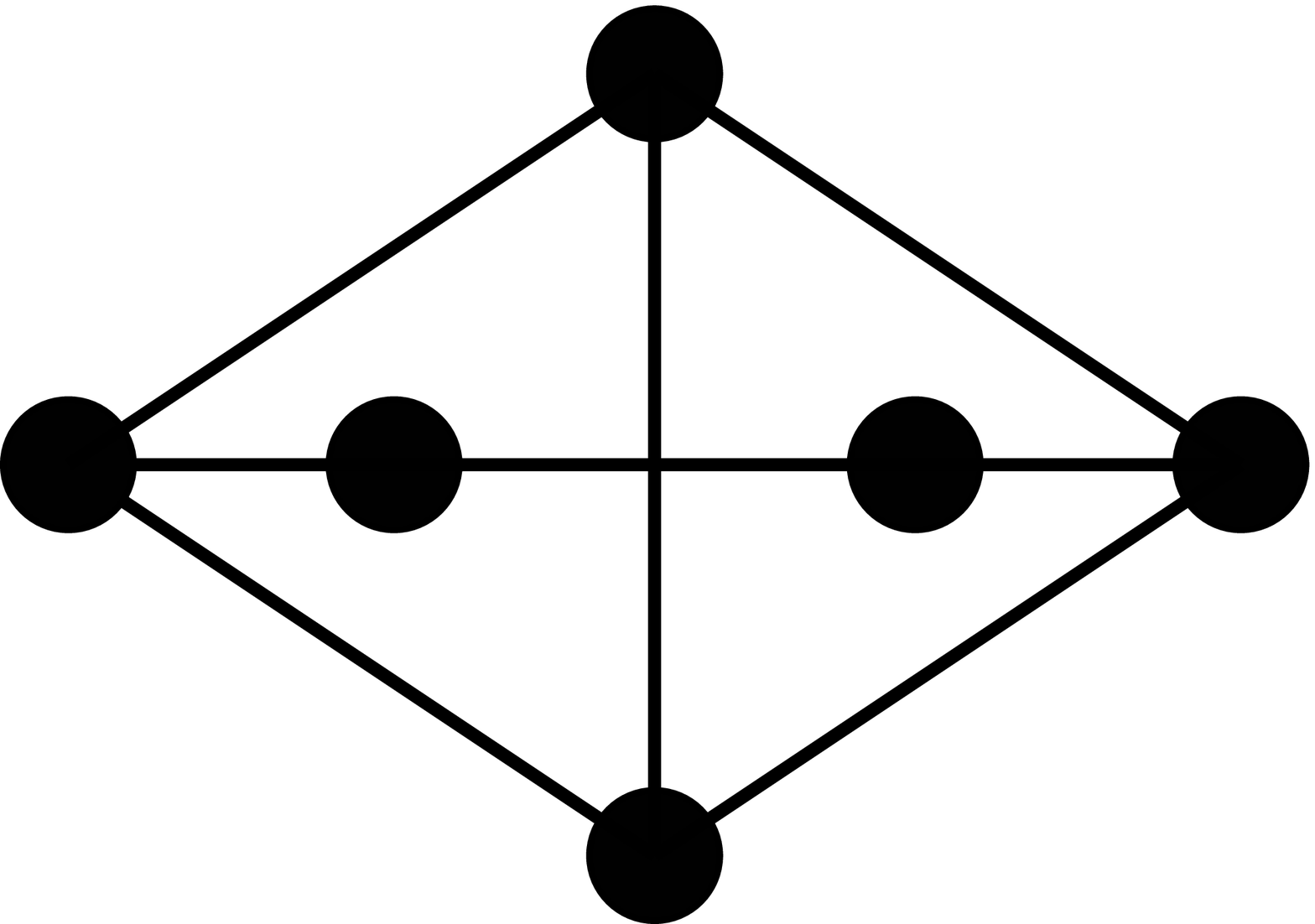} &
\includegraphics[scale=0.11]{./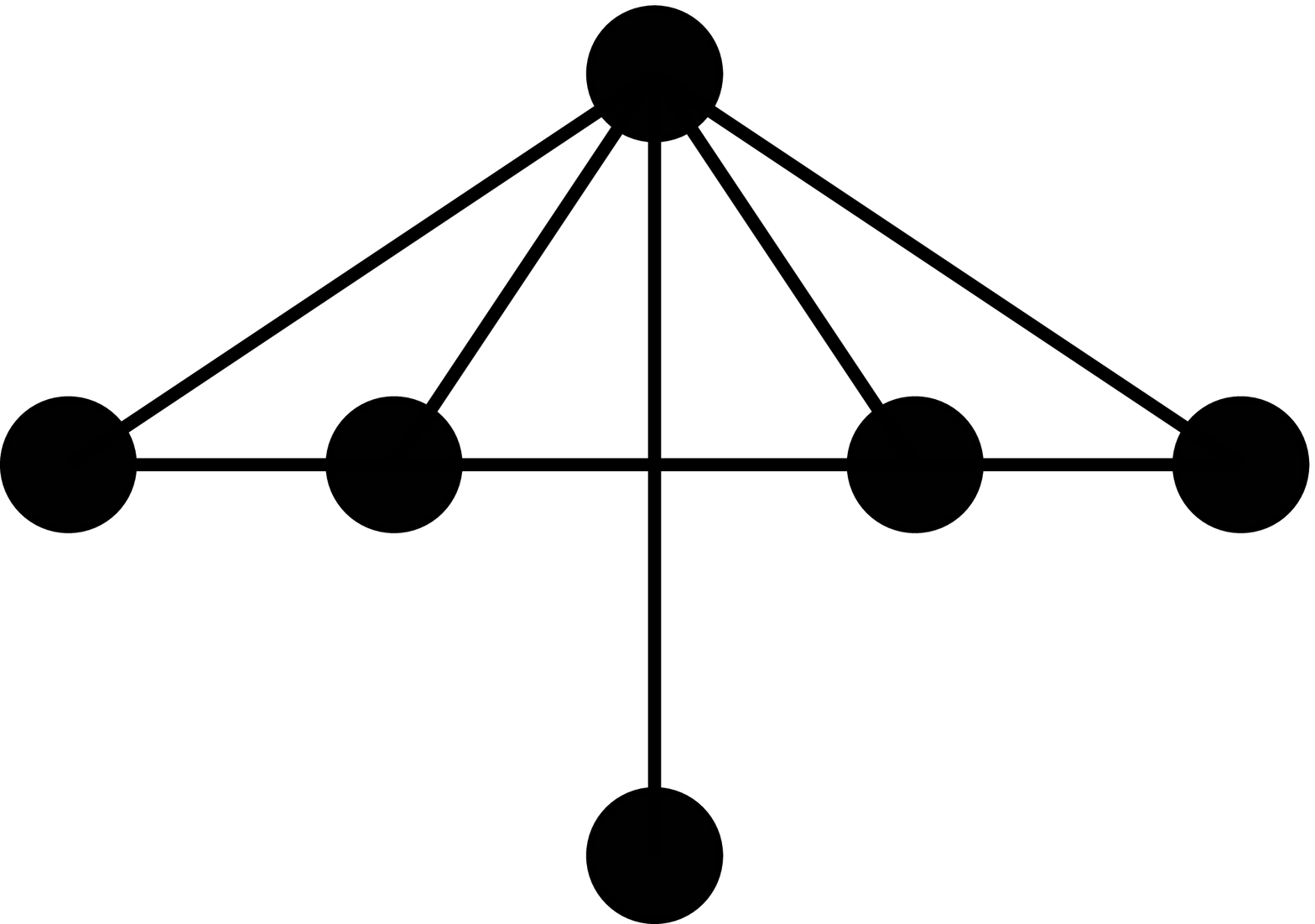} &
\includegraphics[scale=0.11]{./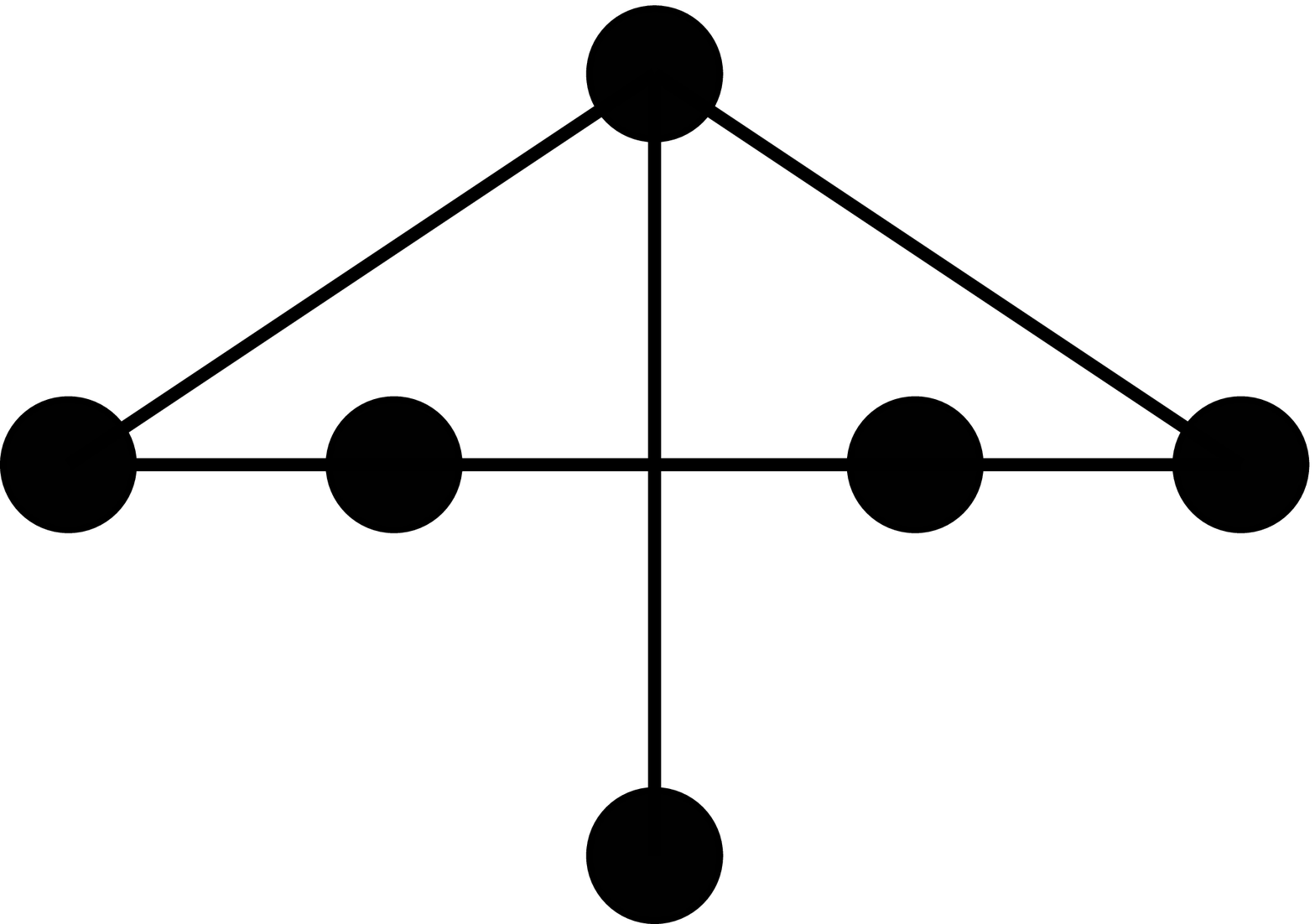} &
\includegraphics[scale=0.11]{./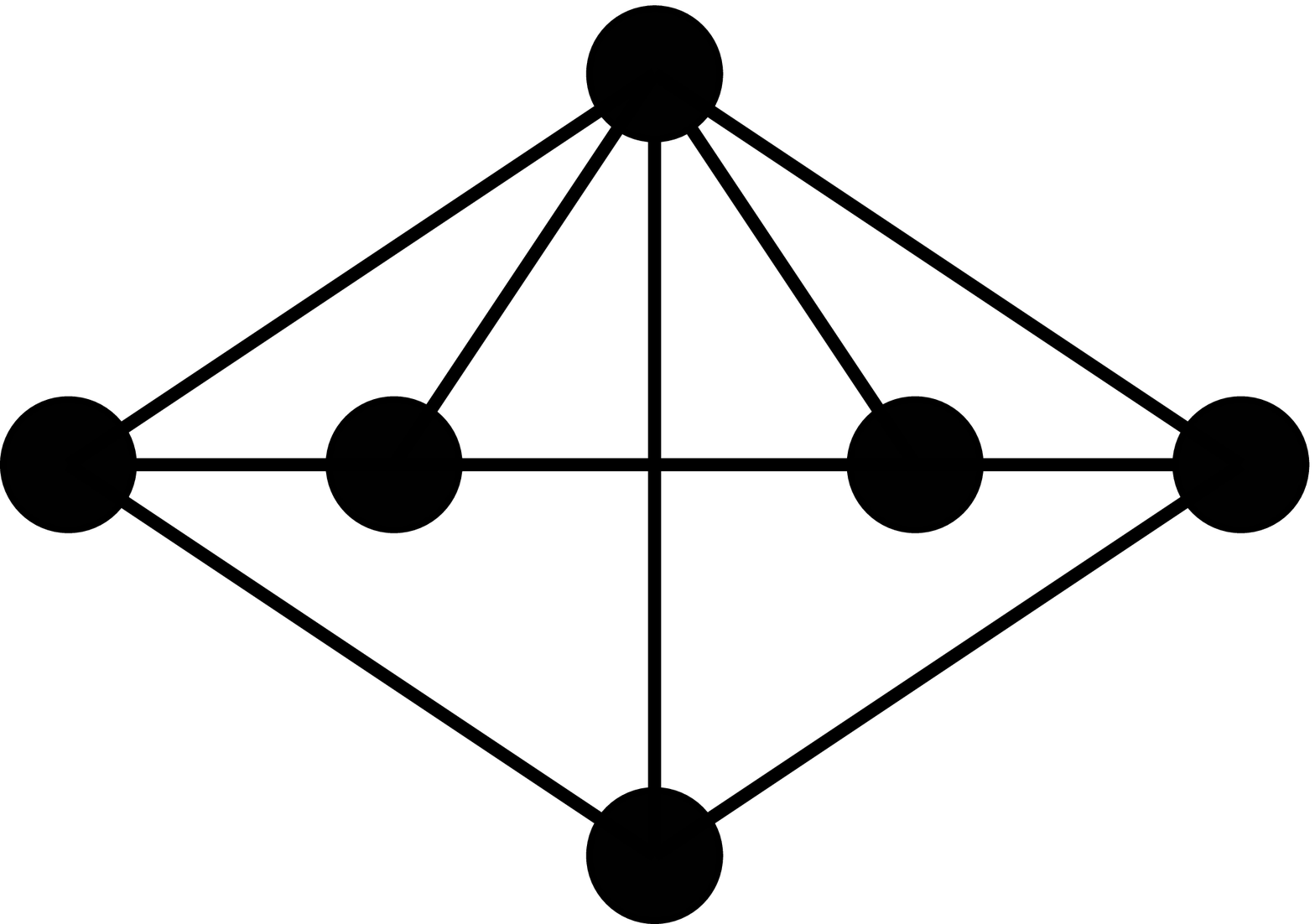}
 \\
$G_{6,14}$ & $G_{6,15}$ & $G_{6,16}$ & $G_{6,17}$ & $G_{6,18}$ \\
&&&&\\

\includegraphics[scale=0.11]{./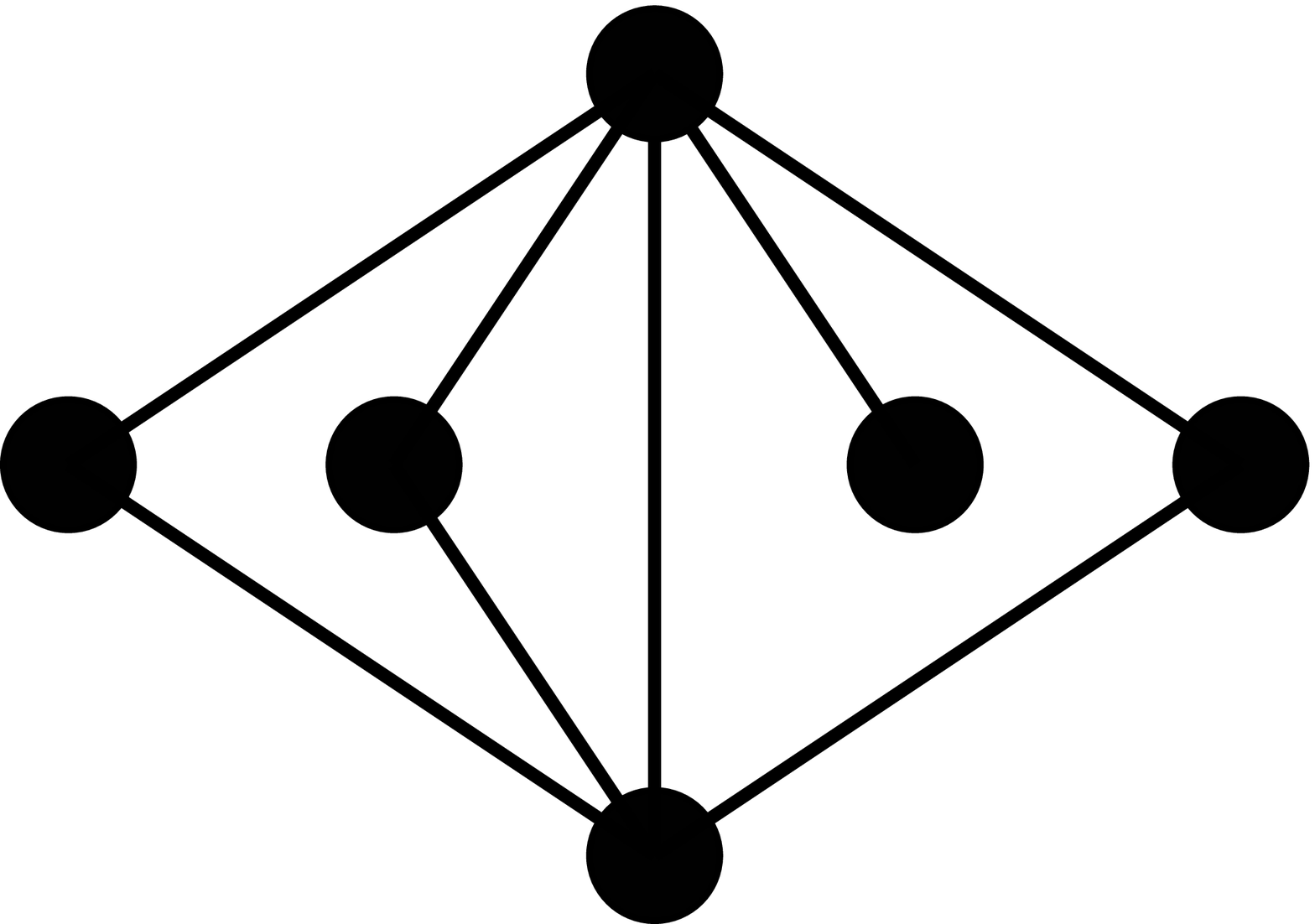} &
\includegraphics[scale=0.11]{./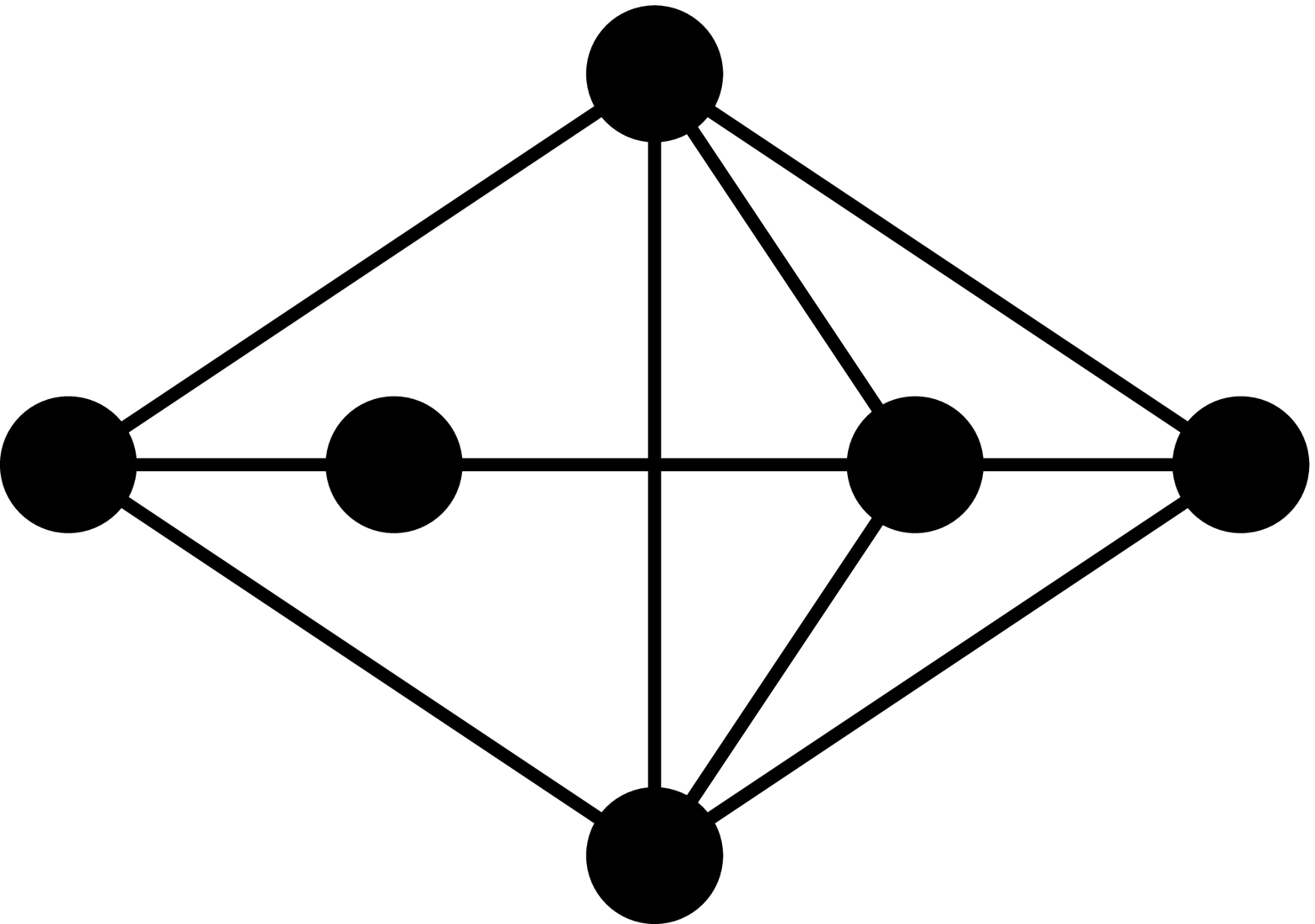} &
\includegraphics[scale=0.11]{./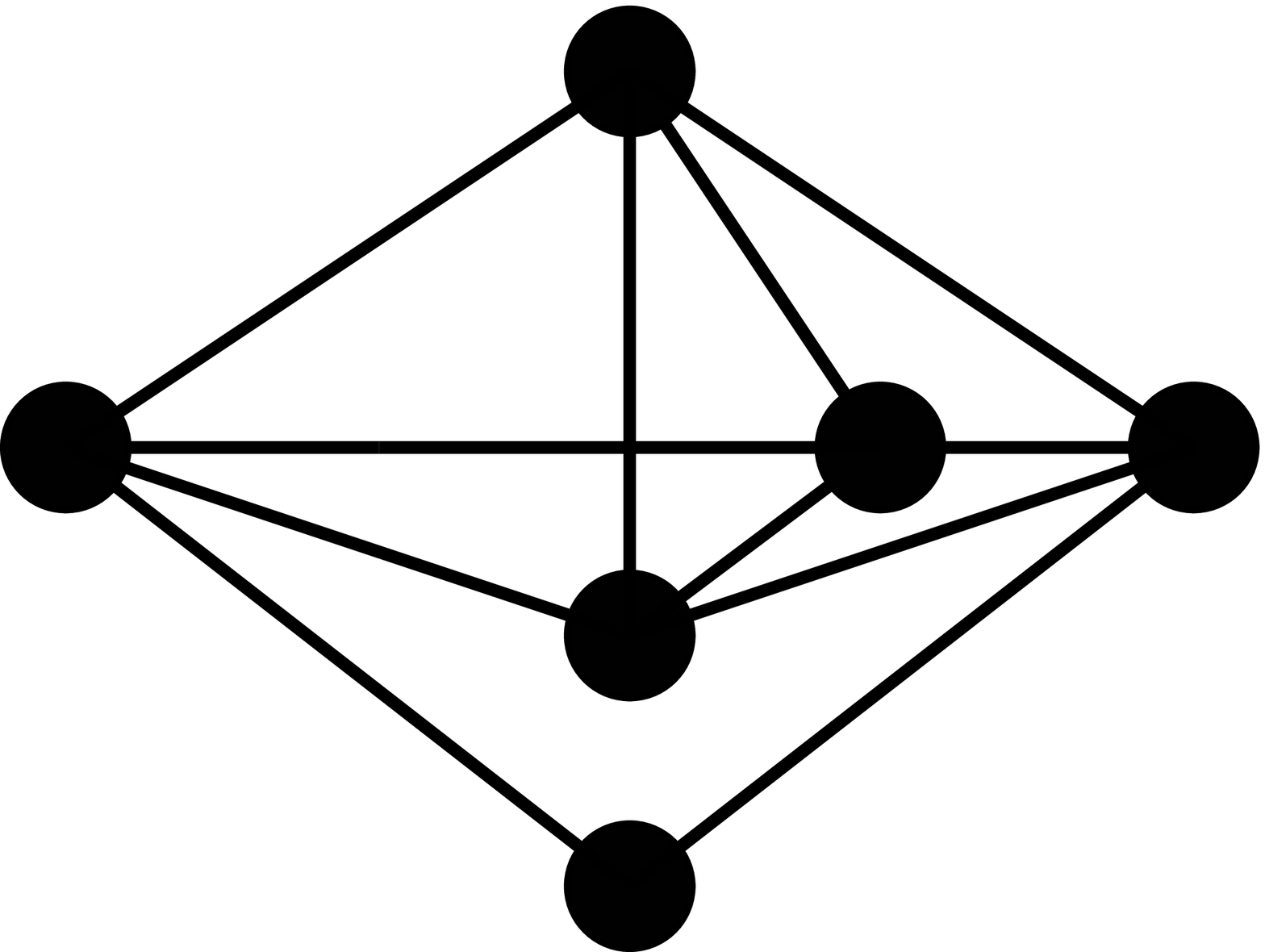} &
\includegraphics[scale=0.11]{./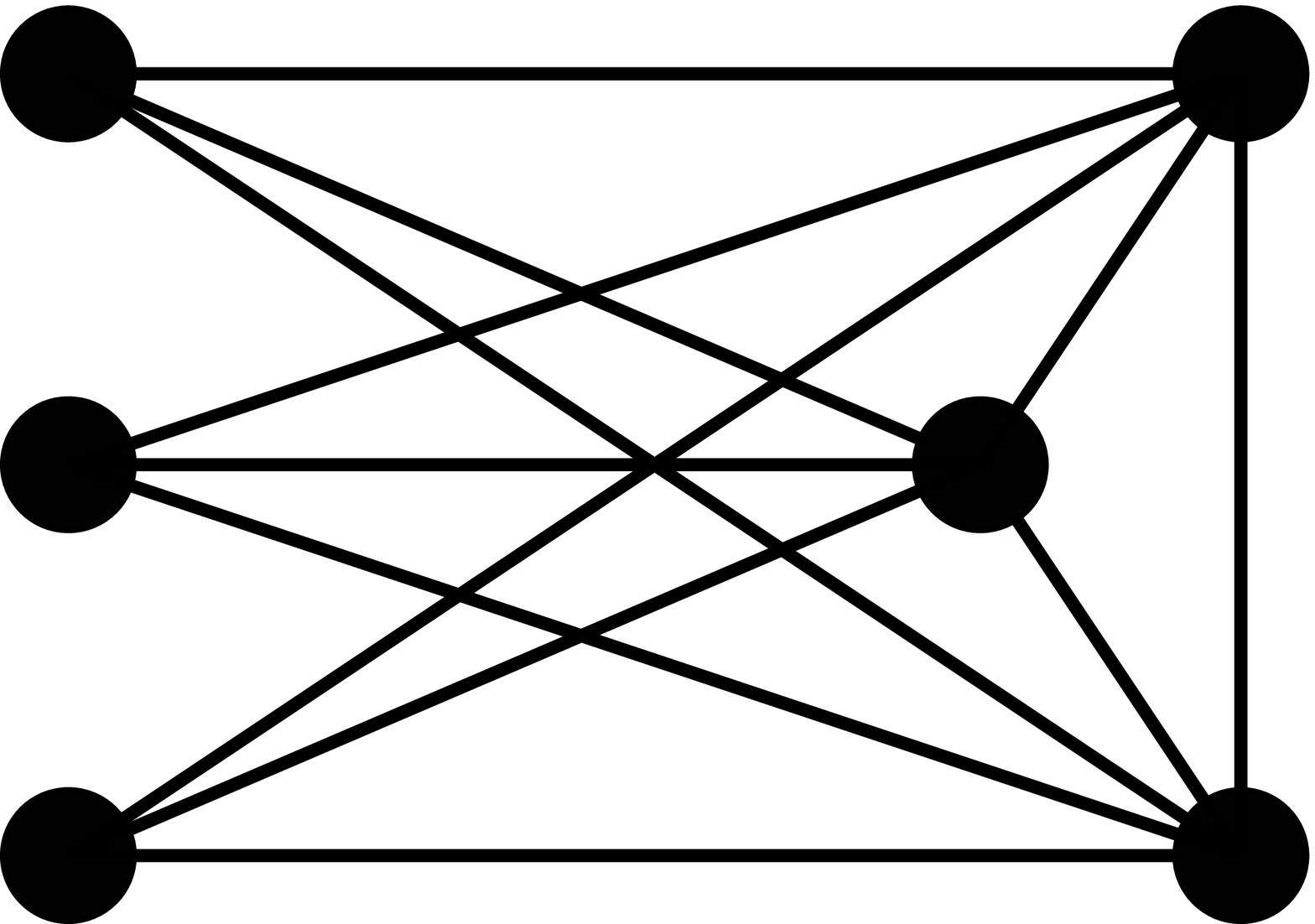} &
\includegraphics[scale=0.11]{./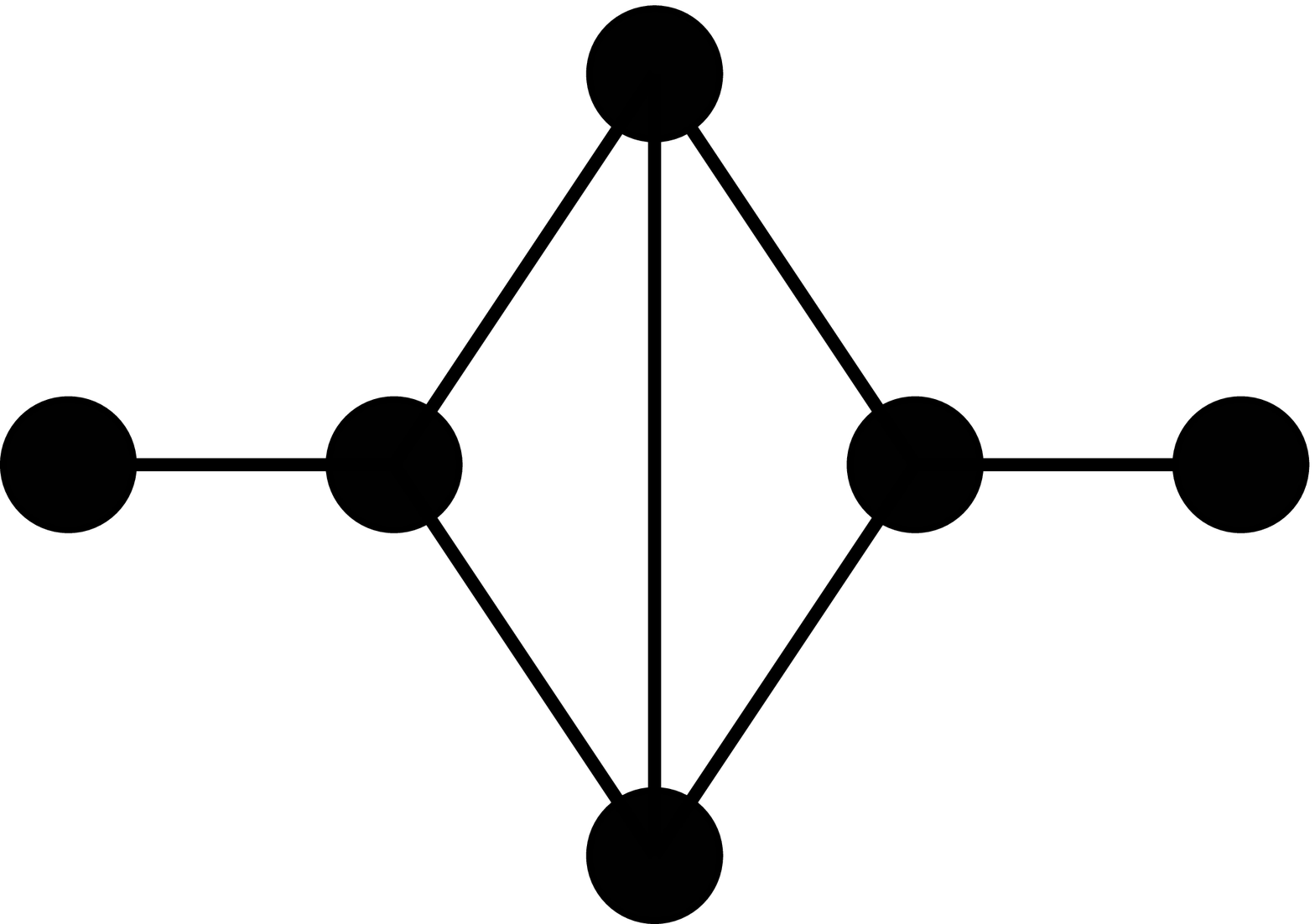}
 \\
$G_{6,19}$ & $G_{6,20}$ & $G_{6,21}$ & $G_{6,22}$ & $G_{6,23}$ \\
&&&&\\

\includegraphics[scale=0.11]{./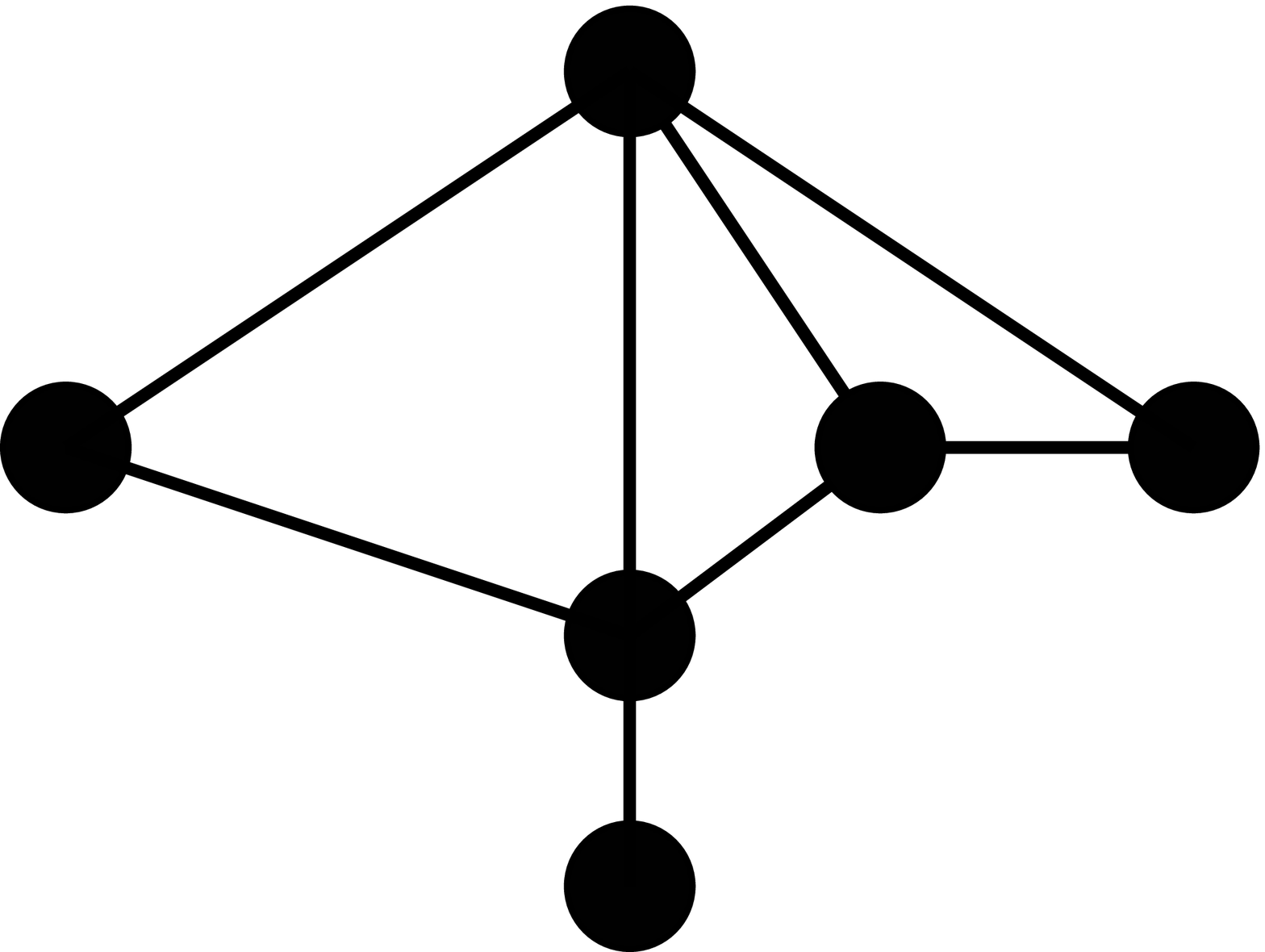} &
\includegraphics[scale=0.11]{./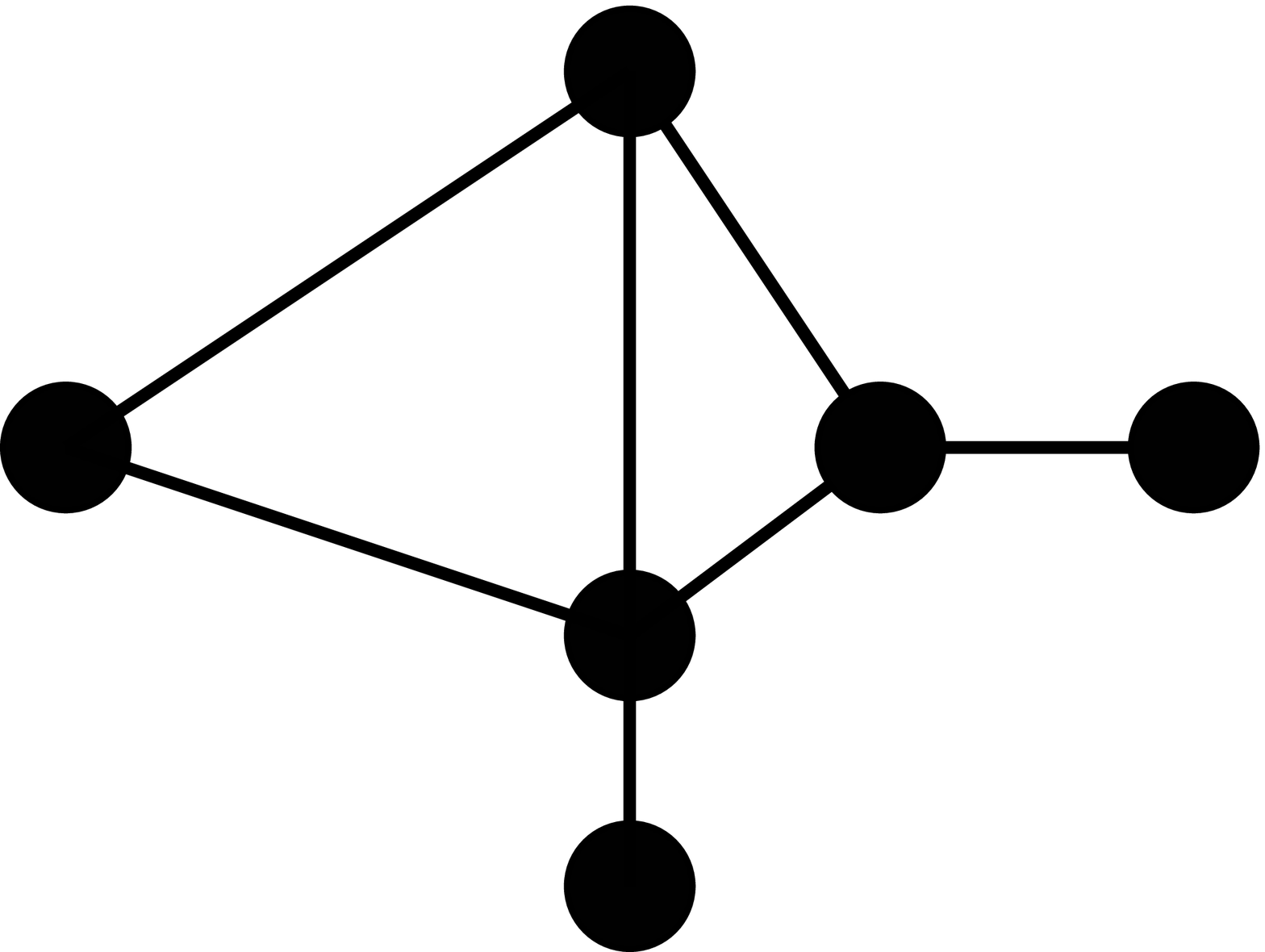} &
\includegraphics[scale=0.11]{./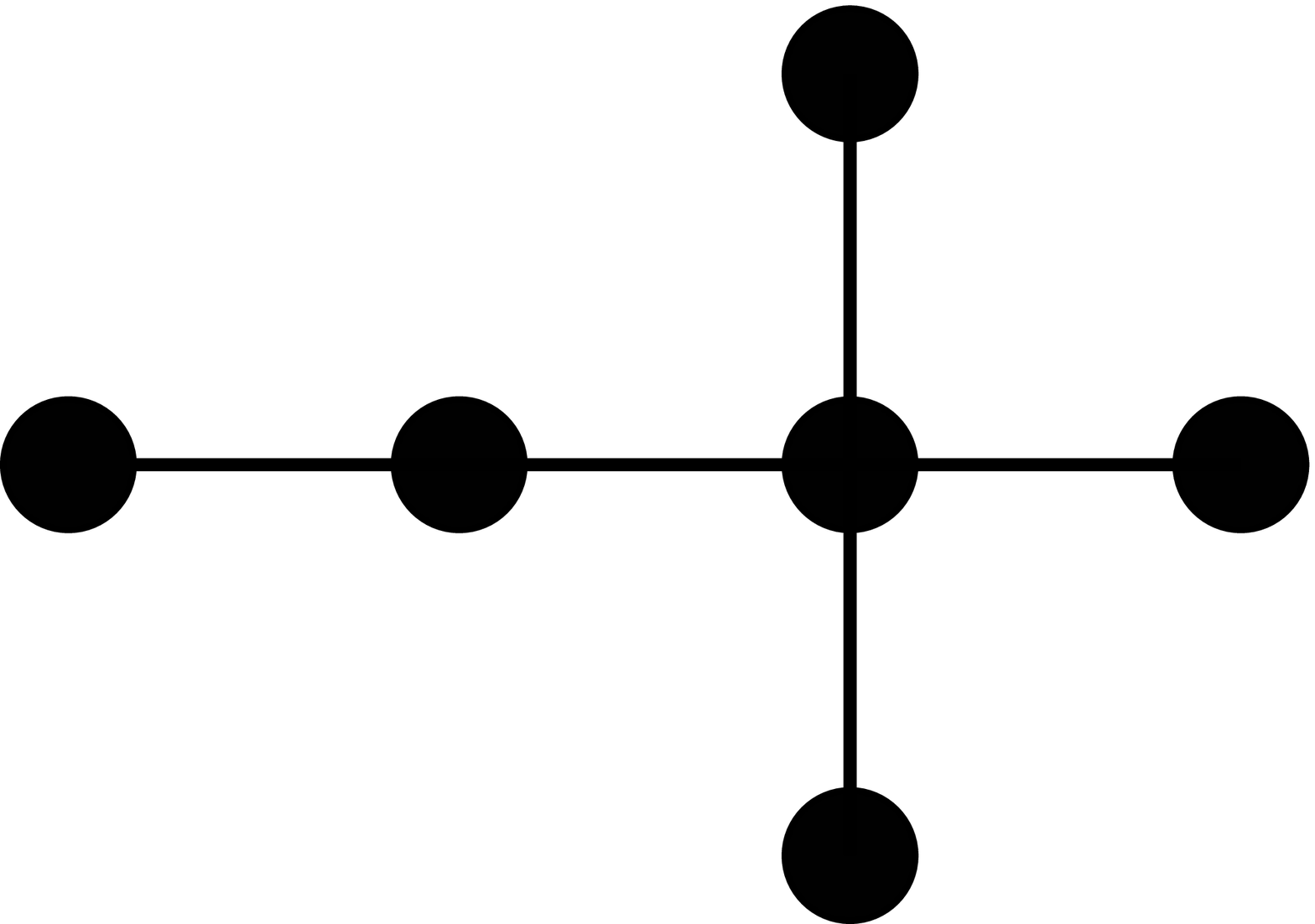} &
\includegraphics[scale=0.11]{./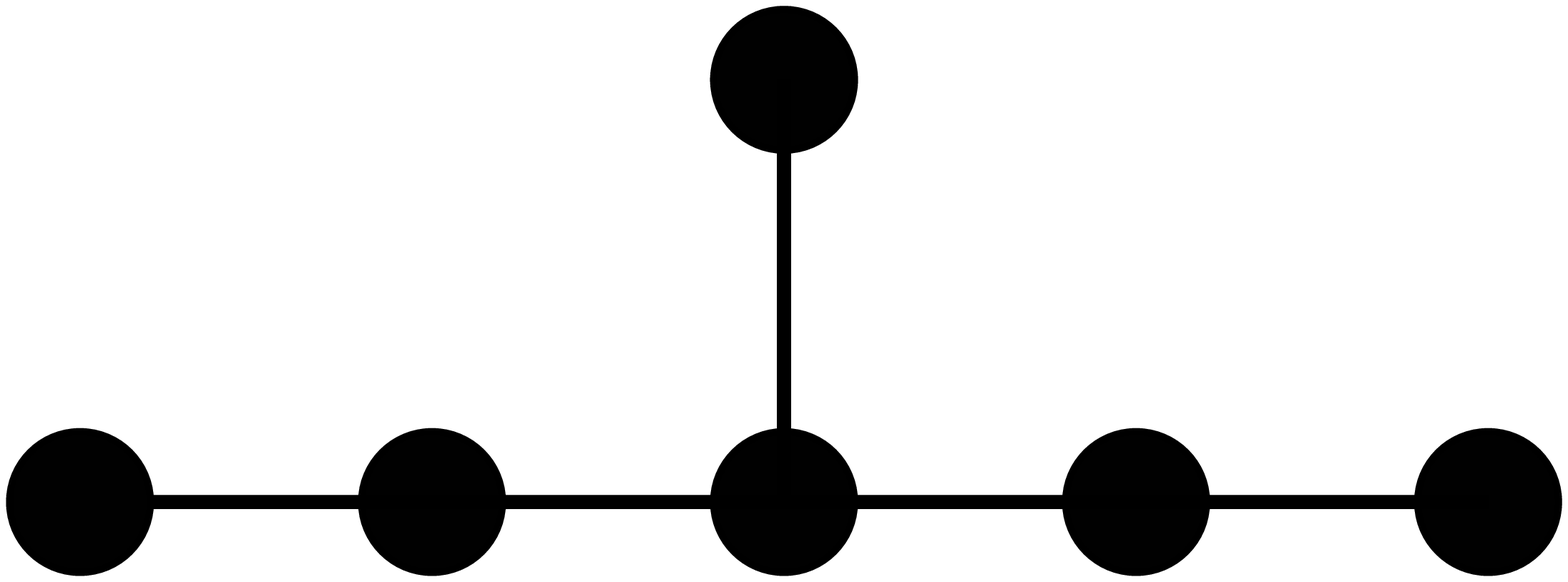} &
\includegraphics[scale=0.11]{./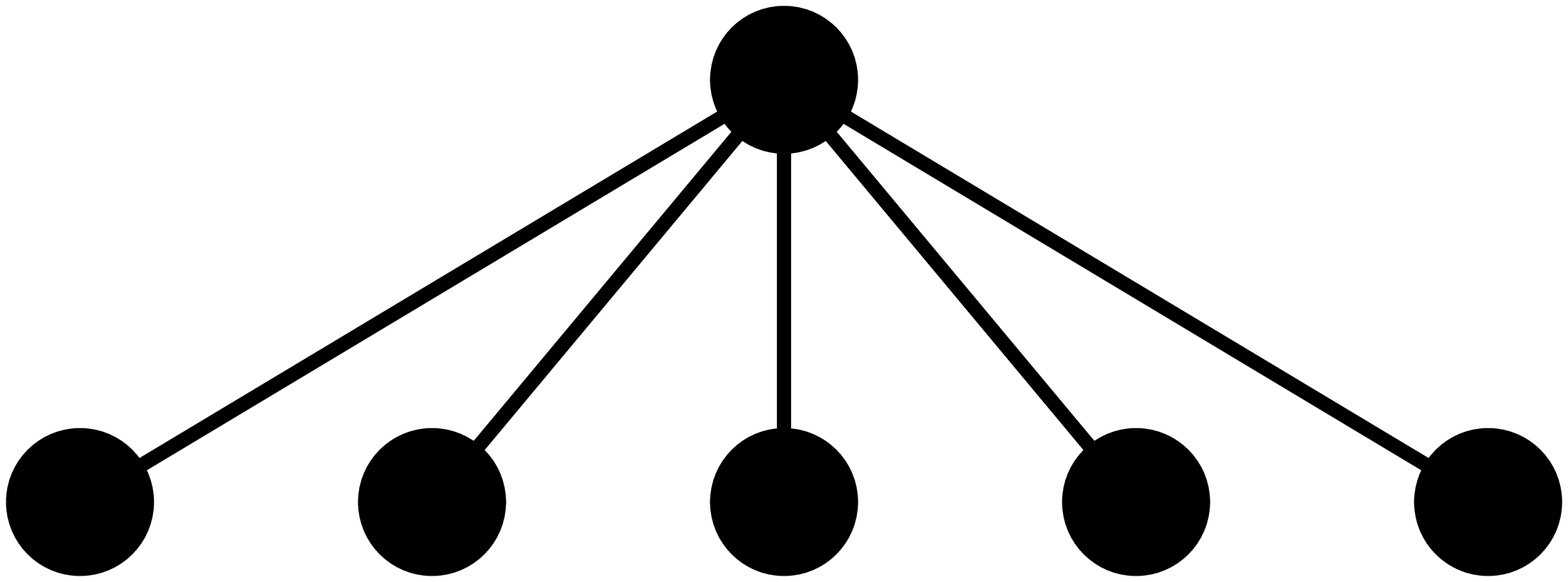}
 \\
$G_{6,24}$ & $G_{6,25}$ & $G_{6,26}$ & $G_{6,27}$ & $G_{6,28}$ \\
&&&&\\

\includegraphics[scale=0.11]{./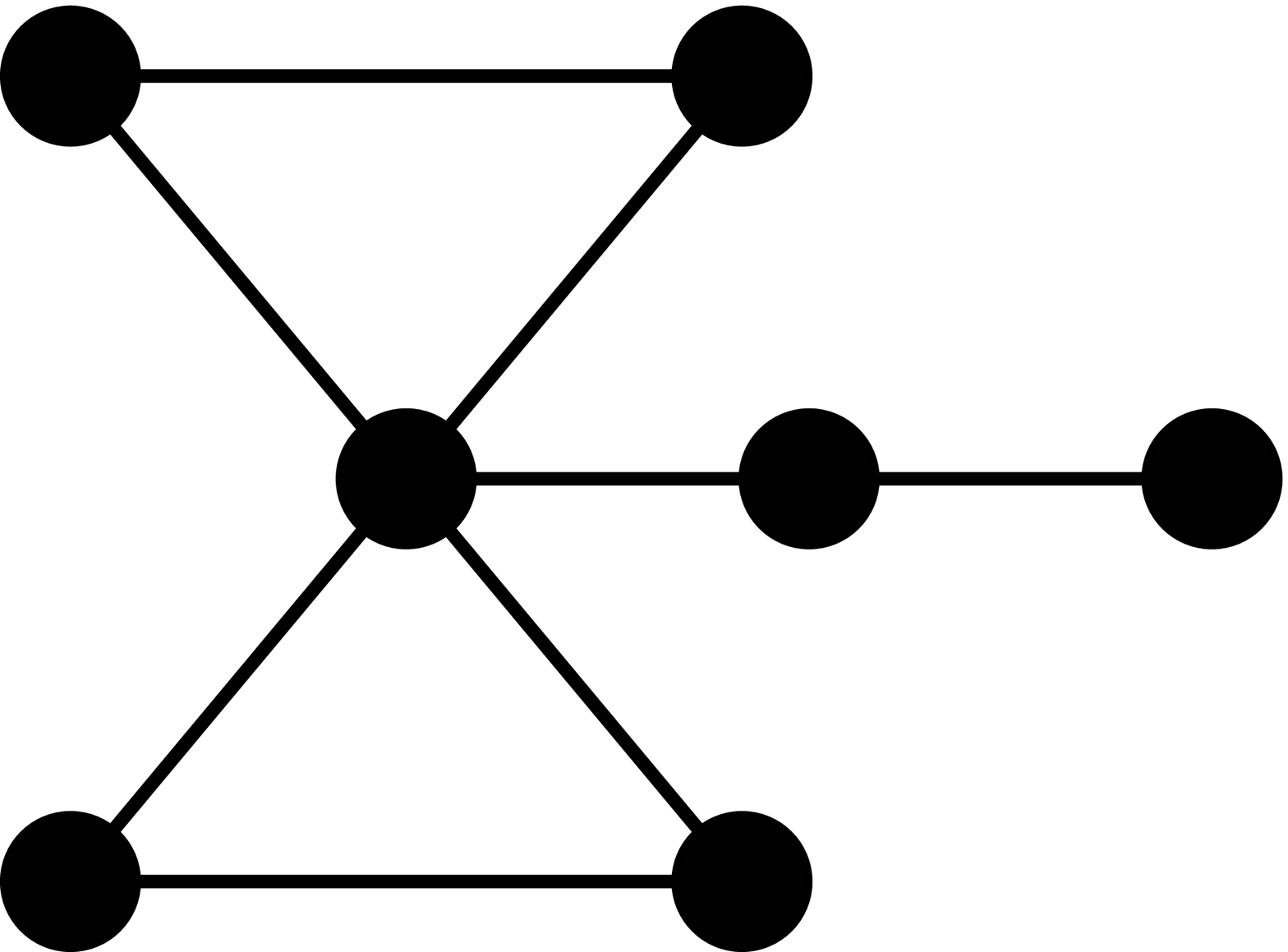} &
\includegraphics[scale=0.11]{./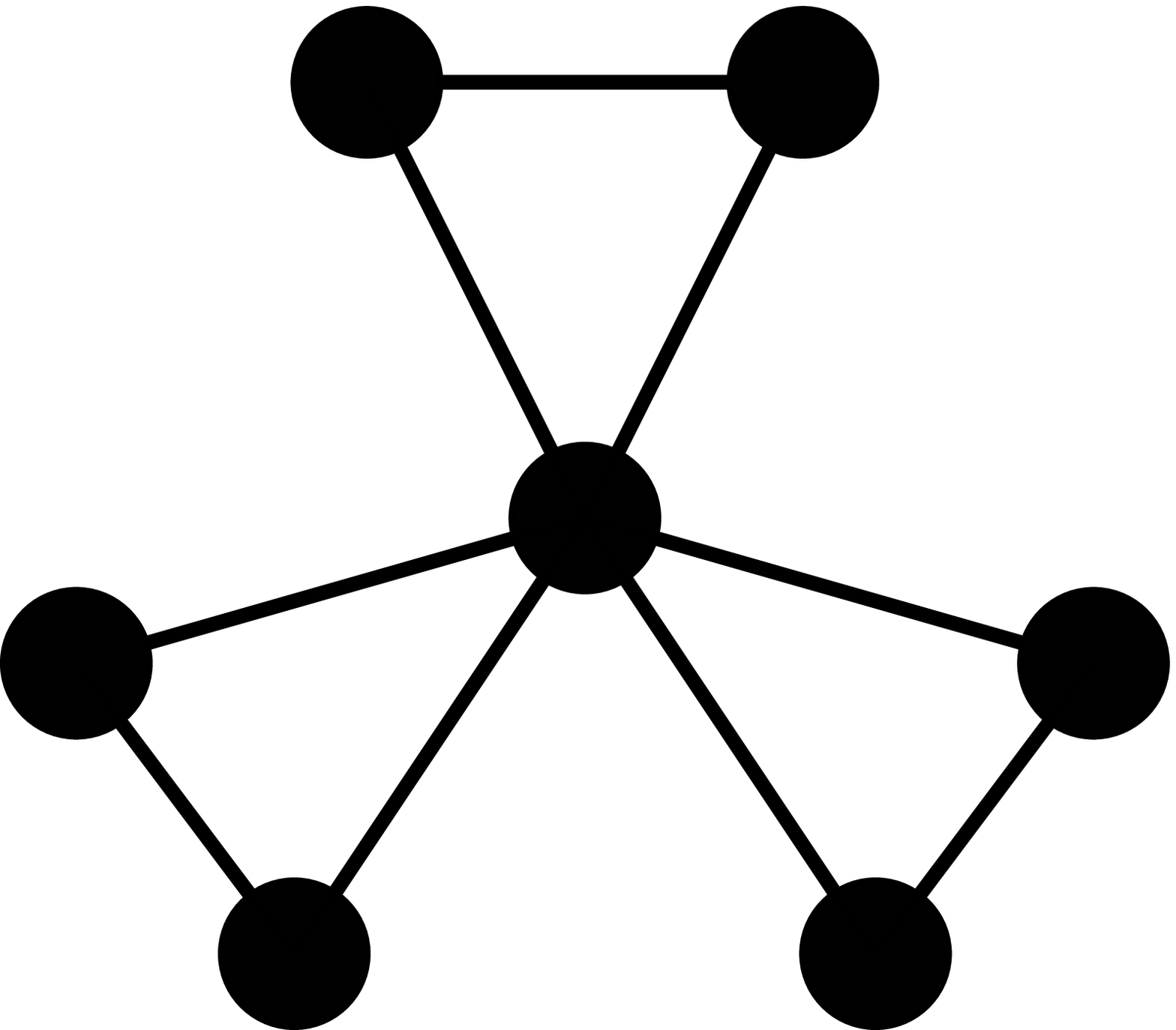} &
\includegraphics[scale=0.11]{./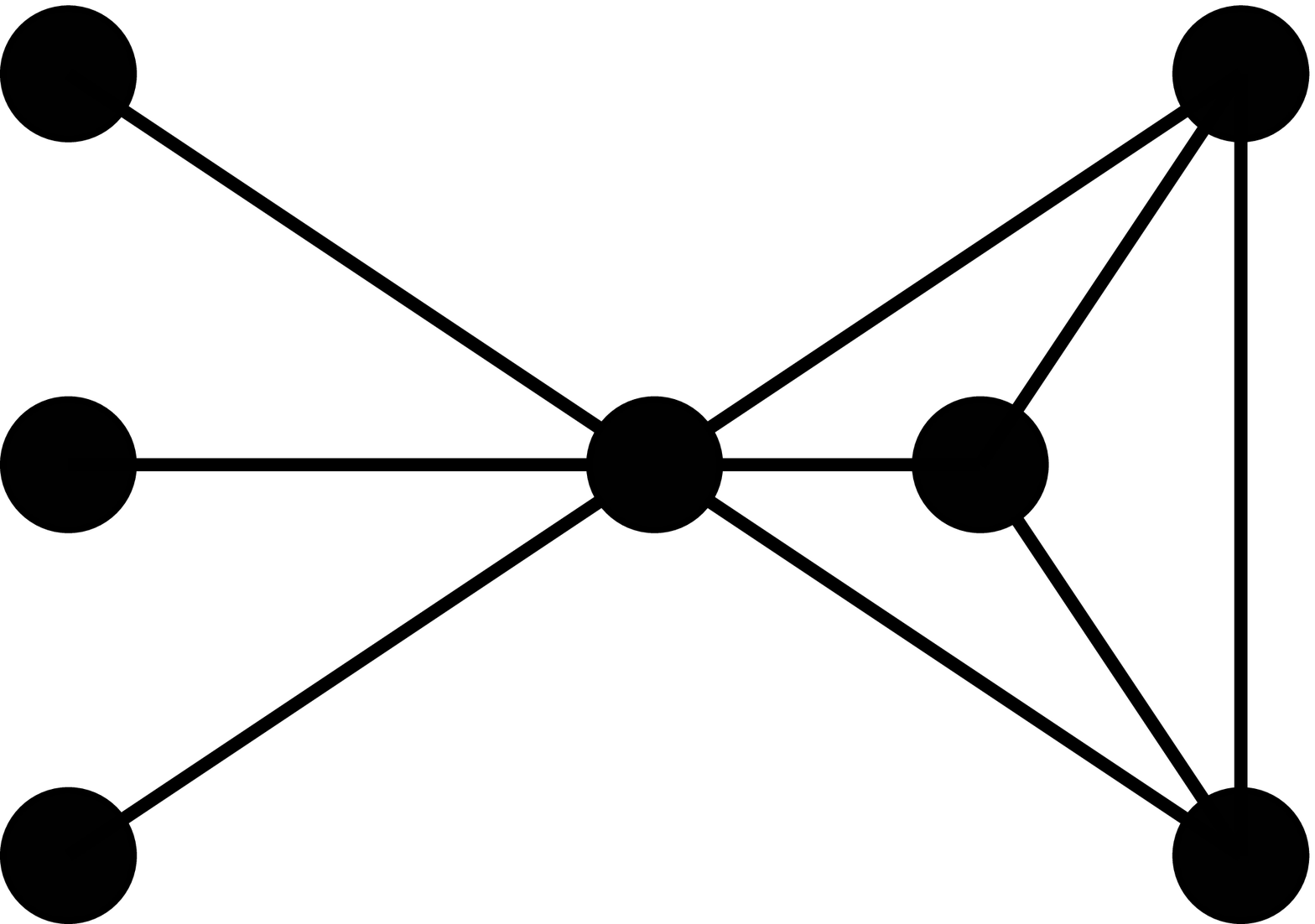} &
\includegraphics[scale=0.11]{./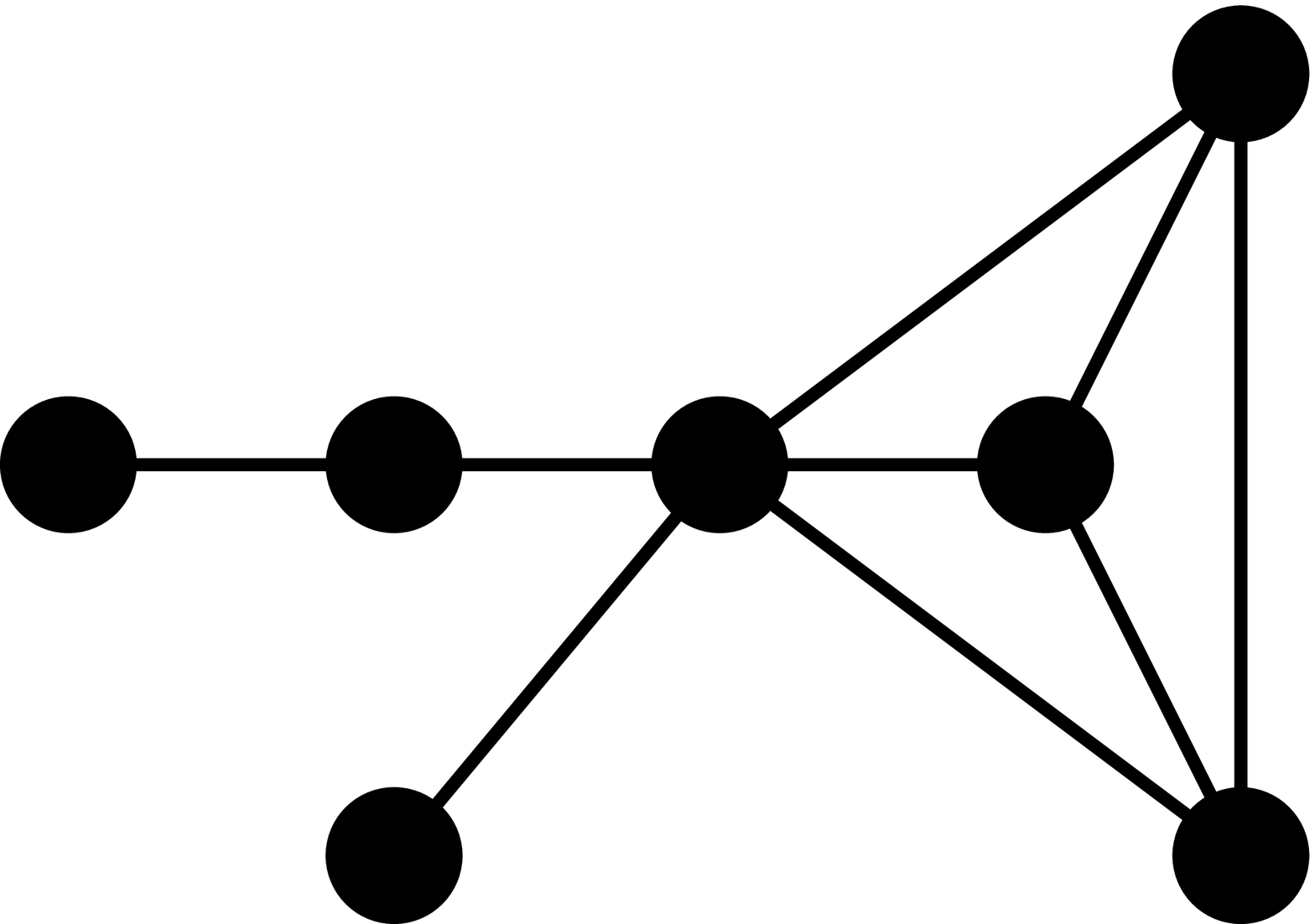} &
\includegraphics[scale=0.11]{./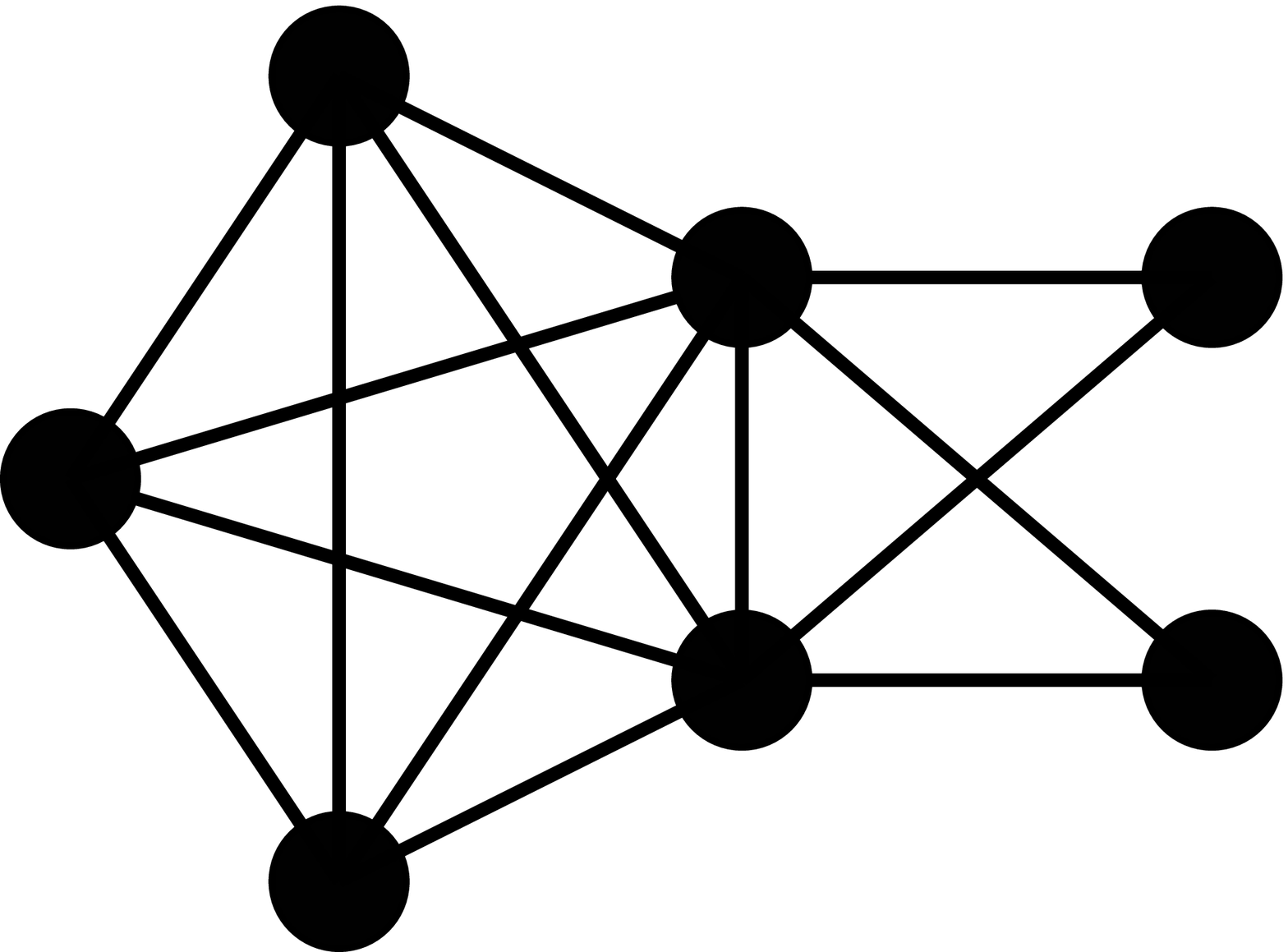}
 \\
$G_{7,1}$ & $G_{7,2}$ & $G_{7,3}$ & $G_{7,4}$ & $G_{7,5}$ \\
&&&&\\

\includegraphics[scale=0.11]{./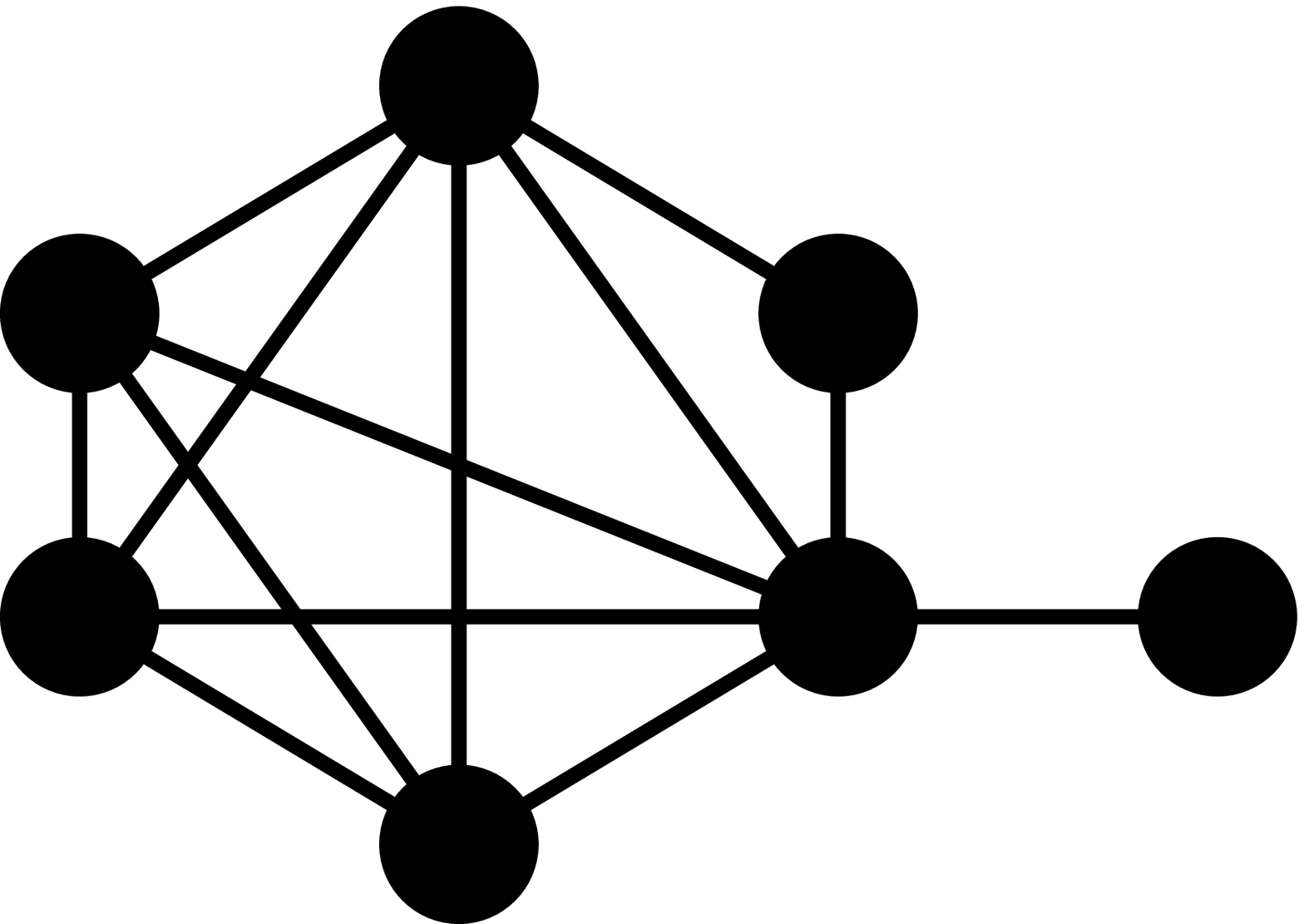} &
\includegraphics[scale=0.11]{./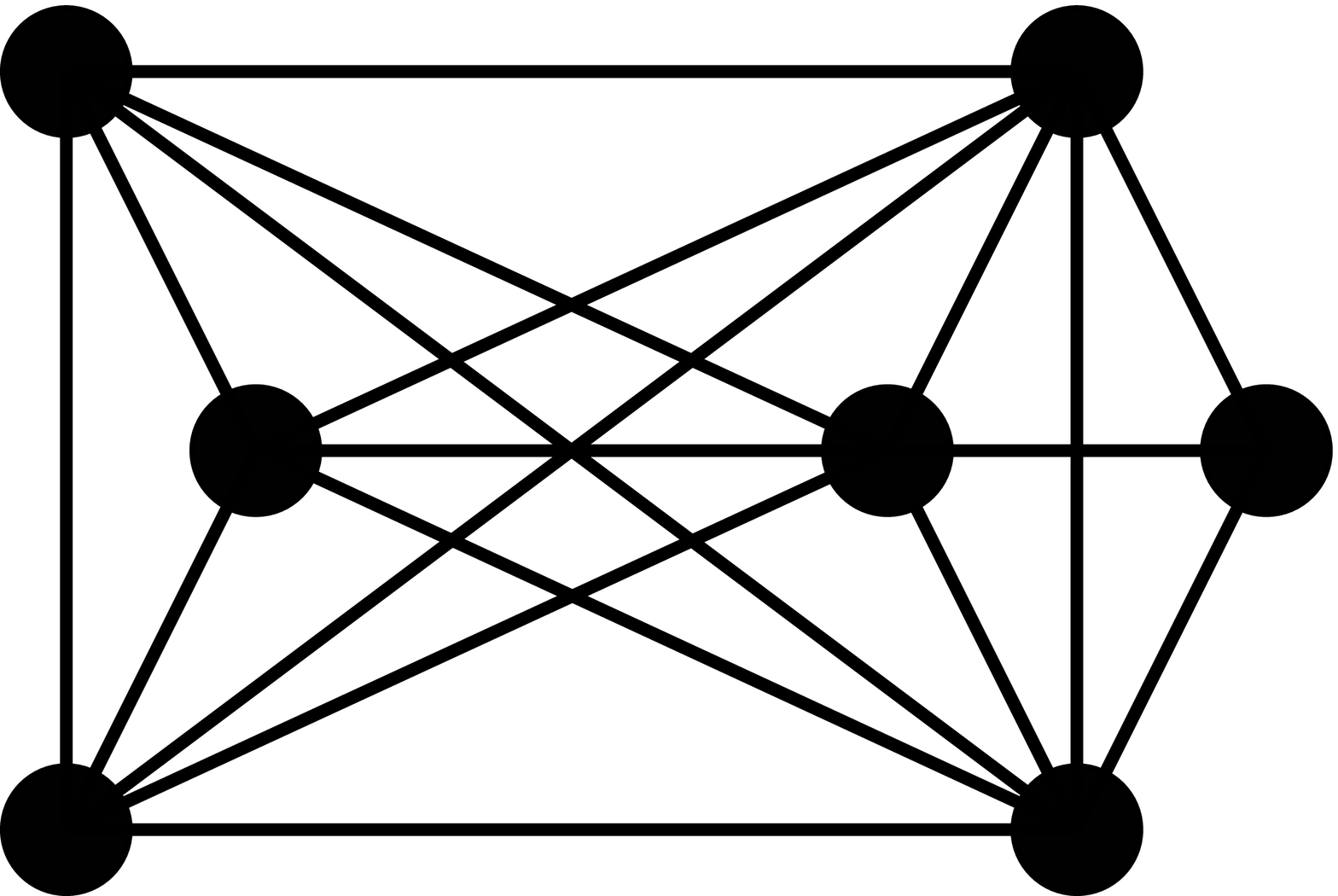} &
\includegraphics[scale=0.11]{./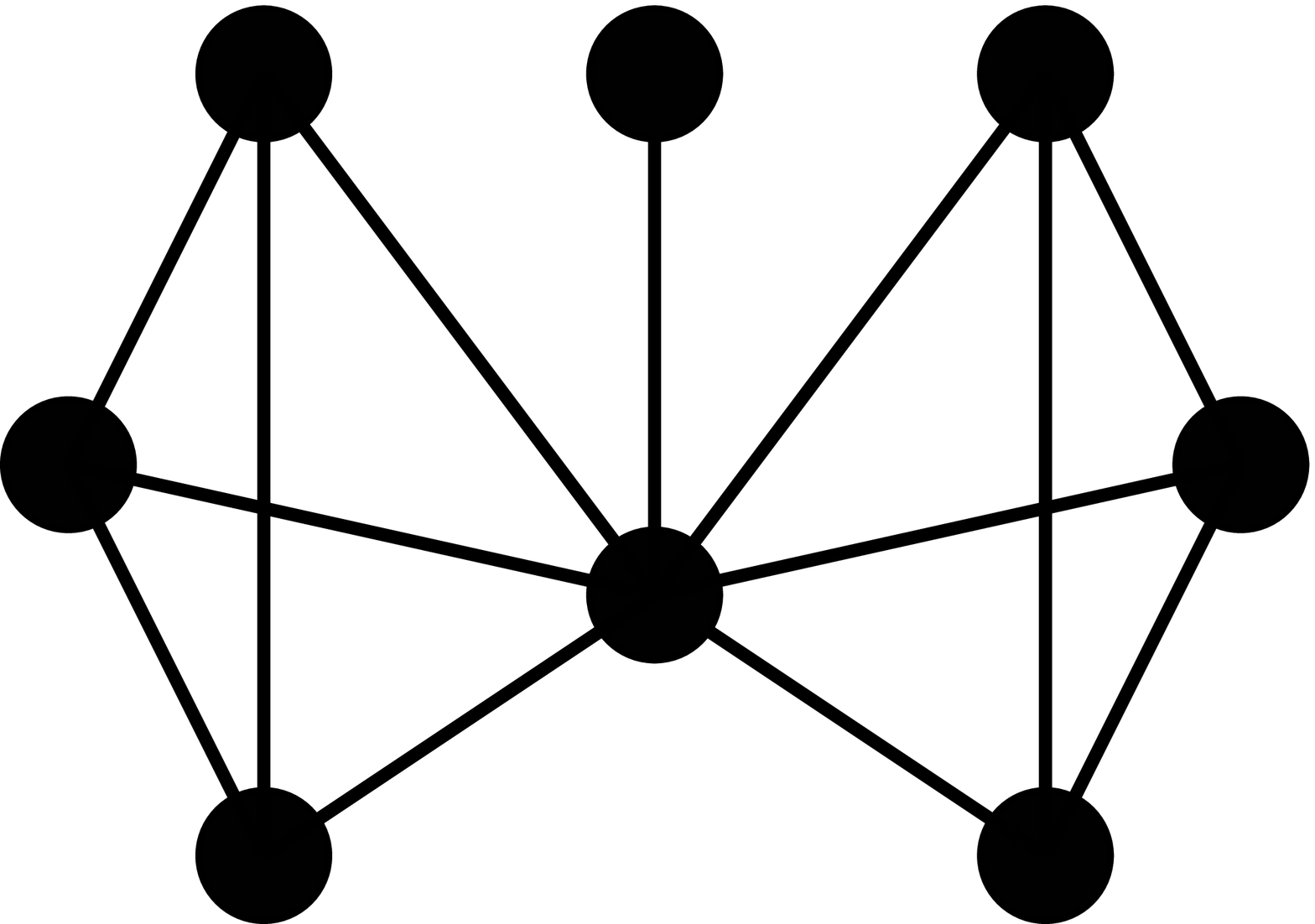}
 &
&
 \\
$G_{7,6}$ & $G_{7,7}$ & $G_{8,1}$ & & \\

\end{tabular}
}
\label{MFS}
\end{figure}

%%%%%%%%%%%%%%%%%%%%%%%%%%%%%%%%%%%%%%%%%%%%
%%
%%  Section 2.5: Some useful lemma
%%
%%%%%%%%%%%%%%%%%%%%%%%%%%%%%%%%%%%%%%%%%%%%
\section{SOME USEFUL LEMMAS}
\label{SomeUsefulLemmas}

A vertex of a graph is called a pendant vertex if it has degree $1$.
%%%%%%%%%%%%%%%%%%%%%%%%%%%%%%%%%%%%% lemma
\begin{lm}\label{lem:4.2}
Let $H =H^0\uplus H^1$ be a connected graph.
Suppose that $V_f(H^0)\cap V_f(H^1)=\{\alpha\}$
	and $N_{H^0}^s(\alpha)=V_s(H^0)$.
Then $H^1$ is connected.
\end{lm}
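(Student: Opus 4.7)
My plan is to argue by contradiction. Assume $H^1$ is disconnected; since $\alpha\in V_f(H^1)$ lies in a single connected component of $H^1$, I can choose another component $C$ of $H^1$ with $\alpha\notin V(C)$, so that $V(C)\subseteq V(H^1)\setminus\{\alpha\}$. I will show that no edge of $H$ joins $V(C)$ to $V(H)\setminus V(C)$; since $\alpha$ witnesses $V(C)\ne V(H)$, this contradicts the connectedness of $H$ and completes the proof.

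From $V_f(H^0)\cap V_f(H^1)=\{\alpha\}$ together with condition (ii) of Definition~\ref{df:1}, I get $V(H^0)\cap V(H^1)=\{\alpha\}$. Consider a putative edge $vw$ of $H$ with $v\in V(C)$ and $w\in V(H)\setminus V(C)$. If $w\in V(H^1)$, then $v$ and $w$ lie in different components of $H^1$ yet are adjacent in $H^1$, a contradiction; so $w\in V(H^0)\setminus\{\alpha\}$ and $v\in V(H^1)\setminus\{\alpha\}$. I then case-split on the slim/fat labels of $v$ and $w$: fat--fat is ruled out by condition (2) of Definition~\ref{df:Hg}; each mixed slim--fat case is ruled out by condition (iii) of Definition~\ref{df:1}, which would force the fat endpoint to lie in the summand of the slim endpoint, contrary to its actual location; and the slim--slim case, with $v\in V_s(H^1)$ and $w\in V_s(H^0)$, is handled by condition (iv), which supplies a common fat neighbour of $v$ and $w$ in $H$. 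Applying (iii) to each of $v$ and $w$ localises this common neighbour into $V_f(H^0)\cap V_f(H^1)=\{\alpha\}$; hence $v$ is adjacent to $\alpha$ in $H^1$, placing $v$ in the connected component of $\alpha$, contradicting $v\in V(C)$.

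I expect the main obstacle to be the slim--slim case: the relevant adjacencies of $H$ are not edges of either $H^0$ or $H^1$ in isolation but are generated by condition (iv) of the sum, so (iii) must be invoked carefully on both endpoints before intersecting the two fat-neighbourhoods. The hypothesis $N^s_{H^0}(\alpha)=V_s(H^0)$ is intended here to supply the structural compatibility on the $H^0$-side needed for that localisation to pinpoint $\alpha$ as the unique possible ``bridge'' between $V(C)$ and $V(H^0)\setminus\{\alpha\}$.
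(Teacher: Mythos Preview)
Your argument is correct and self-contained, whereas the paper proceeds differently: it deletes the set $F=V_f(H^0)\setminus\{\alpha\}$ of ``extra'' fat vertices from $H$, observes that the result is still a connected sum $K\uplus H^1$ (with $K=H^0-F$ having $\alpha$ as its unique fat vertex, adjacent to every slim vertex of $K$), and then invokes Lemma~15 of \cite{paperI} as a black box. Your direct case analysis on a putative crossing edge, using conditions (iii) and (iv) of Definition~\ref{df:1} together with the fact that the $H^i$ are induced subgraphs, avoids the external citation entirely.

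One point worth noting: despite your closing remark, your proof never actually uses the hypothesis $N^s_{H^0}(\alpha)=V_s(H^0)$. In the slim--slim case you localise the common fat neighbour to $V_f(H^0)\cap V_f(H^1)=\{\alpha\}$ via condition~(iii) applied to each endpoint, and that alone forces $v\sim\alpha$ in $H^1$; the assumption on $N^s_{H^0}(\alpha)$ plays no role. So your argument in fact establishes the stronger statement that drops this hypothesis. By contrast, the paper's proof does use it, both to ensure that removing $F$ keeps $H$ connected and to meet the hypotheses of the cited lemma.
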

\begin{proof}
Put $F=V_f(H^0)\setminus \{\alpha\}$
   and $K=H^0-F$.
Then $F\cap V_f(H^1)=\emptyset$.
Hence $H-F=K\uplus H^1$
	and $H-F$ is connected.
Since $\alpha$ is a unique fat vertex of $K$
   which is adjacent to all the slim vertices of $K$,
   Lemma 15 of \cite{paperI} implies that
   $H^1$ is connected.
\end{proof}

%%%%%%%%%%%%%%%%%%%%%%%%%%%%%%%%%%%% Lemma
\begin{lm}\label{lm:not_contain_H14H15}
Let $\h$ be a family of isomorphism classes of Hoffman graphs,
	satisfying the following condition:
\[
	[H]\in\h,\; H\not\cong H_2
	\implies |N_H^f(x)|\leq1\quad\forall x\in V_s(H).
\]
Let $H$ be an $\h$-line graph.
Then,
\begin{enumerate}[{\rm (i)}]
	\item if $u\in V_s(H)$,
		then $|N_H^f(u)|\leq2$,
	\item if $u,v$ are distinct slim vertices of $H$,
		then $|N_H^f(u)\cap N_H^f(v)|\leq1$.
\end{enumerate}
\end{lm}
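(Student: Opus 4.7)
The plan is to lift $H$ to a cover graph $\tilde H=\biguplus_{i=1}^n H^i$ with $[H^i]\in\h$ witnessing that $H$ is an $\h$-line graph, and to prove both bounds inside $\tilde H$. Since $H$ is an induced subgraph of $\tilde H$, every fat neighbour of a slim vertex of $H$ taken in $H$ is also a fat neighbour of it in $\tilde H$, so bounds for $\tilde H$ transfer to $H$.

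For (i), I would invoke clauses (ii) and (iii) of Definition \ref{df:1}: a slim vertex $u\in V_s(\tilde H)$ belongs to a unique summand $H^{i_0}$, and all of its fat neighbours in $\tilde H$ also lie in $V(H^{i_0})$. Hence $N_{\tilde H}^f(u)=N_{H^{i_0}}^f(u)$. By inspection of Figure \ref{Hoffmans}, the slim vertex of $H_2$ has exactly two fat neighbours, while by hypothesis every $[H^{i_0}]\in\h$ with $H^{i_0}\not\cong H_2$ satisfies $|N_{H^{i_0}}^f(u)|\leq 1$. Either way $|N_{\tilde H}^f(u)|\leq 2$.

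For (ii), I would split on whether two distinct slim vertices $u,v$ of $\tilde H$ lie in the same summand or in different summands. If $u\in V_s(H^i)$ and $v\in V_s(H^j)$ with $i\neq j$, clause (iv) of Definition \ref{df:1} gives $|N_{\tilde H}^f(u)\cap N_{\tilde H}^f(v)|\leq 1$ at once. If instead $u,v\in V_s(H^{i_0})$ for some common $i_0$, inspection of Figure \ref{Hoffmans} shows that $H_2$ contains only one slim vertex, forcing $H^{i_0}\not\cong H_2$; the hypothesis on $\h$ then yields $|N_{H^{i_0}}^f(u)|\leq 1$, and this alone bounds the intersection by $1$.

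The argument is essentially a mechanical unpacking of Definition \ref{df:1} together with the hypothesis on $\h$. The only substantive point, and the main (minor) obstacle, is the structural observation about $H_2$ in Figure \ref{Hoffmans}: that its unique slim vertex has exactly two fat neighbours. This single fact supplies the tight value $2$ in (i) and simultaneously collapses the otherwise delicate within-summand case of (ii) to a vacuous one.
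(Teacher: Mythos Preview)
Your argument is correct. The paper itself does not give a proof here at all---it simply cites Lemma~23 of \cite{paperI}---so there is nothing to compare structurally; your direct unpacking of Definition~\ref{df:1} together with the observation that $H_2$ has a single slim vertex is exactly the natural argument and is almost certainly what the cited lemma does.
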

\begin{proof}
See Lemma 23 of \cite{paperI}.
\end{proof}

%From the exercise 6(c) in Section 6 "Connectivity" of \cite{exercises},
From \cite[\S6, Problem 6(c)]{exercises},
	we obtain the following lemma:
%%%%%%%%%%%%%%%%%%%%%%%%%%%%%%%%%%%%%%%%%%%%
%%%%%%%%%%%%%%					%%%%%%%%%%%%%%%%%%%
%%%%%%%%%%%%%%      lemma			%%%%%%%%%%%%%%%%%%%
\begin{lm}
Let $\Gamma$ be a connected slim graph.
If $\Gamma$ is neither a complete graph nor a cycle,
then there exists a non-adjacent pair $\{x,y\}$ in $V(\Gamma)$
such that $\Gamma -\{x,y\}$ is connected.
\label{connected}
\end{lm}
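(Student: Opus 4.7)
The plan is to handle the lemma by cases depending on the connectivity of $\Gamma$.

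First, assume $\Gamma$ has a cut vertex, and pass to the block-cut tree of $\Gamma$, which has at least two leaves. These correspond to two distinct end-blocks $B_1, B_2$ of $\Gamma$, each containing a unique cut vertex of $\Gamma$, say $v_1 \in B_1$ and $v_2 \in B_2$. I would take $x \in V(B_1) \setminus \{v_1\}$ and $y \in V(B_2) \setminus \{v_2\}$, which is possible because every block has at least two vertices. Since $x$ and $y$ belong to distinct blocks of $\Gamma$, they are non-adjacent; and since each is a non-cut vertex of $\Gamma$, removing them one at a time preserves connectedness (note that $y$ remains a non-cut vertex of $\Gamma - x$, as removing $x$ alters only the block $B_1$).

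Second, assume $\Gamma$ is $2$-connected. If $\Gamma$ is $3$-connected, then $\Gamma - \{x, y\}$ is connected for every pair, so any non-adjacent pair works (and one exists since $\Gamma$ is not complete). Otherwise pick a $2$-cut $\{a, b\}$, let $C_1, \ldots, C_k$ (with $k \geq 2$) be the components of $\Gamma - \{a, b\}$, and set $G_i = \subg{C_i \cup \{a, b\}}{\Gamma}$. I would select $x \in C_1$ and $y \in C_2$; they are non-adjacent because they lie in distinct components. Using $2$-connectedness of $\Gamma$ one checks that every component of $C_1 - x$ (and of $C_2 - y$) contains a vertex adjacent to $\{a, b\}$, so the connectedness of $\Gamma - x - y$ reduces to showing that $a$ and $b$ lie in the same component. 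This is immediate if $k \geq 3$ (via $G_3$) or if $ab$ is an edge of $\Gamma$.

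The main obstacle is the remaining case $k = 2$ with $ab$ not an edge of $\Gamma$. Here one must arrange that $G_1 - x$ or $G_2 - y$ still contains an $a$-$b$ path. If no such choice existed, then every vertex of $C_i$ would separate $a$ from $b$ in $G_i$ for $i = 1, 2$; a short argument shows this forces every $a$-$b$ path in $G_i$ to be Hamiltonian, and hence each $G_i$ to be an induced path from $a$ to $b$, since any chord would yield an $a$-$b$ path missing some internal vertex. Then $\Gamma = G_1 \cup G_2$ would be a cycle, contradicting the hypothesis. Hence a suitable $x \in C_1$ (or $y \in C_2$) exists, completing the proof.
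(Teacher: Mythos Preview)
Your proof is correct. The paper does not prove this lemma itself; it merely cites it as Problem~6(c) in \S6 of Lov\'asz, \emph{Combinatorial Problems and Exercises}~\cite{exercises}. Your argument, by contrast, is a self-contained elementary proof: you split on the vertex-connectivity of $\Gamma$, handle the $1$-connected case via two distinct end-blocks of the block--cut tree, dispatch the $3$-connected case trivially, and treat the critical $2$-connected case by analysing a $2$-cut $\{a,b\}$ and showing that unless both bridges $G_1,G_2$ are induced $a$--$b$ paths (which would force $\Gamma$ to be a cycle), some internal vertex can be deleted from one side while keeping an $a$--$b$ path. All steps check out; in particular the key reduction ``every vertex of $C_i$ separates $a$ from $b$ in $G_i$ $\Rightarrow$ $G_i$ is an induced path'' is argued correctly, and this is precisely where the ``not a cycle'' hypothesis enters. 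The only places that are terse rather than wrong are the parenthetical in the cut-vertex case (that $y$ stays a non-cut vertex after deleting $x$) and the appeal to $2$-connectedness to see that every component of $C_i-x$ touches $\{a,b\}$; both are routine to expand.
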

%%%%%%%%%%%%%%				%%%%%%%%%%%%%%%%%%%%%%
%%%%%%%%%%%%%%				%%%%%%%%%%%%%%%%%%%%%%
%%%%%%%%%%%%%%%%%%%%%%%%%%%%%%%%%%%%%%%%%%%%%

For the remainder of this section,
	we assume $\h=\{[H_2],\ [H_3],\ [H_5]\}$ (cf. Figure~\ref{Hoffmans}).

%%%%%%%%%%%%%%%%%%%%%%%%%%%%%%%%%%%%%%%  lm:new2_1
\begin{lm}\label{lm:new2_1}
Let $H=\biguplus_{i=0}^nH^i$ be a Hoffman graph
   satisfying $[H^j]\in \h$ for $j=0,1,\ldots,n$.
Let $V$ be a subset of $V_s(H)$,
   and let $K=\subgg{V}{H}$.
Then there exist subgraphs $K^i\ (i=0,1,\ldots,n')$ of $K$
   such that
\[
K=\biguplus_{i=0}^{n'}K^i,
\ [K^j]\in \h\cup\{[H_1]\}\mbox{ for }j=0,1,\ldots,n'.
\]
\end{lm}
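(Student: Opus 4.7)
The plan is to decompose $K$ along the given decomposition of $H$. For each $i\in\{0,1,\ldots,n\}$, set
\[
V^i = V\cap V_s(H^i),\qquad K^i = \subgg{V^i}{H^i}.
\]
(So $K^i$ is the sub-Hoffman-graph of $H^i$ induced on $V^i$ together with all fat neighbours of the vertices of $V^i$.) After discarding empty $K^i$'s and re-indexing, I would claim that $K=\biguplus_{i=0}^{n'} K^i$ is the required decomposition.

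The first step is to verify the four conditions of Definition~\ref{df:1} for this decomposition of $K$. The key observation is condition~(iii) for the original sum $H=\biguplus H^i$: any fat vertex adjacent to a slim vertex $z\in V_s(H^i)$ lies in $V(H^i)$, so $N^f_H(z)=N^f_{H^i}(z)$. Consequently
\[
V(K) \;=\; V\cup\bigcup_{z\in V}N_H^f(z) \;=\; \bigcup_i\Bigl(V^i\cup\bigcup_{z\in V^i}N_{H^i}^f(z)\Bigr) \;=\;\bigcup_i V(K^i),
\]
which is condition~(i). Condition~(ii) is immediate since $V_s(K^i)=V^i$ and these are pairwise disjoint. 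Condition~(iii) for $K$ follows directly from condition~(iii) for $H$, and condition~(iv) is inherited from condition~(iv) for $H$ together with the fact that any common fat neighbour witnessing adjacency of $x\in V^i$ and $y\in V^j$ lies in $N_H^f(x)\subseteq V(K)$.

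The second step, which is the real content, is to show that $[K^i]\in\h\cup\{[H_1]\}$ whenever $K^i$ is nonempty. I would do this by case analysis on $[H^i]\in\{[H_2],[H_3],[H_5]\}$. The graphs $H_2$, $H_3$, $H_5$ have only a handful of slim vertices (see Figure~\ref{Hoffmans}), and by Lemma~\ref{lm:not_contain_H14H15} each slim vertex of $H_3$ and $H_5$ has at most one fat neighbour. When $[H^i]=[H_2]$, $H_2$ has a unique slim vertex, so a nonempty $V^i$ forces $K^i\cong H^i\cong H_2$. For $[H^i]=[H_3]$ and $[H^i]=[H_5]$, I would enumerate all nonempty subsets $W\subseteq V_s(H^i)$ and check directly that $\subgg{W}{H^i}$ is isomorphic to one of $H_1,H_2,H_3,H_5$.

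The main obstacle is the case analysis in the second step: it requires the explicit structure of $H_2,H_3,H_5$ and a small finite check that the family $\{[H_1],[H_2],[H_3],[H_5]\}$ is closed under the operation $W\mapsto\subgg{W}{\,\cdot\,}$. The first step, by contrast, is a routine bookkeeping exercise. Once the case analysis is done, combining the two steps and dropping empty summands yields the desired decomposition $K=\biguplus_{i=0}^{n'}K^i$ with $[K^i]\in\h\cup\{[H_1]\}$.
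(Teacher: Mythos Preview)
Your approach is the same as the paper's: set $K^i=\subgg{V\cap V_s(H^i)}{H^i}$, observe that $K=\biguplus_i K^i$ (the paper cites \cite[Lemma~12]{paperI} for this, while you verify Definition~\ref{df:1} directly---either is fine), and then analyse each $K^i$. However, your case analysis in the second step contains a genuine gap.

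You assert that for every nonempty $W\subseteq V_s(H^i)$ one has $[\subgg{W}{H^i}]\in\{[H_1],[H_2],[H_3],[H_5]\}$. This fails when $H^i\cong H_5$. The graph $H_5$ has three slim vertices forming a path $s_1\sim s_2\sim s_3$ and a single fat vertex adjacent to all of them. Taking $W=\{s_1,s_2\}$ (or $\{s_2,s_3\}$) gives $\subgg{W}{H_5}\cong H'$, where $H'$ consists of two \emph{adjacent} slim vertices with a common fat neighbour. This $H'$ is not isomorphic to any of $H_1,H_2,H_3,H_5$: it has two slim vertices (ruling out $H_1,H_2$), they are adjacent (ruling out $H_3$), and it has only two slim vertices (ruling out $H_5$).

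The fix is immediate and is exactly what the paper does: note that $H'=H_1\uplus H_1$ (two copies of $H_1$ sharing their fat vertex---condition~(iv) of Definition~\ref{df:1} forces the adjacency), so whenever $K^i\cong H'$ you simply split it further into two summands each isomorphic to $H_1$. With this additional splitting, re-indexing and discarding empty pieces yields the claimed decomposition $K=\biguplus_{i=0}^{n'}K^i$ with $[K^j]\in\h\cup\{[H_1]\}$.
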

\begin{proof}
Put $L^i=\subgg{V\cap V_s(H^i)}{H^i}$.
Obviously
   $[L^i]\in \h\cup \{[\phi],[H_1],[H']\}$,
   where
   $H'$ is the sum $H_1\uplus H_1$ of two copies of
   $H_1$ sharing a fat vertex.
Since $K=\biguplus_{i=0}^nL^i$ by \cite[Lemma 12]{paperI},
   the lemma holds.
\end{proof}
%%%%%%%%%%%%%%%%%%%%%%%%%%%%%%%%%%%%%%%%%%%%%
%%%%%%%%%%%%%%				%%%%%%%%%%%%%%%%%%%%%%
%%%%%%%%%%%%%%	lemma       	%%%%%%%%%%%%%%%%%%%%%%
\begin{lm}\label{lem:4.4}
Let $\Gamma$ be a connected slim $\h$-line graph.
Then there exists
	a connected strict $\h$-cover graph $H=\biguplus_{i=0}^n H^i$
	of $\Gamma$.
Conversely,
	if $H=\biguplus_{i=0}^n H^i$ is a connected graph
	with $[H^i]\in\h$ and $n>0$,
	then $\Gamma=\langle V_s(H)\rangle_H$
is connected.
\end{lm}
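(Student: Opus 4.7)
The plan is to prove the two directions separately; in both cases the content is how the summand decomposition interacts with ordinary graph connectivity.

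\emph{Forward direction.} Given a connected slim $\h$-line graph $\Gamma$, I would first produce a \emph{strict} $\h$-cover $H=\biguplus_{i=0}^{n}H^{i}$ of $\Gamma$ from the cover guaranteed by the definition, using the reduction from \cite{paperI} to trim surplus slim vertices while preserving the summand structure. Once strictness is available, connectedness of $H$ is immediate: since $V_s(H)=V_s(\Gamma)$ and $\Gamma$ is an induced subgraph of $H$, the slim subgraph of $H$ coincides with $\Gamma$, which is connected by hypothesis, and by Definition~\ref{df:Hg}(i) every fat vertex has at least one slim neighbour, so adjoining fat vertices preserves connectedness.

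\emph{Reverse direction.} Suppose $H=\biguplus_{i=0}^{n}H^{i}$ is connected with $[H^{i}]\in\h$ and $n>0$, and set $\Gamma=\langle V_s(H)\rangle_H$. I would take two arbitrary slim vertices $x,y$, pick a walk from $x$ to $y$ in $H$ (using connectedness of $H$), and convert it into a walk in $\Gamma$ by processing each fat vertex $\alpha$ appearing on the walk together with its immediate neighbours $a,b$ on the walk (both slim, since fat vertices are pairwise non-adjacent). If $a\in V_s(H^{i})$ and $b\in V_s(H^{j})$ with $i\neq j$, then $\alpha$ is a common fat neighbour of $a$ and $b$ across distinct summands, so condition~(iv) of Definition~\ref{df:1} forces $a$ and $b$ to be adjacent in $H$ and hence in $\Gamma$; the single edge $a-b$ then replaces the segment $a-\alpha-b$. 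If instead $a,b\in V_s(H^{i})$, I would use the fact that each of $H_2,H_3,H_5$ has a connected slim subgraph, inspected directly from Figure~\ref{Hoffmans}, to replace $a-\alpha-b$ by a walk from $a$ to $b$ within the slim subgraph of $H^{i}$. Iterating until no fat vertex remains yields a walk from $x$ to $y$ in $\Gamma$.

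\emph{Main obstacle.} The key technical step in the reverse direction is the intra-summand case: one must know that any two slim vertices of $H^{i}$ that share a common fat neighbour are joined by a walk among the slim vertices of $H^{i}$. In the present setting this reduces to the finite inspection of $H_2,H_3,H_5$. A smaller technicality is the passage from an arbitrary $\h$-cover to a strict one in the forward direction, which is not formal from the definitions; indeed Lemma~\ref{lm:new2_1} shows that naive restriction to $V_s(\Gamma)\cup V_f(H)$ may introduce $H_1$-summands which are not in $\h$, so the reduction from the companion paper is genuinely needed.
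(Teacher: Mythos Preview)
Your forward direction is fine and aligns with the paper, which simply cites Example~22 of \cite{paperI}; your added remark that any strict cover of a connected slim graph is automatically connected is correct.

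For the reverse direction your walk-replacement argument differs from the paper's induction on $n$, and it breaks precisely at the step you flagged as the main obstacle. The assertion that each of $H_2,H_3,H_5$ has a connected slim subgraph is false: in $H_3$ the two slim vertices are non-adjacent (this fact is used explicitly in the proof of Lemma~\ref{newlemma}), and the slim subgraph of $H_5$ is $K_2\cup K_1$, an edge together with an isolated vertex (this can be read off from Table~\ref{table2}, or checked directly from the eigenvalue $-1-\sqrt{2}$). Hence when $a,b$ are the two slim vertices of some $H^{i}\cong H_3$, or the non-adjacent pair in some $H^{i}\cong H_5$, there is no walk from $a$ to $b$ inside the slim part of $H^{i}$, and your replacement of the segment $a-\alpha-b$ fails.

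The fix is to use the hypothesis $n>0$: such an $H^{i}$ has a unique fat vertex $\alpha$, and since $H$ is connected with at least two summands, $\alpha$ must also lie in some $H^{j}$ with $j\neq i$; any slim vertex $c\in N^{s}_{H^{j}}(\alpha)$ is then adjacent to both $a$ and $b$ by Definition~\ref{df:1}(iv), and $a-c-b$ supplies the detour. This is exactly the mechanism behind the paper's inductive step (``every slim vertex of $H^{0}$ is adjacent to $\alpha$, and hence every slim vertex of $H^{0}$ has a slim neighbour in $H'$''), so once patched your approach and the paper's rest on the same key observation.
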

\begin{proof}
The first part follows from Example 22 of \cite{paperI}.
%and Lemma~\ref{lem:4.1}.
We prove the second part by induction on $n$.
The assertion is easy to verify when $n=1$.
Suppose $n>1$,
	and let $H'=\subg{\bigcup_{i=1}^n V(H^i)}{H}$.
Since $H$ is connected, $V_f(H^0)\cap V_f(H')\neq\emptyset$.
Pick $\alpha\in V_f(H^0)\cap V_f(H')$.
Then every slim vertex of $H^0$ is adjacent to $\alpha$,
	and hence every slim vertex of $H^0$ has a slim neighbour in $H'$.
Since $H'=\biguplus_{i=1}^n H^i$ is connected by inductive hypothesis,
	we see that $\Gamma$ is connected.
\end{proof}

%%%%%%%%%%%%%%                         %%%%%%%%%%%%%%%%%%%%%%
%%%%%%%%%%%%%%                         %%%%%%%%%%%%%%%%%%%%%%
%%%%%%%%%%%%%%%%%%%%%%%%%%%%%%%%%%%%%%%%%%%%
%%%%%%%%%%%%%%%%%%%%%%%%%%%%%%%%%%%%%%%%%%%%
%%%%%%%%%%%%%%                             %%%%%%%%%%%%%%%%%%%%%
%%%%%%%%%%%%%%      New Lemma              %%%%%%%%%%%%%%%%%
\begin{lm}
If $\biguplus_{i=0}^{m_1}K^i=\biguplus_{i=0}^{m_2}L^i$
	and $[K^i],[L^i]\in\h$ for each $i$,
	then $m_1=m_2$,
	and
\[
	\{K^i\mid 0\le i\le m_1\}=\{L^i\mid 0\le i\le m_2\}.
\]
\label{newlemma}
\end{lm}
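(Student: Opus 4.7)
The plan is to reduce the uniqueness statement to the uniqueness of the induced partition of the slim vertex set $V_s(H)$, and then to recover that partition locally from the fat-neighbour structure of $H$.

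The first step is the reduction. By conditions (i)--(iii) of Definition~\ref{df:1}, every vertex of $H = \biguplus_{i=0}^{m_1} K^i$ lies in some $K^i$, the slim vertex sets $\{V_s(K^i)\}_{i=0}^{m_1}$ partition $V_s(H)$, and every fat neighbour in $H$ of a slim vertex of $K^i$ is a fat vertex of $K^i$. Combined with the requirement that every fat vertex of a Hoffman graph has at least one slim neighbour, this forces
\[
V_f(K^i) = \bigcup_{x \in V_s(K^i)} N_H^f(x),
\]
while the edges of $K^i$ are inherited from $H$. Thus each summand $K^i$ is determined, as a labelled Hoffman subgraph of $H$, by its slim vertex set $V_s(K^i)$, and the same holds for the $L^j$. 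Hence the lemma reduces to showing that the partitions $\{V_s(K^i)\}$ and $\{V_s(L^j)\}$ of $V_s(H)$ coincide.

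The second step is to pin down each part intrinsically. The key tool is condition (iv) of Definition~\ref{df:1}, whose contrapositive says that any two slim vertices of $H$ sharing two or more fat neighbours must lie in the same part of every valid decomposition. Together with Lemma~\ref{lm:not_contain_H14H15}, which bounds $|N_H^f(x)| \leq 2$ for every slim vertex $x$, this already forces many pairs to be in a common part. I would then perform a case analysis on the isomorphism class $[K^{i(x)}] \in \h$ of the part containing a fixed slim vertex $x$: using the explicit structures of $H_2, H_3, H_5$ (their slim subgraphs together with their fat-neighbour patterns) in Figure~\ref{Hoffmans}, one reads off the remaining slim vertices of $V_s(K^{i(x)})$ from the local configuration of $H$ around $x$---namely, their adjacencies to $x$, the fat vertices they share with $x$, and the slim edges among them forced by the isomorphism class.

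The main obstacle will be the case analysis for the ``thinnest'' parts, namely slim vertices $x$ with only a single fat neighbour, for which the two-or-more-fat-neighbour criterion is vacuous. For these, one must combine the local structure at $x$ with condition (iv) applied to every other slim vertex in a prospective part; an inconsistent assignment would violate (iv) somewhere else in $H$. Once every part is thus identified uniquely from $H$, the partitions $\{V_s(K^i)\}$ and $\{V_s(L^j)\}$ must coincide, and hence $m_1 = m_2$ and $\{K^i\} = \{L^j\}$, proving the lemma.
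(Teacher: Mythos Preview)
Your reduction step is correct and matches the paper's framing: both arguments observe that each summand is determined (as an induced subgraph of $H$) by its slim vertex set, so it suffices to show that the two partitions $\{V_s(K^i)\}$ and $\{V_s(L^j)\}$ of $V_s(H)$ coincide.

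The gap is in your ``key tool.'' You take as the decisive contrapositive of Definition~\ref{df:1}(iv) the statement that two slim vertices sharing \emph{two or more} fat neighbours must lie in the same part. But this criterion is vacuous here: part~(ii) of Lemma~\ref{lm:not_contain_H14H15} (which you cite only for part~(i)) says that in any $\h$-line graph, two distinct slim vertices share at most one fat neighbour. So your tool never fires, and the claim that it ``already forces many pairs to be in a common part'' is incorrect---it forces none. The obstacle you flag for the ``thinnest'' parts is in fact the situation for \emph{all} parts.

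The half of condition~(iv) that does the work is the biconditional: across different summands, two slim vertices have a common fat neighbour \emph{if and only if} they are adjacent. The useful contrapositive is therefore: two \emph{non-adjacent} slim vertices with a common fat neighbour must lie in the same summand. This is exactly what the paper exploits. If $s_1\in V_s(K^0)\cap V_s(L^0)$ and $K^0\cong H_3$ or $H_5$, then $K^0$ contains a slim vertex $s_2$ not adjacent to $s_1$ but sharing the unique fat vertex of $K^0$; the corrected criterion forces $s_2\in V_s(L^0)$, and iterating (and then switching roles) gives $V_s(K^0)=V_s(L^0)$. For $K^0\cong H_2$, condition~(iii) places both fat neighbours of its unique slim vertex into $L^0$, whence $|V_f(L^0)|\ge 2$ forces $L^0\cong H_2$ and $K^0=L^0$. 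With this correction your outline becomes the paper's proof.
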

\begin{proof}
It suffices to prove $K^i=L^j$ whenever $V_s(K^i)\cap V_s(L^j)\neq\emptyset$. 
We may suppose without loss of generality that $i=j=0$.
If $K^0\cong H_2$,
	then $K^0$ has a unique slim vertex,
	so $V_s(K^0)\subset V_s(L^0)$.
By Definition~\ref{df:1}(iii),
	we have $K^0\subset L^0$.
This implies $|V_f(L^0)|\geq2$,
	hence $L^0\cong H_2$,
	and therefore $K^0=L^0$.
The same conclusion holds when $L^0\cong H_2$,
	so we suppose $[K^0],[L^0]\in \{[H_3],[H_5]\}$
	for the rest of the proof.
If $s_1\in V_s(K^0)\cap V_s(L^0)$,
	then there exists $s_2\in V_s(K^0)$ not adjacent to $s_1$.
Since $s_1$ and $s_2$ have a common fat neighbour in $K^0$,
	Definition~\ref{df:1}(iv) forces $s_2\in V_s(L^0)$.
This implies $V_s(K^0)\subset V_s(L^0)$ if $K_0\cong H_3$.
If $K^0\cong H_5$,
	then consider the third slim vertex $s_3$ of $K^0$.
We may assume without loss of generality that $s_3$ is not adjacent to $s_1$.
Since $s_1$ and $s_3$ have a common fat neighbour in $K^0$,
	Definition~\ref{df:1}(iv) forces $s_3\in V_s(L^0)$.
Thus $V_s(K^0)\subset V_s(L^0)$.
Switching the roles of $K^0$ and $L^0$,
	we obtain $V_s(L^0)\subset V_s(K^0)$.
Therefore
	we conclude $V_s(K^0)=V_s(L^0)$,
	and hence $K^0=L^0$.
\end{proof}

%%%%%%%%%%%%%%%%%%%%%%%%%%%%%%%%%%%%%% Lemmma
\begin{lm}\label{lm:subgraph}
%Suppose $H=H^0\uplus H^1$, and $H^2$ is a subgraph of $H^1$
%	such that $H^2=H^1-(V_s(H^1)\setminus V_s(H^2))$.
%Then the subgraph induced by $V(H^0)\cup V(H^2)$ can be 
%	written as $H^0\uplus H^2$.
Suppose $H=H^0\uplus H^1$,
	$S\subset V_s(H^1)$,
	and $H^2=\subgg{S}{H^1}$.
Then $\subg{V(H^0)\cup V(H^2)}{H}=H^0\uplus H^2$.
\end{lm}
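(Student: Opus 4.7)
The plan is to verify directly that the pair $(H^0,H^2)$ satisfies the four defining conditions of Definition~\ref{df:1} as a decomposition of $K:=\subg{V(H^0)\cup V(H^2)}{H}$. Since $K$ is by construction the induced subgraph of $H$ on $V(H^0)\cup V(H^2)$, adjacency in $K$ coincides with adjacency in $H$ between any two such vertices, and $V_f(K)=V_f(H^0)\cup V_f(H^2)$.

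Conditions (i) and (ii) are essentially immediate: $V(K)=V(H^0)\cup V(H^2)$ holds by construction, and $V_s(H^0)\cap V_s(H^2)\subseteq V_s(H^0)\cap V_s(H^1)=\emptyset$ using condition (ii) for $H=H^0\uplus H^1$.

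For condition (iii), I would take a slim vertex $x$ in $V_s(H^0)$ (respectively $V_s(H^2)$) and a fat vertex $y$ of $K$ adjacent to $x$; as $K$ is induced, $x$ and $y$ are adjacent in $H$. Applying condition (iii) for $H=H^0\uplus H^1$ then forces $y\in V(H^0)$ (respectively $y\in V(H^1)$). The extra observation needed in the second case is that such a $y$ is a fat neighbour in $H^1$ of a vertex of $S$, so by the very definition of $H^2=\subgg{S}{H^1}$ (which adjoins to $S$ all fat neighbours in $H^1$ of its slim vertices), $y\in V(H^2)$.

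Condition (iv) is the main thing to check. Take $x\in V_s(H^0)$ and $y\in V_s(H^2)\subseteq V_s(H^1)$. Any common fat neighbour in $K$ is a common fat neighbour in $H$, so the bound of one and the biconditional with adjacency follow from condition (iv) for $H=H^0\uplus H^1$, provided we know the converse: that a common fat neighbour $z$ of $x,y$ in $H$ actually lies in $V_f(K)$. Here condition (iii) for $H$ applied to the pair $x,z$ puts $z$ in $V(H^0)$, and applied to $y,z$ puts $z$ in $V(H^1)$; since $z$ is then a fat neighbour in $H^1$ of $y\in S$, the construction of $H^2$ places $z$ in $V_f(H^2)\subseteq V_f(K)$. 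The only real obstacle is this bookkeeping in (iv), and it reduces to the same key observation that $V_f(H^2)$ absorbs every fat vertex of $H^1$ adjacent to any vertex of $S$, after which the argument closes routinely.
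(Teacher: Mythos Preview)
Your proof is correct and is precisely the ``routine verification'' that the paper's own proof consists of (the paper gives no further detail). You spell out the check of conditions (i)--(iv) of Definition~\ref{df:1}, with the only nontrivial point being the one you identify: a common fat neighbour in $H$ of $x\in V_s(H^0)$ and $y\in S$ must lie in $V_f(H^2)$ by the definition of $\subgg{S}{H^1}$ and condition~(iii) for $H=H^0\uplus H^1$.
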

\begin{proof}
Routine verification.
\end{proof}

%%%%%%%%%%%%%%%%%%%%%%%%%%%%%%%%%%%%%%%%%%%%%%%%%%%%%%%%%%%%%%%  Table 2
\begin{center}
\begin{table}[htbp]
\begin{tabular}{|c||c|c|c|c|}
\hline
& (i)& (ii)& (iii)& (iv)\\
\hline
&&&& \\
{\large \raise2ex\hbox{$H^0$}} &
\includegraphics[scale=0.07]
{./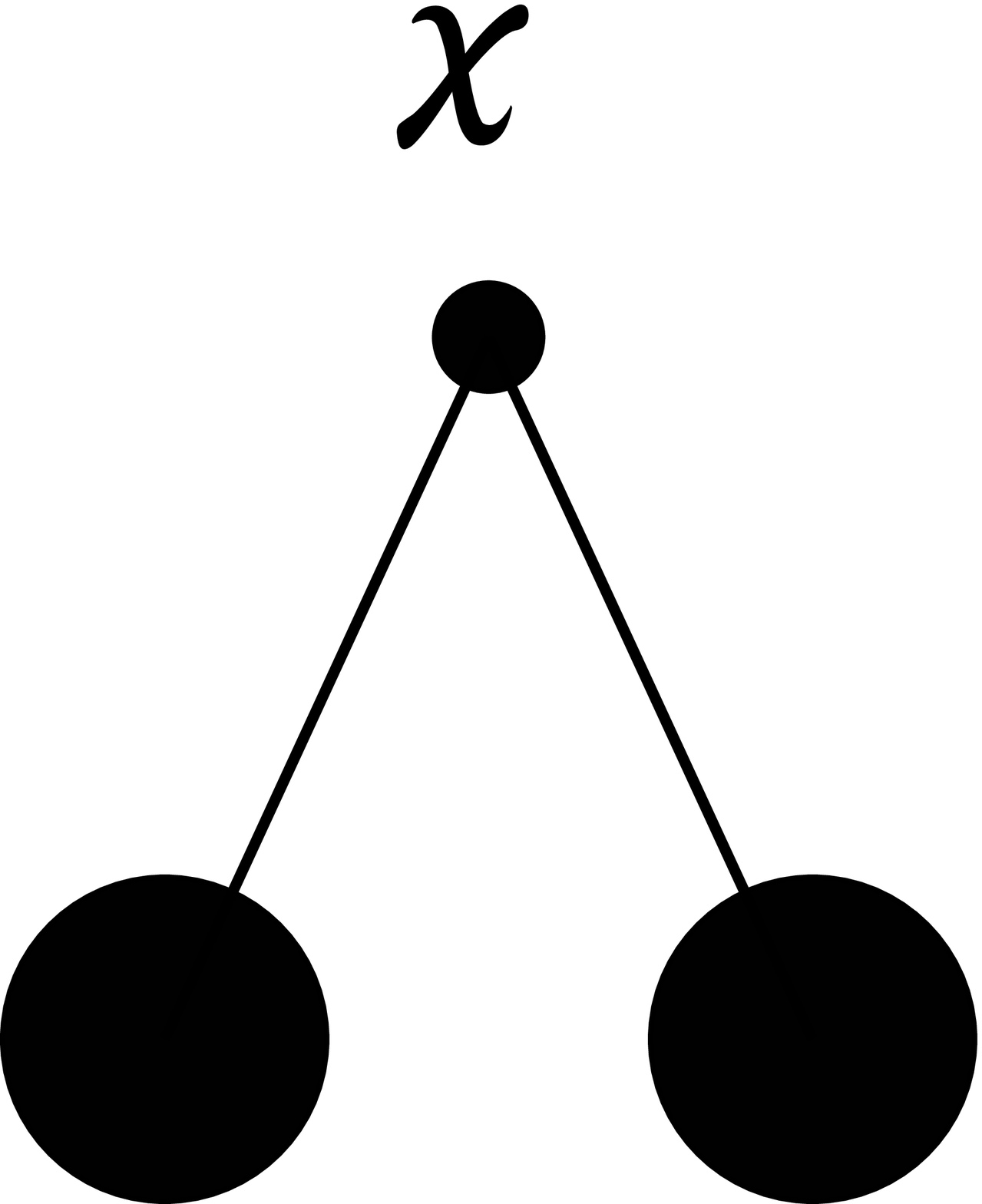} &
\includegraphics[scale=0.07]
{./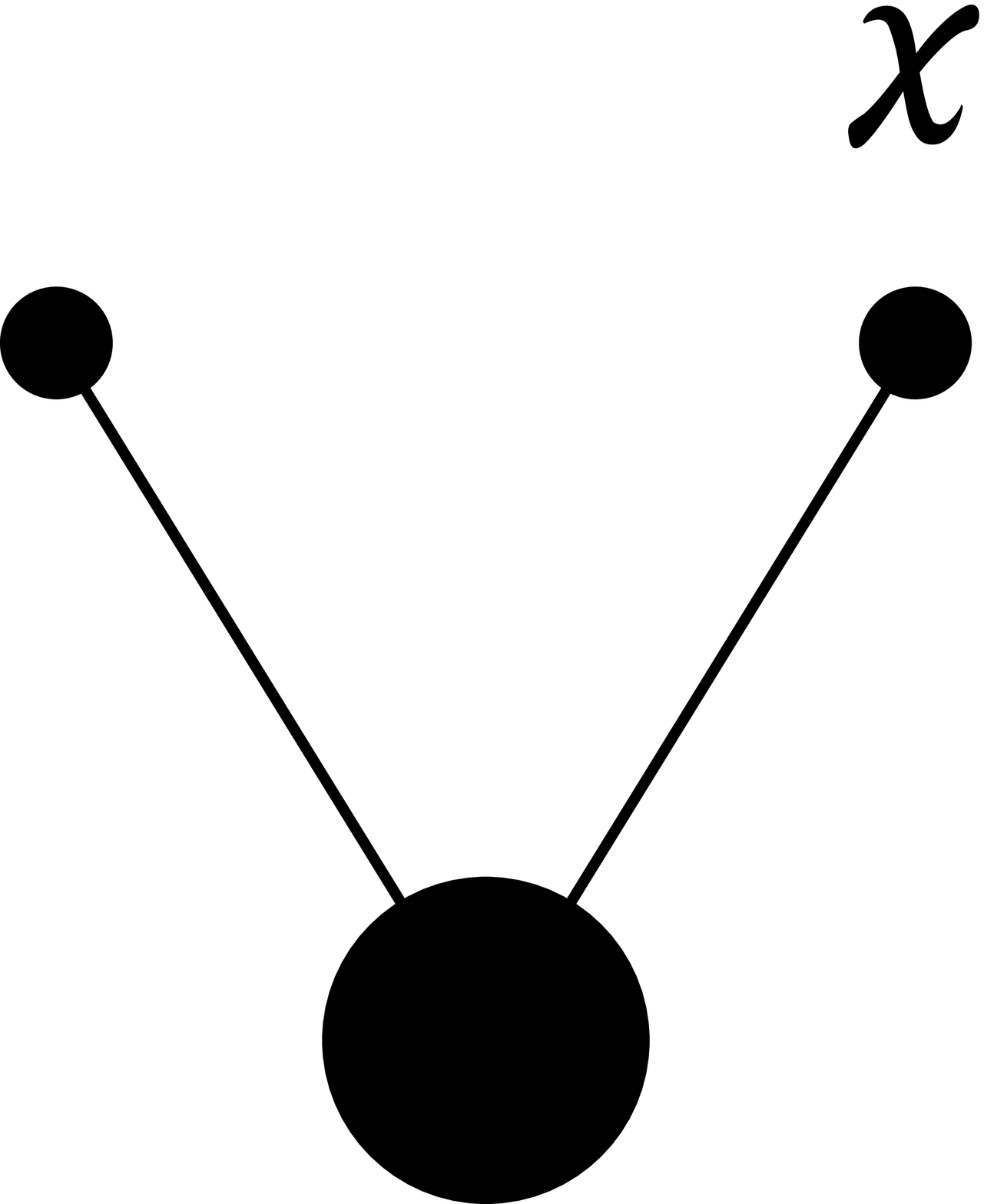} &
\includegraphics[scale=0.07]
{./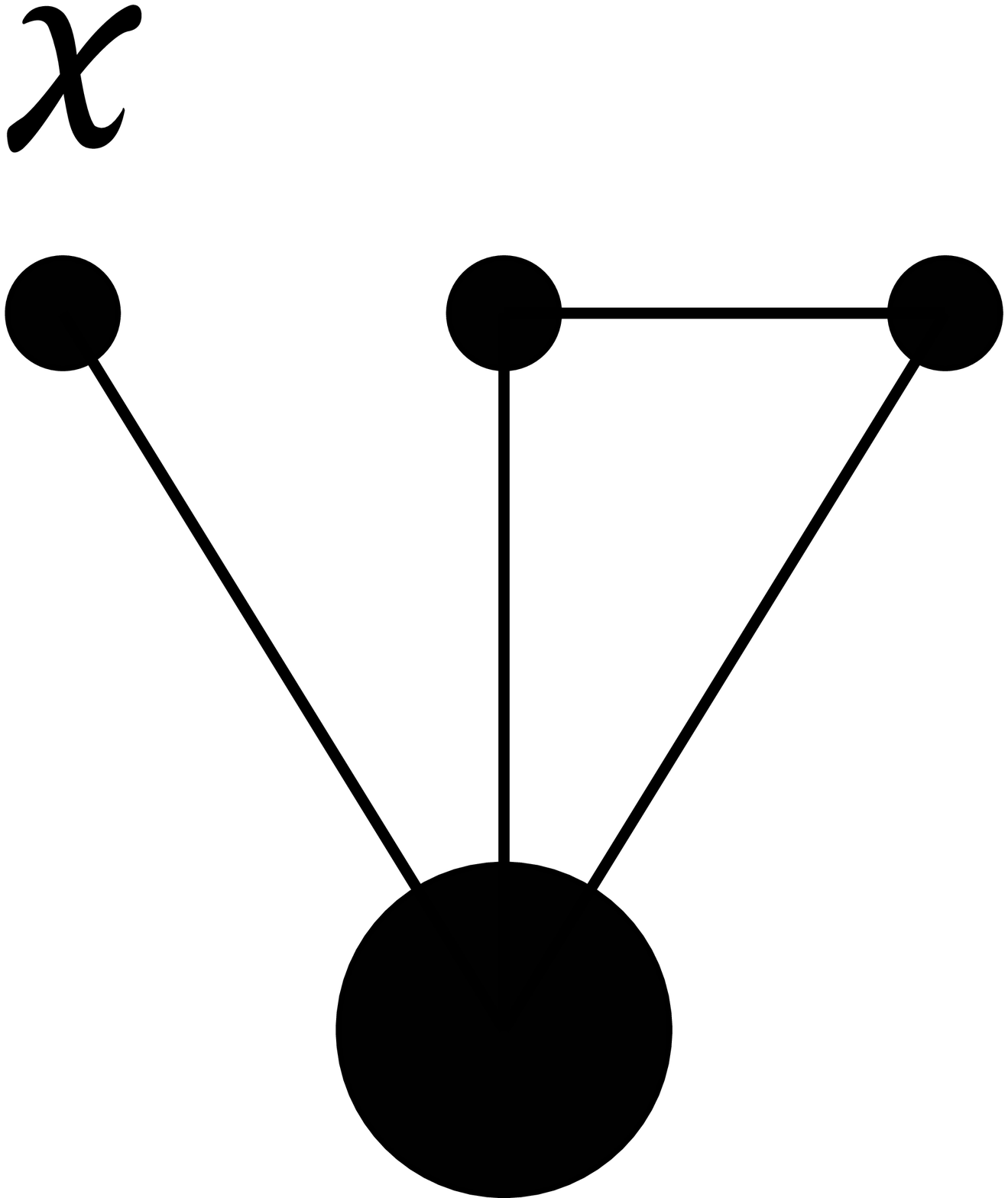} &
\includegraphics[scale=0.07]
{./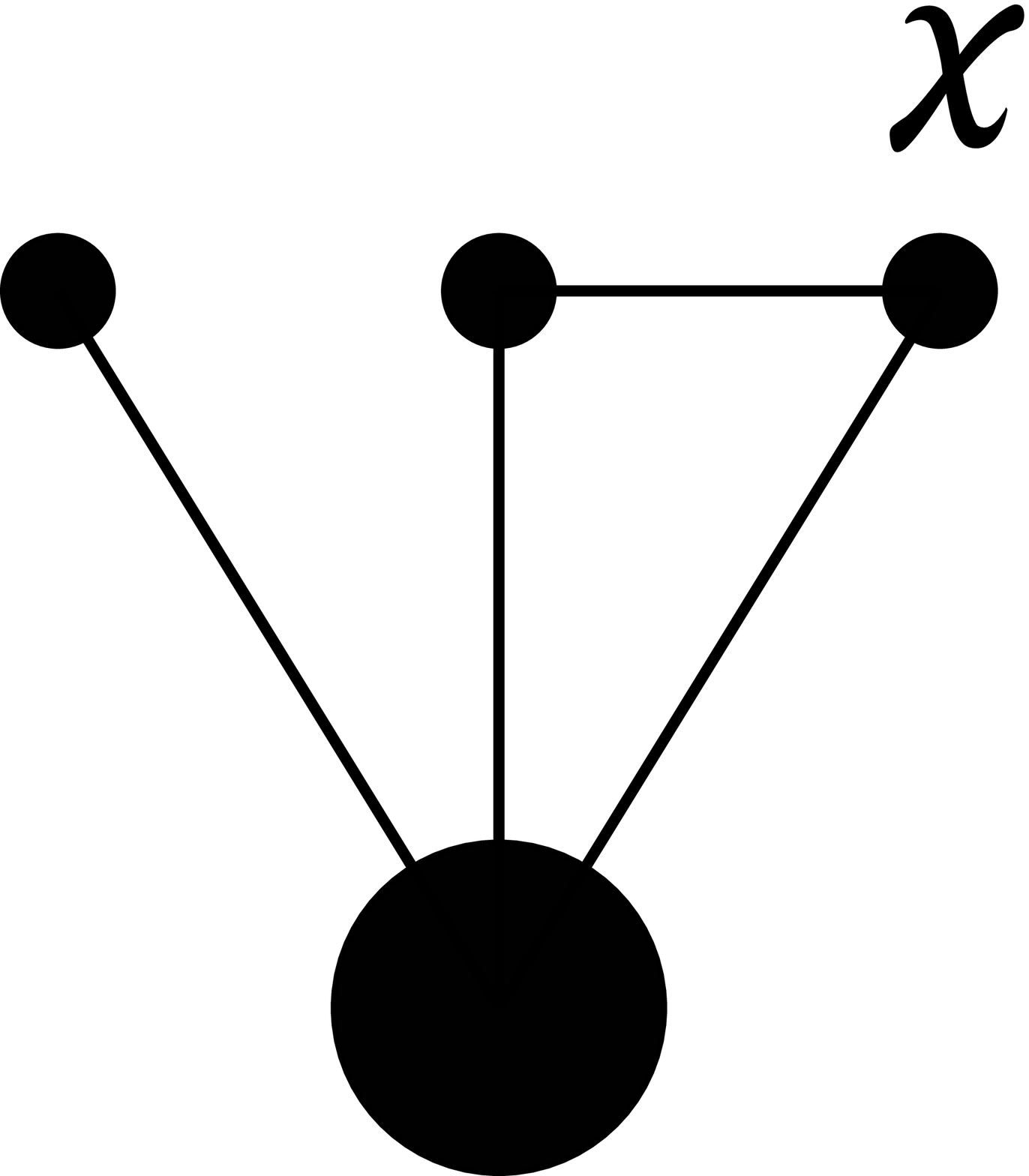}
 \\

\hline
&&&& \\
{\large \raise2ex\hbox{$H^0-x$}} &
{\large \raise2ex\hbox{$\phi$}} &
\includegraphics[scale=0.07]
{./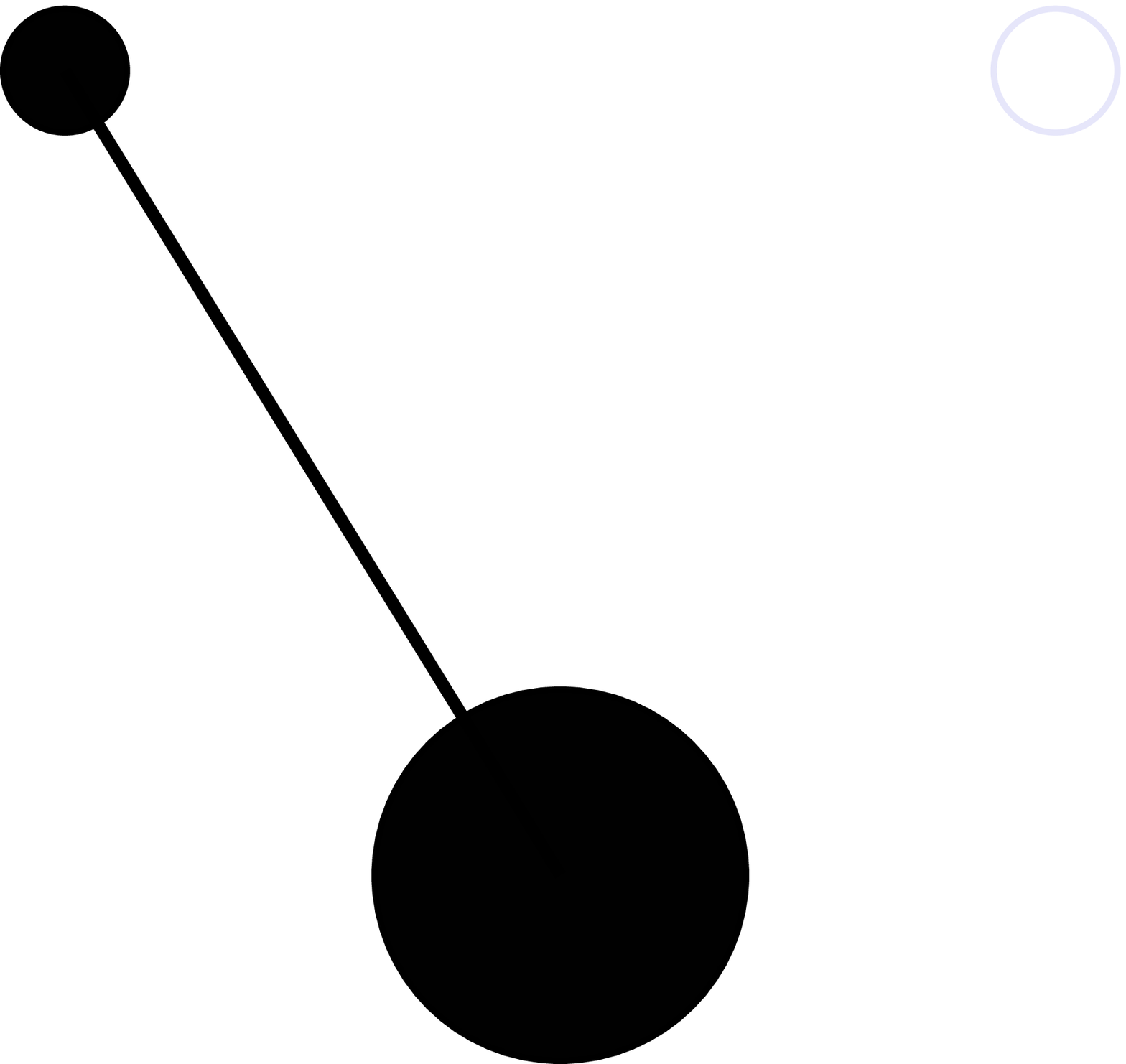} &
\includegraphics[scale=0.07]
{./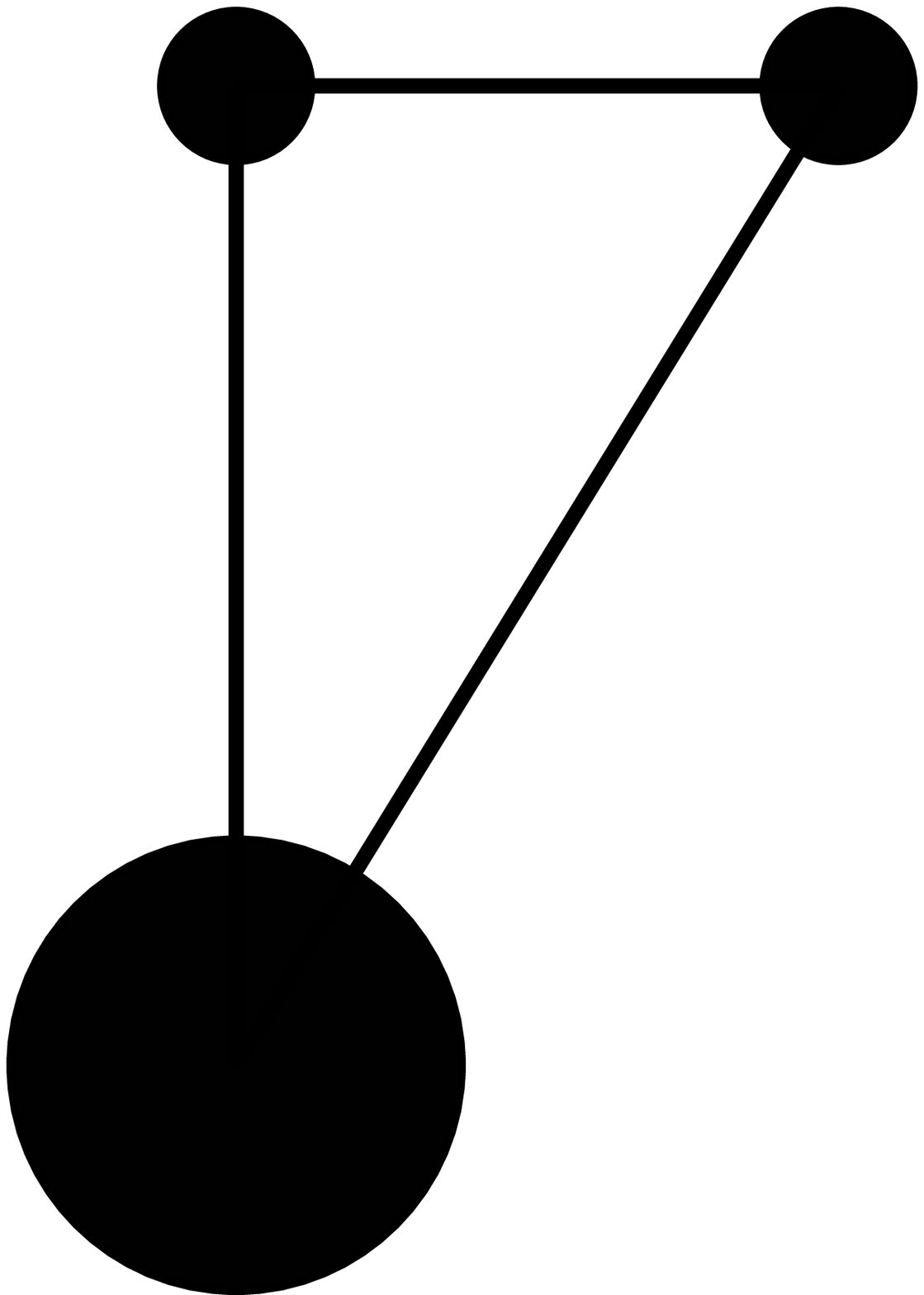} &
\includegraphics[scale=0.07]
{./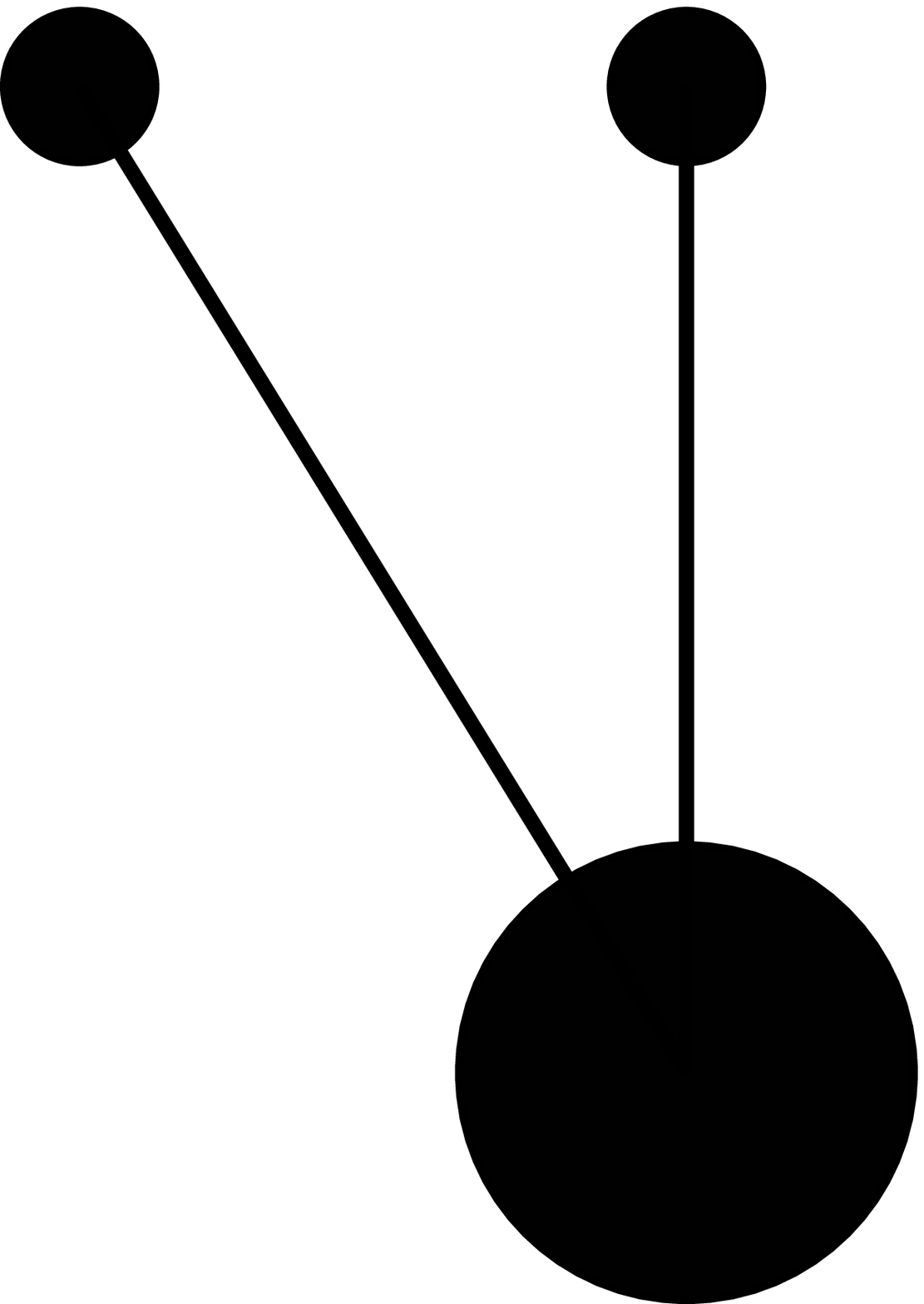}
 \\

\hline
&&&& \\
{\large \raise2ex\hbox{$\tilde{H}^0$}} &
{\large \raise2ex\hbox{$\phi$}} &
\includegraphics[scale=0.07]
{./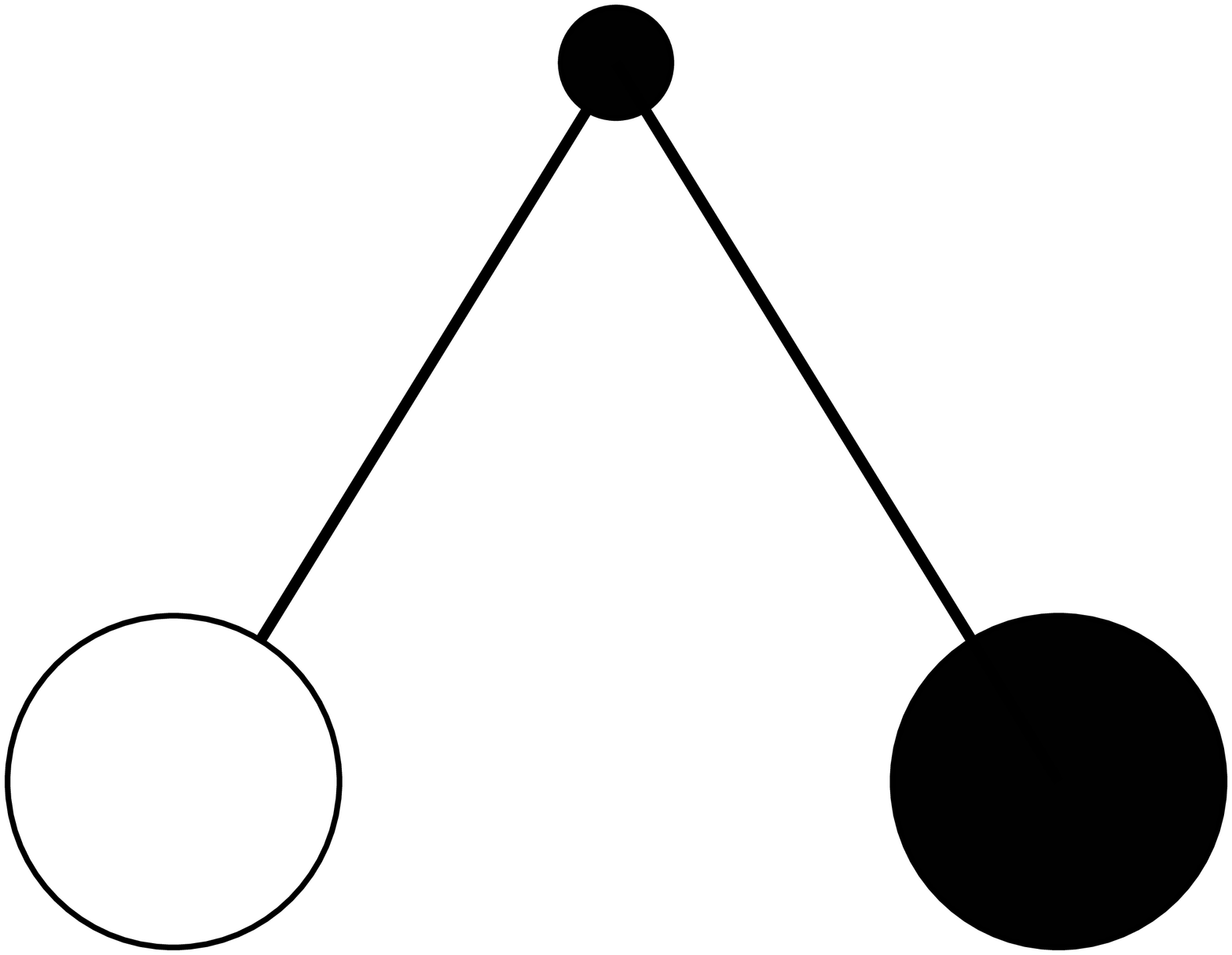} &
\includegraphics[scale=0.07]
{./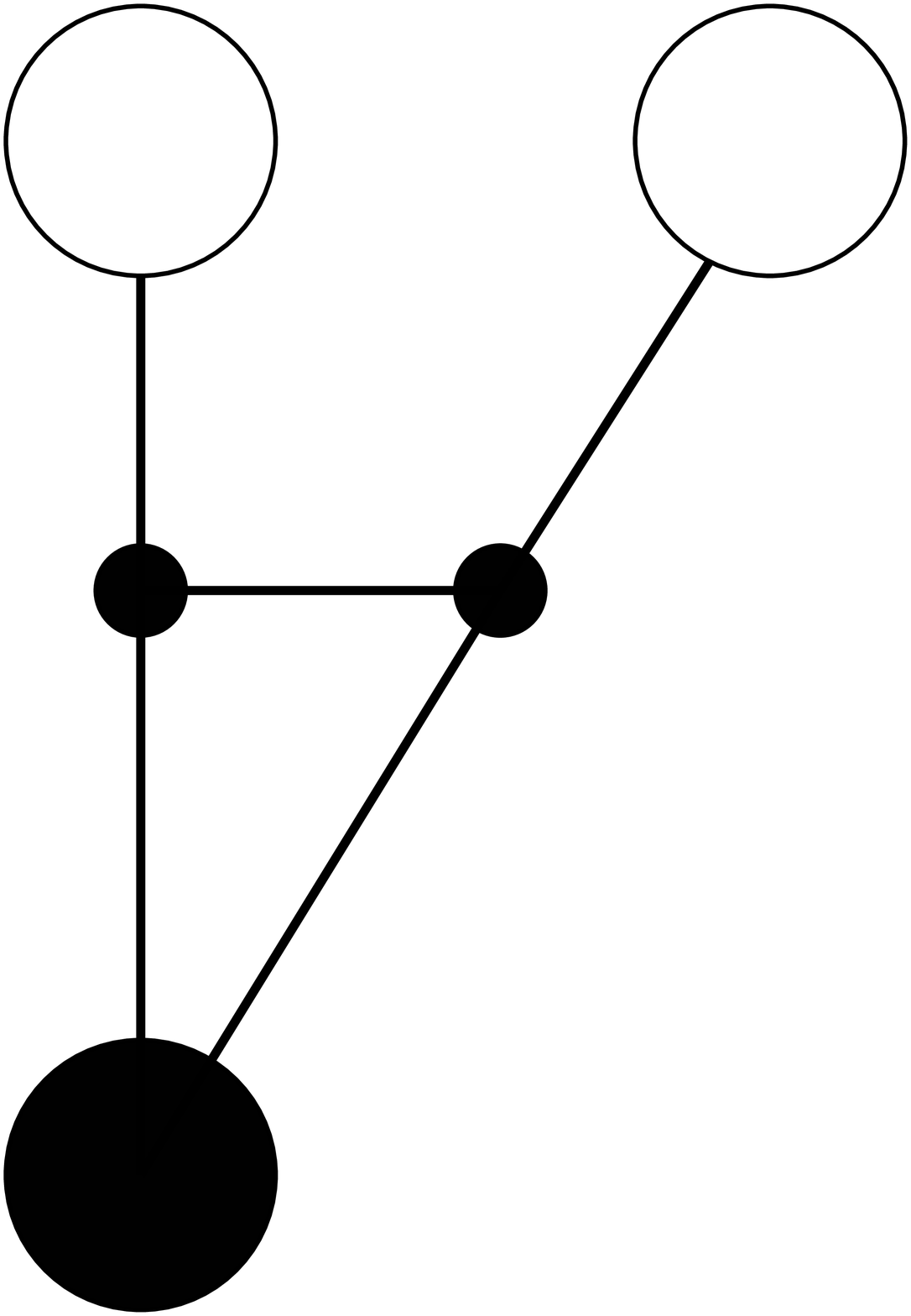} &
\includegraphics[scale=0.07]
{./X0_xh5_2.eps}
 \\

\hline
\end{tabular} \caption{}\label{table2}
\end{table}
\end{center}

%%%%%%%%%%%%%%%%%%%%%%%%%%%%%%%%%%%%%% Lemmma
\begin{lm}\label{lm:newLemma001}
Let $H=H^0\uplus H^1$ be a connected Hoffman graph
   satisfying $[H^0]\in \h$.
Let $x$ be a slim vertex of $H^0$.
Then there exists a strict $\h$-cover graph
   $\tilde{H}=\tilde{H}^0\uplus H^1$ of $H-x$,
   and one of the following holds:
\begin{enumerate}[(i)]
\item $\tilde{H}^0=\phi$,
\item $\tilde{H}^0\cong H_2$,
   and one of the fat vertices of $\tilde{H}^0$
   is a pendant vertex in $H$,
\item $\tilde{H}^0= K^1\uplus K^2$,
   $K^1\cong K^2\cong H_2$,
   $K^1$ and $K^2$ have a fat vertex in common,
   and the other fat vertices of $\tilde{H}^0$
   are pendant vertices in $H$,
\item $\tilde{H^0}\cong H_3$.
\end{enumerate}

\end{lm}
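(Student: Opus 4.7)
The plan is to perform a case analysis on the isomorphism type of $H^0\in\{H_2,H_3,H_5\}$, and in each case to inspect the Hoffman graph $H^0-x$ directly. Lemma~\ref{lm:new2_1}, applied with $V=V_s(H)\setminus\{x\}$, gives a preliminary sum decomposition of $H-x$ whose summands lie in $\h\cup\{[H_1]\}$; the remaining work is to locate the piece arising from $H^0-x$, upgrade it (if necessary) to a piece of $\h$ by adjoining a suitable fat vertex already in $H$, and verify that the resulting decomposition $\tilde{H}=\tilde{H}^0\uplus H^1$ satisfies Definition~\ref{df:1}.

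If $H^0\cong H_2$, then $x$ is the unique slim vertex of $H^0$ and we simply take $\tilde{H}^0=\phi$, giving~(i). If $H^0\cong H_3$, a unique slim vertex $y$ of $H^0$ survives; it retains one or two fat neighbours inherited from $H^0$, and when only one remains we promote the piece to an $H_2$ by adjoining a fat vertex of $H^0$ whose only slim neighbour in $H$ is $y$, giving~(ii). For $H^0\cong H_5$ two slim vertices of $H^0$ survive, and the split (iii) versus (iv) reflects the two automorphism orbits of slim vertices of $H_5$: one choice leaves the survivors with a common fat neighbour (so the piece is already $H_3$, case~(iv)), and the other separates them into two $H_2$-pieces sharing a pendant fat vertex (case~(iii)).

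The main obstacle will be verifying, in cases~(ii)--(iv), that $\tilde{H}^0\uplus H^1$ is a genuine sum in the sense of Definition~\ref{df:1}. Axioms~(i)--(iii) are straightforward bookkeeping, but axiom~(iv) — that two slim vertices in different summands share a fat neighbour iff they are adjacent — requires care. For slim-vertex pairs already present in the original sum $H=H^0\uplus H^1$ the condition is inherited, with Lemma~\ref{lm:subgraph} allowing us to restrict the decomposition cleanly to the induced subgraph $H-x$; for any newly adjoined pendant fat vertex the condition is vacuous, since such a vertex has just one slim neighbour in $H$ by construction. The assertion that these promoted fat vertices are genuinely pendant in $H$ rather than merely in $\tilde{H}$ then follows from axiom~(iii) of $H=H^0\uplus H^1$ applied to the fats of $H^0$ that remain after $x$ is removed: any fat vertex of $H^0$ not shared with $H^1$ has all of its slim neighbours in $V_s(H^0)$, so the ones selected in the promotion step have a single slim neighbour in $H$, as required.
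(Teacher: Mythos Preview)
Your overall strategy---a case split on $[H^0]\in\{[H_2],[H_3],[H_5]\}$ followed by inspection of $H^0-x$---is exactly the paper's approach (cf.\ Table~\ref{table2}). The problem is in your execution of cases~(ii) and~(iii), where you misidentify both the source of the extra fat vertices and the distinction between the two $H_5$ subcases.

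Recall that $H_3$ and $H_5$ each have exactly \emph{one} fat vertex (this is forced by the hypothesis of Lemma~\ref{lm:not_contain_H14H15} and used in the proof of Lemma~\ref{lm:Fpre2}). So when $H^0\cong H_3$ and you delete $x$, the surviving slim vertex $y$ has exactly one fat neighbour $\alpha$ in $H$; there is no ``fat vertex of $H^0$ whose only slim neighbour in $H$ is $y$'' available, because $\alpha$ was adjacent to $x$ as well, and $H^0$ has no other fat vertex. The second fat vertex needed to make $\tilde H^0\cong H_2$ must be a \emph{new} vertex, not present in $H$ at all; this is precisely the enlargement step supplied by Lemma~13 of \cite{paperI} in the paper's cited proof. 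Consequently, the phrase ``pendant vertex in $H$'' in the lemma should be read as ``pendant in $\tilde H$''; your attempt to justify it literally via axiom~(iii) applied to ``fats of $H^0$'' appeals to a vertex that does not exist.

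Your $H_5$ case split is also off. The slim subgraph of $H_5$ is a path $a\sim b\sim c$, and in \emph{both} subcases the two surviving slim vertices retain the common fat neighbour $\alpha$. The actual dichotomy is adjacency between the survivors: deleting the middle vertex $b$ leaves $a,c$ non-adjacent, so $H^0-x\cong H_3$ already and no promotion is needed (case~(iv)); deleting an endpoint leaves the survivors adjacent, giving two copies of $H_1$ glued at $\alpha$, which one upgrades to $K^1\uplus K^2$ with $K^i\cong H_2$ by adjoining two new pendant fat vertices (case~(iii)). In particular, the fat vertex shared by $K^1$ and $K^2$ is $\alpha$, which is \emph{not} pendant---the pendant ones are the two newly added fats, contrary to what you wrote.
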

\begin{proof}
This is shown in the proof of Theorem~31 in \cite{paperI},
   using Table~1, Lemma~12 and Lemma~13 in \cite{paperI}.
\end{proof}

For a Hoffman graph $H=\biguplus_{i=0}^nH^i$
	and a subset $J$ of $\{0,1,\ldots,n\}$,
	we write $H(J)=\biguplus_{i\in J}H^i$.
%%%%%%%%%%%%%%%%%%%%%%%%%%%%%%%%%%%%%%%  lm:Fpre2
\begin{lm}\label{lm:Fpre2}
Let $H=\biguplus_{i=0}^nH^i$ be a connected Hoffman graph
	satisfying $H^j\cong H_2,H_3$ or $H_5$
	for $j=0,1,\ldots,n$.
Let $V$ be a subset of $V_s(H)$
	such that $\subgg{V}{H}$ is connected.
Let $I=\{i\mid H^i\cong H_2,\ 0\le i\le n\}$,
	and let $I'=\{i\in I\mid V_s(H^i)\subset V\}$.
Then,
\begin{enumerate}[(i)]
\item if $I'\neq\emptyset$,
	then $H(I')$ is connected,
	and in particular,
	$H(I)$ is connected,
\item if $I\neq\emptyset$,
	then $V_f(H(I))=V_f(H)$.
\end{enumerate}
\end{lm}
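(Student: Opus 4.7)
The plan is to prove (i) by reducing a connecting walk in $\subgg{V}{H}$ to one that visits only $I'$-pieces, and to derive (ii) by a contradiction that exhibits a disconnection of $H$. Both parts rely on the bound from Lemma~\ref{lm:not_contain_H14H15}: every slim vertex in a piece isomorphic to $H_3$ or $H_5$ has at most one fat neighbor.

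For part (i), fix $i_0,i_1 \in I'$ and let $x_{i_0},x_{i_1}\in V$ be the unique slims of $H^{i_0},H^{i_1}\cong H_2$. The connectedness of $\subgg{V}{H}$ supplies a path between them; since fat vertices are pairwise non-adjacent in a Hoffman graph, I strip interior fats to obtain a slim walk $x_{i_0}=u_0,u_1,\ldots,u_m=x_{i_1}$ in $V$ in which consecutive $u_l,u_{l+1}$ are either adjacent in $H$ or share a fat neighbor in $\subgg{V}{H}$. Each $u_l$ lies in a unique piece $H^{a_l}$; when $H^{a_l}\cong H_2$ its single slim is $u_l\in V$, so $a_l\in I'$; when $H^{a_l}\cong H_3$ or $H_5$, Lemma~\ref{lm:not_contain_H14H15} gives $|N_H^f(u_l)|\le 1$. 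The key step is that any maximal run $u_{l_1},\ldots,u_{l_2}$ of consecutive indices whose pieces are of type $H_3$ or $H_5$ shares one common fat $f^\ast$: the two bridging fats adjacent to each interior $u_l$ are both fat neighbors of $u_l$, hence equal by the one-fat bound, and propagating this equality along the run shows $f^\ast$ also bridges $u_{l_1-1}$ with $u_{l_2+1}$ and lies in $\subgg{V}{H}$. Shortcutting each run produces a reduced walk entirely inside $I'$-pieces whose consecutive pieces share fats, yielding a chain of fat-adjacencies in $H(I')$ from $H^{i_0}$ to $H^{i_1}$. Taking $V=V_s(H)$ (so that $I'=I$) gives the ``in particular'' assertion.

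For part (ii), I argue by contradiction: suppose $f \in V_f(H)\setminus V_f(H(I))$. Any slim neighbor $u$ of $f$ lies in some $H^j$ containing $f$; because $f\notin V_f(H(I))$ we have $H^j\cong H_3$ or $H_5$, and Lemma~\ref{lm:not_contain_H14H15} then forces $N_H^f(u)=\{f\}$. Let $W$ be the set of slim vertices in the same connected component of $H-f$ as some vertex of $N_H^s(f)$. Using the one-fat bound together with the structural properties of $H_3$- and $H_5$-pieces (in particular, that the bridging fat along any slim-slim edge starting from such a vertex must be $f$, and the unique fat of any slim in such a piece is $f$), I would verify that every slim and every fat neighbor in $H$ of a vertex of $W$ lies in $W\cup\{f\}$. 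Thus $W\cup\{f\}$ is a union of connected components of $H$. However, for any $i_0\in I$ the slim $x_{i_0}$ satisfies $N_H^f(x_{i_0})=V_f(H^{i_0})\not\ni f$, so $x_{i_0}\notin W\cup\{f\}$, contradicting the connectedness of $H$.

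The main obstacle is the run-collapsing step in (i): the single shortcut fat for each maximal $H_3/H_5$-run has to be identified by matching the bridging fats at each interior slim to that slim's unique fat, and then confirmed to remain in $\subgg{V}{H}$. Part (ii) then becomes a similar propagation argument, exploiting the constrained fat structure of $H_3$- and $H_5$-pieces to force every edge out of $W$ to pass through the single fat $f$.
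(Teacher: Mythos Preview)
Your proposal is correct, but both parts take longer routes than the paper does.

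For (i), the paper avoids walks entirely. It sets $J=\{i:V_s(H^i)\cap V\neq\emptyset\}$, notes that $H(J)$ is connected (each piece $H^i$ with $i\in J$ meets the connected set $\subgg{V}{H}$), and then peels off the pieces $H^i$ with $i\in J\setminus I'$ one at a time via Lemma~\ref{lem:4.2}: each such piece is isomorphic to $H_3$ or $H_5$, so it has a single fat vertex adjacent to all of its slim vertices, and that lemma guarantees the remainder stays connected. Your run-collapsing argument reaches the same conclusion constructively and is self-contained (it does not rely on Lemma~\ref{lem:4.2}), but it is substantially longer. One small correction: the bound $|N_H^f(u_l)|\le 1$ for $u_l$ in an $H_3$- or $H_5$-piece is not what Lemma~\ref{lm:not_contain_H14H15} says (that lemma gives $\le 2$ for all slim vertices); the fact you need follows directly from Definition~\ref{df:1}(iii), which gives $N_H^f(u_l)=N_{H^{a_l}}^f(u_l)$.

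For (ii), your approach is really the paper's argument in disguise. The paper observes in one line that $\subgg{N_H^s(f)}{H}$ has $f$ as its unique fat vertex (since every slim neighbor of $f$ lies in an $H_3$- or $H_5$-piece whose only fat is $f$), and hence is a union of connected components of $H$, contradicting connectedness together with $I\neq\emptyset$. Your set $W$ actually coincides with $N_H^s(f)$: once you know each $u\in N_H^s(f)$ has $N_H^f(u)=\{f\}$ and that any slim neighbor of $u$ again lies in $N_H^s(f)$ (by Definition~\ref{df:1}(iv) the bridging fat must be $f$), the component of $H-f$ containing $u$ never leaves $N_H^s(f)$. So the detour through $H-f$ is unnecessary; identifying $W=N_H^s(f)$ up front gives the paper's short proof.
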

\begin{proof}
Put $J=\{i\mid 0\le i\le n,\ V_s(H^i)\cap V\neq\emptyset\}$
   so that $I'=I\cap J$.
Since $\subgg{V}{H}$ is connected,
	so is $H(J)$.
Since the removal of $V_s(H^i)$ with $i\in J\setminus I'$
	preserves connectivity
	by Lemma~\ref{lem:4.2},
	we conclude that $H(I')$ is connected.

Suppose $V_f(H(I))\neq V_f(H)$.
Then there exists a fat vertex $f\in V_f(H)\setminus V_f(H(I))$.
Since $\subgg{N_H^s(f)}{H}$ has the unique fat vertex $f$,
	it is a connected component of $H$.
But this contradicts the assumption
	that $H$ is connected and $I\neq\emptyset$.
Hence $V_f(H(I))=V_f(H)$.
\end{proof}

%%%%%%%%%%%%%%%%%%%%%%%%%%%%%%%%%%%%%%%%%%%%
%%
%%  Section 3: about {H_2,H_5}-line graph, finding all forbidden subgraph
%%
%%%%%%%%%%%%%%%%%%%%%%%%%%%%%%%%%%%%%%%%%%%%
\section{MAIN THEOREM: THE MINIMAL FORBIDDEN SUBGRAPHS}
\label{proofofmain}

In this section,
	we assume $\h=\{[H_2],\ [H_3],\ [H_5]\}$ (cf. Figure~\ref{Hoffmans}).
Let $F_1,F_2,\ldots,F_9$ be the Hoffman graphs depicted in Figure~\ref{FG2}.

%If a graph $\Gamma$ contains a subgraph which
%	is a non $\h$-line graph,
%	then obviously $\Gamma$ is a non $\h$-line graph.
%Hence,
%	by the determination of all minimal slim non $\h$-line graph,
%	we obtain all non $\h$-line graph.
%We expect such graphs are only the graphs in Figure~\ref{MFS}.

%%%%%%%%%%%%%%%%%%%%%%%%%%%%%%%%%%%% Lemma
%\begin{lm}\label{lm:not_contain_H14H15}
%Let $\h$ be a family of isomorphism classes of Hoffman graphs,
%	satisfying the following condition:
%\[
%	[H]\in\h,\; H\not\cong H_2
%	\implies |N_H^f(x)|\leq1\quad\forall x\in V_s(H).
%\]
%Let $H$ be an $\h$-line graph.
%Then,
%\begin{enumerate}[{\rm (i)}]
%	\item if $u\in V_s(H)$,
%		then $|N_H^f(u)|\leq2$,
%	\item if $u,v$ are distinct slim vertices of $H$,
%		then $|N_H^f(u)\cap N_H^f(v)|\leq1$.
%\end{enumerate}
%\end{lm}
%\begin{proof}
%See Lemma 23 of \cite{paperI}.
%\end{proof}

%%%%%%%%%%%%%%%%%%%%%%%%%%%%%%%%%%%%%%%%%%%%
%%%%%%%%%%%%%%     table2		%%%%%%%%%%%%%%%%%%%%%%
%%%%%%%%%%%%%%                		%%%%%%%%%%%%%%%%%%%%%%
\begin{center}
\begin{figure}[hbtp]
\caption{}
\begin{tabular}{ccccc}
\includegraphics[scale=0.11]
{./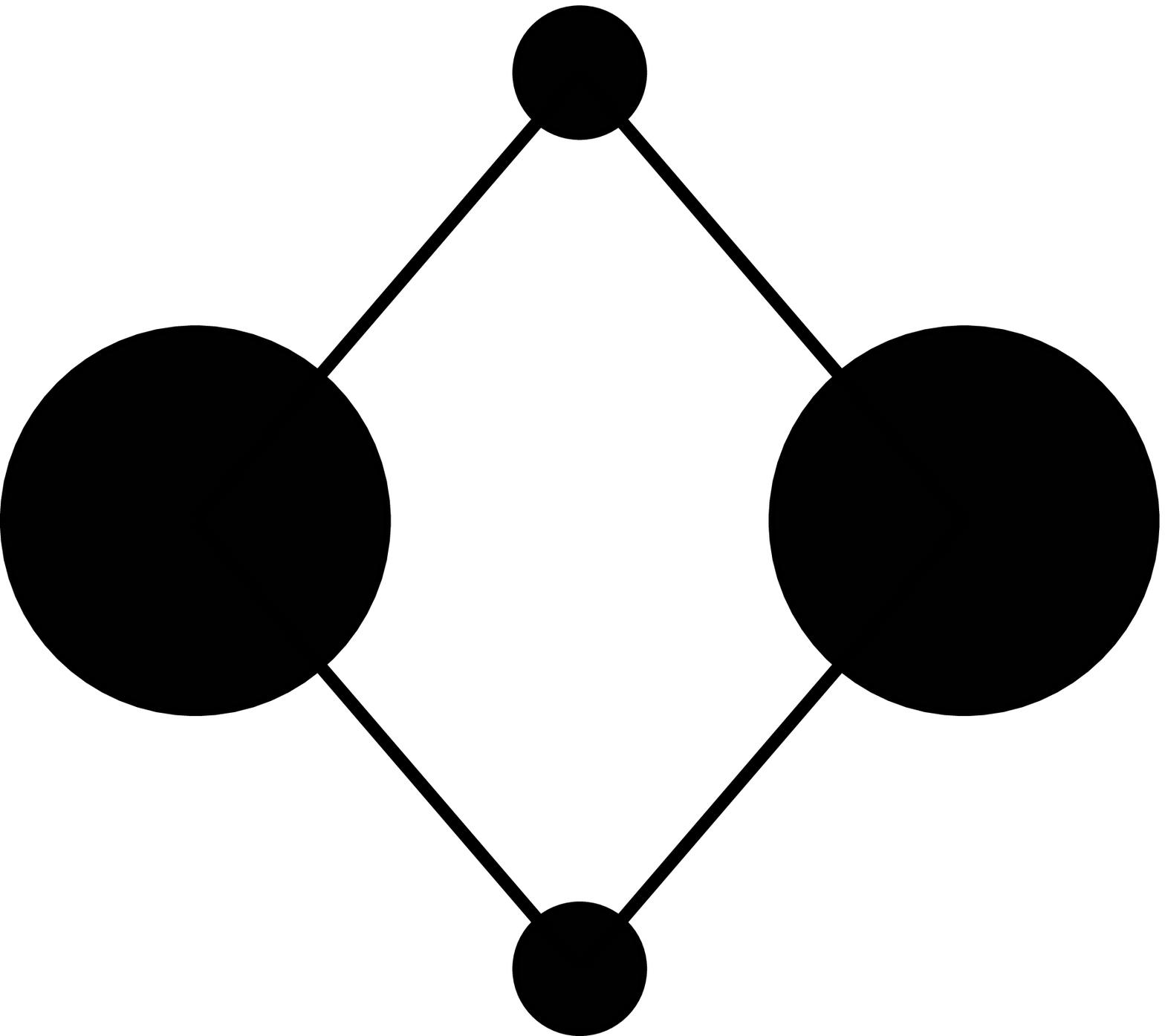} &
\includegraphics[scale=0.11]
{./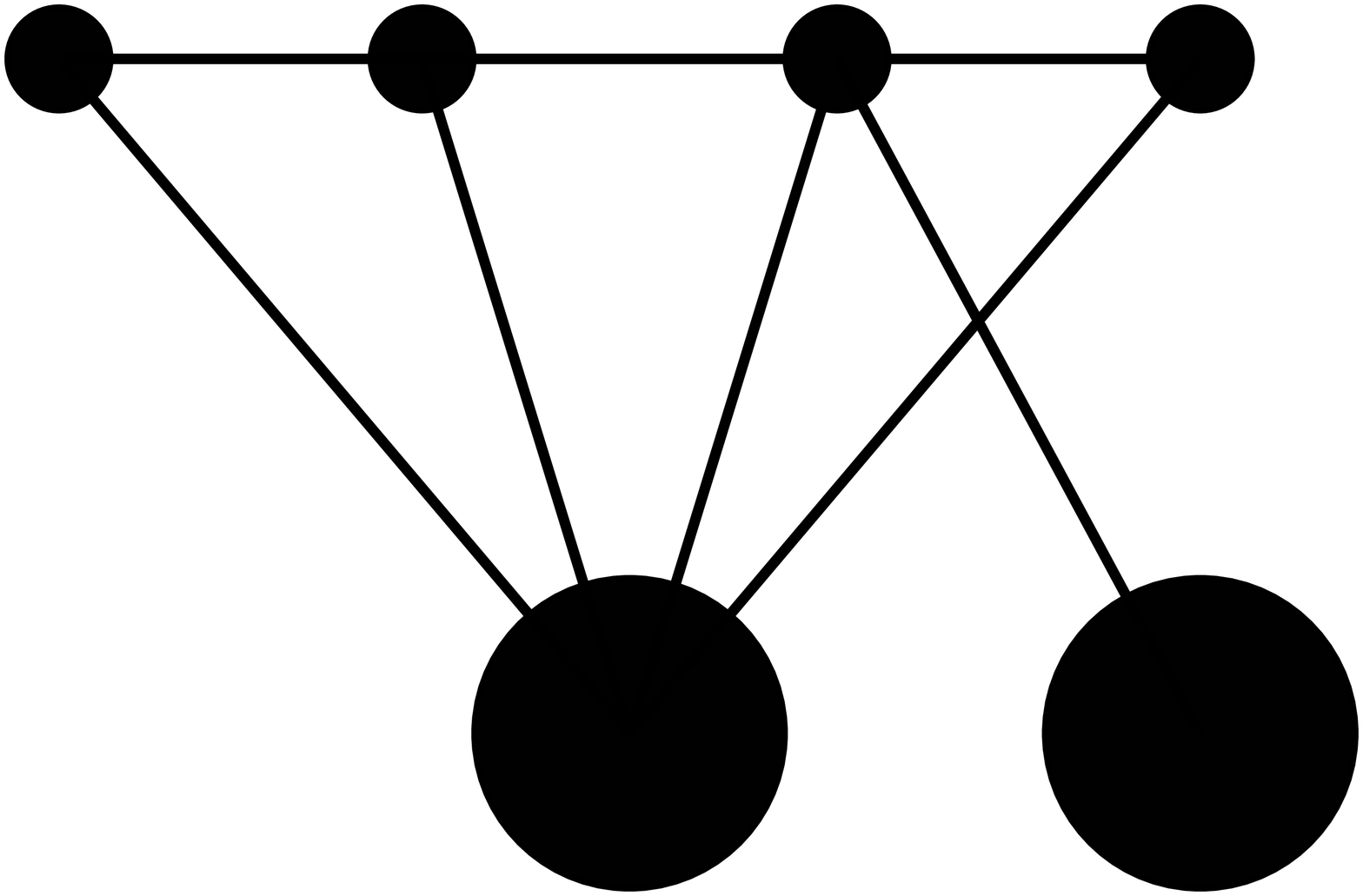} &
\includegraphics[scale=0.11]
{./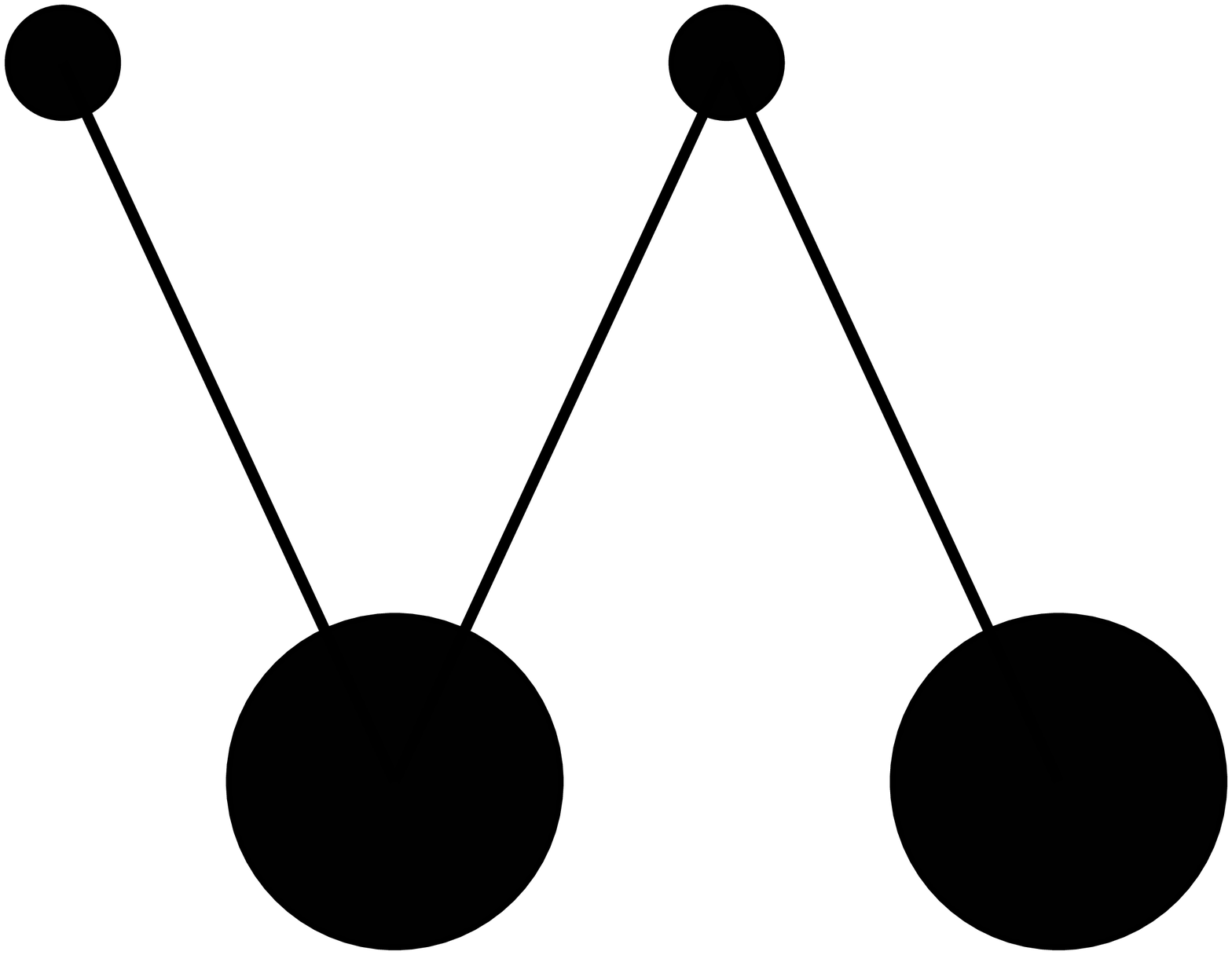} &
\includegraphics[scale=0.11]
{./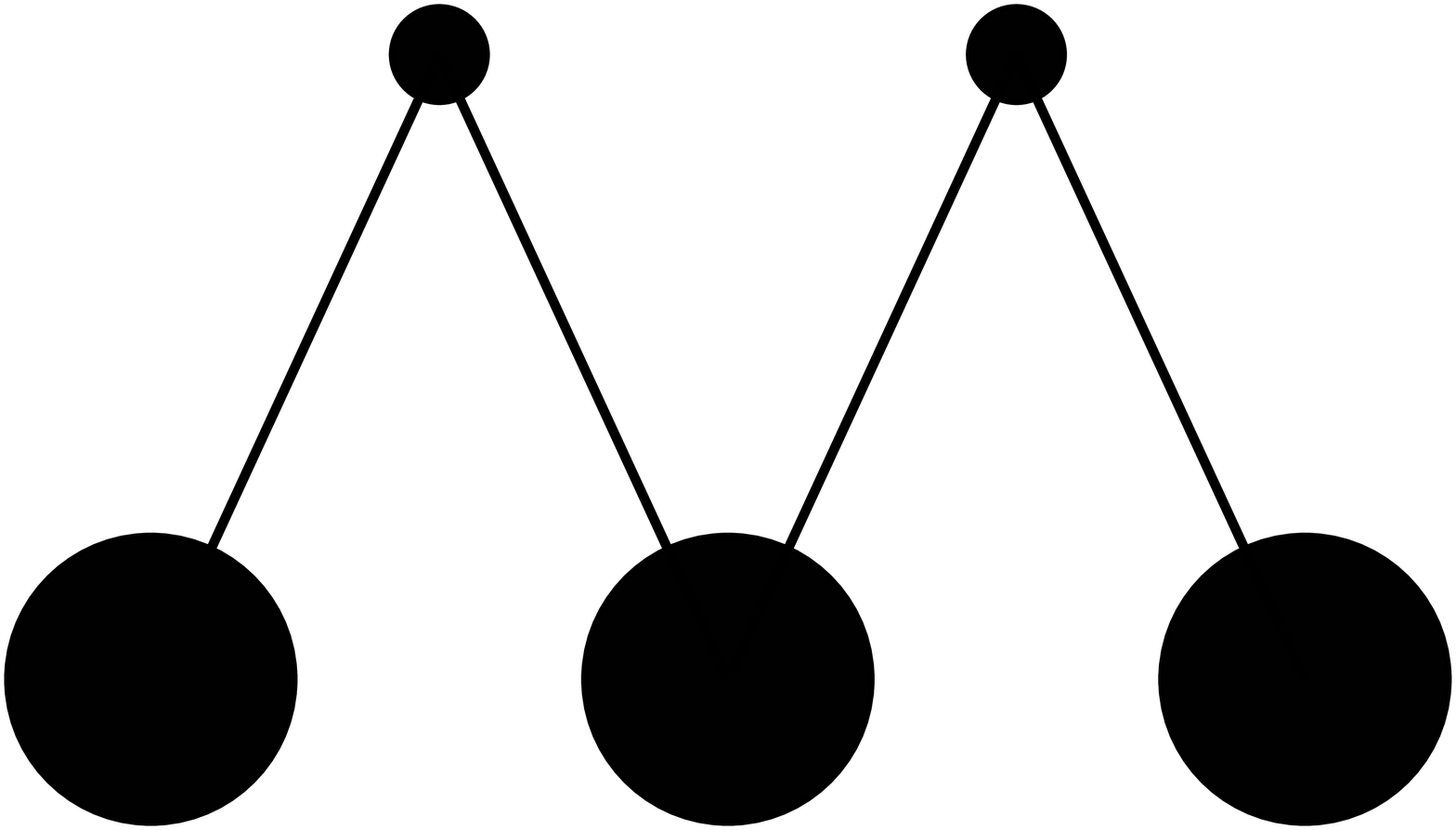} &
\includegraphics[scale=0.11]
{./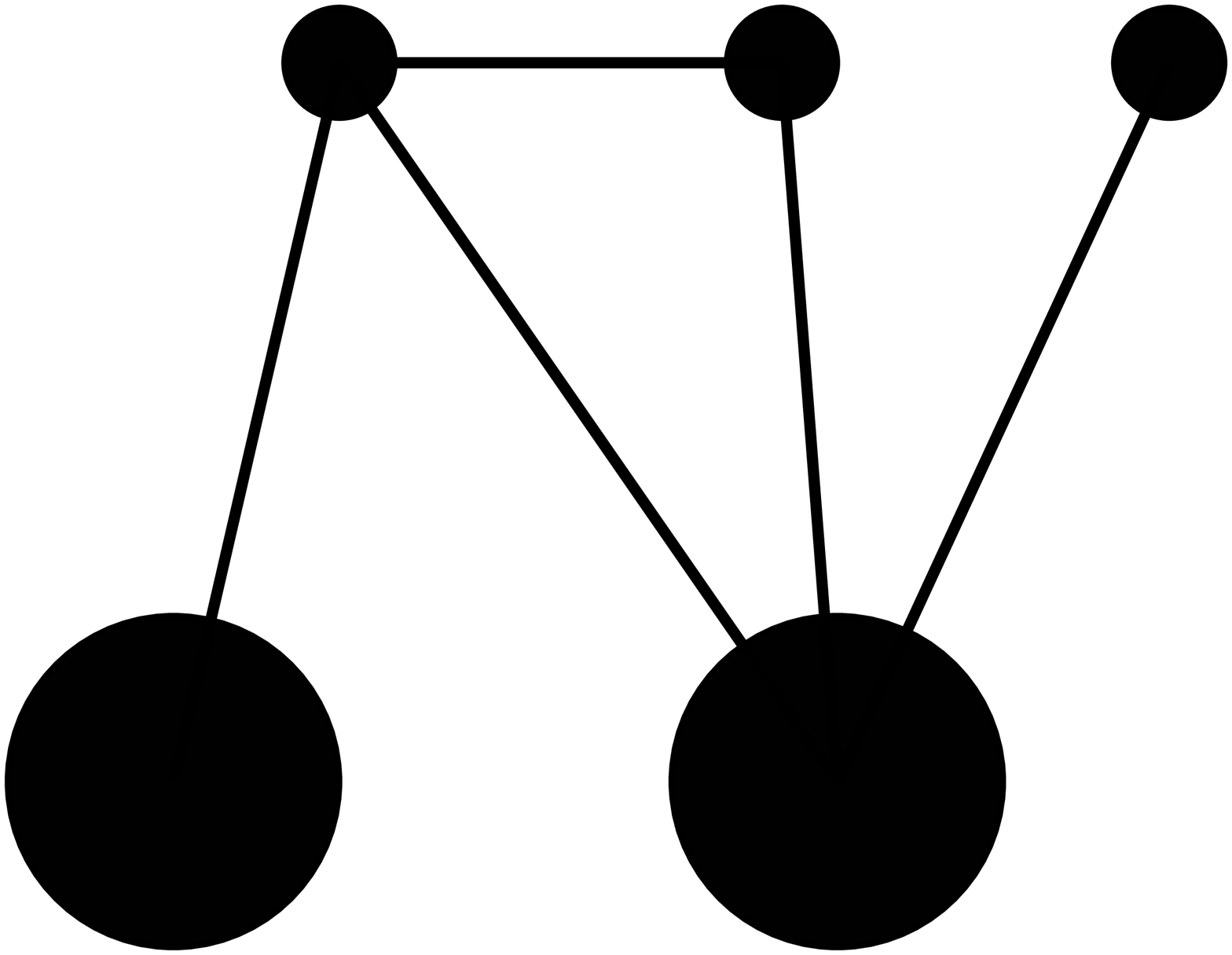}\\
$F_1$ & $F_2$ & $F_3$ & $F_4$ & $F_5$\\
&&& \\
\includegraphics[scale=0.11]
{./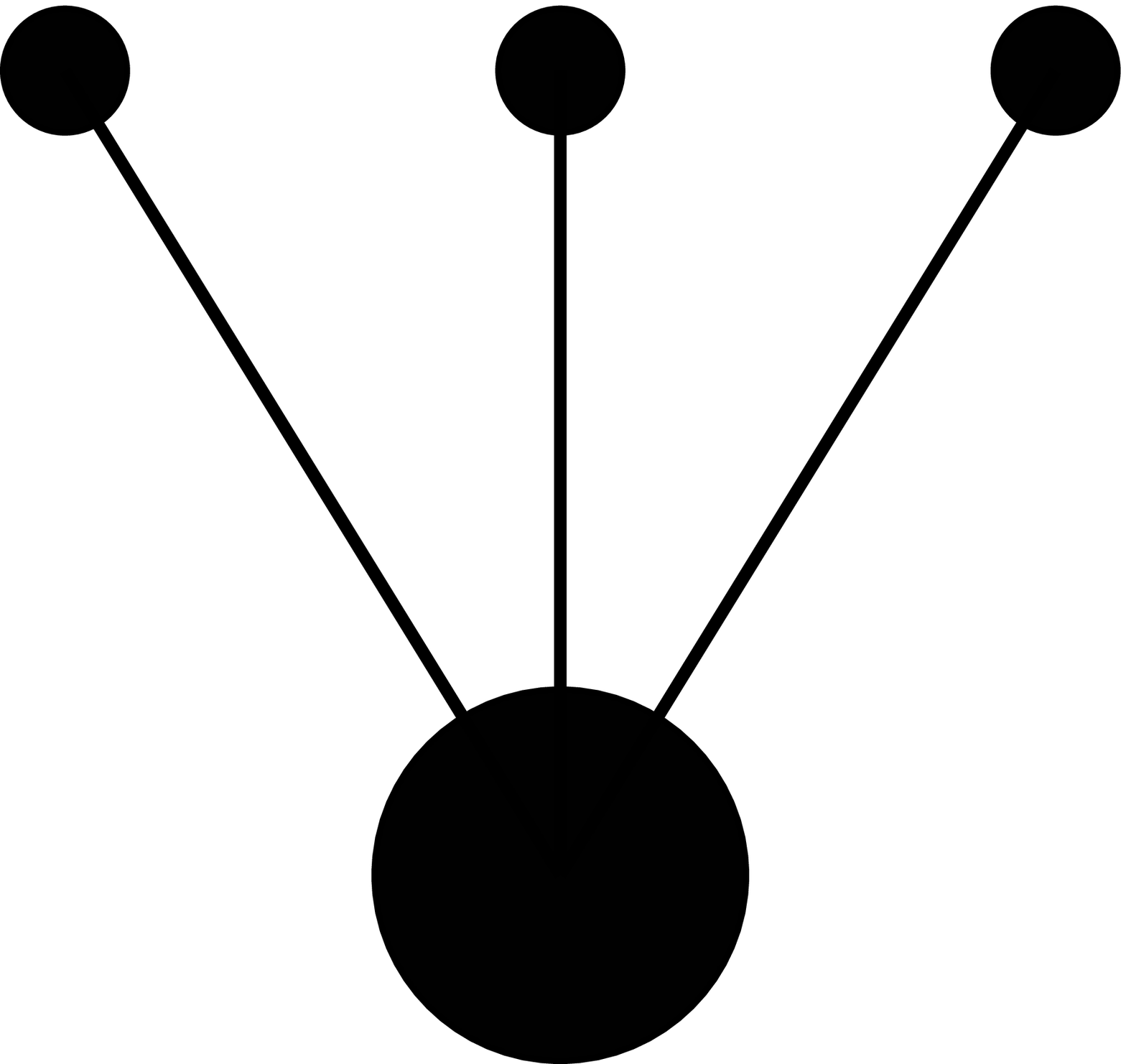} &
\includegraphics[scale=0.11]
{./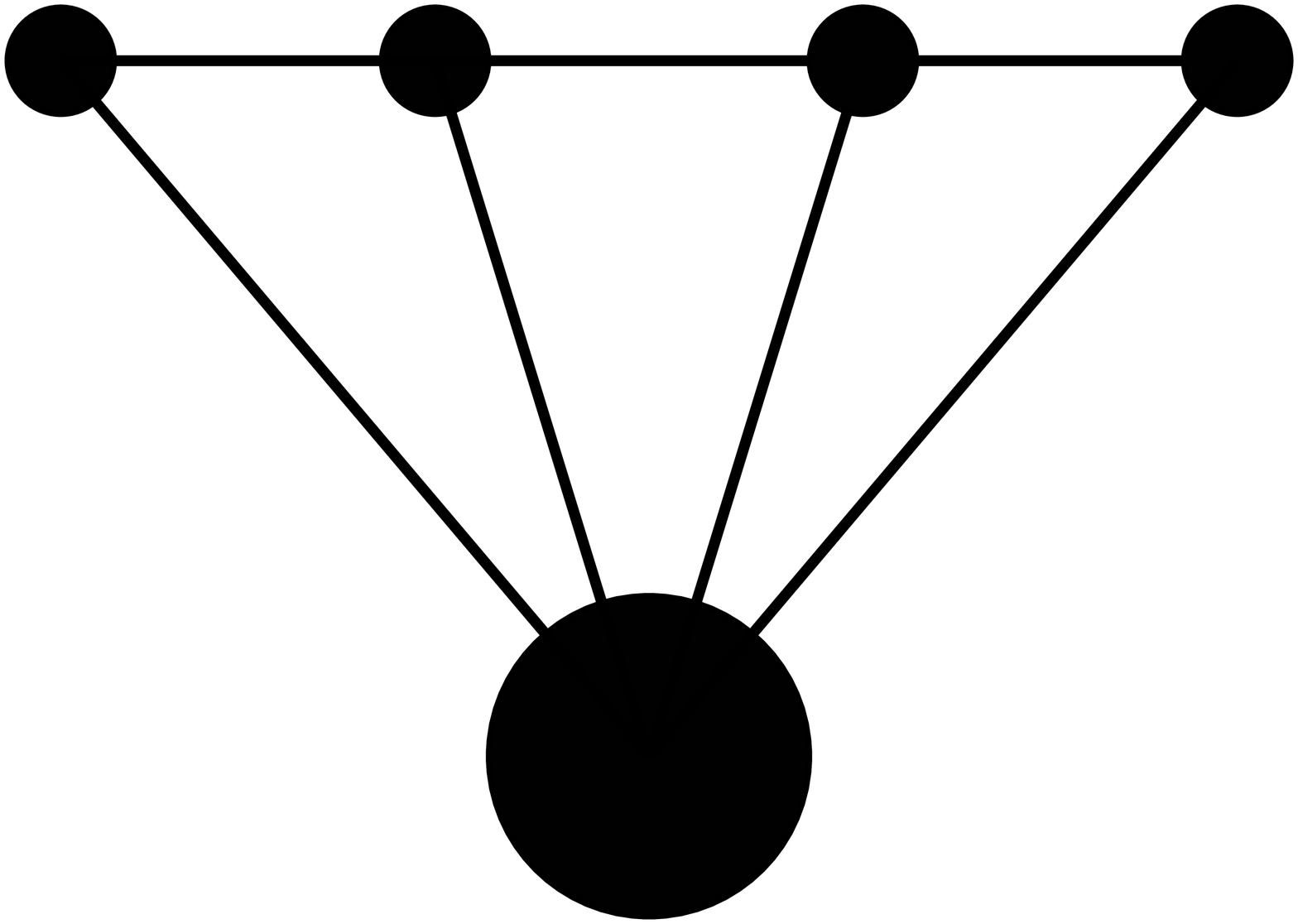} &
\includegraphics[scale=0.11]
{./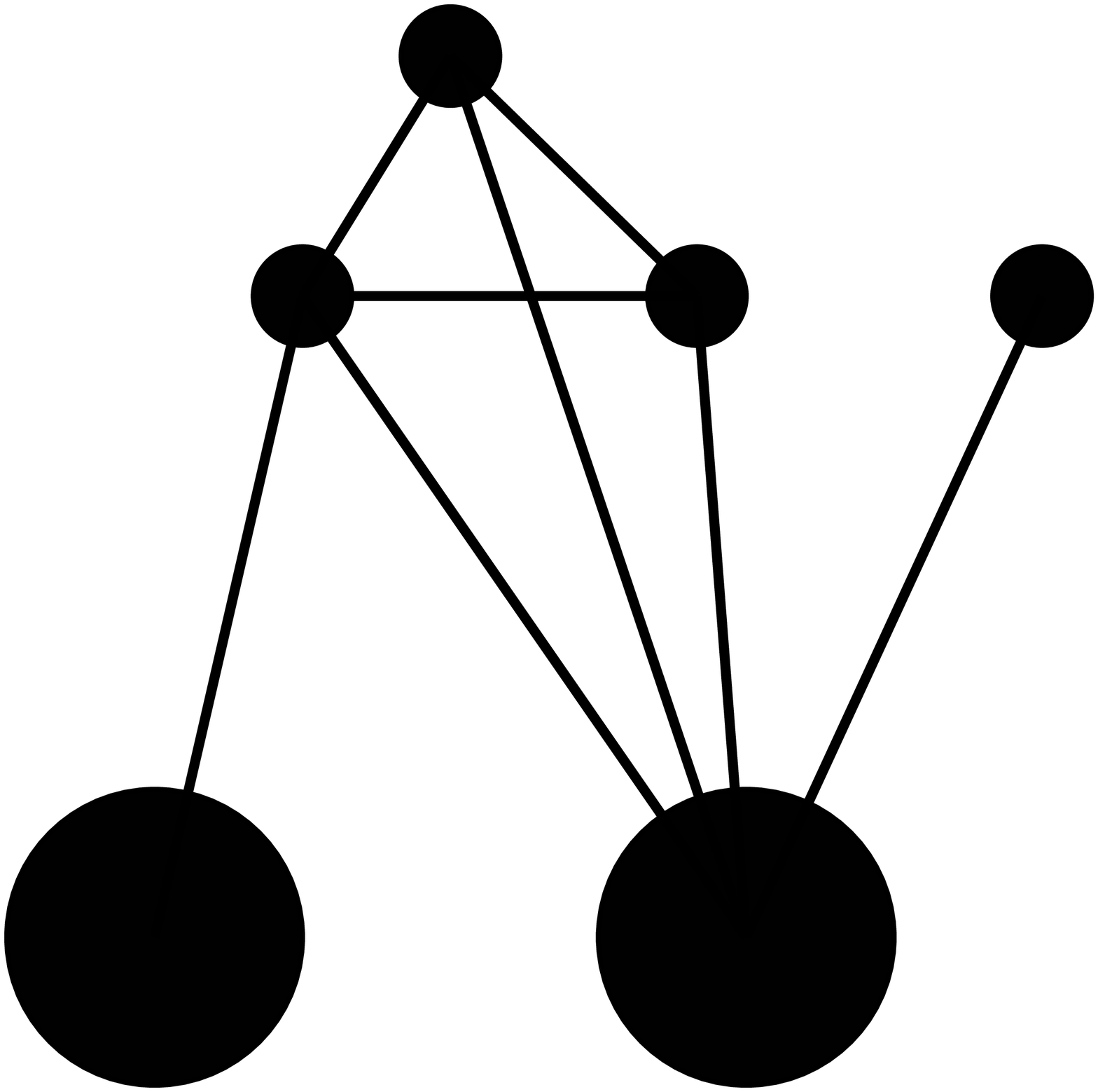} &
\includegraphics[scale=0.11]
{./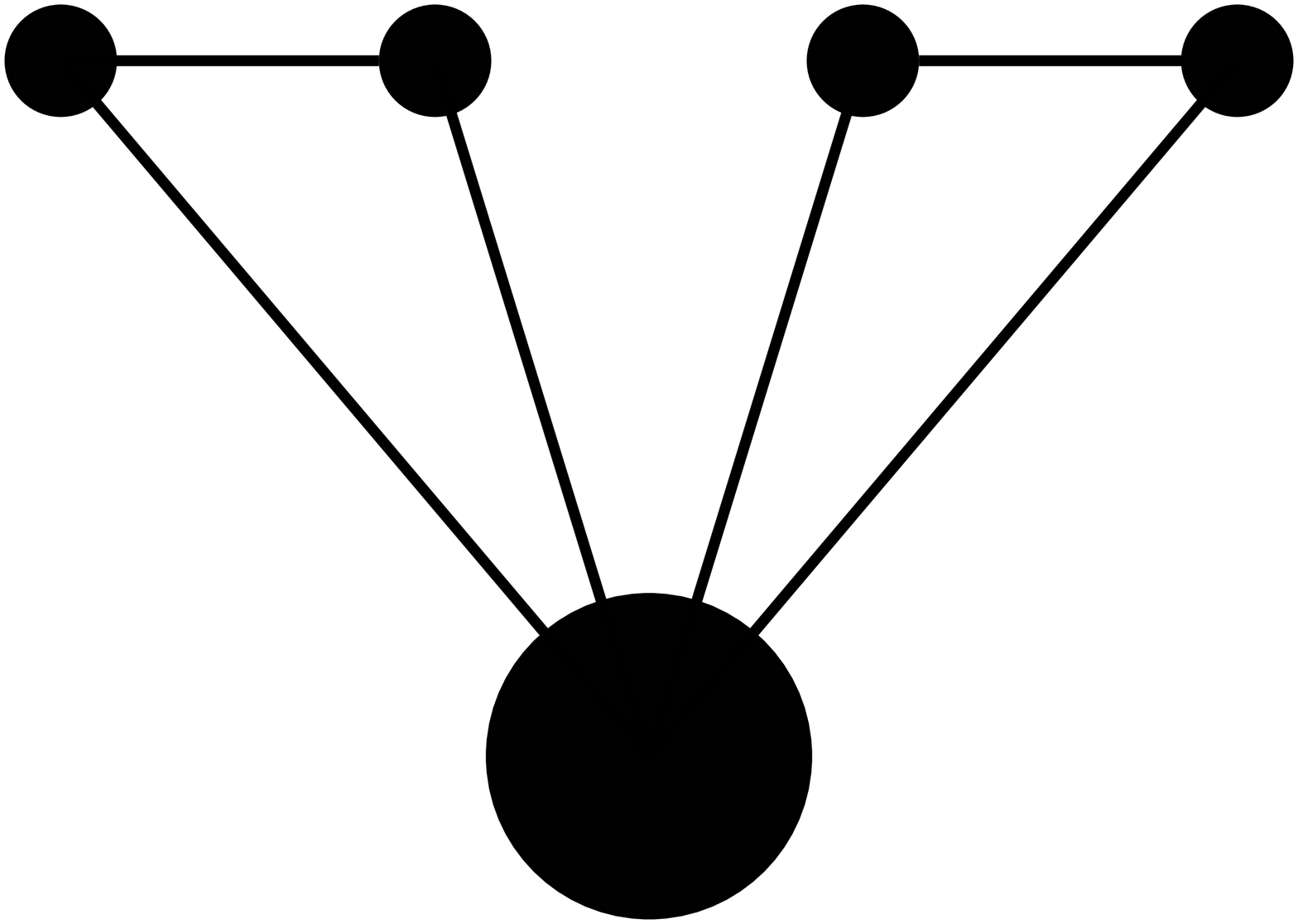} &
 \\

$F_6$ & $F_7$ & $F_8$ & $F_9$ &
\\

\end{tabular}
\label{FG2}
\end{figure}
\end{center}
%%%%%%%%%%%%%%%%%%%%%%%%%%%%%%%%%%%%%%%%%%%%
%%%%%%%%%%%%%%%%%%%%%%%%%%%%%%%%%%%%%%%%%%%%

Let $G=F\uplus K$ be a connected Hoffman graph
	such that $V_f(F)\subset V_f(K)$
   and
%%%%%%%%%%%%%%%%%%%%%%%%%%%%%%%%%%%%%%%%%% eq:H
\begin{equation}\label{eq:H}
\ K=\biguplus_{i=0}^nH^i,
\ [H^j]\in\h
\cup\{[H_1]\}
\mbox{ for }j=0,1,\ldots,n.
\end{equation}
When $F\cong F_1,F_3,F_4,F_6,F_7$ or $F_9$,
Table~\ref{table:1} gives a list of slim subgraphs $G'$ guaranteed to exist
in $G$, under some additional assumptions. The assumptions are given in
terms of $c(K)$ and $|V_s(K)|$, where $c(K)$
denotes the number of connected components of $K$.
%For example,
%%if $F\cong F_3$, $c(K)=2$ and $|V_s(K)|=6$, then
%if $F\cong F_3$ and $c(K)=2$, then
%Table~\ref{table:1} gives no conclusion, while
%       if $F\cong F_1$, $K$ is connected, and $|V_s(K)|=5$,
%       then $G$ has a slim subgraph $G'$ isomorphic to
%       $G_{5,1}$, $G_{5,2}$, $G_{6,3}$, $G_{6,6}$,
%       $G_{6,12}$, $G_{6,14}$, $G_{6,21}$ or $G_{7,5}$.
For example,
   if $F\cong F_1$, $c(K)=2$, and $|V_s(K)|=4$,
       then $G$ has a slim subgraph $G'$ isomorphic to
       $G_{5,1}$, $G_{5,2}$, $G_{6,3}$, 
       or $G_{6,21}$,
while if $F\cong F_3$ and $c(K)=2$, then
Table~\ref{table:1} gives no conclusion.
The results in Table~\ref{table:1} were obtained by computer.

%%%%%%%%%%%%%%%%%%%%%%%%%%%%%%%%%%%%%%%%  table:1
\begin{table}[htbp]
\begin{center}
\caption{}
\label{table:1}
\begin{tabular}{c|c|c|c||l@{\hspace{2mm}}l|l@{\hspace{2mm}}l
	@{\hspace{2mm}}l@{\hspace{2mm}}l@{\hspace{2mm}}l
	@{\hspace{2mm}}
	l@{\hspace{2mm}}|ll}
	\hhline{|====:t:==========|}
	&$F$ & $c(K)$ & $|V_s(K)|$ & \multicolumn{10}{c}{$G'$} \\
	\hhline{|====::==========|}
%%%%%%%%%%%%%%%%%%%%%%%%%%%%%%%%%%%%%%%%%% F1_1
	(a)&\multirow{2}{*}{$F_1$} & $1$ & $5$ & $G_{5,1}$& $G_{5,2}$
	& $G_{6,3}$
	& $G_{6,6}$ & $G_{6,12}$& $G_{6,14}$& $G_{6,21}$& & $G_{7,5}$ &\\
	\hhline{-|~--||----------|}
%%%%%%%%%%%%%%%%%%%%%%%%%%%%%%%%%%%%%%%%%% F1_2
	(b)& & $2$ & $4$ & $G_{5,1}$& $G_{5,2}$& $G_{6,3}$
	& $G_{6,21}$ & & & & & &  \\
	\hhline{-|---|----------|}
%%%%%%%%%%%%%%%%%%%%%%%%%%%%%%%%%%%%%%%%%%% F3
	\multirow{2}{*}{(c)}&\multirow{2}{*}{$F_3$} & \multirow{5}{*}{$1$}
	& \multirow{2}{*}{$5$} & $G_{5,1}$& & $G_{6,5}$
	& $G_{6,7}$ & $G_{6,9}$& $G_{6,11}$& $G_{6,12}$& $G_{6,13}$
	& $G_{7,6}$& \\
	& & & & & & $G_{6,19}$
	&  $G_{6,17}$&$G_{6,23}$ & $G_{6,24}$& $G_{6,25}$& $G_{6,27}$& 
	&   \\
	\hhline{-|-|~|-||----------|}
%%%%%%%%%%%%%%%%%%%%%%%%%%%%%%%%%%%%%%%%%%% F4
	(d)&$F_4$ &
	  & \multirow{2}{*}{$4$} & $G_{5,1}$& & $G_{6,5}$
	& $G_{6,8}$ & $G_{6,15}$& $G_{6,18}$& & & &  \\
	\hhline{-|-|~|~||----------|}
%%%%%%%%%%%%%%%%%%%%%%%%%%%%%%%%%%%%%%%%%%% F6
	(e)&$F_6$ &  &  & $G_{5,2}$& & $G_{6,14}$
	& $G_{6,19}$ & $G_{6,22}$& $G_{6,26}$& $G_{6,28}$& & $G_{7,3}$&  \\
	\hhline{-|-|~|-||----------|}
%%%%%%%%%%%%%%%%%%%%%%%%%%%%%%%%%%%%%%%%%%% F7
	(f)&$F_7$ &  & $2$ & & & $G_{6,1}$
	& $G_{6,6}$ & $G_{6,16}$& & & & &  \\
	\hhline{-|-|~|-||----------|}
%%%%%%%%%%%%%%%%%%%%%%%%%%%%%%%%%%%%%%%%%%% F9
	(g)&$F_9$ &  & $4$ & & & $G_{6,2}$
	& $G_{6,3}$ & & & & &$G_{7,1}$ &$G_{7,2}$  \\
	\hhline{|====:b:==========|}
\end{tabular}
\end{center}
\end{table}

%%%%%%%%%%%%%%%%%%%%%%%%%%%%%%%%%%%%%%%%%  table

%\newpage
%%%%%%%%%%%%%%%%%%%%%%%%%%%%%%%%%%%%%%%  lm:new2_path
%\begin{lm}\label{lm:new2_path}
%Let $H$ be
%	a Hoffman graph
%   satisfying
%\begin{equation}\label{eq:new2_path}
%	H=\biguplus_{i=0}^nH^i,
%	\ H^j\cong H_2
%	\mbox{ for }j=0,1,\ldots,n,
%	\ H\text{ is connected}.
%\end{equation}
%For distinct fat vertices $f_0$ and $f_1$ of $H$
%	satisfying $N^s_H(f_0)\cap N^s_H(f_1)=\emptyset$,
%	there exists a path in $\subg{V_s(H)}{H}$
%   connecting a vertex in $N^s_H(f_0)$ and a vertex in $N^s_H(f_1)$.
%\end{lm}
%\begin{proof}
%%First we note $N^s_H(f_0)\cap N^s_H(f_1)=\emptyset$
%%	by Definition~\ref{df:1}(iv).
%The assumption implies $n>0$.
%From Lemma~\ref{lem:4.4},
%	the lemma holds.
%\end{proof}

%%%%%%%%%%%%%%%%%%%%%%%%%%%%%%%%%%%%%%%  lm:new2_F
\begin{lm}\label{lm:new2_F}
Let $G=F\uplus H$ be a Hoffman graph
   satisfying
\begin{eqnarray}
	H=\biguplus_{i=0}^nH^i,\label{eq:new2_path}\\
	V_f(F)\subset V_f(H),\label{eq:new2_path2}\\
	\ H^j\cong H_2
	\mbox{ for }j=0,1,\ldots,n,\label{eq:new2_path3}\\
	\ H\text{ is connected}.\label{eq:new2_path4}
\end{eqnarray}
%\begin{equation}\label{eq:new2_path}
%	H=\biguplus_{i=0}^nH^i,
%	\ H^j\cong H_2
%	\mbox{ for }j=0,1,\ldots,n,
%	\ H\text{ is connected},
%\end{equation}
%	$V_f(F)\subset V_f(H)$
Suppose $F\cong F_i$ for some $i\in \{2,3,5,8\}$,
	and let $F'$ be a subgraph of $F$
%	and
%	 $F\cong F_i$ for some $i\in \{2,3,5,8\}$.
%Let $F'$ be a subgraph of $F$
   such that $F'\cong F_3$.
Let $V_f(F')=\{f_0,f_1\}$.
If there is no edge between $N^s_{H}(f_0)$ and $N^s_{H}(f_1)$,
%   and no pair of vertices in $N^s_{H}(f_0)$ and $N^s_{H}(f_1)$
%   are pairwise adjacent,
   then $G$ has a slim subgraph isomorphic to $G_{5,1}$, $G_{6,17}$ or $G_{6,27}$.
\end{lm}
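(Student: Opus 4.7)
The plan is to exploit the very rigid structure of $H$---a connected sum of copies of $H_2$---to produce a small set of slim vertices of $H$ which, adjoined to the slim vertices of $F'\cong F_3$, induce one of the three target forbidden subgraphs in Figure~\ref{MFS}.

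First, I would model $H$ as a connected multigraph $\mathcal{M}$ on the vertex set $V_f(H)$: each component $H^i\cong H_2$ becomes a single edge joining its two fat vertices, and the slim vertices of $H$ correspond bijectively to the edges of $\mathcal{M}$. Two slim vertices of $H$ are adjacent in $H$ precisely when their corresponding edges of $\mathcal{M}$ share an endpoint. Since $V_f(F)\subset V_f(H)$ and $H$ is connected, both $f_0$ and $f_1$ are vertices of the connected multigraph $\mathcal{M}$; let $\ell$ be the length of a shortest path from $f_0$ to $f_1$ in $\mathcal{M}$.

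Second, I would use the no-edge hypothesis to rule out $\ell=2$. An intermediate fat vertex $a$ on such a path would produce slim vertices $t_0\in N^s_H(f_0)$ and $t_1\in N^s_H(f_1)$ sharing the fat neighbour $a$; by Definition~\ref{df:1}(iv) they would then be adjacent, contradicting the hypothesis. Hence $\ell=1$ or $\ell\ge 3$. If $\ell=1$, there is a slim vertex $s\in N^s_H(f_0)\cap N^s_H(f_1)$. If $\ell\ge 3$, the first three edges of a shortest path supply slim vertices $s_0,s_1,s_2$ forming an induced $P_3$ (edges $s_0s_1$ and $s_1s_2$, non-edge $s_0s_2$) with $s_0\sim f_0$, $s_1$ non-adjacent to both $f_0$ and $f_1$, $s_2$ non-adjacent to $f_0$, and, in case $\ell=3$, $s_2\sim f_1$; all of these (non-)adjacencies are forced by the shortest-path property in $\mathcal{M}$. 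When $\ell\ge 4$ a symmetric construction from the $f_1$-end gives analogous witnesses.

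Finally, I would combine these witnesses with $V_s(F)$, using the explicit pictures of $F_2,F_3,F_5,F_8$ in Figure~\ref{FG2} and the chosen embedding of $F'\cong F_3$ into each, and verify case by case on $i\in\{2,3,5,8\}$ and on $\ell$ that the induced slim subgraph of $G$ on the resulting vertex set is isomorphic to $G_{5,1}$, $G_{6,17}$, or $G_{6,27}$. I expect the main obstacle to be this last step: the witness vertices attach to $V_s(F')$ at the two ``ends'' dictated by $f_0$ and $f_1$, and depending on the shape of $V_s(F)$ and the value of $\ell$ one gets a different target. The verification is essentially a mechanical diagrammatic check once the witnesses are chosen, but requires careful bookkeeping of adjacencies to match Figure~\ref{MFS} precisely.
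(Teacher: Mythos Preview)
Your shortest-path idea is exactly what the paper uses, but two points in your plan are off, and one key simplification is missing.

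First, the case $\ell=1$ cannot occur. If some $s\in V_s(H)$ had both $f_0$ and $f_1$ as fat neighbours, then $s$ and the slim vertex $u\in V_s(F')$ with $N^f_{F'}(u)=\{f_0,f_1\}$ would be two slim vertices in different summands of $G=F\uplus H$ sharing two common fat neighbours, contradicting Definition~\ref{df:1}(iv). The paper records this as $N^s_H(f_0)\cap N^s_H(f_1)=\emptyset$ at the very start, so your multigraph path has length $\ell\ge 3$ (since you already ruled out $\ell=2$).

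Second---and this is the simplification you are missing---there is no need for a case analysis on $i\in\{2,3,5,8\}$. By Lemma~\ref{lm:subgraph}, $F'\uplus H$ is already an induced subgraph of $G$, so it suffices to find the desired slim subgraph inside $F'\uplus H$. The paper does exactly this: take a shortest path $P$ in $\langle V_s(H)\rangle_H$ from $N^s_H(f_0)$ to $N^s_H(f_1)$ (your $\ell$-edge path in $\mathcal{M}$ gives a slim path of length $\ell-1$), and then $V_s(F')\cup V(P)$ induces $G_{5,1}$, $G_{6,17}$, or $G_{6,27}$ according as the slim path has length $2$, $3$, or $\ge 4$. Only the two slim vertices of $F'\cong F_3$ are ever used.

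Your plan to ``combine the witnesses with $V_s(F)$'' is where things would go wrong: for $i\in\{2,5,8\}$ one has $|V_s(F)|\ge 3$, so together with three or more path-witnesses you get a set of at least seven vertices, which cannot be isomorphic to any of $G_{5,1}$, $G_{6,17}$, $G_{6,27}$. Passing to $F'$ first is what keeps the count right and collapses all four cases into one.
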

\begin{proof}
%If two slim vertices share a common fat neighbour,
%	then they are adjacent since
%	$H^i\cong H_2$ for all $i$.
%Thus the connectivity of $H$ implies that
% of $\subg{V_s(H)}{H}$.
%In particular,
%   there exists a path in $\subg{V_s(H)}{H}$
%   connecting a vertex in $N^s_H(f_0)$ and a vertex in $N^s_H(f_1)$.
%
First we note $N^s_H(f_0)\cap N^s_H(f_1)=\emptyset$
	by Definition~\ref{df:1}(iv).
In particular, we have $n>0$.
From Lemma~\ref{lem:4.4},
	there exists a path in $\subg{V_s(H)}{H}$
   connecting a vertex in $N^s_H(f_0)$ and a vertex in $N^s_H(f_1)$.
Let $P$ be such a path with shortest length.
The length of $P$ is at least $2$
   by the assumption.
Since $G$ contains $F'\uplus H$ as a subgraph by Lemma~\ref{lm:subgraph},
it suffices to show that $F'\uplus H$ contains a desired slim subgraph.
If $P$ has length $2$ or $3$, 
   then $F'\uplus H$ has a subgraph isomorphic to $G_{5,1}$ or $G_{6,17}$,
   respectively.
If the length of $P$ is at least $4$,
   then $F'\uplus H$ has a subgraph isomorphic to $G_{6,27}$.
\end{proof}

\begin{lm}\label{lm:new2_F_4}
Let $G=F\uplus H$ be a Hoffman graph
	satisfying (\ref{eq:new2_path})--(\ref{eq:new2_path4}).
Suppose $F\cong F_4$, $V_f(F)=\{f_0,f_1,f_2\}$ with $|N_F^s(f_0)|=2$.
If $\subg{N^s_{H}(f_0)\cup N^s_{H}(f_1)\cup N^s_{H}(f_2)}{H}$
	is not connected,
	then $G$ has a slim subgraph isomorphic to $G_{5,1}$,
	$G_{6,17}$, $G_{6,23}$ or $G_{6,27}$.
\end{lm}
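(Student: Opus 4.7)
The strategy is to mirror the proof of Lemma~\ref{lm:new2_F}: the disconnectedness hypothesis furnishes a pair of ``separated'' cliques inside $\langle V_s(H)\rangle_H$, and a shortest slim path joining them together with the slim vertices of $F$ produces one of the four listed graphs.

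First I would unpack the structure of $H$. Since each $H^j\cong H_2$ has a unique slim vertex, Definition~\ref{df:1}(iv) forces two slim vertices of $H$ in distinct summands to be adjacent iff they share a common fat neighbour; in particular, for each $k\in\{0,1,2\}$ the set $N^s_H(f_k)$ is a clique in $\langle V_s(H)\rangle_H$. Because $V_f(F)\subset V_f(H)$ and every fat vertex of a Hoffman graph has at least one slim neighbour, each of these three cliques is nonempty; consequently $n\ge 1$, and Lemma~\ref{lem:4.4} ensures that $\langle V_s(H)\rangle_H$ is connected. Writing $V=N^s_H(f_0)\cup N^s_H(f_1)\cup N^s_H(f_2)$ and noting that each $N^s_H(f_k)$ lies inside a single component of $\langle V\rangle_H$, the disconnectedness hypothesis supplies indices $\{i,j\}\subset\{0,1,2\}$ with $N^s_H(f_i)$ and $N^s_H(f_j)$ in different components.

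Next I would pick a shortest path $P\colon v_0,v_1,\ldots,v_\ell$ in $\langle V_s(H)\rangle_H$ from $N^s_H(f_i)$ to $N^s_H(f_j)$. One has $\ell\ge 2$, for if $\ell=1$ then $v_0$ and $v_\ell$ would share a fat neighbour, and regardless of whether that neighbour lies in $\{f_0,f_1,f_2\}$ it would already place $v_0,v_\ell$ in the same component of $\langle V\rangle_H$. A small case analysis on $\ell$ and on the pair $\{i,j\}$, examining the slim subgraph of $G$ induced on $V_s(F)\cup V(P)$, then identifies the claimed subgraph: $\ell=2$ yields $G_{5,1}$; $\ell=3$ yields $G_{6,17}$ if $0\notin\{i,j\}$ and $G_{6,23}$ if $0\in\{i,j\}$, the distinction arising precisely from the two $F$-slim neighbours of $f_0$ that were absent in the $F_3$-situation of Lemma~\ref{lm:new2_F}; and $\ell\ge 4$ yields $G_{6,27}$ just as in the $F_3$ analysis.

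The main obstacle I expect lies in the length-$3$ case: one must verify case by case on $\{i,j\}$ and on the placement of $V(P)$ that the induced slim subgraph on $V_s(F)\cup V(P)$ really is isomorphic to $G_{6,17}$ or $G_{6,23}$ and contains no unintended adjacencies. The clique property of the $N^s_H(f_k)$ together with Definition~\ref{df:1}(iv) controls which adjacencies can and cannot appear between the interior vertices of $P$ and $V_s(F)$, and the hypothesis $|N^s_F(f_0)|=2$ is what distinguishes the two possibilities.
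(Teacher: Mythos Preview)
Your shortest-path idea is the right starting point, but the case split you propose is not the one that works, and the specific outcomes you list for $\ell=3$ are incorrect. First, note that $F_4$ has the \emph{same} two slim vertices as $F_3$; the only difference is the extra fat vertex $f_2$, adjacent to exactly one of them. So your remark about ``the two $F$-slim neighbours of $f_0$ that were absent in the $F_3$-situation'' is a misreading of the structure. Second, since each $N^s_H(f_k)$ is a clique, $N^s_H(f_0)$ lies in a single component of $\langle V\rangle_H$, and hence it is always separated from at least one of $N^s_H(f_1),N^s_H(f_2)$; you may (and should) take $0\in\{i,j\}$ from the outset. The dichotomy $0\in\{i,j\}$ versus $0\notin\{i,j\}$ is therefore not the relevant one, and in fact with $0\in\{i,j\}$ and $\ell=3$ you do \emph{not} automatically get $G_{6,23}$: if the interior of $P$ avoids $N^s_H(f_2)$ the induced slim graph on $V_s(F)\cup V(P)$ is $G_{6,17}$, exactly as in Lemma~\ref{lm:new2_F}.

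The paper's argument identifies the correct dichotomy. One takes a shortest path $P=u\sim v\sim\cdots\sim w$ with $w\in N^s_H(f_0)$, $u\in N^s_H(f_1)\cup N^s_H(f_2)$ (say $u\in N^s_H(f_1)$), and $u\not\sim w$, and then splits on $S:=V(P)\cap N^s_H(f_2)$. If $S=\emptyset$, then $f_2$ has no slim neighbour along $P$, so one may pass to $(F-f_2)\uplus\langle\langle V(P)\rangle\rangle_H$; since $F-f_2\cong F_3$, Lemma~\ref{lm:new2_F} applies directly and yields $G_{5,1}$, $G_{6,17}$, or $G_{6,27}$ according to the length. If $S\neq\emptyset$, the minimality of $P$ forces every vertex of $S$ to be adjacent to $w$, and one reads off a copy of $G_{6,23}$ on $V_s(F)\cup S\cup\{w,w'\}$. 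This reduction to $F_3$ via deleting $f_2$ is the idea your proposal is missing; without it, the ``small case analysis'' you anticipate has to track, for every length, whether and where $P$ meets $N^s_H(f_2)$, and your stated outcomes do not survive that analysis.
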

\begin{proof}
By (\ref{eq:new2_path2}),
	$|V_f(H)| \ge |V_f(F)| = 3$,
	and therefore $n>0$.
From Lemma~\ref{lem:4.4},
       there exists a path in $\subg{V_s(H)}{H}$
  connecting a vertex in $N^s_H(f_0)$
  and a vertex in $N^s_H(f_1)\cup N^s_H(f_2)$
  such that
  the two vertices are not adjacent in $H$,
  by the assumption.
Let $P=u\sim v\sim\cdots\sim w$ be such a path with shortest length,
where
$u\in N^s_H(f_1)\cup N^s_H(f_2)$ and $w\in N^s_H(f_0)$.
%Then $v\notin N^s_H(f_1)$.
Then $v\notin N^s_H(f_1)\cup N^s_H(f_2)$, and
we may assume $u\in N^s_H(f_1)$ without loss of generality.
Then $V(P)\cap N^s_H(f_1)=\{u\}$.
If $u\sim f_2$,
	then $N^f_H(u)=\{f_1,f_2\}$,
	which implies $N^f_H(u)\cap N^f_H(v)=\emptyset$,
	contradicting $u\sim v$.
Thus $u\notin N^s_H(f_2)$.

Put $S=V(P)\cap N^s_H(f_2)$.
Suppose $S=\emptyset$.
By Lemma~\ref{lm:subgraph},
	$F\uplus\subgg{V(P)}{H}\subset G$,
	while $f_2$ has no slim neighbour
	in $\subgg{V(P)}{H}$.
This implies $(F-f_2)\uplus\subgg{V(P)}{H}\subset G$.
Since $F-f_2\cong F_3$,
	the lemma follows from Lemma~\ref{lm:new2_F}.
Suppose $S\neq\emptyset$.
Since $P$ is the shortest path,
	$w$ is adjacent to exactly one vertex $s_1$ in $S$,
	and $|S|=2$.
Put $S\setminus \{s_1\}=\{s_2\}$,
	and let $w'$ be the neighbour of $s_2$
	different from $s_1$ in $P$.
Then $\subg{V_s(F)\cup S\cup\{w,w'\}}{G}\cong G_{6,23}$,
and hence $G$ contains a subgraph isomorphic to $G_{6,23}$.
\end{proof}
%}

%%%%%%%%%%%%%%%%%%%%%%%%%%%%%%%%%%%%%%%  lm:Fpre
\begin{lm}\label{lm:Fpre}
Let $G=F\uplus H$ be a Hoffman graph
   satisfying (\ref{eq:new2_path}),
   (\ref{eq:new2_path2}) and
   the following conditions:
\begin{eqnarray}
	F\text{ is connected},\label{eq:Fpre1}\\
	\ [H^j]\in\h
	\mbox{ for }j=0,1,\ldots,n.\label{eq:Fpre2}
\end{eqnarray}
Let $V$ is a subset of $V_s(H)$,
	and let $K=\subgg{V}{H}$.
If $V_f(F)\subset V_f(K)$,
	and every vertex of $V$
	can be joined by
	a path in $K$ to a fat vertex of $F$,
	then $G$ contains a connected subgraph $F\uplus K$
	satisfying (\ref{eq:H}).
\end{lm}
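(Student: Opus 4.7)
The plan is to build the desired subgraph in three stages: decompose $K$ into admissible pieces, identify $F \uplus K$ as an induced subgraph of $G$, and verify connectedness. First I would apply Lemma~\ref{lm:new2_1} to $K = \subgg{V}{H}$; since $H$ is already presented as a sum with $[H^j]\in\h$ by (\ref{eq:new2_path}) and (\ref{eq:Fpre2}), the lemma yields a decomposition $K = \biguplus_{i=0}^{n'} K^i$ with $[K^i] \in \h \cup \{[H_1]\}$, which is exactly the form required by (\ref{eq:H}).

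To realize $F \uplus K$ as an induced subgraph of $G$, I would then apply Lemma~\ref{lm:subgraph} with the ambient graph $G = F \uplus H$, taking $H^0 = F$, $H^1 = H$, $S = V$, and $H^2 = K$; this directly gives $\subg{V(F) \cup V(K)}{G} = F \uplus K$. The hypothesis $V_f(F) \subset V_f(K)$ is used here to ensure that the common fat neighbours required by Definition~\ref{df:1}(iv) lie in the smaller vertex set, so that the sum $F \uplus K$ is actually well-defined. For connectedness, $F$ is connected by (\ref{eq:Fpre1}); every $v \in V$ is joined inside $K$ to some fat vertex of $F$ by hypothesis, and that vertex belongs to $F$; moreover, every fat vertex of $K$ has a slim neighbour in $V$ by the definition of a Hoffman graph, which in turn reaches $F$ via the same path argument. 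Hence every vertex of $F \uplus K$ lies in the same connected component.

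The main obstacle, such as it is, lies in carefully matching the hypotheses of the two invoked lemmas and checking that the sum conditions of Definition~\ref{df:1} really do survive the passage to $V(F) \cup V(K)$; once that bookkeeping is settled, the conclusion is immediate, and I expect this lemma to serve mainly as a preparation for the main argument rather than as a source of genuine difficulty.
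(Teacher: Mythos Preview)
Your proposal is correct and follows essentially the same three-step argument as the paper: use Lemma~\ref{lm:new2_1} to decompose $K$ as in (\ref{eq:H}), identify $F\uplus K$ as an induced subgraph of $G$, and verify connectedness from (\ref{eq:Fpre1}) together with the path hypothesis. The only cosmetic difference is that for the second step the paper invokes Lemma~12 of \cite{paperI} to obtain $\subgg{V_s(F)\cup V}{G}=F\uplus K$, whereas you invoke Lemma~\ref{lm:subgraph} to obtain $\subg{V(F)\cup V(K)}{G}=F\uplus K$; these yield the same induced subgraph. One small remark: Lemma~\ref{lm:subgraph} does not itself require $V_f(F)\subset V_f(K)$, so your aside about that hypothesis being needed for the sum to be well-defined is unnecessary there --- the hypothesis is part of the conclusion you must verify, not an input to Lemma~\ref{lm:subgraph}.
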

\begin{proof}
From Lemma 12 of \cite{paperI},
	$\subgg{V_s(F)\cup V}{G}=F\uplus K$.
Since $F$ is connected
	and every vertex of $V$
	can be joined by
	a path in $K$ to a fat vertex of $F$,
	$F\uplus K$ is connected.
%From Lemma~\ref{lm:new2_1},
%	$F\uplus K$ satisfies (\ref{eq:H}).
From Lemma~\ref{lm:new2_1},
	$K$ satisfies (\ref{eq:H}).
\end{proof}

\setcounter{cl}{0}
%%%%%%%%%%%%%%%%%%%%%%%%%%%%%%%%%%%%%%%  lm:F
\begin{lm}\label{lm:F}
Let $G=F\uplus H$ be a Hoffman graph
   satisfying
   (\ref{eq:new2_path}),
   (\ref{eq:new2_path2}),
   (\ref{eq:Fpre2}),
   and $F\cong F_i$ for some $i\in \{1,2,\ldots,9\}$.
Let
\[
m(F)=\begin{cases}
	2 & \text{if }F\cong F_7,\\
	4 & \text{if }F\cong F_4,F_6\text{ or }F_9,\\
	5 & \text{otherwise.}
	\end{cases}
\]
%Suppose that $H$ is connected.
%If $|V_s(H)|\ge m(F)$,
If $H$ is connected and $|V_s(H)|\ge m(F)$,
   then $G$ has a slim subgraph isomorphic to one of the graphs
	in Figure~\ref{MFS}.
\end{lm}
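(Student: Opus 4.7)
The plan is a case analysis on the isomorphism type of $F$, reducing in each case to either Table~\ref{table:1} or one of Lemmas~\ref{lm:new2_F}, \ref{lm:new2_F_4}. The unifying idea is to first shrink $H$ to a well-behaved connected subgraph $K$ of exactly $m(F)$ slim vertices that still covers every fat vertex of $F$, so that the configuration $F\uplus K$ falls under the scope of one of the earlier results.

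To carry out this reduction, for each $f\in V_f(F)\subset V_f(H)$ I would pick a slim neighbour $v_f\in N^s_H(f)$. Lemma~\ref{lem:4.4} makes the slim subgraph $\langle V_s(H)\rangle_H$ connected, so the $v_f$'s can be joined by walks there; enlarging the resulting vertex set to a subset $V\subset V_s(H)$ of size exactly $m(F)$ (possible because $|V_s(H)|\ge m(F)$), I set $K=\langle\langle V\rangle\rangle_H$ and arrange the walks so that $K$ is connected. Lemma~\ref{lm:new2_1} produces a decomposition of $K$ of the form (\ref{eq:H}), and Lemma~\ref{lm:Fpre} then guarantees that $F\uplus K\subset G$ is a connected Hoffman graph matching the ambient hypotheses of Table~\ref{table:1}, with $c(K)=1$ and $|V_s(K)|=m(F)$.

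For $F\cong F_i$ with $i\in\{1,3,4,6,7,9\}$, the row of Table~\ref{table:1} indexed by $F_i$ now applies directly and produces a slim subgraph of $F\uplus K$ isomorphic to one of the graphs of Figure~\ref{MFS}. For $F\cong F_i$ with $i\in\{2,5,8\}$, each such $F_i$ contains an induced copy $F'\cong F_3$; Lemma~\ref{lm:subgraph} yields $F'\uplus K\subset F\uplus K\subset G$. I would then invoke Lemma~\ref{lm:Fpre2} to pass to the sub-decomposition of $K$ induced by its $H_2$-components (which is connected by Lemma~\ref{lm:Fpre2}(i) and covers $V_f(F)$ by Lemma~\ref{lm:Fpre2}(ii)), so that Lemma~\ref{lm:new2_F} becomes applicable and yields a slim subgraph isomorphic to $G_{5,1}$, $G_{6,17}$ or $G_{6,27}$. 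In the $F_4$ branch, if the direct Table~\ref{table:1}(d) application fails for some structural reason, Lemma~\ref{lm:new2_F_4} provides the alternative argument after the same $H_2$-restriction.

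The main obstacle is the case $F\cong F_i$ with $i\in\{2,5,8\}$: after restricting to $H_2$-components, one still has to secure the non-edge hypothesis of Lemma~\ref{lm:new2_F}. When an edge between $N^s_K(f_0)$ and $N^s_K(f_1)$ unavoidably occurs, the additional structure of $F_2$, $F_5$, or $F_8$ (beyond the bare $F_3$) must be exploited directly to exhibit a minimal forbidden subgraph inside $F\uplus K$, possibly after invoking Lemma~\ref{lm:newLemma001} to simplify any remaining non-$H_2$ components of $K$ while preserving the covering condition $V_f(F)\subset V_f(K)$. Navigating these competing constraints --- controlled shrinkage of $K$, preservation of fat coverage, and matching the exact hypotheses of Lemma~\ref{lm:new2_F} --- is where the technical bulk of the proof will lie.
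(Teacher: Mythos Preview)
Your overall plan---reduce to Table~\ref{table:1} or to Lemmas~\ref{lm:new2_F}, \ref{lm:new2_F_4} by case analysis on $F$---is the same as the paper's, but the reduction step has a genuine gap for $F\cong F_1$ and $F\cong F_3$. You assert that one can always find a connected $K=\langle\langle V\rangle\rangle_H$ with $|V|=m(F)$ covering $V_f(F)$, but this fails when the slim neighbours of the two fat vertices of $F$ are far apart in $H$: a chain of ten copies of $H_2$ with $f_0,f_1$ at the ends gives a counterexample, since any five-vertex $V$ containing a neighbour of each splits $K$ into two components. For $F_1$ the paper handles exactly this obstruction via row~(b) of Table~\ref{table:1} (allowing $c(K)=2$, $|V_s(K)|=4$), which you never invoke. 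For $F_3$ the paper does not attempt direct use of Table~\ref{table:1} at all; instead it groups $F_3$ with $F_2,F_5,F_8$ and runs the edge/no-edge dichotomy on $H(I)$: no edge between $N_{H(I)}^s(f_0)$ and $N_{H(I)}^s(f_1)$ gives Lemma~\ref{lm:new2_F}, while an edge $s_0\sim s_1$ yields a two-element $I'\subset I$ with $H(I')$ connected and $V_f(F)\subset V_f(H(I'))$, which can then be grown to size $m(F)$ inside the connected slim graph and fed into Table~\ref{table:1}(c).

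This last point also corrects your final paragraph. When the no-edge hypothesis of Lemma~\ref{lm:new2_F} fails, the resolution is \emph{not} to exploit extra structure of $F_2,F_5,F_8$ beyond the $F_3$ subgraph; rather, the existence of the edge is precisely what guarantees a small connected $H_2$-piece covering $V_f(F')$, after which Table~\ref{table:1}(c) (applied to $F'\cong F_3$) finishes the job. The same mechanism works for $F_4$: if $\langle N_0\cup N_1\cup N_2\rangle_{H(I)}$ is disconnected, Lemma~\ref{lm:new2_F_4} applies; otherwise edges from $N_1,N_2$ into $N_0$ give $|I'|\le 4$ with $H(I')$ connected, and Table~\ref{table:1}(d) applies. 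So the ``competing constraints'' you flag are resolved not by further shrinkage or by Lemma~\ref{lm:newLemma001}, but by this dichotomy.
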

\begin{proof}
%It suffices to show that there exists
%   a subgraph $K$ of $H$
%   such that (\ref{eq:H}) and the assumptions of Table~\ref{table:1}
%   are satisfied.

Let $I=\{i\mid H^i\cong H_2,\ 0\le i\le n\}$.
First we suppose $I=\emptyset$.
Then,
   since $H^i\cong H_3$ or $H_5$,
   $|V_f(H^i)|=1$  for all $i\in \{0,1,\ldots,n\}$.
This implies $|V_f(H)|=1$ since $H$ is connected.
Hence $F\cong F_6,F_7$ or $F_9$
%Hence $F\cong F_6$ 
   by (\ref{eq:new2_path2}).
Suppose $F\cong F_7$.
%Then $m(F)=2$.
Since $H_3$ is a subgraph of $H_5$,
   there exists a subgraph $K$ of $H$ such that $K\cong H_3$.
Then $G$ contains $F\uplus K$
	as a subgraph from Lemma~\ref{lm:subgraph}.
Since $F\uplus K$ satisfies the assumptions of Table~\ref{table:1},
	the conclusion holds.
%The Hoffman graph $F\uplus K(\subset G)$
%   satisfies (\ref{eq:H}),
%   and hence,
%	from the row (f) of Table~\ref{table:1},
%	$F\uplus K (\subset G)$ has a slim subgraph
%	isomorphic to one of the graphs in Figure~\ref{MFS}.
Suppose $F\cong F_6$ or $F_9$.
%Then $m(F)=4$.
Since $|V_s(H)|\ge 4$ and $H_3$ is a subgraph of $H_5$,
   there exists a subgraph $K$ of $H$
   isomorphic to the sum $H_3\uplus H_3$ sharing a fat vertex.
Then $G$ contains $F\uplus K$
	as a subgraph from Lemma~\ref{lm:subgraph}.
Since $F\uplus K$ satisfies the assumptions of Table~\ref{table:1},
	the conclusion holds.
%This subgraph $K$ satisfies (\ref{eq:H}) and the assumptions of Table~\ref{table:1}.
%The Hoffman graph $F\uplus K(\subset G)$
%   satisfies (\ref{eq:H}),
%   and hence,
%	from the row (e) of Table~\ref{table:1},
%	$F\uplus K(\subset G)$ has a slim subgraph
%	isomorphic to one of the graphs in Figure~\ref{MFS}.
In the remaining part of this proof,
   we suppose $I\neq\emptyset$.
For a subset $J$ of $\{0,1,\ldots,n\}$,
	we write $H(J)=\biguplus_{i\in J}H^i$.
%%%%%%%%%%%%%%%%%%%%%%%%%%%%%%%%%%%%%% Claim~1
%\begin{cl}\label{cl:001}
%Let $V$ be a subset of $V_s(H)$ such that $\subgg{V}{H}$ is connected.
%Let $I'=\{i\in I\mid V_s(H^i)\subset V\}$.
%Then $H(I')$ is connected.
%In particular,
%   $H(I)$ is connected.
%\end{cl}
%\red{
%Put $J=\{i\mid 0\le i\le n,\ V_s(H^i)\cap V\neq\emptyset\}$
%   so that $I'=I\cap J$.
%Since $\subgg{V}{H}$ is connected,
%	$H(J)$ is connected.
%Since the removal of $V_s(H^i)$ with $i\in J\setminus I'$
%	preserves connectivity
%	by Lemma~\ref{lem:4.2},
%	we conclude that $H(I')$ is connected.
%}
%From Lemma~\ref{lm:Fpre2}(i),
%	the claim holds.

%%%%%%%%%%%%%%%%%%%%%%%%%%%%%%%%%%%%%% Claim~\ref{cl:002}
\begin{cl}\label{cl:002}
The graph $\subg{V_s(H)}{H}$ is connected.
\end{cl}
Since $|V_s(H)|\ge m(F)\ge 2$ and $I\neq \emptyset$,
   $n>0$.
Hence,
   from the last part of Lemma~\ref{lem:4.4},
   $\subg{V_s(H)}{H}$ is connected.

%%%%%%%%%%%%%%%%%%%%%%%%%%%%%%%%%%%%%% Claim~\ref{cl:003}
\begin{cl}\label{cl:003}
$V_f(F)\subset V_f(H(I))$.
\end{cl}
%\red{
%Suppose $V_f(H(I))\neq V_f(H)$.
%Then there exists a fat vertex $f\in V_f(H)\setminus V_f(H(I))$.
%Since $\subgg{N_H^s(f)}{H}$ has the unique fat vertex $f$,
%	it is a connected component of $H$.
%But this contradicts the assumption
%	that $H$ is connected and $I\neq\emptyset$.
%Hence $V_f(H(I))=V_f(H)$.
%By (\ref{eq:new2_path2}),
%	$V_f(F)\subset V_f(H(I))$.
%}
From Lemma~\ref{lm:Fpre2}(ii),
	$V_f(H(I))=V_f(H)$.
By (\ref{eq:new2_path2}),
	$V_f(F)\subset V_f(H(I))$.
%\begin{cl}\label{cl:003}
%For any $f\in V_f(F)$,
%   there exists $i\in I$ such that $f\in V_f(H^i)$,
%   i.e., $V_f(F)\subset V_f(H(I))$.
%\end{cl}
%If $|V_f(F)|=1$,
%   then the claim holds since $I\neq\emptyset$.
%Suppose $|V_f(F)|>1$.
%Let $g$ be a fat vertex different from $f$.
%Since $V_f(F)\subset V_f(H)$,
%   there exists a path $P$ such that $f,g$ are end points of $P$
%   and $P-\{f,g\}\subset \subg{V_s{H}}{H}$
%   from Claim~\ref{cl:002}.
%If there exists $s\in N^s_H(f)\cap N^s_H(g)$,
%   then $\subg{\{s,f,g\}}{H}\cong H_2$.
%Hence $\subg{\{s,f,g\}}{H}=H^i$ for some $i\in I$.
%Suppose $N^s_H(f)\cap N^s_H(g)=\emptyset$.
%Then there exist $s_1,s_2\in V_s(P)$ such that $s_1\sim f$,
%   $s_2\not\sim f$ and $s_1\sim s_2$.
%Then there exists a common fat neighbour $f'$ of $s_1$ and $s_2$.
%Since $s_2\not\sim f$,
%   $f\neq f'$.
%Hence $\subg{\{s_1,f,f'\}}{H}\cong H_2$.
%Hence $\subg{\{s_1,f,f'\}}{H}=H^i$ for some $i\in I$.

%%%%%%%%%%%%%%%%%%%%%%%%%%%%%%%%%%%%%% Claim~4
\begin{cl}\label{cl:004}
Suppose $F\cong F_1,F_3,F_4,F_6,F_7$ or $F_9$,
	and that there exists $I'\subset I$ such that $|I'|\le m(F)$,
	$V_f(F)\subset V_f(H(I'))$
	and $H(I')$ is connected.
Then the lemma holds.
%there exists
%   a subgraph $K$ of $H$
%   such that (\ref{eq:H}) and the assumptions of Table~\ref{table:1}
%   are satisfied.
%In particular,
%   $G$ has a slim subgraph isomorphic to one of the graphs
%	in Figure~\ref{MFS}.
\end{cl}

If $|I'|=1$,
   then obviously $\subg{V_s(H(I'))}{H}$ is connected.
If $|I'|>1$,
   then,
   from the last part of Lemma~\ref{lem:4.4},
   $\subg{V_s(H(I'))}{H}$ is connected.
The graph $\subg{V_s(H)}{H}$ is also connected from Claim~\ref{cl:002}.
Since $|V_s(H(I'))|=|I'|\le m(F)\le |V_s(H)|$,
%	and $\subg{H(I')}{H}$ is connected,
	there exists a subset $V$
	such that
	$V_s(H(I'))\subset V\subset V_s(H)$,
	$|V|=m(F)$ and
	$\subg{V}{H}$ is connected.
Put $K=\subgg{V}{H}$.
Then $K$ is connected and $V_f(F)\subset V_f(K)$.
%Hence the graph $F\uplus K$ satisfies (\ref{eq:H}) by Lemma~\ref{lm:Fpre}.
Hence $G$ contains a connected subgraph $F\uplus K$
	satisfying (\ref{eq:H})
	by Lemma~\ref{lm:Fpre}.
Therefore
%Since $F\cong F_1,F_3,F_4,F_6,F_7$ or $F_9$ and
%	$c(K)=1$,
	the assumptions of Table~\ref{table:1} are satisfied.
Hence the lemma holds.

\begin{cl}\label{cl:F6_7}
If $F\cong F_6,F_7$ or $F_9$,
   then the lemma holds.
\end{cl}
%Suppose $F\cong F_6,F_7$ or $F_9$.
From Claim~\ref{cl:003},
   there exists $i\in I$ such that the unique fat vertex of $F$ is in $V_f(H^i)$.
Then $I'=\{i\}$ satisfies the hypotheses of Claim~\ref{cl:004},
%   since the fat vertex of $F$ is unique.
   and hence the lemma holds.

\begin{cl}\label{cl:F1}
If $F\cong F_1$,
   then the lemma holds.
\end{cl}
%Suppose $F\cong F_1$.
Let $V_f(F)=\{f_0,f_1\}$.
From Claim~\ref{cl:003},
   there exist $i_0,i_1\in I$
   such that
%   $i_0\neq i_1$ and 
   $f_k\in V_f(H^{i_k})$ for each $k=0,1$.
From Definition~\ref{df:1}(ii), $i_0\neq i_1$.
For each $k=0,1$,
	let $s_k$ be the unique slim vertex of $H^{i_k}$.
Since $H$ is connected and $5=m(F)\le |V_s(H)|$,
	there exist disjoint subsets $V_0$, $V_1$ of $V_s(H)$
	such that $|V_0\cup V_1|=5$,
	$\subgg{V_k}{H}$ is connected and
	$s_k\in V_k$ for each $k=0,1$.
%Since $\subg{V_s(H)}{H}$ is connected by Claim~\ref{cl:002},
%   and $|V_s(H^{i_0})\cup V_s(H^{i_1})|=2<5=m(F)\le |V_s(H)|$,
%   there exist disjoint subsets $V_0,V_1$
%   such that 
%%   $V_s(H^{i_0})\cup V_s(H^{i_1})\subset V_0\cup V_1\subset V_s(H)$,
%   $V_0\cup V_1\subset V_s(H)$,
%   $|V_0\cup V_1|=m(F)$,
%   \blue{
%   $c(\subgg{V_0\cup V_1}{H})\le 2$,
%   both $\subgg{V_0}{H}$ and $\subgg{V_1}{H}$ are connected
%	}
%   and $V_s(H^{i_k})\subset V_k$ for each $k=0,1$.
Let $V=V_0\cup V_1$.
%Then,
%   since $|V_s(H)|\ge m(F)=5$ and $\subg{V_s(H)}{H}$ is connected from Claim~\ref{cl:002},
%   there exists $V\subset V_s(H)$
%   such that $|V|=m(F)(=5)$,
%   $c(\subgg{V}{H})\le 2$
%   and $H^{i_0},H^{i_1}\subset \subgg{V}{H}$.
Then every vertex of $V$ can be joined by
	a path in $\subgg{V}{H}$ to $f_0$ or $f_1$.

Suppose $c(\subgg{V}{H})=1$,
	i.e., $\subgg{V}{H}$ is connected.
Let $I'=\{i\in I\mid V_s(H^i)\subset V\}$.
Then $|I'|\le |V|=m(F)$
	and $i_0,i_1\in I'$.
%Let $s_i$ be the unique slim vertex of $H^i$
%	for $i \in I'$.
%Put $V'=\{s_i\mid i \in I'\}$.
%Since $\subgg{V}{H}$ is connected,
%	$\subgg{V'}{H}$ is connected.
%From Lemma~\ref{lem:4.4},
%	$\subg{V'}{H}$ is connected.
%Since all slim vertices in $V\setminus V'$
%	are adjacent to some slim vertices
%	in $V'$,
%	$\subg{V}{H}$ is connected.
%From Claim~\ref{cl:001},
Since $I'\neq\emptyset$,
   $H(I')$ is connected
	from Lemma~\ref{lm:Fpre2}(i).
Since $i_0,i_1\in I'$,
   $V_f(F)\subset V_f(H(I'))$.
Hence $I'$ satisfies the hypotheses of Claim~\ref{cl:004},
   and the lemma holds.

Next suppose $c(\subgg{V}{H})>1$.
Since $\subgg{V_0}{H}$ and $\subgg{V_1}{H}$ are connected,
	$c(\subgg{V}{H})=2$.
Since $|V_0|+|V_1|=5$,
	we may assume $|V_0|\ge 3$	
	without loss of generality.
Let $s$ be a slim vertex of $\subgg{V_0}{H}$
	which has the largest distance from $s_0$.
Then $\subgg{V_0\setminus\{s\}}{H}$ is connected.
Put $K=\subgg{V\setminus\{s\}}{H}$.
Then $c(K)=2$.
Moreover $V_f(F)\subset V_f(K)$,
	and every vertex of $V\setminus \{s\}$ can be
	joined by a path in $K$ 
	to $f_0$ or $f_1$.
%Hence the graph $F\uplus K$ satisfies (\ref{eq:H}) by Lemma~\ref{lm:Fpre}.
Hence $G$ contains a connected subgraph $F\uplus K$
	satisfying (\ref{eq:H})
	by Lemma~\ref{lm:Fpre}.
Since $|V_s(K)|=|V\setminus \{s\}|=4$,
	the assumptions of Table~\ref{table:1} are satisfied.
Hence the lemma holds.

Now we consider the remaining cases.
Let $F'$ be a subgraph of $F$ such that
\[
	\begin{cases}
%	F'\cong F_3  &  \text{if the condition (ii) is satisfied,}\\
%	F'\cong F_4  &  \text{if the condition (iii) is satisfied.}
	F'\cong F_3  &  \text{if }F\cong F_2,F_3,F_5\text{ or }F_8,\\
	F'= F  &  \text{if }F\cong F_4.
	\end{cases}
\]
Obviously $V_f(F')=V_f(F)$.
Hence $F'=\subgg{V_s(F')}{F}$. 
Thus $\subg{V(F')\cup V(H)}{G}=F'\uplus H$ from Lemma~\ref{lm:subgraph},
	i.e., $F'\uplus H\subset G$.
Let $f_0$ be
	the unique
	fat vertex of $F'$
   satisfying $|N^s_{F'}(f_0)|=2$,
   and let $f_1$ be a fat vertex of $F'$
   different from $f_0$.
Then $f_0,f_1\in V_f(H(I))$
from Claim~\ref{cl:003}.

\begin{cl}\label{cl:F2_3_5_8}
If $F\cong F_2,F_3,F_5\text{ or }F_8$,
   then the lemma holds.
\end{cl}

%Suppose that the condition (ii) is satisfied.
%Suppose $F\cong F_2,F_3,F_5\text{ or }F_8$.
Then $F'\cong F_3$.
From Lemma~\ref{lm:Fpre2}(i),
   $H(I)$ is connected.
If there is no edge between $N^s_{H(I)}(f_0)$ and $N^s_{H(I)}(f_1)$,
   then the result follows from Lemma~\ref{lm:new2_F}.
Suppose that there exist $s_0\in N_{H(I)}^s(f_0)$ and $s_1\in N_{H(I)}^s(f_1)$
   such that $s_0\sim s_1$.
For each $k=0,1$,
   there exists $i_k\in I$
   such that $V_s(H^{i_k})=\{s_k\}$.
Put $I'=\{i_0,i_1\}$.
By Lemma~\ref{lm:Fpre2}(i),
	$H(I')$ is connected.
Then $I'$ satisfies the hypotheses of Claim~\ref{cl:004},
   and the lemma holds.

\begin{cl}\label{cl:F4}
If $F\cong F_4$,
   then the lemma holds.
\end{cl}
%Suppose that the condition (iii) is satisfied.
%Suppose $F\cong F_4$.
Let $f_2$ be a fat vertex of $F$
   different from $f_0,f_1$.
From Lemma~\ref{lm:Fpre2}(i),
   $H(I)$ is connected,
   and from Claim~\ref{cl:003},
   $V_f(F)\subset V_f(H(I))$.
Put $N_i=N_{H(I)}^s(f_i)$ for $i=0,1,2$.
If $\subg{N_0\cup N_1\cup N_2}{H(I)}$
	is not connected,
	then the result follows from Lemma~\ref{lm:new2_F_4}.
Suppose that $\subg{N_0\cup N_1\cup N_2}{H(I)}$
	is connected.
Then,
	for each $i=1,2$,
	there exists an edge $s_is_0^{(i)}$ between $N_i$ and $N_0$
	such that $s_i\in N_i$ and $s_0^{(i)}\in N_0$.
Put $I'=\{i\in I\mid
   V_s(H^i)\subset \{s_0^{(1)},s_0^{(2)},s_1,s_2\}\}$.
Since $s_0^{(1)},s_0^{(2)},s_1,s_2\in V_s(H(I))$,
	$\subgg{\{s_0^{(1)},s_0^{(2)},s_1,s_2\}}{H}=H(I')$.
Since $f_0$ is a common fat neighbour of $s_0^{(1)}$ and $s_0^{(2)}$,
	$s_0^{(1)}$ and $s_0^{(2)}$ are adjacent,
	or equivalently in $H(I')$.
Thus $H(I')$ is connected.
Then $I'$ satisfies the hypotheses of Claim~\ref{cl:004},
	and hence the lemma holds.
\end{proof}

The next three lemmas are verified by computer.
%%%%%%%%%%%%%%%%%%%%%%%%%%%%%%%%%%%%%%%%%%
%%%%%%%%%%%%%%%%%%%%%%%%%%%%%%%%%%%%%  Lemma
\begin{lm}\label{lm:100}
Let $F$ be a fat connected Hoffman graph
	satisfying the following conditions:
\begin{enumerate}[(i)]
\item $|V_s(F)|=2$,
\item the two slim vertices of $F$ are not adjacent,
\item $|V_f(F)|\le 4$,
\item every slim vertex has at most $2$ fat neighbours,
\item $F$ is a non $\h$-line graph.
\end{enumerate}
Then $F$ is isomorphic to $F_1$, $F_3$ or $F_4$.
\end{lm}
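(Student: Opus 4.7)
The plan is to reduce to an enumeration: classify all Hoffman graphs satisfying (i)--(iv) up to isomorphism, and check which ones are non-$\h$-line graphs. Let $x,y$ denote the two non-adjacent slim vertices of $F$, and set
\[
a=|N_F^f(x)\setminus N_F^f(y)|,\quad
b=|N_F^f(y)\setminus N_F^f(x)|,\quad
c=|N_F^f(x)\cap N_F^f(y)|.
\]
Condition (iv) yields $a+c\le 2$ and $b+c\le 2$; (iii) yields $a+b+c\le 4$; and since fat vertices are pairwise non-adjacent (Definition \ref{df:Hg}), the only $x$--$y$ paths in $F$ traverse a common fat neighbour, so connectedness forces $c\ge 1$. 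Up to swapping $x$ and $y$, only the four triples $(a,b,c)\in\{(0,0,1),(1,0,1),(1,1,1),(0,0,2)\}$ are possible, each determining $F$ up to isomorphism.

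The triple $(0,0,1)$ produces $F\cong H_3\in\h$, which is excluded by (v). The remaining triples yield graphs matching the diagrams of $F_3$, $F_4$, and $F_1$ in Figure \ref{FG2}, respectively (consistent with the fat-vertex data used in Lemma \ref{lm:new2_F} for $F_3$ and in Lemma \ref{lm:new2_F_4} for $F_4$).

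It remains to verify that $F_1$, $F_3$, $F_4$ really are non-$\h$-line graphs. Suppose $F$ is any of them and $F$ embeds as an induced subgraph into $H=\biguplus_{j=0}^n H^j$ with $[H^j]\in\h$. By Lemma \ref{lm:new2_1} applied to $V=\{x,y\}$, the subgraph $K=\subgg{\{x,y\}}{H}$ admits a decomposition $K=\biguplus_{k} K^k$ with $[K^k]\in\h\cup\{[H_1]\}$, and $F$ is induced in $K$. Every summand satisfies $V_s(K^k)\subset\{x,y\}$, ruling out $H_5$, so each summand is $H_1$, $H_2$, or $H_3$. If some summand is $H_3$, it absorbs both slim vertices and supplies the only fat vertex of $K$, forcing $F\cong H_3$, a contradiction. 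Otherwise $K=K^0\uplus K^1$ with $V_s(K^0)=\{x\}$ and $V_s(K^1)=\{y\}$; since $x\not\sim y$ (a property inherited from $F\subset H$), Definition \ref{df:1}(iv) forbids $K^0$ and $K^1$ from sharing a fat vertex, giving $N_K^f(x)\cap N_K^f(y)=\emptyset$ and hence $c=0$ in $F$, contradicting $F\in\{F_1,F_3,F_4\}$.

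The principal obstacle is the last case-analysis paragraph, where one must rule out every admissible sum decomposition of a possible supergraph $H$. Lemma \ref{lm:new2_1} makes this tractable by localizing the decomposition to the slim set $\{x,y\}$, which automatically restricts the summand types to $\{H_1,H_2,H_3\}$ and the number of summands to at most two; Definition \ref{df:1}(iv) then disposes of the remaining case, so the three diagrams $F_1$, $F_3$, $F_4$ emerge as the only non-$\h$-line possibilities.
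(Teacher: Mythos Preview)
Your argument is correct. The paper itself does not give a hand proof of this lemma: it simply states ``The next three lemmas are verified by computer,'' so your enumeration is a genuinely different route.

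A brief comparison: the paper treats Lemma~\ref{lm:100} as one of three small classification statements (together with Lemmas~\ref{lm:101} and~\ref{lm:102}) that are dispatched by exhaustive machine search over the finitely many Hoffman graphs meeting the numerical constraints. Your approach instead parametrises the isomorphism type by the triple $(a,b,c)$ and uses connectedness to force $c\ge 1$, reducing to four cases by hand. This is more transparent and explains \emph{why} exactly three graphs survive, whereas the computer check merely certifies the list. Your cross-references to Lemmas~\ref{lm:new2_F} and~\ref{lm:new2_F_4} to pin down which triple corresponds to which $F_i$ are appropriate, since the figures are not otherwise described in the text; the remaining case $(0,0,2)$ then falls to $F_1$ by elimination, and this is consistent with Lemma~\ref{lm:not_contain_H14H15}(ii).

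One remark on scope: your third paragraph, showing that $F_1,F_3,F_4$ are genuinely non-$\h$-line graphs, is not required for the implication stated in the lemma --- once $H_3$ is excluded by~(v), the conclusion follows. That paragraph is nonetheless a clean independent check (via Lemma~\ref{lm:new2_1} and Definition~\ref{df:1}(iv)) that the lemma is not vacuous, and it is correct as written.
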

%%%%%%%%%%%%%%%%%%%%%%%%%%%%%%%%%%%%%%%%%%
%%%%%%%%%%%%%%%%%%%%%%%%%%%%%%%%%%%%%  Lemma
\begin{lm}\label{lm:101}
Let $F$ be a fat connected Hoffman graph
	satisfying the following conditions:
\begin{enumerate}[(i)]
\item $3\le |V_s(F)|\le 4$,
\item $|V_f(F)|\le 2$,
\item some slim vertex $s$ of $F$ has $2$ fat neighbours,
\item some slim vertex $s'$ of $F$ is not adjacent to $s$,
   and the others are adjacent to $s$,
\item $\subgg{V_s(F)\setminus\{s\}}{F}\cong H_3$ or $H_5$,
\item $F$ is a non $\h$-line graph.
\end{enumerate}
Then $F$ is isomorphic to $F_2$, $F_5$ or $F_8$.
\end{lm}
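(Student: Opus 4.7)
The plan is to exploit the fact that conditions (i)--(v) bound the size and internal rigidity of $F$ so tightly that only a short finite list of candidates survives; condition (vi) is then checked on each. This is why the author defers to computer verification.

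First I would observe that (ii) together with (iii) forces $|V_f(F)|=2$. Writing $V_f(F)=\{f_1,f_2\}$, the slim vertex $s$ from (iii) is adjacent to both $f_1$ and $f_2$. Combined with (i), $F$ has at most six vertices, so the set of isomorphism classes of connected Hoffman graphs satisfying (i)--(v) is small and explicit.

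Next I would split on $|V_s(F)|$. If $|V_s(F)|=3$, then $V_s(F)\setminus\{s\}$ has two slim vertices, so (v) forces $\subgg{V_s(F)\setminus\{s\}}{F}\cong H_3$; this pins down the adjacencies among the two non-$s$ slim vertices and one fat vertex, and, because $\langle\langle\cdot\rangle\rangle$ pulls in all fat neighbours, it also forces the two non-$s$ slim vertices to have no additional fat neighbours in $F$. Condition (iv) then fixes the adjacencies between $s$ and the two non-$s$ slim vertices. The same reasoning for $|V_s(F)|=4$ uses $H_5$ in place of $H_3$. In each sub-case the only remaining freedoms are whether a few potential fat-slim edges are present or absent, yielding a handful of candidate graphs to list explicitly.

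For each candidate I would then test condition (vi), i.e., whether $F$ admits a decomposition $F=\biguplus_{i=0}^n H^i$ with $[H^i]\in\h$ in the sense of Definition~\ref{df:linegraph}. Because $|V_s(F)|\leq 4$ and $|V_f(F)|=2$, the number of potential partitions of $V_s(F)$ into blocks corresponding to $H_2$, $H_3$ or $H_5$ is very small, and Lemma~\ref{lm:not_contain_H14H15} together with the consistency conditions of Definition~\ref{df:1} pare it down further. Those candidates admitting no such decomposition are exactly the non-$\h$-line graphs, and must then be matched against $F_2$, $F_5$ and $F_8$ in Figure~\ref{FG2}.

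The main obstacle is the decomposition check itself. For a given candidate it is not enough to try the obvious decompositions; one must rule out every partition of $V_s(F)$ into blocks matching the slim vertex sets of $H_2$, $H_3$ or $H_5$, subject to the sharing-of-fat-vertices constraints in Definition~\ref{df:1}(iii),(iv). This is precisely the finite but tedious combinatorial task that motivates the use of MAGMA, and the content of the lemma is that only the three configurations $F_2$, $F_5$, $F_8$ survive all of (i)--(vi).
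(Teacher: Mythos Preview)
Your plan matches the paper's approach exactly: the paper states outright that this lemma (together with Lemmas~\ref{lm:100} and~\ref{lm:102}) is ``verified by computer'', and your outline is precisely a description of what such a verification does---reduce to a finite list via (i)--(v) and then test (vi) on each candidate.

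One small technical caution: when you write that testing (vi) amounts to checking ``whether $F$ admits a decomposition $F=\biguplus_{i=0}^n H^i$ with $[H^i]\in\h$'', that is slightly too restrictive. By Definition~\ref{df:linegraph}, $F$ is an $\h$-line graph if it is an \emph{induced subgraph} of such a sum; equivalently (cf.\ Example~22 of \cite{paperI}), $F$ has a strict $\h$-cover, which may carry extra fat vertices beyond $V_f(F)$. So the computer check must also allow adjoining new fat vertices to the non-$s$ slim vertices (each of which currently has only one fat neighbour) before attempting a sum decomposition. Lemma~\ref{lm:not_contain_H14H15} bounds this: each slim vertex gets at most two fat neighbours, and any two slim vertices share at most one, so the enumeration stays finite. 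With that adjustment your sketch is complete and coincides with what the MAGMA verification carries out.
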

%%%%%%%%%%%%%%%%%%%%%%%%%%%%%%%%%%%%%%%%%%
%%%%%%%%%%%%%%%%%%%%%%%%%%%%%%%%%%%%%  Lemma
\begin{lm}\label{lm:102}
Let $F$ be a fat connected Hoffman graph
	satisfying the following conditions:
\begin{enumerate}[(i)]
\item $3\le |V_s(F)|\le 6$,
\item $|V_f(F)|=1$,
\item every slim vertex of $F$ has $1$ fat neighbour,
\item there exist different subsets $V_1$ and $V_2$ of $V_s(F)$
   such that $V_1\cup V_2=V_s(F)$,
   $\subgg{V_1}{F}$ and $\subgg{V_2}{F}$
   are isomorphic to $H_3$ or $H_5$,
   the vertex of $V_s(F)\setminus V_2$ and 
   the vertex of $V_s(F)\setminus V_1$ are adjacent to each other
      except some pair
      $\{s_1,s_2\}$
      $(s_1\in V_s(F)\setminus V_2$,
      $s_2\in V_s(F)\setminus V_1)$,
\item $F$ is a non $\h$-line graph.
\end{enumerate}
Then $F$ contains a subgraph isomorphic to $F_6,F_7$ or $F_9$.
\end{lm}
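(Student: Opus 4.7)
The approach is a finite case analysis, as indicated by the paper's remark that this lemma is verified by computer; I describe the reduction that the search carries out. By conditions (ii) and (iii), $F$ has a unique fat vertex $f$ and every slim vertex is adjacent to $f$; hence $F$ is completely determined by its slim subgraph together with the rule that $f$ is attached to every slim vertex. Write $A = V_s(F) \setminus V_2$, $B = V_s(F) \setminus V_1$, and $C = V_1 \cap V_2$, so $V_s(F) = A \dcup B \dcup C$. The slim subgraphs of $H_3$ and $H_5$ have $2$ and $3$ vertices respectively, so $|V_i| \in \{2,3\}$; combined with $V_1 \neq V_2$ and $3 \le |V_s(F)| \le 6$, only finitely many triples $(|V_1|,|V_2|,|C|)$ arise. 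For each such triple, condition (iv) prescribes that the bipartite graph between $A$ and $B$ is complete except for one missing edge $\{s_1,s_2\}$, while the adjacencies inside $V_1$ and $V_2$ are constrained by $\subgg{V_i}{F} \cong H_3$ or $H_5$.

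The plan is to iterate over all valid triples $(|V_1|,|V_2|,|C|)$; for each, iterate over compatible internal adjacencies on $V_1$ and $V_2$ (which automatically determine the induced edges within $C$); then place the single non-edge between $A$ and $B$. For every candidate $F$ produced, first test using Definition~\ref{df:linegraph} whether $F$ is an $\h$-line graph. By hypothesis (v), only the non-$\h$-line-graph candidates survive, and for each of these one searches for an induced copy of $F_6$, $F_7$, or $F_9$ via subgraph isomorphism. The enumeration is performed in MAGMA, as described in Section~2.

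The main obstacle is bookkeeping rather than structural: one must correctly enumerate configurations up to isomorphism using a canonical-form routine, and correctly implement the $\h$-line-graph test, which ultimately reduces to deciding whether $V_s(F)$ admits a decomposition $\biguplus_i H^i$ with $[H^i] \in \h$ as in Definition~\ref{df:1}. Since only a modest number of candidate graphs arise after symmetry reduction, the verification is completely routine; the parallel structure of Lemmas~\ref{lm:100} and~\ref{lm:101} supports that the same computational strategy applies uniformly.
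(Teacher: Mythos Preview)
Your proposal is correct and matches the paper's approach: the paper simply states that this lemma (together with Lemmas~\ref{lm:100} and~\ref{lm:101}) is verified by computer, and your description of the finite enumeration---reducing $F$ to its slim subgraph with the single fat vertex attached everywhere, then iterating over the possible sizes and internal structures of $V_1$, $V_2$, and $V_1\cap V_2$---is exactly the kind of exhaustive search the paper intends. If anything, you have made the bookkeeping more explicit than the paper does.
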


\setcounter{cl}{0}
We shall now prove our main result.
%%%%%%%%%%%%%%%%%%%%%%%%%%%%%%%%%%%%%%%%%%
%%%%%%%%%%%%%%%%%%%%%%%%%%%%%%%%%%%%%  main thm 2
\begin{proof}[Proof of Theorem~\ref{mainthm}.]\label{proof:mainthm}
From Proposition \ref{prop1},
	it is enough to prove $|V(\Gamma)|<10$,
	so suppose $|V(\Gamma)|\ge 10$.
Since a complete graph and a cycle are $\h$-line graphs,
$\Gamma$ is neither a complete graph nor a cycle.
Hence,
	from Lemma~\ref{connected},
	there exists a non-adjacent pair $\{x,y\}$ in $V(\Gamma)$
		such that $\Gamma-\{x,y\}$ is connected.
Then $\Gamma-x$ and $\Gamma-y$ are connected as well.
The graphs $\Gamma-x$, $\Gamma-y$ and $\Gamma-\{x,y\}$
		are $\h$-line graphs by the minimality of $\Gamma$
		and $|V(\Gamma-\{x,y\})|\ge 8$.

Let $X=\biguplus_{i=0}^{m_1}X^i$ (resp. $Y=\biguplus_{i=0}^{m_2}Y^i$)
	be a strict $\h$-cover graph of $\Gamma-y$ (resp. $\Gamma-x$).
Without loss of generality,
	we may suppose $x\in V_s(X^0)$ and $y\in V_s(Y^0)$.
From Lemma~\ref{lm:newLemma001},
   there exists a strict $\h$-cover graph
	$\X=\X^0\uplus(\biguplus_{i=1}^{m_1}X^i)$ of $X-x$.
Similarly,
	there exists a strict $\h$-cover graph
	$\Y=\Y^0\uplus(\biguplus_{i=1}^{m_2}Y^i)$ of $Y-y$.
Obviously $\X$ and $\Y$ are strict $\h$-cover graph of $\Gamma-\{x,y\}$.
From Theorem 31 of \cite{paperI},
	there exists an isomorphism
	$\varphi:~\Y\to\X$
	such that $\varphi|_{(\Gamma-\{x,y\})}$ is 
	the identity automorphism of $\Gamma-\{x,y\}$.

From Lemma~\ref{lm:newLemma001},
	we can put 
   $\X^0=\X^0_1\uplus \X^0_2$
   ($[\X^0_1], [\X^0_2]
   \in \{[\phi],[H_2],[H_3]\}$)
   and
   $\Y^0=\Y^0_1\uplus \Y^0_2$
   ($[\Y^0_1], [\Y^0_2]
   \in \{[\phi],[H_2],[H_3]\}$),
   and put $\mathcal{X}=\{\phi,\X^0_1,\X^0_2\}$
   and $\mathcal{Y}=\{\phi,\Y^0_1,\Y^0_2\}$.   
Then $\tilde{X}=(\biguplus_{K\in \mathcal{X}}K)\uplus
   (\biguplus_{i=1}^{m_1}X^i)
   =(\biguplus_{L\in \mathcal{Y}}\varphi(L))\uplus
   (\biguplus_{j=1}^{m_2}\varphi(Y^j))$.
From Lemma~\ref{newlemma},
$\{\varphi(L)\mid L\in\mathcal{Y}\}
   \cup\{\varphi(Y^i)\mid 1\le i\le m_2\}
   =\mathcal{X}\cup\{X^i\mid 1\le i\le m_1\}$.
Put $\mathcal{Z}
      =\mathcal{X}\cup\{\varphi(L)\mid L\in\mathcal{Y}\}$.
Then 
\begin{equation}\label{eq:005}
\tilde{X}=(\biguplus_{Z\in\mathcal{Z}}Z)\uplus H,
\end{equation}
   where 
\begin{equation}\label{eq:01}
H=\biguplus_{i\in I}X^i=\biguplus_{j\in J}\varphi(Y^j)
\end{equation}
   for some $I\subset\{1,2,\ldots,m_1\}$
      and $J\subset\{1,2,\ldots,m_2\}$.
Obviously
\begin{equation}\label{eq:007}
X=X^0\uplus (\biguplus_{Z\in\mathcal{Z}\setminus\mathcal{X}}Z)\uplus H,\quad
Y=Y^0\uplus (\biguplus_{Z\in\mathcal{Z}\setminus\{\varphi (L)\mid L\in\mathcal{Y}\}}\varphi^{-1}(Z))\uplus \varphi^{-1}(H),
\end{equation}

%%%%%%%%%%%%%%%%%%%%%%%%%%%%%%%%%%%%%% Claim 3
\begin{cl}\label{cl:017}
The graph $H$ is connected.
\end{cl}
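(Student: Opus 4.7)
The plan is to first establish that $\X$ itself is connected, then peel off the factors in $\mathcal{Z}$ from the decomposition $\X=(\biguplus_{Z\in\mathcal{Z}}Z)\uplus H$ to leave $H$ connected. Since $\{x,y\}$ was chosen by Lemma~\ref{connected} so that $\Gamma-\{x,y\}$ is connected, and $|V(\Gamma-\{x,y\})|\ge 8$, Theorem~\ref{cover} gives uniqueness of the strict $\h$-cover of $\Gamma-\{x,y\}$ up to equivalence, while Lemma~\ref{lem:4.4} produces such a cover that is connected; equivalence preserves the number of connected components, so $\X$ is connected.

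For the first round of peeling, I write $\X=\X^0\uplus(\biguplus_{i=1}^{m_1}X^i)$ and apply Lemma~\ref{lem:4.2}. By Lemma~\ref{lm:newLemma001}, $\X^0$ is isomorphic to $\phi$, $H_2$, a sum $H_2\uplus H_2$ with a shared fat vertex, or $H_3$; in every case there is a distinguished fat vertex $\alpha$ of $\X^0$ that is adjacent to all slim vertices of $\X^0$, and the remaining fat vertices of $\X^0$ (if any) are pendant in $X$. Connectedness of $\X$ forces $\alpha\in V_f(\biguplus_{i=1}^{m_1}X^i)$, and the pendant property prevents the other fat vertices of $\X^0$ from being shared with that remainder, so $V_f(\X^0)\cap V_f(\biguplus_{i=1}^{m_1}X^i)=\{\alpha\}$. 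Lemma~\ref{lem:4.2} then yields that $\biguplus_{i=1}^{m_1}X^i$ is connected. A second round of peeling, applied to $\biguplus_{i=1}^{m_1}X^i=(\biguplus_{Z\in\mathcal{Z}\setminus\mathcal{X}}Z)\uplus H$ and using Lemma~\ref{lm:newLemma001} for $Y$ (with the pendant-in-$Y$ property transferred to $\X$ via the isomorphism $\varphi$), removes each $\varphi(\Y^0_k)\in\mathcal{Z}\setminus\mathcal{X}$ in turn, leaving $H$ connected.

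The main obstacle is verifying the singleton fat-intersection hypothesis in the $H_2$-cases of Lemma~\ref{lm:newLemma001}: a pendant fat vertex of $\X^0$ (or of $\varphi(\Y^0_k)$) could simultaneously lie in some $H_2$-factor inside $\biguplus_{i=1}^{m_1}X^i$, making the intersection larger than one vertex. The resolution is to peel $\X^0$ (respectively $\varphi(\Y^0_k)$) together with that extra $H_2$-factor; since a pendant fat vertex has degree one in the ambient graph, no further shared fat vertex can appear, and Lemma~\ref{lem:4.2} still applies to the coarser decomposition. The classification in Table~\ref{table2} of the possible shapes of $\X^0$ and $\varphi(\Y^0_k)$ keeps this case analysis tractable.
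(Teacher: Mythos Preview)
Your proof is correct and rests on the same device as the paper's: Lemma~\ref{lem:4.2} applied to the decomposition~(\ref{eq:005}). The paper does it in a single shot, asserting that $H'=\biguplus_{Z\in\mathcal{Z}}Z$ admits a fat vertex $\alpha$ with $V_f(H')\cap V_f(H)=\{\alpha\}$ and $N^s_{H'}(\alpha)=V_s(H')$, whereas you peel off $\X^0$ first and then each $\varphi(\Y^0_k)\in\mathcal{Z}\setminus\mathcal{X}$ in turn---more explicit, but the same mechanism.

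Two simplifications are worth noting. First, your route to ``$\X$ is connected'' via Lemma~\ref{lem:4.4} and Theorem~\ref{cover} works but is roundabout; the paper simply observes that the slim subgraph $\Gamma-\{x,y\}$ of $\X$ is connected, and since every fat vertex of a Hoffman graph has a slim neighbour (Definition~\ref{df:Hg}(1)), $\X$ is connected. Second, the ``obstacle'' you identify does not arise. A fat vertex $\beta$ of $\X^0$ (or of $\varphi(\Y^0_k)$) that is pendant in $\X$ has a \emph{unique} neighbour there, and that neighbour lies in $V_s(\X^0)$ (respectively $V_s(\varphi(\Y^0_k))$). If $\beta$ were also in $V_f(X^j)$ for some other summand $X^j$, it would acquire a second slim neighbour in $V_s(X^j)$, contradicting pendancy. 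Hence the fat-vertex intersection with the remaining summand is already the singleton $\{\alpha\}$ you need, and the coarser peeling you propose as a workaround is unnecessary.
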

Since $\Gamma-\{x,y\}$ is connected,
   so is $\tilde{X}$.
The Hoffman graph $H'=\biguplus_{Z\in\mathcal{Z}}Z$
	has the unique fat vertex $\alpha$
	satisfying $V_f(H')\cap V_f(H)=\{\alpha\}$
	and $N_{H'}^s(\alpha)=V_s(H')$.
Using Lemma~\ref{lem:4.2} on the decomposition
   (\ref{eq:005}),
%\red{
%Each non-empty $Z$ in $\mathcal{Z}$ has a unique fat vertex $\alpha$
%   which is not a pendant vertex,
%   and $\alpha$ is adjacent to the unique slim vertex of $Z\cong H_2$.
%Using Lemma~\ref{lem:4.2} repeatedly on the decomposition
%   (\ref{eq:005}),
%}
	We conclude that $H$ is connected.

%Also,
%	$|V_s(H)|=|V_s(\Gamma)|-|V_s(X^0)\cup V_s(Y^0)|\ge4$.
%From Lemma~\ref{newlemma},
%	without loss of generality,
%	we may suppose $X^i=\varphi(Y^i)$ for $i\in I$.
We define the edge set
\[
E_0=\left(\bigcup_{z\in V_s(X^0)}
	\{zf \mid
		f\in V_f(H)\cap N_{X^0}^f(z)
	\}
	\right)
	\cup
	\left(\bigcup_{z\in V_s(Y^0)}
	\{z\varphi(g) \mid
		g\in V_f(\varphi^{-1}(H))\cap N_{Y^0}^f(z)
	\}
	\right)
\]

and the Hoffman graph
\[
G=(V(\Gamma)\cup V_f(H),~E(\Gamma)\cup E(H)\cup E_0).
\]
Let
\[
	F=\subg{\subg{V_s(X^0)\cup V_s(Y^0)}{}}{G}.
\]
Obviously the following holds:
\begin{equation}\label{eq:006}
s\in V_s(F),\ f\in V_f(G),\ sf\in E(G)
\Longrightarrow sf\in E_0,
\end{equation}
and
\begin{equation}\label{eq:02}
\begin{array}{cccccc}
(\text{a}) & V_f(F)\subset V_f(H), & (\text{b}) & E_0\subset E(F),&
 (\text{c}) &
\Gamma\subset G\mbox{ and }V_s(\Gamma)=V_s(G). 
\end{array}
\end{equation}
%\begin{equation}\label{eq:02}
%\begin{array}{cccc}
%(\text{a}) & V_f(F)\subset V_f(H), & (\text{b}) & E_0\subset E(F),\\
% (\text{c}) &
%\Gamma\subset G\mbox{ and }V_s(\Gamma)=V_s(G),
%& (\text{d}) & |V_s(F)|\le6. 
%\end{array}
%\end{equation}
Also,
	from (\ref{eq:005}),
\begin{equation}\label{eq:03}
\begin{array}{cccc}
(\text{a}) & V_s(\Gamma)=V_s(F)\cup V_s(H), & (\text{b}) &
V_s(F)\cap V_s(H)=\emptyset.
\end{array}
\end{equation}
%From (\ref{eq:02})-(d) and (\ref{eq:03}),
From (\ref{eq:03}),
\begin{equation}\label{eq:04}
%	|V_s(H)|=|V_s(\Gamma)|-|V_s(F)|.
	|V_s(H)|\ge10-|V_s(F)|.
\end{equation}
By the definition of $G$,
\begin{equation}\label{eq:008}
	V_f(G)=V_f(H).
\end{equation}

%%%%%%%%%%%%%%%%%%%%%%%%%%%%%%%%%%%%%% Claim 4
\begin{cl}\label{cl:007}
$G=F\uplus H$.
\end{cl}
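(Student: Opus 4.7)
The plan is to verify the four conditions of Definition~\ref{df:1} for the putative sum $G = F \uplus H$. Conditions (i)--(iii) amount to bookkeeping with the edge partition $E(G) = E(\Gamma) \cup E(H) \cup E_0$ and the vertex identifications already collected in (\ref{eq:02})--(\ref{eq:008}); the real content lies in condition (iv), which I transport from the known sum structures of $X$ and $Y$.

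For (i): $V_s(G) = V_s(\Gamma) = V_s(F) \cup V_s(H)$ by (\ref{eq:02}c) and (\ref{eq:03}a), and $V_f(G) = V_f(H) \supset V_f(F)$ by (\ref{eq:008}) and (\ref{eq:02}a), so $V(G) = V(F) \cup V(H)$. Condition (ii) is precisely (\ref{eq:03}b). For (iii): if $s \in V_s(F)$ is adjacent in $G$ to a fat vertex $f$, then by (\ref{eq:006}) the edge $sf$ lies in $E_0$, so $f$ is a fat neighbour of $s$ in $G$ and hence $f \in V_f(F)$ by the definition of $F$; conversely, if $s \in V_s(H)$ is adjacent to a fat $f$, then $sf$ cannot lie in $E(\Gamma)$ (which has no fat vertices) nor in $E_0$ (since $s \notin V_s(F)$ by (\ref{eq:03}b)), forcing $sf \in E(H)$ and $f \in V_f(H) \subset V(H)$.

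The main obstacle is condition (iv). Take $s_1 \in V_s(F)$ and $s_2 \in V_s(H)$, and suppose first $s_1 \in V_s(X^0)$. By (\ref{eq:01}) there is a unique $i_0 \in I \subset \{1,\ldots,m_1\}$ with $s_2 \in V_s(X^{i_0})$, so $s_1$ and $s_2$ lie in distinct summands of $X = \biguplus_{i=0}^{m_1} X^i$. The strategy is to prove that the common fat neighbours of $s_1, s_2$ in $G$ coincide with those in $X$, and that $s_1 s_2 \in E(G) \Leftrightarrow s_1 s_2 \in E(X)$; condition (iv) for the sum $X$ then transfers directly. Each identification is one line: by the definition of $E_0$, an edge $s_1 f$ with $f \in V_f(G) = V_f(H)$ lies in $E(G)$ iff $f \in N^f_{X^0}(s_1)$, iff $s_1 f \in E(X)$; and since $s_2 \notin V_s(F)$, an edge $s_2 f$ lies in $E(G)$ iff $s_2 f \in E(H)$, iff $s_2 f \in E(X)$ (invoking (iii) for the sum decomposition of $X$). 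For the slim edge, neither $s_1$ nor $s_2$ equals $y$, so $s_1 s_2 \in E(G) \Leftrightarrow s_1 s_2 \in E(\Gamma - y) = E(X)$.

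The case $s_1 \in V_s(Y^0)$ is handled symmetrically with $Y$ in place of $X$: replacing $H$ by $\varphi^{-1}(H) = \biguplus_{j \in J} Y^j$, and noting that $\varphi$ is a Hoffman-graph isomorphism fixing $V(\Gamma - \{x,y\})$ pointwise, the same edge-by-edge translation identifies common fat neighbours of $s_1, s_2$ in $G$ (a subset of $V_f(H)$) with their $\varphi$-images among the common fat neighbours of $s_1, s_2$ in $Y$. Condition (iv) for the sum $Y$ then yields the desired property, completing the verification that $G = F \uplus H$.
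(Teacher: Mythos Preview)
Your proposal is correct and follows essentially the same approach as the paper: verify (i)--(iii) via the bookkeeping identities (\ref{eq:02})--(\ref{eq:008}), and then establish (iv) by splitting into the cases $s_1\in V_s(X^0)$ and $s_1\in V_s(Y^0)$ and reducing to condition (iv) for the known sums $X$ and $Y$ respectively (using $\varphi$ in the second case). Your treatment of (iii) and of the slim-edge equivalence $s_1s_2\in E(G)\Leftrightarrow s_1s_2\in E(X)$ is spelled out in slightly more detail than the paper's, but the substance is identical.
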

Let us check the conditions (i)--(iv) of  Definition \ref{df:1}.

% (i)
From (\ref{eq:02})-(c) and (\ref{eq:03})-(a),
	$V_s(G)=V_s(F)\cup V_s(H)$.
Moreover,
	$V_f(G)=V_f(H)=V_f(F)\cup V_f(H)$
	by (\ref{eq:02})-(a) and (\ref{eq:008}).
Hence the condition (i) is satisfied.
% (ii)
Also,
	by  (\ref{eq:03})-(b),
	the condition (ii) is satisfied.
% (iii)
By the definitions of $F$ and $G$,
   the condition (iii) is satisfied.

Let $s_1\in V_s(F)$,
	and let $s_2\in V_s(H)$.
Then $s_1\in V_s(X^0)$ or $s_1\in V_s(Y^0)$,
   $s_2\in V_s(H)\subset V_s(\biguplus_{i=1}^{m_1}X^i)$.
By (\ref{eq:008}),
	$N^f_{G}(s_2)=N^f_{H}(s_2)$.
First suppose $s_1\in V_s(X^0)$.
Since $N_H^f(s_2)\subset V_f(H)$,
	$N_G^f(s_1)\cap N_G^f(s_2)
   =(N_{X^0}^f(s_1)\cap V_f(H))\cap N_H^f(s_2)
   =N_{X^0}^f(s_1)\cap N_H^f(s_2).$
Since  $N^f_{X^0}(s_1)=N^f_{X}(s_1)$
	and $N^f_{H}(s_2)=N^f_{X}(s_2)$,
	$N_G^f(s_1)\cap N_G^f(s_2)
   =N_X^f(s_1)\cap N_X^f(s_2)$.
Thus,
   $s_1$ and $s_2$ have at most one common fat neighbour in $G$,
   and they have one
   if and only if
   they are adjacent in $X$,
   or equivalently in $G$.
Hence (iv) holds in this case.
Next suppose $s_1\in V_s(Y^0)$.
Since $s_2\in V_s(H)$,
   $s_2\in \varphi (V_s(Y^j))$ for some $j\in\{1,2,\ldots,m_2\}$.
Hence $N_G^f(s_2)=N_H^f(s_2)=N_{\varphi (Y^j)}^f(s_2)
   =N_{\varphi (Y^j)}^f(\varphi (s_2))
   =\varphi(N_{Y^j}^f(s_2))
   =\varphi(N_{Y}^f(s_2))$.
Thus
\begin{align*}
N_G^f(s_1)\cap N_G^f(s_2)
   &=\varphi(V_f(\varphi^{-1}(H))\cap N_{Y^0}^f(s_1))
      \cap \varphi(N_Y^f(s_2))
   \\&=\varphi(N_{Y}^f(s_1)\cap N_Y^f(s_2)\cap V_f(\varphi^{-1}(H)))
   \\&=\varphi(N_{Y}^f(s_1)\cap N_Y^f(s_2))
\end{align*}
   since $\varphi^{-1}(H)\subset Y$.
A similar argument shows that (iv) holds
   in this case as well.

%%%%%%%%%%%%%%%%%%%%%%%%%%%%%%%%%%%%%% Claim 8
\begin{cl}\label{cl:012}
For any $s\in V_s(F)$,
\[
   |N_G^f(s)|\le
   \begin{cases}
   |V_f(X^0)| & \mbox{ if }s\in V_s(X^0),\\
   |V_f(Y^0)| & \mbox{otherwise.}
   \end{cases}
\]
\end{cl}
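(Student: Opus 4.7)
The plan is to analyse $N^f_G(s)$ via the structure of $E_0$ and, in the overlap case $s \in V_s(X^0) \cap V_s(Y^0)$, to transfer the $Y^0$-contribution to the $X$-side using the isomorphism $\varphi$. First I would observe that every fat neighbour of $s$ in $G$ arises from an edge in $E_0$: since $s \in V_s(F)$ we have $s \notin V_s(H)$ by (\ref{eq:03})(b), while $V_f(G) = V_f(H)$ by (\ref{eq:008}) and $\Gamma$ is slim, so edges in $E(\Gamma) \cup E(H)$ contribute no fat endpoints at $s$. Writing $A_s = V_f(H) \cap N^f_{X^0}(s)$ (nonempty only if $s \in V_s(X^0)$) and $B_s = \varphi\bigl(V_f(\varphi^{-1}(H)) \cap N^f_{Y^0}(s)\bigr)$ (nonempty only if $s \in V_s(Y^0)$), we have $N^f_G(s) = A_s \cup B_s$.

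If $s \in V_s(X^0) \setminus V_s(Y^0)$, then $N^f_G(s) = A_s$, and condition (iii) of Definition~\ref{df:1} applied to $X = X^0 \uplus (\biguplus_{i=1}^{m_1} X^i)$ gives $A_s \subseteq N^f_{X^0}(s) \subseteq V_f(X^0)$. The case $s \in V_s(Y^0) \setminus V_s(X^0)$ is symmetric, yielding the bound $|V_f(Y^0)|$.

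The remaining case is $s \in V_s(X^0) \cap V_s(Y^0)$. Since $x \notin V_s(Y^0)$ and $y \notin V_s(X^0)$, we have $s \notin \{x,y\}$, so $\varphi(s) = s$, $s \in V_s(\X^0) \cap V_s(\Y^0)$, and $X^0 \not\cong H_2$, placing us in cases (ii)--(iv) of Table~\ref{table2}. In each such case $V_f(X^0) \subseteq V_f(\X^0)$ and every vertex of $V_f(\X^0) \setminus V_f(X^0)$ is a pendant in $\X$, hence is not shared with any $X^i$ for $i \ge 1$; consequently $V_f(\X^0) \cap V_f(H) \subseteq V_f(X^0)$. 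For $f = \varphi(g) \in B_s$, the edge $sg \in E(Y^0)$ lies in $Y - y$, hence in $\Y$; applying $\varphi$ yields $sf \in E(\X)$, and condition (iii) for $\X = \X^0 \uplus (\biguplus_{i=1}^{m_1} X^i)$ forces $f \in V_f(\X^0) \cap V_f(H) \subseteq V_f(X^0)$. Together with $A_s \subseteq V_f(X^0)$, this gives $|N^f_G(s)| \le |V_f(X^0)|$.

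The main obstacle is the overlap case, where one must invoke Lemma~\ref{lm:newLemma001} to recognise that the ``extra'' fat vertices in $V_f(\X^0) \setminus V_f(X^0)$ are pendants and therefore do not lie in $V_f(H)$; without this observation, the $\varphi$-transfer would only place $f$ in the possibly larger set $V_f(\X^0)$, and the bound $|V_f(X^0)|$ would not follow.
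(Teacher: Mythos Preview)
Your argument is correct, and it follows the same basic idea as the paper---namely, that every fat neighbour of $s$ in $G$ is recorded by an edge in $E_0$, and that these edges point back into $V_f(X^0)$ or $V_f(Y^0)$ as appropriate. The paper's own proof is much terser: from $sf\in E_0$ and $s\in V_s(X^0)$ it simply asserts $sf\in E(X^0)$, and symmetrically for $Y^0$.

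Where your treatment genuinely adds something is the overlap case $s\in V_s(X^0)\cap V_s(Y^0)$. The paper's one-line deduction ``Then $sf\in E(X^0)$'' is not immediate here, since $sf$ could come from the $Y^0$-half of $E_0$. Your transfer via $\varphi$ together with the pendant-vertex clause of Lemma~\ref{lm:newLemma001} (so that $V_f(\X^0)\cap V_f(H)\subseteq V_f(X^0)$) is exactly what is needed to close that gap. In practice the paper only \emph{uses} Claim~\ref{cl:012} in Claim~\ref{cl:013}, where one of $X^0,Y^0$ is isomorphic to $H_2$ and no overlap occurs; but as a proof of the claim as stated, your version is the more complete one. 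A minor remark: in the non-overlap case $s\in V_s(X^0)\setminus V_s(Y^0)$ you do not actually need to invoke condition~(iii) of Definition~\ref{df:1}, since $A_s\subseteq N_{X^0}^f(s)\subseteq V_f(X^0)$ holds by definition.
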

By (\ref{eq:006}),
   $sf\in E_0$ for each $f\in N_G^f(s)$.
Suppose $s\in V_s(X^0)$.
Then $sf\in E(X^0)$.
Hence $|N_G^f(s)|\le |N_{X^0}^f(s)|\le |V_f(X^0)|$.
Suppose $s\in V_s(Y^0)$.
Then $s\varphi^{-1}(f)\in E(Y^0)$.
Hence $|N_G^f(s)|\le |N_{Y^0}^f(s)| \le |V_f(Y^0)|$.

%%%%%%%%%%%%%%%%%%%%%%%%%%%%%%%%%%%%%% Claim 7
\begin{cl}\label{cl:011}
$|V_f(F)|\le |V_f(X^0)|+|V_f(Y^0)|$.
\end{cl}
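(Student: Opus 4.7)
The plan is to deduce the claim as a bookkeeping consequence of (\ref{eq:006}) and the definition of $E_0$. The key observation is that, by the definition of $\langle\langle\cdot\rangle\rangle$, we have
\[
V_f(F) = \bigcup_{s\in V_s(F)} N_G^f(s),
\]
so it suffices to cover $V_f(F)$ by $V_f(X^0) \cup \varphi(V_f(Y^0))$; the claim will then follow from the trivial inequality $|A\cup B|\le |A|+|B|$ together with $|\varphi(V_f(Y^0))|=|V_f(Y^0)|$.

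First I would record the identity above, using $V_s(F)=V_s(X^0)\cup V_s(Y^0)$. Then I would fix an arbitrary pair $(s,f)$ with $s\in V_s(F)$ and $f\in N_G^f(s)$, and trace it through the two halves of the definition of $E_0$: by (\ref{eq:006}) the edge $sf$ lies in $E_0$, so either $s\in V_s(X^0)$ and $f\in V_f(H)\cap N_{X^0}^f(s)\subseteq V_f(X^0)$, or $s\in V_s(Y^0)$ and $f=\varphi(g)$ for some $g\in V_f(\varphi^{-1}(H))\cap N_{Y^0}^f(s)\subseteq V_f(Y^0)$, whence $f\in\varphi(V_f(Y^0))$. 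Taking the union over all such $s$ yields the desired containment
\[
V_f(F)\subseteq V_f(X^0)\cup\varphi(V_f(Y^0)).
\]

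Applying $|A\cup B|\le |A|+|B|$ and bijectivity of $\varphi$ then gives $|V_f(F)|\le |V_f(X^0)|+|V_f(Y^0)|$. I expect no real obstacle; the claim essentially upgrades Claim~\ref{cl:012} (which bounds each individual $|N_G^f(s)|$) to a bound on the union of these neighborhoods, and the upgrade is purely formal once the two halves of the definition of $E_0$ are read as specifying where the fat neighbors of $V_s(X^0)$- and $V_s(Y^0)$-vertices can live.
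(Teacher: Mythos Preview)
Your argument is correct and essentially the same as the paper's: both show $V_f(F)\subseteq V_f(X^0)\cup\varphi(V_f(Y^0))$ by tracing the fat neighbours of $V_s(F)$ through the definition of $E_0$, and then apply $|A\cup B|\le|A|+|B|$. The paper phrases the first step via Claim~\ref{cl:007} and (\ref{eq:02})-(a) to obtain the equality $V_f(F)=(V_f(X^0)\cap V_f(H))\cup\varphi(V_f(Y^0)\cap\varphi^{-1}(V_f(H)))$, whereas you invoke (\ref{eq:006}) directly; the content is the same.
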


From Claim~\ref{cl:007},
	$V_f(F)=V_f(\subgg{V_s(X^0)\cup V_s(Y^0)}{F\uplus H})$.
By (\ref{eq:02})-(a),
	$V_f(F)=V_f(\subgg{V_s(X^0)\cup V_s(Y^0)}{F\uplus H})
	\cap V_f(H)$,
	i.e.,
\[
	V_f(F)=\left(V_f(X^0)\cap V_f(H)\right)
	\cup
	\left(\varphi(
		V_f(Y^0)\cap \varphi^{-1}(V_f(H))
	)\right).
\]
Hence
\begin{align*}
|V_f(F)|& \le |V_f(X^0)\cap V_f(H)|
	+|
		V_f(Y^0)\cap \varphi^{-1}(V_f(H))|\\
	&\le |V_f(X^0)|+|V_f(Y^0)|.
\end{align*}
%For any $s\in V_s(F)$,
%   $N_F^f(s)=N_G^f(s)\subset
%   V_f(X^0)\cup (V_f(Y^0)\cap V_f(\varphi^{-1}(H)))$
%   by (\ref{eq:006}).
%Hence
%\[V_f(F)=\bigcup_{s\in V_s(F)}N_F^f(s)
%   \subset V_f(X^0)\cup (V_f(Y^0)\cap V_f(\varphi^{-1}(H))).
%\]
%Thus $|V_f(F)|\le
%   |V_f(X^0)\cup (V_f(Y^0)\cap V_f(\varphi^{-1}(H)))|
%   \le 
%   |V_f(X^0)|+|V_f(Y^0)\cap V_f(\varphi^{-1}(H))|
%   \le |V_f(X^0)|+|V_f(Y^0)|$.

%%%%%%%%%%%%%%%%%%%%%%%%%%%%%%%%%%%%%% Claim 10
\begin{cl}\label{cl:008}
The Hoffman graph $F$ is a non $\h$-line graph.
\end{cl}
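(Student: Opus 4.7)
I plan to argue by contradiction. Suppose $F$ is an $\h$-line graph, so by definition there exists a Hoffman graph $\tilde F = \biguplus_{k} \tilde F^k$ with $[\tilde F^k] \in \h$ for all $k$ and $F$ an induced subgraph of $\tilde F$. The strategy is to combine $\tilde F$ with the given decomposition $H = \biguplus_{j\in I} X^j$ to build an $\h$-cover of $G = F \uplus H$ (the sum structure is provided by Claim~\ref{cl:007}), and hence an $\h$-cover of $\Gamma = \langle V_s(G)\rangle_G$ (using $V_s(G) = V_s(\Gamma)$ from (\ref{eq:02})(c) and (\ref{eq:03})(a)), contradicting the hypothesis that $\Gamma$ is a slim non $\h$-line graph.

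Applying Lemma~\ref{lm:new2_1} with $V := V_s(F) \subset V_s(\tilde F)$, I pass to the Hoffman graph $\tilde F' := \subgg{V_s(F)}{\tilde F}$, with decomposition $\tilde F' = \biguplus_{i} K^i$, where $[K^i] \in \h \cup \{[H_1]\}$ and $V_s(\tilde F') = V_s(F)$; note that $F \subset \tilde F'$. Next I form a Hoffman graph $\tilde G$ by identifying $\tilde F'$ and $H$ along the common fat vertices $V_f(F) \subset V_f(\tilde F')\cap V_f(H)$, and verify that $\tilde G = \tilde F'\uplus H = \biguplus_{i} K^i \uplus \biguplus_{j\in I} X^j$ in the sense of Definition~\ref{df:1}. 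Conditions~(i)--(iii) are routine; the nontrivial one is (iv) for $x \in V_s(\tilde F') = V_s(F)$ and $y \in V_s(H)$. Since the common fat neighbours of $x$ and $y$ in $\tilde G$ lie in $V_f(\tilde F')\cap V_f(H) = V_f(F)$ and coincide with those in $G$, condition~(iv) for $\tilde G$ is inherited from Claim~\ref{cl:007}.

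The final step is to eliminate any $[H_1]$-pieces from the decomposition of $\tilde G$ to obtain an honest $\h$-cover. Each $K^i \cong H_1$ is a pair $\{s_i, f_i\}$ with $s_i \in V_s(F)$ and $f_i \in V_f(F) \subset V_f(H)$; the fat vertex $f_i$ belongs to some $X^j$, and since every slim vertex of $X^j$ is fat-adjacent to $f_i$, Definition~\ref{df:1}(iv) applied to $G$ forces $s_i$ to be adjacent in $G$ to every slim vertex of $X^j$. I would absorb $K^i$ into $X^j$ and verify by a short case analysis on $X^j \in \{H_2, H_3, H_5\}$ that the merged piece is again in $\h$ (for example, an $H_1$ absorbed into $H_2$ yields $H_5$), the other cases either producing an $\h$-piece or contradicting the hypothesis that $F$ is $\h$-line. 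After carrying out these absorptions, $\tilde G$ becomes an $\h$-cover, and thus $\Gamma \subset G \subset \tilde G$ is an $\h$-line graph, yielding the desired contradiction.

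The main obstacle I expect is this final absorption step: it relies on the precise adjacency structure of $H_2$, $H_3$, and $H_5$, and requires ruling out (or otherwise accommodating) configurations where an $H_1$-piece $\{s_i, f_i\}$ meets an $X^j \cong H_3$ or $H_5$; such configurations must either be shown to produce an $\h$-piece after absorption or be incompatible with $F$ being an $\h$-line graph in view of how Lemma~\ref{lm:new2_1} generates $H_1$-pieces.
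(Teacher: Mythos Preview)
Your overall strategy---suppose $F$ is an $\h$-line graph, glue a strict cover of $F$ to $H$ along $V_f(F)$, and deduce that $\Gamma$ is an $\h$-line graph---is exactly the paper's. The paper does it in three lines by citing two results from \cite{paperI}: Example~22 (any $\h$-line graph admits a \emph{strict} $\h$-cover) and Lemma~20 (strict $\h$-covers of the summands of a sum combine to a strict $\h$-cover of the sum). You are essentially reproving these two facts by hand.

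The gap is in your absorption step. First, the claim ``$f_i\in V_f(F)$'' for each $H_1$-piece $K^i=\{s_i,f_i\}$ is unjustified: $K^i$ is a piece of $\tilde F'=\subgg{V_s(F)}{\tilde F}$, and $\tilde F'$ can have fat vertices not in $V_f(F)$, so $f_i$ need not lie in $V_f(H)$ at all. Second, even when $f_i\in V_f(H)$, your sample absorption ``$H_1$ into $H_2$ yields $H_5$'' is wrong on vertex counts: $H_2$ has one slim vertex and $H_5$ has three, so merging an $H_1$ with an $H_2$ gives a graph with two slim and two fat vertices, which is none of $H_2,H_3,H_5$ and does not decompose as a sum of such (in $H_3$ the two slim vertices are non-adjacent). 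The case analysis you hope for does not go through.

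The fix is much simpler than absorption and is what Example~22 of \cite{paperI} encodes: for each $K^i\cong H_1$, adjoin a brand-new pendant fat vertex $f_i'$ adjacent only to $s_i$, turning $K^i$ into $K^i'\cong H_2$. Since $f_i'$ is adjacent to no other slim vertex, all four conditions of Definition~\ref{df:1} are preserved for the enlarged graph, and you obtain an honest $\h$-cover containing $\Gamma$. With this repair your argument is correct, though longer than the paper's two citations.
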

The Hoffman graph $H$ is a strict $\h$-cover graph of itself.
Suppose that $F$ is an $\h$-line graph.
Then there exists a strict $\h$-cover graph of $F$
   (cf. Example 22 of \cite{paperI}).
Hence
   $G$ has a strict $\h$-cover graph
   from Lemma 20 of \cite{paperI}.
Since $\Gamma\subset G$,
   $\Gamma$ is an $\h$-line graph,
   a contradiction.

%%%%%%%%%%%%%%%%%%%%%%%%%%%%%%%%%%%%%% Claim 11
\begin{cl}\label{cl:013}
If $X^0$ or $Y^0$ is isomorphic to $H_2$,
   the theorem holds.
\end{cl}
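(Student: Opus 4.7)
Without loss of generality suppose $X^0\cong H_2$, so $V_s(X^0)=\{x\}$, and $V_s(F)=\{x\}\cup V_s(Y^0)$ has size $2$, $3$, or $4$ according as $Y^0\cong H_2$, $H_3$, or $H_5$. The plan is to show that $F$ is isomorphic to $F_i$ for some $i\in\{1,\ldots,9\}$, or at least contains such an $F_i$ as an induced subgraph $F'$, and then to apply Lemma~\ref{lm:F} to $F'\uplus H$ to exhibit a slim subgraph of $G$, and hence of $\Gamma$, isomorphic to one of the graphs in Figure~\ref{MFS}. By the minimality of $\Gamma$ this subgraph must equal $\Gamma$ itself, forcing $|V(\Gamma)|\le 8$ and contradicting $|V(\Gamma)|\ge 10$.

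The structural constraints on $F$ are already available: $F$ is non $\h$-line by Claim~\ref{cl:008}; $x$ has at most $|V_f(X^0)|=2$ fat neighbors in $F$, while each $s\in V_s(Y^0)$ has at most $|V_f(Y^0)|$ fat neighbors, by Claim~\ref{cl:012}; and $|V_f(F)|\le 2+|V_f(Y^0)|$ by Claim~\ref{cl:011}. Connectedness of $F$ follows at once: $V_s(X^0)=\{x\}$ is a singleton, and $V_s(Y^0)$ is connected inside $F$ via the slim edges inherited from $Y^0$, so any disconnection of $F$ would split as $\{x\}$ against $V_s(Y^0)$; each component would then be isomorphic to a subgraph of one of $H_1,H_2,H_3,H_5$, all of which are $\h$-line graphs, so their disjoint union $F$ would be $\h$-line, contradicting Claim~\ref{cl:008}.

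I then split by the isomorphism type of $Y^0$. If $Y^0\cong H_2$, the two slim vertices $x,y$ of $F$ are non-adjacent (the pair $\{x,y\}$ was chosen non-adjacent), $|V_f(F)|\le 4$, and every slim vertex has at most two fat neighbors; Lemma~\ref{lm:100} then yields $F\cong F_1$, $F_3$, or $F_4$. If $Y^0\cong H_3$, then $|V_s(F)|=3$ and the two vertices of $V_s(Y^0)$ are adjacent, sharing the fat vertex of $Y^0$ whenever it lies in $V_f(H)$; a finite case analysis on the adjacencies of $x$ to $V_s(Y^0)$, on which fat vertices of $X^0$ lie in $V_f(H)$, and on $|V_f(F)|$, places $F$ inside the hypotheses of Lemma~\ref{lm:101} (giving $F\cong F_2$, $F_5$, or $F_8$) or of Lemma~\ref{lm:102} (giving an $F_6$-, $F_7$-, or $F_9$-subgraph of $F$), possibly after passing to a subgraph $F'\subset F$ with fewer fat vertices. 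The case $Y^0\cong H_5$ is treated analogously with $|V_s(F)|=4$.

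With $F'\cong F_i$ in hand, I invoke Lemma~\ref{lm:F} on the pair $(F',H)$: the graph $H$ is connected by Claim~\ref{cl:017}; $V_f(F')\subset V_f(F)\subset V_f(H)$ by~(\ref{eq:02})(a); each $H^j$ lies in $\h$; and $|V_s(H)|=|V_s(\Gamma)|-|V_s(F)|\ge 10-4=6\ge m(F')$ since $m(F')\le 5$ for every $F_i$. The lemma then produces a slim subgraph $G'$ of $F'\uplus H$ isomorphic to one of the graphs in Figure~\ref{MFS}; since $V_s(G)=V_s(\Gamma)$, this $G'$ sits inside $\Gamma$, and minimality forces $\Gamma\cong G'$, so $|V(\Gamma)|\le 8$, a contradiction. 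The hard part will be the subcase analysis when $Y^0\cong H_3$ or $H_5$: verifying the precise structural hypotheses of Lemmas~\ref{lm:101} and~\ref{lm:102}, and in the configurations where $|V_f(F)|$ exceeds the bound allowed by those lemmas, carefully extracting an induced subgraph $F'\subset F$ isomorphic to some $F_i$ so that $F'\uplus H$ is still a subgraph of $G$ of the required form.
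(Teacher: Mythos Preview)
Your overall strategy---pin down $F$ (or an induced subgraph $F'$) as one of the $F_i$ and then invoke Lemma~\ref{lm:F}---is exactly what the paper does, and your treatment of the sub-case $Y^0\cong H_2$ via Lemma~\ref{lm:100} coincides with the paper's. The gap is in the sub-case $Y^0\cong H_3$ or $H_5$, which you explicitly leave as an undone ``finite case analysis'' and where you list Lemma~\ref{lm:102} as a possible destination. That lemma cannot apply here: its hypothesis~(iv) demands \emph{two} subsets $V_1,V_2\subset V_s(F)$ with $\subgg{V_k}{F}\cong H_3$ or $H_5$, but when $X^0\cong H_2$ the only slim vertex outside $V_s(Y^0)$ is $x$, so there is no room for a second such subset. (Also, the two slim vertices of $H_3$ are \emph{not} adjacent---they merely share a fat neighbour---so your parenthetical ``the two vertices of $V_s(Y^0)$ are adjacent'' is incorrect.)

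The paper dispatches the $Y^0\cong H_3$ or $H_5$ sub-case without any branching, via one observation you are missing. Claim~\ref{cl:011} gives $|V_f(F)|\le|V_f(X^0)|+|V_f(Y^0)|=2+1=3$. If equality held, then the fat neighbours of $x$ (contributed by $X^0$) and the fat neighbour of $V_s(Y^0)$ (contributed by $Y^0$) would be disjoint; together with $V_s(X^0)\cap V_s(Y^0)=\emptyset$ this splits $F$ as a sum of a copy of $H_2$ on $\{x\}$ and a copy of $H_3$ or $H_5$ on $V_s(Y^0)$, making $F$ an $\h$-line graph and contradicting Claim~\ref{cl:008}. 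Hence $|V_f(F)|\le 2$, and now all hypotheses of Lemma~\ref{lm:101} are in place (with $s=x$ and $s'=y$), yielding $F\cong F_2$, $F_5$, or $F_8$ directly---no further subcases, no passage to a proper subgraph $F'$, and no Lemma~\ref{lm:102}.
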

If $X^0$ or $Y^0$ is isomorphic to $H_2$,
   then $V_s(X^0)\cap V_s(Y^0)=\emptyset$,
   and each slim vertex of $F$ has at most $2$ fat neighbours
   by Claim~\ref{cl:012}.
First suppose that $X^0$ and $Y^0$ are isomorphic to $H_2$.
Then $|V_s(F)|=|\{x,y\}|=2$ and
   $|V_f(F)|\le 4$ by Claim~\ref{cl:011}.
Hence the hypotheses of Lemma~\ref{lm:100} hold
   by Claim~\ref{cl:008}.
Thus $F\cong F_1,F_3$ or $F_4$,
	and $|V_s(H)|\ge8$ by (\ref{eq:04}).
Next suppose otherwise.
Then 
   $3\le |V_s(F)|=|V_s(X^0)\cup V_s(Y^0)|\le 4$,
   and $|V_f(F)|\le 3$ by Claim~\ref{cl:011}.
If $|V_f(F)|= 3$,
   then $V_f(X^0)\cap V_f(Y^0)=\emptyset$,
   and therefore $F$ is an $\h$-line graph
   since $V_s(X^0)\cap V_s(Y^0)=\emptyset$,
   a contradiction to Claim~\ref{cl:008}.
Obviously the hypotheses (v) and (iv) of Lemma~\ref{lm:101} hold.
Hence the hypotheses of Lemma~\ref{lm:101} hold
   by Claim~\ref{cl:008}.
Thus $F\cong F_2$, $F_5$ or $F_8$,
	and $|V_s(H)|\ge6$ by (\ref{eq:04}).
Hence the theorem holds from Lemma~\ref{lm:F}
   if $X^0$ or $Y^0$ is isomorphic to $H_2$.

For the remainder of this proof,
   we assume that $X^0$ and $Y^0$ are isomorphic
      to $H_3$ or $H_5$.
Then $3\le |V_s(X^0)\cup V_s(Y^0)|(=|V_s(F)|)\le 6$.
Hence the condition (i) of Lemma~\ref{lm:102} holds.
%Then $V_f(X^0)\subset V_f(H)$ and $\varphi(V_f(Y^0))\subset V_f(H)$.
Suppose $V_f(X^0)\cap \varphi(V_f(Y^0))=\emptyset$.
Then $V_f(\X^0)\cap V_f(\varphi(\Y^0))=\emptyset$.
Hence $V(\X^0)\cap  V(\varphi(\Y^0))=\emptyset$
	by (\ref{eq:005})
	since $\X=\varphi(\Y)$.
Thus $V_s(X^0)\cap V_s(Y^0)=\emptyset$,
   and therefore
   $F=\subgg{V_s(X^0)\cup V_s(Y^0)}{G}
      =\subgg{V_s(X^0)}{G}\uplus \subgg{V_s(Y^0)}{G}$.
Obviously $\subgg{V_s(X^0)}{G}$ and $\subgg{V_s(Y^0)}{G}$
   are isomorphic to $H_3$ or $H_5$.
Hence	$F$ is an $\h$-line graph.
But this contradicts Claim~\ref{cl:008}.
Thus $V_f(X^0)\cap \varphi(V_f(Y^0))\neq\emptyset$,
   i.e., $\varphi$ maps the unique fat vertex of $Y^0$
	to the unique fat vertex of $X^0$,
	and $|V_f(F)|=1$.
Hence the conditions (ii) and (iii) of Lemma~\ref{lm:102} hold.
Moreover the condition (v) of Lemma~\ref{lm:102} holds
   by Claim~\ref{cl:008}.

Put $V_1=V_s(X^0)$ and $V_2=V_s(Y^0)$,
	and put $s_1=x$ and $s_2=y$.
Then
\begin{itemize}
\item $(V_s(F)\setminus V_2)\setminus\{s_1\}=V_s(\tilde{X}^0)\setminus V_s(Y^0)
	\subset
	V_s(\biguplus_{Z\in
	\mathcal{Z}\setminus\{\varphi (L)\mid L\in\mathcal{Y}\}}
	\varphi^{-1} (Z))$ by (\ref{eq:007}),
\item $V_s(F)\setminus V_1=V_s(Y^0)\setminus V_s(X^0)
	\subset
	V_s(Y^0)$,
\item $(V_s(F)\setminus V_1)\setminus\{s_2\}=V_s(\tilde{Y}^0)\setminus V_s(X^0)
	\subset
	V_s(\biguplus_{Z\in
	\mathcal{Z}\setminus\mathcal{X}}
	Z)$ by (\ref{eq:007}),
\item $V_s(F)\setminus V_2=V_s(X^0)\setminus V_s(Y^0)
	\subset
	V_s(X^0).$
\end{itemize}
Hence the vertex of $V_s(F)\setminus V_2$ and 
	the vertex of $V_s(F)\setminus V_1$ are adjacent to each other
      except the pair
      $\{s_1,s_2\}$
      $(s_1\in V_s(F)\setminus V_2$,
      $s_2\in V_s(F)\setminus V_1)$.
Thus the conditions (iv) of Lemma~\ref{lm:102} holds.
Therefore
	$F$ has a subgraph isomorphic to $F_6$, $F_7$ or $F_9$.
Let $F'$ be a subgraph isomorphic to $F_6$, $F_7$ or $F_9$,
	of $F$.
Then $F'\uplus H\subset G$ from Lemma~\ref{lm:subgraph}.
Now $|V_f(F')|=1$,
	$|V_s(H)|\ge4$ by
	%Claim~\ref{cl:011} and 
	(\ref{eq:04}).
Moreover $V_f(F')=V_f(F)\subset V_f(H)$ from (\ref{eq:02})-(a).
Hence the hypothesis of Lemma~\ref{lm:F} is satisfied.
Thus $F'\uplus H$ has a slim subgraph isomorphic to
	one of the graphs in Figure~\ref{MFS},
	and so does $G$.
\end{proof}

%%%%%%%%%%%%%%					%%%%%%%%%%%%%%
%%%%%%%%%%%%%%%%%%%%%%%%%%%%%%%%%%%%%%%

\paragraph{Acknowledgements}
The author would like to thank his advisor,
	Professor Akihiro Munemasa,
	for his valuable suggestions and fruitful discussions on the topic.
The author is also grateful to
	Professors Drago\'{s} Cvetkovi\'{c} and Arnold Neumaier
	for their valuable comments on the presentation of the results contained
	in this paper.

%%%%%%%%%%%%%%%%%%%%%%%%%%%%%%%%%%%%%%%%%%%

\end{document}